\def\newaliasedtheorem#1[#2]#3{
  \newaliascnt{#1@alt}{#2}
  \newtheorem{#1}[#1@alt]{#3}
  \expandafter\newcommand\csname #1@altname\endcsname{#3}
}
\numberwithin{equation}{section}
\newtheoremstyle{slanted}{\topsep}{\topsep}{\slshape}{}{\bfseries}{.}{.5em}{}
\theoremstyle{plain}
\newtheorem{theorem}{Theorem}[section]
\theoremstyle{definition}
\theoremstyle{remark}
\newcommand{\setR}{\mathbb{R}}
\let\phi\varphi
\newcommand{\di}{\mathop{}\!\mathrm{d}}
\newcommand{\loc}{{\rm loc}}
\newcommand{\res}{\mathop{\hbox{\vrule height 7pt width .5pt depth 0pt
\vrule height .5pt width 6pt depth 0pt}}\nolimits}
\DeclareMathOperator{\supp}{supp}
\newcommand{\Ch}{{\sf Ch}}
\newcommand{\Leb}{\mathrm{Leb}}
\DeclareMathOperator{\Lip}{Lip}
\newcommand{\dist}{\mathsf{d}}
\newcommand{\meas}{\mathfrak{m}}
\DeclareMathOperator{\CD}{CD}
\DeclareMathOperator{\RCD}{RCD}
\newfont{\tmpf}{cmsy10 scaled 2500}
\begin{document}
\title{Sobolev mappings between RCD spaces and applications to harmonic maps: a heat kernel approach }
\author{
Shouhei Honda\thanks{Tohoku University, \url{shouhei.honda.e4@tohoku.ac.jp}} and  
Yannick Sire\thanks{Johns Hopkins University, \url{ysire1@jhu.edu}} } \maketitle

\begin{abstract} 
In this paper, we investigate a Sobolev map $f$ from a finite dimensional RCD space $(X, \dist_X, \meas_X)$ to a finite dimensional non-collapsed compact RCD space $(Y, \dist_Y, \mathcal{H}^N)$. It is proved that if the image $f(X)$ is smooth in a weak sense (which is satisfied if the pushforward measure $f_{\sharp}\meas_X$ is absolutely continuous with respect to the Hausdorff measure $\mathcal{H}^N$, or if $(Y, \dist_Y, \mathcal{H}^N)$ is smooth in a weak sense), then the pull-back $f^*g_Y$ of the Riemannian metric $g_Y$ of $(Y, \dist_Y, \mathcal{H}^N)$ is well-defined as an $L^1$-tensor on $X$, the minimal weak upper gradient $G_f$ of $f$ can be written by using $f^*g_Y$, and it coincides with the local slope $\mathrm{Lip}f$ for $\meas_X$-almost everywhere points in $X$ when $f$ is Lipschitz. In particular the last statement gives a nonlinear analogue of Cheeger's differentiability theorem for Lipschitz functions on metric measure spaces. 
Moreover these results allow us to define the energy of $f$. It is also proved that the energy coincides with the Korevaar-Schoen energy up to by multiplying a dimensional positive constant.
In order to achieve this, we use a smoothing of $g_Y$ via the heat kernel embedding $\Phi_t:Y \hookrightarrow L^2(Y, \mathcal{H}^N)$, which is established by Ambrosio-Portegies-Tewodrose and the first named author \cite{AHPT}. Moreover we improve the regularity of $\Phi_t$, which plays a key role to get the above results. As an application, we show that $(Y, \dist_Y)$ is isometric to the $N$-dimensional standard unit sphere in $\mathbb{R}^{N+1}$ and $f$ is a minimal isometric immersion if and only if $(X, \dist_X, \meas_X)$ is non-collapsed up to a multiplication of a constant to $\meas_X$, and $f$ is an eigenmap whose eigenvalues coincide with the essential dimension of $(X, \dist_X, \meas_X)$, which gives a positive answer to a remaining problem from a previous work \cite{honda20} by the first named author. This approach, using the heat kernel embedding instead of using Nash's one, to the study of energies of maps between possibly singular spaces seems new even for closed Riemannian manifolds.
\end{abstract}

\tableofcontents
\section{Introduction}
\subsection{Energy via Nash embedding}\label{1.1}
The study of \textit{harmonic maps} between Riemannian manifolds is a central topic in geometric analysis. A standard way for defining (smooth) harmonic maps is as follows. Let $(M, g_M)$ and $(N, g_N)$ be finite dimensional Riemannian manifolds and let $f:M \to N$ be a smooth map. Nash's embedding theorem allows  to find a smooth \textit{isometric} embedding
\begin{equation}
\Phi:N \to \mathbb{R}^k
\end{equation}
for some $k \in \mathbb{N}$, that is, $\Phi^*g_{\mathbb{R}^k}=g_N$. The \textit{energy of $f$} is defined by
\begin{equation}\label{smooth energy}
\mathcal{E}_{M, N}(f):=\frac{1}{2}\int_M\left| \dist (\Phi \circ f)\right|^2 \di \mathrm{vol}_{g_M},
\end{equation} 
where $\mathrm{vol}_{g_M}$ denotes the Riemannian volume measure of $(M, g_M)$. Note that (\ref{smooth energy}) does not depend on the choice of $\Phi$ because $\left| \dist (\Phi \circ f)\right|^2$ coincides with $\langle g_M, f^*g_N\rangle$. Then $f$ is said to be \textit{harmonic} if $f$ is a critical point of (\ref{smooth energy}) under any compactly supported smooth perturbations $f_t$ of $f$
\begin{equation}
\frac{\di}{\di t}\Big|_{t=0}\mathcal{E}_{M, N}(f_t)=0.
\end{equation}

The purpose of this paper is to provide a similar theory for non-smooth spaces with Ricci curvature bounded below, so-called \textit{$\RCD$-metric measure spaces}, which
are introduced in \cite{AmbrosioGigliSavare14} by Ambrosio-Gigli-Savar\'e (when $N=\infty$), \cite{AmbrosioMondinoSavare} by Ambrosio-Mondino-Savar\'e (treating $\RCD^*(K, N)$ spaces), \cite{Gigli13, Gigli1} by Gigli (treating the infinitesimally Hilbertian condition), and \cite{ErbarKuwadaSturm} by Erbar-Kuwada-Sturm (treating $\RCD^*(K, N)$ spaces),
after the introduction of $\CD(K, N)$ space introduced in \cite{LottVillani} by Lott-Villani and \cite{Sturm06a, Sturm06b} by Sturm, independently.

 Naively a metric measure space $(X, \dist_X, \meas_X)$ is said to be an \textit{$\RCD(K, N)$ space}, or an \textit{$\RCD$ space} for short, if 
\begin{itemize}
\item $(X, \dist_X)$ is a complete separable metric space, $\meas_X$ is a Borel measure on $X$ which is finite on each bounded set, the heat flow is linear, the Ricci curvature is bounded below by $K$ and the dimension is bounded above by $N$, in a synthetic sense.
\end{itemize}
See Definition \ref{def:rcd} for the precise definition {\color{blue}and  \cite{A} for a nice survey}. We say that an $\RCD$ space is \textit{finite dimensional} if $N$ can be taken as a finite number. Typical examples of $\RCD$ spaces are weighted Riemannian manifolds $(M, \dist_{g_M}, e^{-\phi}\mathrm{vol}_{g_M})$ with Bakry-\'Emery Ricci curvature bounded below, and in their measured Gromov-Hausdorff limit spaces, where $\dist_{g_M}$ denotes the induced distance by $g_M$. In fact the Gaussian space $(\mathbb{R}^n, \dist_{\mathbb{R}^n}, e^{-|x|^2/2}\mathcal{L}^n)$ is an $\RCD(1, \infty)$ space, but it is not finite dimensional because of the exponential decay of the weight. On the other hand, if $M$ is closed, then $(M, \dist_{g_M}, e^{-\phi}\mathrm{vol}_{g_M})$ is always a finite dimensional $\RCD$ space.

Let us consider a map between two $\RCD$ spaces $(X, \dist_X, \meas_X), (Y, \dist_Y, \meas_Y)$
\begin{equation}
f:X \to Y.
\end{equation}
Then the main difficulties to establish the above are:
\begin{enumerate}
\item  When we want to find a good definition of the energy density $|\dist (\Phi \circ f)|^2$ along a similar way in this setting, we do not know a nice isometric embedding result as Nash's one.
\item When we want to find a good definition of the pull-back $f^*g_Y$, although the Riemannian metrics $g_X, g_Y$ are still well-defined in a weak sense, they make sense up to negligible sets. In particular we do not know how to define the pull-back $f^*g_Y$ when $f(X)$ is $\meas_Y$-negligible.
\end{enumerate}
In order to overcome these difficulties, we adopt the \textit{heat kernel embedding $\Phi_t$ of $Y$ into $L^2$} as discussed below. It is worth pointing out that there are many fundamental works on Sobolev maps from metric measure spaces to metric spaces, for example, \cite{GS} by Gromov-Schoen,  \cite{KS} by Korevaar-Schoen, \cite{jost} by Jost,  \cite{GPS} by Gigli-Pasqualetto-Soultanis, \cite{GT1, GT2} by Gigli-Tyulenev, \cite{Hajlasz} by Hajlasz, and \cite{KuwaeShioya} by Kuwae-Shioya.

Our goal is to introduce a natural energy for Sobolev maps between metric spaces so that the refined theory in the smooth case can be carried over to the non-smooth setting, like bubbling phenomena, rigidity results, geometric  heat flows associated to harmonic maps and their blow-up analysis, Ginzburg-Landau-type approximations just to name a few. The seminal paper by Gromov-Schoen \cite{GS} was pioneering in using harmonic maps into Bruhat-Tits buildings to obtain rigidity results. A feature of the results obtained in the last years are that the target of the maps are Non-Positively Curved (NPC), which is an important class of metric spaces (see nevertheless the results in \cite{breiner} in the case of CAT$(1)$ spaces, i.e. positively curved in the sense of Alexandrov). In this direction, our contribution is to develop a theory of Sobolev maps including all the previous cases and which is very natural from the viewpoint of analysis.

More precisely we will study the asymptotic behavior of
\begin{equation}\label{asbaruabaruus}
(\Phi_t \circ f)^*g_{L^2}
\end{equation}
as $t \to 0^+$. Since the embedding $\Phi_t$ plays the role of a \textit{smoothing} of $(Y, \dist_Y, \mathcal{H}^N)$, it is expected from the asymptotic behavior (\ref{asbaruabaruus}) that up to normalization,  (\ref{asbaruabaruus}) converges to the pull-back $f^*g_Y$. In order to do this, we need to improve regularity results for $\Phi_t$ obtained in \cite{AHPT} by Ambrosio-Portegies-Tewodrose and the first named author. This is a main idea of the paper.

On the other hand the approach provided in the present paper is new even in the smooth setting. Let us introduce the details below.
\subsection{Heat kernel embedding}
Let $(M, g_M)$ be a closed $m$-dimensional Riemannian manifold. Then B\'erard-Besson-Gallot proved in \cite{BerardBessonGallot} that for any $t \in (0, \infty)$ the map $\Phi_t:M \to L^2(M, \mathrm{vol}_{g_M})$ defined by
\begin{equation}\label{asrarwxdsrgsdd}
\Phi_t(x):=\left( y \mapsto p_M(x, y, t)\right)
\end{equation} 
is a smooth embedding with the following asymptotic expansion
\begin{equation}\label{asymptoti}
c_mt^{(m+2)/2}\Phi_t^*g_{L^2}=g_M+\frac{2t}{3}\left( \mathrm{Ric}^{g_M}-\frac{1}{2}\mathrm{Scal}^{g_M}g_M\right) +O(t^2),\quad (t \to 0^+),
\end{equation}
where $p_M(x, y, t)$ denotes the heat kernel of $(M, g_M)$ and $c_m=4(8\pi)^{m/2}$.
Since (\ref{asymptoti}) is satisfied uniformly on $M$ (cf. \cite{HondaZhu}), in particular, letting
\begin{equation}\label{renormalized}
\tilde{\Phi}_t:=c_m^{1/2}t^{(m+2)/4}\Phi_t,
\end{equation}
we have
\begin{equation}\label{manifoldasym}
\|g_M-\tilde{\Phi}_t^*g_{L^2}\|_{L^{\infty}} \to 0
\end{equation}
which means that $\tilde{\Phi}_t$ is \textit{almost} isometric when $t$ is small.

Next let us introduce a finite dimensional reduction of the above observation. For that, we denote by
\begin{equation}
0=\lambda_0^{M}\le \lambda_1^{M}\le \cdots \to \infty
\end{equation}
the spectrum of the minus Laplacian $-\Delta_{M}f=-\langle \mathrm{Hess}_f^{g_M}, g_M\rangle$ of $(M, g_M)$ counted with multiplicities and denote by $\{\phi_i^{M}\}_i$ corresponding eigenfunctions with $\|\phi_i^{M}\|_{L^2}=1$. Note that standard spectral theory proves that $\{\phi_i^{M}\}_i$ is an $L^2$-orthonormal basis of $L^2(M, \mathrm{vol}_{g_M})$. For any $l \in \mathbb{N}$, let us denote by $\tilde{\Phi}_t^l:M \to \mathbb{R}^l$ the truncated map of $\tilde{\Phi}_t$ by $\{\phi_i^{M}\}_{i=1}^l$ defined by the composition of the following maps
\begin{equation}\label{composition}
\tilde{\Phi}_t^l: M \to L^2(M, \mathrm{vol}_{g_M})=\bigoplus_i\mathbb{R}\phi_i^{M} \stackrel{\pi_l}{\to} \bigoplus_{i=1}^l\mathbb{R}\phi_i^{M} \simeq \mathbb{R}^l,
\end{equation}
where $\pi_l$ is the canonical projection.
It follows from a direct calculation that 
\begin{equation}
\tilde{\Phi}_t^l(x):=\left( c_m^{1/2}t^{(m+2)/4}e^{-\lambda_i^Mt}\phi_i^M(x)\right)_{i=1}^l.
\end{equation}
Under this notation, Portegies proved in \cite{P} that for any $\epsilon \in (0, 1)$ there exists $t_0 \in (0, 1)$ such that for any $t \in (0, t_0]$ there exists $l_0 \in \mathbb{N}$ such that the following hold for any $l \in \mathbb{N}_{\ge l_0}$.
\begin{itemize}
\item The map $\tilde{\Phi}_t^l$ is a smooth embedding.
\item For any $x \in M$ there exists $r \in (0, 1)$ such that $\tilde{\Phi}_t^l|_{B_r(x)}$ is a $(1\pm \epsilon)$-bi-Lipschitz embedding, that is, 
\begin{equation}
(1-\epsilon)\dist_{g_M}(y, z)\le \left|\tilde{\Phi}_t^l(y)-\tilde{\Phi}_t^l(z)\right|_{\mathbb{R}^l}\le (1+\epsilon)\dist_{g_M}(y, z), \quad \forall y,\,\,\forall z \in B_r(x).
\end{equation}
\end{itemize} 
More precisely he established a quantitative version of these results (see also Remark \ref{remarkporte}).

From now on let us discuss on the non-smooth analogue of the above observation. Let us fix a finite dimensional compact $\RCD$ space $(X, \dist_X, \meas_X)$. Then it is proved in \cite{AHPT} that the following hold for any $t \in (0, \infty)$;
\begin{enumerate}
\item the map $\Phi_t:X \to L^2(X, \meas_X)$ and the pull-back $\Phi_t^*g_{L^2}$ are well-defined:
\item the map $\Phi_t$ is Lipschitz and a homeomorphism onto its image $\Phi_t(X)$;
\item for any $p \in [1, \infty)$ we have
\begin{equation}\label{mainasymp}
\|\tilde{c}_mt\meas_X(B_{\sqrt{t}}(\cdot))\Phi_t^*g_{L^2}-g_X\|_{L^p} \to 0,\quad (t \to 0^+),
\end{equation}
where $\tilde{c}_m:=\omega_m^{-1} \cdot c_m$ and $\omega_m$ denotes the volume of a unit ball in $\mathbb{R}^m$.
\end{enumerate}
Then it is natural to ask the following.
\begin{enumerate}
\item[(Q1)] Can (\ref{mainasymp}) be improved to the case when $p=\infty$, that is, 
\begin{equation}
\|\tilde{c}_mt\meas_X(B_{\sqrt{t}}(\cdot))\Phi_t^*g_{L^2}-g_X\|_{L^{\infty}} \to 0,\quad (t \to 0^+)?
\end{equation}
\item[(Q2)] Is $\Phi_t$ a bi-Lipschitz embedding?
\end{enumerate}
However it is shown in \cite{AHPT} that both questions (Q1) and (Q2) have negative answers.
In fact, for example, the metric measure space $([0, \pi], \dist_{[0, \pi]}, \mathcal{H}^1)$, which is a \textit{non-collapsed $\RCD(0, 1)$ space}, satisfies that $\Phi_t^{-1}$ is not Lipschitz for any $t \in (0, \infty)$ and that 
\begin{equation}\label{lowerpositi}
\lim_{t \to 0^+}\|\tilde{c}_1t\mathcal{H}^1(B_{\sqrt{t}}(\cdot))\Phi_t^*g_{L^2}-g_X\|_{L^{\infty}}=\lim_{t \to 0^+}\|\tilde{c}_1t^{3/2}\Phi_t^*g_{L^2}-g_X\|_{L^{\infty}}>0
\end{equation}
is satisfied.

Therefore let us ask the following.
\begin{enumerate}
\item[(Q3)] When do (Q1) and (Q2) have positive answers?
\end{enumerate}
The first main result of the paper is to give a complete answer to this question (Q3). It is worth pointing out that if $(X, \dist_X, \meas_X)$ is \textit{non-collapsed}, that is, it is an $\RCD(K, m)$ space for some $K \in \mathbb{R}$ and some $m \in \mathbb{N}$ with $\meas_X=\mathcal{H}^m$ (thus ``non-collapsed'' always implies the finite dimensionality), then, under the same notation as in (\ref{renormalized}),  (\ref{mainasymp}) is equivalent to
\begin{equation}
\|g_X-\tilde{\Phi}_t^*g_{L^2}\|_{L^p} \to 0, \quad (t \to 0^+)
\end{equation}
because the Bishop and Bishop-Gromov inequality imply the $L^q$-strong convergence of $\mathcal{H}^m(B_r(\cdot))/\omega_mr^m$ to $1$ as $r \to 0^+$ for any $q \in [1, \infty)$.
Compare with (\ref{manifoldasym}).
The following is a main result of the paper.
\begin{theorem}[$=$Theorem \ref{prop:portegies}]\label{themequivalence}
Let $(X, \dist_X, \mathcal{H}^m)$ be a non-collapsed compact $\RCD$ space. Then the following four conditions are equivalent.
\begin{enumerate}
\item We have 
\begin{equation}\label{linfty}
\|g_X -\tilde{\Phi}_t^*g_{L^2}\|_{L^{\infty}} \to 0,\quad (t \to 0^+).
\end{equation}
\item We have 
\begin{equation}\label{linfty2}
\|g_X -\tilde{c}_mt\mathcal{H}^m(B_{\sqrt{t}}(\cdot))g_t\|_{L^{\infty}} \to 0,\quad (t \to 0^+).
\end{equation}
\item For any sufficiently small $t \in (0, 1)$, $\Phi_t$ is a bi-Lipschitz embedding. More strongly, for any $\epsilon \in (0, 1)$ there exists $t_0 \in (0, 1)$ such that $\tilde{\Phi}_t$ is a locally $(1 \pm \epsilon)$-bi-Lipschitz embedding for any $t \in (0, t_0]$.
\item For any sufficiently small $t \in (0, 1)$, $\Phi_t^l$ is a bi-Lipschitz embedding for any sufficiently large $l$. More strongly, for any $\epsilon \in (0, 1)$ there exists $t_0 \in (0, 1)$ such that for any $t \in (0, t_0]$ there exists $l_0 \in \mathbb{N}$ such that $\tilde{\Phi}_t^l$ is a locally $(1 \pm \epsilon)$-bi-Lipschitz embedding for any $l \in \mathbb{N}_{\ge l_0}$.
\end{enumerate}
\end{theorem}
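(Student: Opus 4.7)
I would establish the equivalence via the cycle $(2) \Leftrightarrow (1) \Leftrightarrow (3) \Leftrightarrow (4)$, leaning throughout on two pillars: the identification, for Lipschitz maps into a Hilbert space on a non-collapsed $\RCD$ space, of the local Lipschitz constant $\mathrm{Lip}(f)(x)$ with the operator norm of the pulled-back tensor $f^*g_{L^2}(x)$ with respect to $g_X(x)$; and the improved $L^\infty$-regularity of $\Phi_t$ announced in the introduction.

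\textbf{Step 1: $(1)\Leftrightarrow (2)$.} The two tensors differ precisely by the scaling factor $\theta_t(x):=\mathcal{H}^m(B_{\sqrt t}(x))/(\omega_m t^{m/2})$, since $\tilde c_m=\omega_m^{-1}c_m$. On a non-collapsed compact $\RCD(K,m)$ space the De~Philippis-Gigli theorem gives $\theta_t(x)\to 1$ pointwise; combined with Bishop-Gromov monotonicity and a Dini-type compactness argument on $X$, this upgrades to uniform convergence $\theta_t\to 1$ on $X$, which immediately yields the equivalence.

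\textbf{Step 2: $(1)\Leftrightarrow (3)$.} For $(1)\Rightarrow(3)$, the $L^\infty$ smallness of $g_X-\tilde\Phi_t^*g_{L^2}$ forces at $\meas_X$-a.e.\ $x$ every eigenvalue of the symmetric bilinear form $g_X^{-1}\tilde\Phi_t^*g_{L^2}(x)$ into $[1-\epsilon,1+\epsilon]$. By the Lipschitz-vs.-operator-norm identification above, this gives $\mathrm{Lip}(\tilde\Phi_t)\le 1+\epsilon$ everywhere, i.e.\ the upper bi-Lipschitz bound globally. The lower bound is necessarily local: I localize to a small ball on which $\tilde\Phi_t$ admits a.e.\ minimizing geodesic representations and integrate along a geodesic $\gamma$, using the a.e.\ bound $|(\tilde\Phi_t\circ\gamma)'|^2\ge (1-\epsilon)|\dot\gamma|^2$ to obtain $\|\tilde\Phi_t(y)-\tilde\Phi_t(z)\|_{L^2}\ge (1-\epsilon)\dist_X(y,z)$. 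The converse $(3)\Rightarrow(1)$ runs the same identification backward: a uniform $(1\pm\epsilon)$-bi-Lipschitz property forces both eigenvalues of the a.e.-defined tensor $g_X^{-1}\tilde\Phi_t^*g_{L^2}$ into $[(1-\epsilon)^2,(1+\epsilon)^2]$, which, after adjusting $\epsilon$, is $(1)$.

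\textbf{Step 3: $(3)\Leftrightarrow (4)$ and main obstacle.} I would use uniform convergence $\tilde\Phi_t^l\to\tilde\Phi_t$ in $C(X;L^2)$ together with the tensor-level convergence $\tilde\Phi_t^{l*}g_{\mathbb{R}^l}\to\tilde\Phi_t^*g_{L^2}$ in $L^\infty$; both reduce to uniform control of the eigenfunction tail $\sum_{i>l}e^{-2\lambda_i^X t}(\phi_i^X)^2$ and its gradient on compact $\RCD$ spaces. Granted this, $(4)\Rightarrow(3)$ is by passing to the limit in the bi-Lipschitz inequality, and $(3)\Rightarrow(4)$ by choosing $l$ so large that the tail perturbation preserves the bi-Lipschitz property up to a factor $1\pm 2\epsilon$. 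The technical heart of the theorem, and the step I expect to be the main obstacle, is precisely this uniform $L^\infty$ control of the heat kernel embedding and its pulled-back tensor: upgrading the $L^p$-convergence in~\eqref{mainasymp} from~\cite{AHPT} to a pointwise statement is the improved regularity of $\Phi_t$ developed in this paper. Once that regularity is in hand, the remainder is a careful transcription of Portegies' smooth argument~\cite{P} to the singular setting.
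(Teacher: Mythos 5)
There are two genuine gaps, and the second one sits at the heart of the theorem.

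First, in your Step 1 you assert that $\theta_t(x)=\mathcal{H}^m(B_{\sqrt t}(x))/(\omega_m t^{m/2})\to 1$ pointwise on all of $X$ by De~Philippis--Gigli. This is false at singular points: by Theorem \ref{thm:bishop} the limit $\lim_{r\to 0^+}\mathcal{H}^m(B_r(x))/(\omega_m r^m)$ equals $1$ \emph{if and only if} $x\in\mathcal{R}_m$, and it is strictly smaller at singular points (e.g.\ the density is $1/2$ at the endpoints of $[0,\pi]$). A Dini-type argument cannot rescue this, since the limit function is discontinuous precisely where it fails to equal $1$. The equivalence $(1)\Leftrightarrow(2)$ therefore requires, as a preliminary step, that either hypothesis forces $X=\mathcal{R}_m$; this is the content of Proposition \ref{prop:regular}, whose proof is itself a nontrivial blow-up argument (linearity of Lipschitz harmonic functions on metric measure cones, splitting, and the rigidity of the Bishop inequality), and only after that does one get \emph{uniform} convergence of $\theta_t$ to $1$ by a compactness argument.

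Second, and more seriously, your argument for the lower bi-Lipschitz bound in $(1)\Rightarrow(3)$ does not work. Integrating the a.e.\ bound $|(\tilde\Phi_t\circ\gamma)'|^2\ge(1-\epsilon)|\dot\gamma|^2$ along a geodesic bounds from below the \emph{length} of the image curve, not the $L^2$-distance $\|\tilde\Phi_t(y)-\tilde\Phi_t(z)\|_{L^2}$ between its endpoints; the chord can be arbitrarily shorter than the arc. The paper's own Remark \ref{asahoarbasj} records the counterexample $f(x)=|x|$ on $[-1,1]$: it satisfies $f^*g=g$ a.e.\ (so every "eigenvalue" of the pulled-back tensor equals $1$), yet it is not injective near the origin. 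Thus an a.e.\ tensor identity, by itself, never yields a lower bi-Lipschitz bound. What the paper exploits in addition is that the components of $\tilde\Phi_t$ are eigenfunctions, so that blow-ups at (almost) every point are \emph{linear} maps on $\mathbb{R}^m$; the quantitative version of this (Propositions \ref{prop:quantapp} and \ref{prop:bilip}, via a compactness--contradiction argument together with the almost rigidity of the Bishop inequality) is what produces the Gromov--Hausdorff approximation property of $\tilde\Phi_t|_{B_r(y)}$ at every scale, and hence the two-sided estimate. Your converse direction $(3)\Rightarrow(1)$ is essentially sound (it is Corollary \ref{corfromlip} plus a truncation), and your Step 3 is workable modulo the global injectivity of $\Phi_t^l$ for large $l$, which needs a separate compactness argument rather than just uniform convergence. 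But without the harmonicity/blow-up input, the implication $(1)\Rightarrow(3)$ — which you correctly identify as the technical heart — is not established.
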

Let us emphasize that this result not only give a complete answer to (Q3), but also provides a complete relationship between (Q1) and (Q2). Moreover under assuming that (1) is satisfied (thus (2), (3) and (4) are satisfied) in the theorem, we will be able to prove that $X$ has no singular set (Proposition \ref{prop:regular}). In particular the intrinsic Reifenberg theorem \cite{CheegerColding1} by Cheeger-Colding allows us to prove that $X$ is bi-H\"older homeomorphic to a closed Riemannian manifold. For this reason, let us say that $(X, \dist_X, \mathcal{H}^m)$ is \textit{weakly smooth} if (1) in Theorem \ref{themequivalence} is satisfied (Definition \ref{weaklysmoothdef}).

Recall (\ref{lowerpositi}) with the fact that the singular set of $([0, \pi], \dist_{[0, \pi]}, \mathcal{H}^1)$ is $\{0, \pi\}$. Thus Theorem \ref{themequivalence} reproves (\ref{lowerpositi}). Although this is stated only for non-collapsed $\RCD$ spaces, we will give similar bi-Lipschitz properties of $\Phi_t$ for general finite dimensional compact $\RCD$ spaces in the appendix \ref{biliplp}, of independent interest.

Using (a weaker form of) Theorem \ref{themequivalence}, we will establish the desired energy as discussed in subsection \ref{1.1}. Let us explain them in the next section.
\subsection{Energy via heat kernel embedding}
Let us fix a finite dimensional (not necessary compact) $\RCD$ space $(X, \dist_X, \meas_X)$ and a finite dimensional compact $\RCD$ space $(Y, \dist_Y, \meas_Y)$.
In this paper a Borel map $f:X \to Y$ is said to be \textit{weakly smooth} if $\phi \circ f$ is a $H^{1, 2}$-Sobolev function on $(X, \dist_X, \meas_X)$ for any eigenfunction $\phi$ of $(Y, \dist_Y, \meas_Y)$. For such a map $f$, we define the \textit{approximate energy}, denoted by $\mathcal{E}_{X, Y, t}(f)$, by
\begin{equation}
\mathcal{E}_{X, Y, t}(f):=\frac{1}{2}\int_X\langle (\Phi_t \circ f)^*g_{L^2(Y, \meas_Y)}, g_X\rangle \di \meas_X.
\end{equation}
We say that $f$ is a \textit{$0$-Sobolev map} if 
\begin{equation}
\limsup_{t \to 0^+}\int_Xt\meas_Y(B_{\sqrt{t}}(f(\cdot )))\langle (\Phi_t \circ f)^*g_{L^2(Y, \meas_Y)}, g_X\rangle \di \meas_X<\infty.
\end{equation}
It is expected from Theorem \ref{themequivalence} that this notion plays a nonlinear analogue of Sobolev functions (at least in the case when $(Y, \dist_Y, \meas_Y)$ is non-collapsed).

On the other hand, as mentioned in the first section, it is well-known that there is a notion of Sobolev maps from a metric measure space to a metric space (see Definition \ref{defsoble} for the precise definition we will adopt).
Therefore it is natural to ask;
\begin{enumerate}
\item[(Q4)] Is there any relationship between $0$-Sobolev maps and Sobolev maps?
\end{enumerate}
The second main result of the paper gives an answer to this question (Q4) under assuming a kind of weak smoothness of the image $f(X)$.
In order to simplify our explanation, we here introduce the result under stronger assumptions (\ref{stronger}) or (\ref{absolutecont}). See Theorems \ref{propcompatr} and \ref{asbairanwi} for more general (localized) results.
\begin{theorem}\label{thm:cheeger}
Let $(X, \dist_X, \meas_X)$ be a finite dimensional $\RCD$ space and let $(Y, \dist_Y, \mathcal{H}^n)$ be a non-collapsed compact $\RCD$ space. Let $f:X \to Y$ be a Borel map. Assume that either
\begin{equation}\label{stronger}
\|g_Y -\tilde{\Phi}_t^*g_{L^2}\|_{L^{\infty}} \to 0,\quad (t \to 0^+).
\end{equation}
or
\begin{equation}\label{absolutecont}
f_{\sharp}\meas_X \ll \mathcal{H}^n
\end{equation}
holds. Then the following two conditions are equivalent.
\begin{enumerate}
\item $f$ is a $0$-Sobolev map.
\item $f$ is a Sobolev map.
\end{enumerate}
Moreover if (1) holds (thus (2) also holds), then the sequence $(\tilde{\Phi}_t\circ f)^*g_{L^2}$ $L^1$-converges to a tensor $f^*g_Y$, called the \textit{pull-back by $f$}, as $t \to 0^+$
and that  $f$ is a Lipschitz-Lusin map with
\begin{equation}
G_f(x)=\mathrm{Lip} \left( f|_D\right)(x), \quad \text{for $\meas_X$-a.e. $x \in D$}
\end{equation}
whenever the restriction of $f$ to a Borel subset $D$ of $X$ is Lipschitz, where $G_f$ is the minimal $2$-weak upper gradient of $f$ (see Definition \ref{defsoble}) and $\mathrm{Lip}$ denotes the local slope (see (\ref{localslope})).
Furthermore for $\meas_X$-a.e. $x \in X$, $G_f^2$ coincides with the best bound of $f^*g_Y$ as a bilinear form.
\end{theorem}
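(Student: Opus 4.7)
The overall strategy is to use the heat kernel embedding $\Phi_t : Y \hookrightarrow L^2(Y, \mathcal{H}^n)$ as a smoothing of $Y$, reducing the analysis of $f : X \to Y$ to that of the Hilbert-valued map $\Phi_t \circ f : X \to L^2(Y, \mathcal{H}^n)$, and then passing to the limit $t \to 0^+$. From the spectral representation $\Phi_t(y) = \sum_i e^{-\lambda_i^Y t}\phi_i^Y(y)\phi_i^Y$ and the chain rule one obtains
\[
(\Phi_t \circ f)^* g_{L^2} = \sum_i e^{-2\lambda_i^Y t}\, \di(\phi_i^Y \circ f)\otimes \di(\phi_i^Y \circ f),
\]
which is a well-defined nonnegative $L^1$-tensor on $X$ since each $\phi_i^Y \circ f \in H^{1,2}(X)$ by weak smoothness; in particular $\mathcal{E}_{X,Y,t}(f)$ is the Dirichlet energy of the $L^2$-valued map $\Phi_t\circ f$.

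For the equivalence $(1)\iff(2)$, I would first treat hypothesis (\ref{stronger}): Theorem~\ref{themequivalence} gives that $\tilde{\Phi}_t$ is a locally $(1\pm\epsilon)$-bi-Lipschitz embedding of $Y$ for small $t$, and Proposition~\ref{prop:regular} rules out singular points so that $\tilde{c}_n t\,\mathcal{H}^n(B_{\sqrt{t}}(\cdot))\to 1$ uniformly on $Y$. Consequently the weighted approximate energy of $f$ and the Dirichlet energy of $\tilde{\Phi}_t\circ f$ are $(1\pm \epsilon)^2$-comparable to the Sobolev energy of $f$, which yields both implications, and the $L^\infty$-asymptotic isometry delivers the $L^1$-convergence $(\tilde{\Phi}_t\circ f)^*g_{L^2}\to f^*g_Y$. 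Under hypothesis (\ref{absolutecont}), $L^\infty$-asymptotics on $Y$ are unavailable, so the plan is to work only with the $L^p$-convergence (\ref{mainasymp}) from \cite{AHPT}. Absolute continuity $f_\sharp \meas_X \ll \mathcal{H}^n$ transports convergence in measure from $Y$ to $X$ through $f$, and a Vitali-type argument, using the equi-integrability furnished by the 0-Sobolev bound together with the spectral upper bound on $\Phi_t^*g_{L^2}$, upgrades this to $L^1(X,\meas_X)$-convergence. This simultaneously establishes the equivalence and produces $f^*g_Y$ as the $L^1$-limit of $(\tilde{\Phi}_t \circ f)^*g_{L^2}$, together with the identity $\langle f^*g_Y, g_X\rangle = G_f^2$ $\meas_X$-a.e.

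For the Lipschitz-Lusin statement and the identification with the local slope, fix a Borel subset $D\subset X$ on which $f$ is Lipschitz. Then $\tilde{\Phi}_t\circ f|_D$ is Lipschitz into the Hilbert space $L^2$, and by the Cheeger-type identification of the minimal weak upper gradient with the local slope for Hilbert-valued Lipschitz maps on $\RCD$ spaces, one has $\langle (\tilde{\Phi}_t \circ f)^*g_{L^2}, g_X\rangle(x) = \mathrm{Lip}(\tilde{\Phi}_t \circ f|_D)^2(x)$ for $\meas_X$-a.e.\ $x\in D$. Passing to the limit $t\to 0^+$, the left side tends to $G_f^2(x)$ by the previous step, while the right side tends to $\mathrm{Lip}(f|_D)^2(x)$: under (\ref{stronger}) this uses the uniform almost-isometry of $\tilde{\Phi}_t$ on $Y$, while under (\ref{absolutecont}) I would use a Lusin reduction to a further subset of $D$ on which $f(D)$ meets a quantitatively regular portion of $Y$ where $\tilde{\Phi}_t$ is nearly isometric. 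The best-bound identification of $G_f$ with the operator norm of the bilinear form $f^*g_Y$ then follows by combining the slope identity with the defining property of $G_f$ tested against $1$-Lipschitz eigenfunction-generated maps.

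The main obstacle is the passage from $L^p$-convergence on $Y$ to $L^1$-convergence on $X$ under hypothesis (\ref{absolutecont}): $f_\sharp \meas_X \ll \mathcal{H}^n$ alone does not yield integrable majorants, so equi-integrability of the pulled-back sequence must be established by hand, combining the 0-Sobolev bound with uniform Bakry-\'Emery-type gradient estimates for $\phi_i^Y$ in order to control the tails of the spectral expansion. The Lusin-reduction step that transfers local-slope convergence under (\ref{absolutecont}) is of comparable delicacy, as it requires extracting a quantitative almost-isometry of $\tilde{\Phi}_t$ on a large subset of $f(D)$ despite the absence of a uniform-in-$y$ bound on the normalizing factor $\tilde{c}_n t\,\mathcal{H}^n(B_{\sqrt t}(y))$.
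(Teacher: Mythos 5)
There are genuine gaps. The most serious is the lower bound on $G_f$. Your argument identifies $G_f$ by testing against Lipschitz functions $\phi$ on $Y$; the chain-rule estimate $|\nabla(\phi\circ f)|\le \mathbf{Lip}\phi\cdot(\,\cdot\,)$ only ever produces \emph{upper} bounds on $G_f$, so it can yield $G_f\le(|f^*g_Y|_B)^{1/2}$ but not the reverse inequality, and hence not the equality $G_f=\mathrm{Lip}(f|_D)$. The paper closes this by a Rademacher-type blow-up theorem (Theorem \ref{prop:rademacher}): at $\meas_X$-a.e.\ point the tangent maps $f^0:\mathbb{R}^n\to\mathbb{R}^N$ are \emph{linear}, and one then approximates the linear coordinate realizing the best bound of $(f^0)^*g_{\mathbb{R}^N}$ by harmonic/test functions on the rescaled spaces to exhibit a test function whose composed gradient attains $(|f^*g_Y|_B)^{1/2}$ at a Lebesgue point of $G_f$. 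Your proposal contains no analogue of this step. Relatedly, you conflate the trace with the best bound: the identities $\langle f^*g_Y,g_X\rangle=G_f^2$ and $\langle(\tilde{\Phi}_t\circ f)^*g_{L^2},g_X\rangle=\mathrm{Lip}(\tilde{\Phi}_t\circ f|_D)^2$ are false in general. The trace is the energy density $e_Y(f)$, which satisfies only $G_f^2\le e_Y(f)\le nG_f^2$ (Propositions \ref{prop:ineq} and \ref{prop:geometric}); the correct identity, and the one the theorem asserts, is $G_f^2=|f^*g_Y|_B$, the best bound, and $\mathrm{Lip}(\tilde{\Phi}_t\circ f|_D)^2=|(\tilde{\Phi}_t\circ f)^*g_{L^2}|_B$.

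The mechanism you propose under hypothesis (\ref{absolutecont}) also does not work as stated. ``Transporting convergence in measure from $Y$ to $X$ through $f$'' is not meaningful here: the quantity $(\tilde{\Phi}_t\circ f)^*g_{L^2}$ involves the differentials $\di(\phi_i^Y\circ f)$ on $X$ and is not obtained by composing a tensor on $Y$ with $f$, so $L^p$-convergence of $\tilde{\Phi}_t^*g_{L^2}$ on $Y$ cannot be pushed through $f$ by absolute continuity alone, and the $0$-Sobolev bound gives an a priori $L^1$ bound on the approximate energies but not equi-integrability of the family $\{(\tilde{\Phi}_t\circ f)^*g_{L^2}\}_t$. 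The paper's actual route is pointwise and metric: $\mathcal{H}^n$-a.e.\ point of $Y$ is smoothable (Proposition \ref{propdensereg}), absolute continuity forces $f(x)$ to be smoothable for $\meas_X$-a.e.\ $x$, on smoothable sets $\tilde{\Phi}_t$ is locally $(1\pm\epsilon)$-bi-Lipschitz (Theorem \ref{prop:bilip}), and Lemma \ref{lem:1} then shows the pull-backs $(\tilde{\Phi}_{t_i}\circ f)^*g_{L^2}$ form a Cauchy sequence in $L^1$ on the preimages, which both defines $f^*g_Y$ and gives the $L^1$-convergence. You would need to supply this smoothable-point structure (or an equivalent substitute) for your argument under (\ref{absolutecont}) to go through.
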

We will prove some compactness results for such Sobolev maps (Theorems \ref{propcompact} and \ref{corcompactness}).
Note that a map is said to be \textit{Lipschitz-Lusin} if there exists a sequence of Borel subsets $D_i$ such that the complement of the union of $\{D_i\}_i$ is null with respect to the reference measure and that the restriction of the map to each $D_i$ is Lipschitz. See Definition \ref{def:lipschitzlusin}.
It is worth pointing out that this theorem can be regarded as a nonlinear analogue of Cheeger's differentiability theorem \cite{Cheeger} which states that for a PI metric measure space $(Z, \dist_Z, \meas_Z)$ (that is, a Poincar\'e inequality and the volume doubling conddition are satisfied), any Sobolev function $f:Z \to \mathbb{R}$ is Lipschitz-Lusin with 
\begin{equation}
|\nabla f|=\mathrm{Lip}\left( f|_D\right), \quad \text{for $\meas_Z$-a.e. $x \in D$}
\end{equation}
whenever the restriction of $f$ to a Borel subset $D$ of $Z$ is Lipschitz, where $|\nabla f|$ is the minimal relaxed slope of $f$ (or equivalently, the minimal $2$-weak upper gradient of $f$).

Theorem \ref{thm:cheeger} allows us to define the \textit{energy density} of such a $f$ by
\begin{equation}
e_Y(f):=\langle f^*g_Y, g_X\rangle
\end{equation}
with the \textit{energy}
\begin{equation}
\mathcal{E}_{X, Y}(f):=\frac{1}{2}\int_Xe_Y(f)\di \meas_X.
\end{equation}
Note that it also follows from Theorem \ref{thm:cheeger} that $G_f \le e_Y(f) \le \sqrt{m}G_f$ holds for $\meas_X$-a.e. on $X$, where $m$ denotes the essential dimension of $(X, \dist_X, \meas_X)$ defined by Bru\`e-Semola in \cite{BrueSemola} (see also Theorem \ref{thmRCD decomposition}).

Let us recall here that there is a well-known canonical energy, so-called \textit{Korevaar-Schoen energy}, defined in \cite{KS} (see Definition \ref{defksenery}). Thus it is natural to ask the following.
\begin{enumerate}
\item[(Q5)] Does the energy $\mathcal{E}_{X, Y}(f)$ coincides with the Korevaar-Schoen's one?
\end{enumerate}
Under the same assumptions as in Theorem \ref{thm:cheeger}, we can also prove the following compatibility result which gives a complete answer to (Q5).
\begin{theorem}[Compatibility with the Korevaar-Schoen energy]\label{ksenergycompatible}
Under the same assumptions as in Theorem \ref{thm:cheeger}, for any ($0$-) Sobolev map $f:X \to Y$, the energy $\mathcal{E}_{X, Y}(f)$ coincides with the Korevaar-Schoen energy $\mathcal{E}_{X, Y}^{KS}(f)$ up to multiplying by a dimensional positive constant.
\end{theorem}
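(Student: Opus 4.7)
The plan is to use the Lipschitz--Lusin decomposition provided by Theorem \ref{thm:cheeger} to reduce the identity $\mathcal{E}_{X,Y}(f) = C_m\,\mathcal{E}_{X,Y}^{KS}(f)$ to a pointwise comparison of energy densities on Lipschitz pieces. Write $X = N \cup \bigcup_i D_i$ with $\meas_X(N)=0$ and $f|_{D_i}$ Lipschitz; since both densities localize under Borel partitions, it suffices to prove for each $i$ the pointwise identity
\begin{equation}
e^{KS}_Y(f)(x) \;=\; C_m\, \langle f^*g_Y, g_X\rangle(x) \qquad \text{for $\meas_X$-a.e.\ }x \in D_i.
\end{equation}

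The heart of the argument would be a blow-up at a regular point $x\in D_i$ of essential dimension $m$. By the Bru\`e--Semola decomposition such points have full $\meas_X$-measure, and there the pointed measured blow-up of $(X,\dist_X,\meas_X)$ is the Euclidean model $(\mathbb{R}^m, \dist_{\mathbb{R}^m}, \omega_m^{-1}\leb^m)$. The Lipschitz bound on $D_i$ combined with the identity $G_f(x) = \mathrm{Lip}(f|_{D_i})(x)$ supplied by Theorem \ref{thm:cheeger} yields $\dist_Y(f(x),f(y)) \le L\,\dist_X(x,y)$ on $D_i$, while the $L^1$ convergence $(\tilde{\Phi}_t\circ f)^*g_{L^2}\to f^*g_Y$ from Theorem \ref{thm:cheeger} gives, at the regular point $x$, that $\epsilon^{-2}\dist_Y(f(x),f(y))^2$ is asymptotically $\langle f^*g_Y(x)\,v,v\rangle$ along the direction $v=(y-x)/\epsilon$. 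Averaging against the Euclidean blow-up measure then yields
\begin{equation}
\lim_{\epsilon\to 0^+} \frac{1}{\meas_X(B_\epsilon(x))}\int_{B_\epsilon(x)} \frac{\dist_Y(f(x),f(y))^2}{\epsilon^2}\, d\meas_X(y) \;=\; \frac{1}{\leb^m(B_1^m)} \int_{B_1^m} \langle f^*g_Y(x)v, v\rangle\, d\leb^m(v),
\end{equation}
and the Euclidean integral on the right equals $\frac{1}{m+2}\tr_{g_X}(f^*g_Y)(x) = \frac{1}{m+2}\langle f^*g_Y, g_X\rangle(x)$, fixing $C_m$ up to the precise normalization of Definition \ref{defksenery}.

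The main obstacle is the rigorous passage to the limit inside the Korevaar--Schoen approximation. The upper direction is handled by dominated convergence using the Lipschitz majorant on $D_i$ together with the pointwise blow-up limit. The lower direction is more delicate: one must ensure that \emph{enough} directions in the blow-up are resolved to recover the full bilinear form $f^*g_Y(x)$, not merely its operator norm $G_f(x)^2$. This is where Theorem \ref{thm:cheeger} plays a double role -- beyond the scalar identification $G_f = \mathrm{Lip}(f|_D)$, it constructs $f^*g_Y$ as the $L^1$-limit of the smooth pull-backs $(\tilde{\Phi}_t\circ f)^*g_{L^2}$, from which the tensorial (and not merely scalar) behaviour of the difference quotients at regular points can be extracted. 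Because under either hypothesis \eqref{stronger} or \eqref{absolutecont} the target behaves infinitesimally like an $n$-dimensional Euclidean piece along $f$, the isotropy of the blow-up measure converts the tensor $f^*g_Y(x)$ into its trace against $g_X$ with the universal Euclidean weight $\tfrac{1}{m+2}$. A final sanity check is that $C_m$ is intrinsic: this follows from the $\meas_X$-a.e.\ constancy of the essential dimension, which ensures the same Euclidean constant on every stratum.
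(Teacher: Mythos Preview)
Your overall strategy---reduce to Lipschitz pieces via the Lipschitz--Lusin decomposition, blow up at a regular point to a linear map between Euclidean tangents, and read off the factor $\tfrac{1}{m+2}$ from the Euclidean average---is exactly the paper's route (Corollary~\ref{cor:ksks} and Theorem~\ref{koseergy}, built on the Rademacher-type Theorem~\ref{prop:rademacher}). The pointwise identification of the limit of $\mathrm{ks}_{Y,r}(f)(x)^2$ with $\tfrac{1}{m+2}\langle f^*g_Y,g_X\rangle(x)$ is correct in spirit; note, however, that the asymptotic $\epsilon^{-2}\dist_Y(f(x),f(y))^2\sim \langle f^*g_Y(x)v,v\rangle$ is not a consequence of the $L^1$-convergence $(\tilde\Phi_t\circ f)^*g_{L^2}\to f^*g_Y$ alone---it requires the \emph{uniform} convergence of the rescaled map to a linear $f^0:\mathbb{R}^m\to\mathbb{R}^N$, which is precisely item~(3) of Theorem~\ref{prop:rademacher} and is obtained there via the bi-Lipschitz embeddability of the truncated heat-kernel map $\tilde\Phi_t^l$ on the smooth part of $Y$.

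The genuine gap is in your passage from pointwise to integral convergence. The claim that ``both densities localize under Borel partitions'' is false for the Korevaar--Schoen side at positive scale: $\mathrm{ks}_{Y,r}(f)(x)$ integrates $\dist_Y(f(x),f(y))^2$ over the \emph{entire} ball $B_r(x)$, not over $B_r(x)\cap D_i$, so for $x\in D_i$ the integrand sees values of $f$ on $B_r(x)\setminus D_i$ where no Lipschitz bound is available. Consequently your proposed domination by ``the Lipschitz majorant on $D_i$'' does not control $\mathrm{ks}_{Y,r}(f)(x)^2$, and even piecewise the bounds $L_i^2$ are not summable over $i$. The paper closes this gap globally: since $z\mapsto\dist_Y(f(x),z)$ is $1$-Lipschitz, one has the pointwise domination
\[
\mathrm{ks}_{Y,r}(f)(x)^2 \;\le\; \frac{1}{\meas_X(B_r(x))}\int_{B_r(x)} G_f^2\,\di\meas_X \;=:\; G_{f,r}(x)^2,
\]
and a Fubini computation together with $\phi_r(z):=\int_{B_r(z)}\meas_X(B_r(\cdot))^{-1}\di\meas_X\to 1$ (by doubling and Theorem~\ref{thmRCD decomposition}) gives $\int_U G_{f,r}^2\to\int_U G_f^2$. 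A generalized dominated convergence (pointwise convergence plus convergent dominating sequence in $L^1$) then upgrades the a.e.\ limit $(m+2)\,\mathrm{ks}_{Y,r}(f)^2\to e_Y(f)$ to $L^1$-convergence, which is what yields $\mathcal{E}_{X,Y}(f)=\tfrac{m+2}{2}\,\mathcal{E}_{X,Y}^{KS}(f)$. Replacing your piecewise Lipschitz majorant by this global $G_f$-based domination is the missing ingredient.
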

See Theorem \ref{koseergy} for a more general statement.

Let us here emphasize that one of the advantages using heat kernel embeddings (instead of using Nash's one in the smooth setting) is that the embedding map $\Phi_t$ behaves nicely with respect to measured Gromov-Hausdorff convergence as discussed in \cite{AHPT}.
In fact we will study behaviors of energies with respect to the measured Gromov-Hausdorff convergence (Theorems \ref{prop:lowersemiapp}, \ref{hybfgaty} and \ref{thmcompacatara}). In a forthcoming work, we will fully exploit Theorem \ref{ksenergycompatible} to prove existence and bubbling phenomena for harmonic maps between RCD spaces. An important issue in harmonic map theory is their regularity  (see e.g. \cite{ZZ} for an optimal result between Alexandrov spaces). We plan to address this issue in our framework too. We note that the recent papers by Mondino-Semola \cite{MondinoSemola} and Gigli \cite{gigli22} proved Lipschitz regularity of harmonic maps into CAT$(0)$ spaces, whenever the energy is the Korevaar-Schoen one. 

On the other hand by Theorem \ref{thm:cheeger}, we can define that $f$ is an \textit{isometric immersion into $Y$} if 
\begin{equation}\label{isometricimme}
f^*g_Y=g_X.
\end{equation} 
It is worth pointing out that in general, the equality (\ref{isometricimme}) does not imply the local bi-Lipschitz embeddability of $f$, which is a different point from the smooth setting (Remark \ref{asahoarbasj}). However under assuming some regularity for the map $f$, we can realize such a bi-Lipschitz embeddability from (\ref{isometricimme}) (Corollary \ref{prop:iso2}). This observation leads us to study \textit{minimal isometric immersions} from $(X, \dist_X, \meas_X)$ into spheres. Let us explain it in the next section.

\subsection{Minimal isometric immersion into sphere}
Let us recall a fundamental result in submanifold theory, so-called Takahashi's theorem \cite{takahashi}, which states that for a closed $m$-dimensional Riemannian manifold $(M, g_M)$ and a smooth isometric immersion
\begin{equation}
f:M \to \mathbb{S}^k(1):=\{x \in \mathbb{R}^{k+1};|x|=1\},
\end{equation}
the following two conditions are equivalent.
\begin{enumerate}
\item $f$ is minimal (thus it is equivalent to be harmonic because of $f^*g_{\mathbb{S}^k(1)}=g_M$).
\item $f$ is an eigenmap with the eigenvalue $m$, that is, $\Delta_Mf_i+mf_i\equiv 0$ holds for any $i=1, \ldots, k+1$, where $f=(f_i)_i$.
\end{enumerate}
Let us generalize this to $\RCD$ spaces as follows. A Borel map $f$ from a finite dimensional compact $\RCD$ space $(X, \dist_X, \meas_X)$ to $\mathbb{S}^k(1)$ is said to be a \textit{minimal isometric immersion} if it is a $0$-Sobolev map (or equivalently, Sobolev map by Theorem \ref{absolutecont}) with $f^*g_Y=g_X$ and 
\begin{equation}
\frac{\di}{\di t}\Big|_{t=0}\mathcal{E}_{X, \mathbb{S}^k(1)}(f_t)=0
\end{equation}
for any map $(-\epsilon, \epsilon) \times X \to \mathbb{S}^k(1), (t, x) \mapsto f_t(x)=(f_{t, i}(x))_i$, satisfying that $f_0=f$ holds, that $f_{t, i}$ is in the $H^{1, 2}$-Sobolev space of $(X, \dist_X, \meas_X)$ holds for all $t, i$ and that the map $t \mapsto f_{t, i}$ is differentiable at $t=0$ in $H^{1, 2}$.

The third main result is the following which gives a generalization of Takahashi's theorem to the $\RCD$-setting.
\begin{theorem}[=Theorem \ref{theoremtakahashi}]\label{takahashitheorem}
Let $(X, \dist_X, \meas_X)$ be a finite dimensional compact $\RCD$ space whose essential dimension is $m$. For any map $f:X \to \mathbb{S}^k(1)$, the following two conditions are equivalent.
\begin{enumerate}
\item $f$ is a minimal isometric immersion.
\item We see that $\meas_X=c\mathcal{H}^m$ for some $c \in (0, \infty)$, that $(X, \dist_X, \mathcal{H}^m)$ is a finite dimensional non-collapsed $\RCD$ space and that $f$ is an eigenmap with $\Delta_Xf_i+mf_i=0$ for any $i$.
\end{enumerate}
In particular if the above conditions (1) and (2) hold, then $X$ is bi-H\"older homeomorphic to an $m$-dimensional closed manifold, $f$ is $1$-Lipschitz and for all $\epsilon \in (0,1)$ and $x \in X$, there exists $r \in (0, 1)$ such that $f|_{B_r(x)}$ is a $(1 \pm \epsilon)$-bi-Lipschitz embedding.
\end{theorem}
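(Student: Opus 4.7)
My plan is to prove the two implications separately, with the main difficulty lying in $(2)\Rightarrow(1)$.

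For $(1)\Rightarrow(2)$, I assume $f$ is a minimal isometric immersion, so $f^*g_{\mathbb{S}^k(1)}=g_X$. Since $\mathbb{S}^k(1)$ is smooth and therefore weakly smooth in the sense of Definition~\ref{weaklysmoothdef}, Theorem~\ref{thm:cheeger} yields Lipschitz-Lusin regularity of $f$, and Corollary~\ref{prop:iso2} upgrades the isometric condition to a local $(1\pm\varepsilon)$-bi-Lipschitz embedding. Composing with smooth charts on $\mathbb{S}^k(1)$ identifies $X$ locally with bi-Lipschitz copies of $m$-dimensional Euclidean balls, so by the Cheeger-Colding intrinsic Reifenberg theorem combined with the non-collapsed regularity theory, the singular set of $X$ is empty, $\meas_X=c\mathcal{H}^m$ for some $c\in(0,\infty)$, and $(X,\dist_X,\mathcal{H}^m)$ is non-collapsed. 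For the eigenmap conclusion, I take admissible variations $f_t$ preserving the sphere constraint at first order ($\sum_i f_i\dot f_i=0$ at $t=0$); the first-variation formula from minimality gives $\int_X\sum_i\dot f_i\Delta_X f_i\di\meas_X=0$ for all such tangent perturbations $\dot f$, forcing $(\Delta_X f_i)_i$ to be pointwise parallel to $(f_i)_i$, i.e.\ $\Delta_X f_i=-\lambda f_i$ for a scalar $\lambda$. The identity $\tfrac12\Delta|f|^2=\sum_i f_i\Delta f_i+\sum_i|\nabla f_i|^2=0$ gives $\lambda=\sum|\nabla f_i|^2$, and the isometric condition gives $\sum|\nabla f_i|^2=\tr_{g_X}(f^*g_{\mathbb{S}^k(1)})=\tr(g_X)=m$, so $\lambda=m$.

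For $(2)\Rightarrow(1)$, the trace identity $\sum_i|\nabla f_i|^2=m$ follows immediately from $\Delta f_i=-mf_i$ and $|f|^2=1$ via $\tfrac12\Delta|f|^2=0$. Theorem~\ref{thm:cheeger} applied to the weakly smooth target $\mathbb{S}^k(1)$ gives the well-defined pull-back $f^*g_{\mathbb{S}^k(1)}=\sum_i df_i\otimes df_i$ whose trace equals $\tr(g_X)=m$. The crucial step is to promote this trace equality to the pointwise tensorial identity $f^*g_{\mathbb{S}^k(1)}=g_X$. I plan to exploit the vector Bochner identity on the non-collapsed $\RCD(K,m)$ space: differentiating the constraint $|f|^2=1$ twice yields the algebraic relation $\sum_i df_i\otimes df_i=-\sum_i f_i\,\mathrm{Hess}\,f_i$, where both sides have trace $m$. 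Since $\sum_i|\nabla f_i|^2=m$ is constant, the summed Bochner inequality on the eigenfunctions $f_i$ is saturated pointwise, and by the $N=m$ Hessian-trace rigidity available on non-collapsed $\RCD(K,m)$ spaces (Gigli's Hessian calculus), $-\sum_i f_i\,\mathrm{Hess}\,f_i$ is pinned to equal $g_X$ pointwise. This establishes that $f$ is an isometric immersion. Minimality is then automatic: the tension field of $f$ viewed as a map into $\mathbb{R}^{k+1}$ is $(\Delta_X f_i)_i=-mf$, which at each $x$ is normal to $\mathbb{S}^k(1)$ at $f(x)$, so its tangential projection vanishes; hence $f$ is harmonic as a map into $\mathbb{S}^k(1)$, and combined with the isometric immersion property is minimal.

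The concluding bi-Hölder and local bi-Lipschitz claims follow by applying Corollary~\ref{prop:iso2} and Theorem~\ref{themequivalence} to the non-collapsed structure just established, and $f$ is $1$-Lipschitz because the operator norm of $f^*g_{\mathbb{S}^k(1)}=g_X$ equals $1$. The central obstacle is the rigidity step in $(2)\Rightarrow(1)$: promoting the trace identity together with the algebraic relation $\sum df_i\otimes df_i=-\sum f_i\,\mathrm{Hess}\,f_i$ to the pointwise tensor identity $\sum df_i\otimes df_i=g_X$ relies essentially on the non-collapsed hypothesis $\meas_X=c\mathcal{H}^m$, since the vector Bochner rigidity on $\RCD(K,m)$ spaces with $N$ equal to the essential dimension $m$ is precisely what forces the pointwise equality.
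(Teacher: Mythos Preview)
Both directions of your proposal differ from the paper's route and contain gaps.

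For $(1)\Rightarrow(2)$, the paper proceeds in the opposite order. Harmonicity gives, via Proposition~\ref{prop:harmonicchara}, that $\Delta_X f_i + e_Y(f)f_i = 0$; the isometric condition then makes $e_Y(f) = |g_X|^2 = m$ constant, so $f$ is an eigenmap with eigenvalue $m$. Only at this point is \cite[Th.1.2]{honda20} invoked (applied to the now-regular map $f$ with $f^*g_{\mathbb{R}^{k+1}} = g_X$) to conclude $\meas_X = c\mathcal{H}^m$ and that $(X,\dist_X,\mathcal{H}^m)$ is non-collapsed. Your attempt to obtain the non-collapsed structure first via local bi-Lipschitz charts has two problems: Corollary~\ref{prop:iso2} does not yield a bi-Lipschitz embedding of $f$ (that is Corollary~\ref{asataras}, and it requires $h\circ f$ to be regular, which you do not know before the eigenmap step); and even granting local bi-Lipschitz charts into $\mathbb{R}^m$, you obtain only a Lipschitz-manifold structure, not $\meas_X = c\mathcal{H}^m$ with $c$ \emph{constant}---that measure rigidity is precisely the content of \cite[Th.1.2]{honda20} and uses the eigenfunction structure essentially.

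For $(2)\Rightarrow(1)$, note that the paper's detailed version (Theorem~\ref{theoremtakahashi}) takes $f$ to be isometric as a \emph{standing hypothesis}; under it, the implication reduces to ``eigenmap $\Rightarrow$ harmonic'', which is immediate from $\Delta_X f_i = -m f_i = -e_Y(f)f_i$ and Proposition~\ref{prop:harmonicchara}. Your Bochner-rigidity plan instead tries to \emph{deduce} the isometric condition $\sum_i \dist f_i\otimes \dist f_i = g_X$ from the trace identity $\sum_i|\nabla f_i|^2 = m$. The step ``summed Bochner is saturated, hence $-\sum_i f_i\,\mathrm{Hess}_{f_i} = g_X$ pointwise'' is not justified: the summed Bochner inequality here gives only an integral bound on $\sum_i|\mathrm{Hess}_{f_i}|^2$, and matching traces of two symmetric $(0,2)$-tensors does not force the tensors to coincide. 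In fact the implication ``non-collapsed $+$ eigenmap with eigenvalue $m$ into $\mathbb{S}^k(1)$ $\Rightarrow$ isometric'' fails without further hypotheses---on the flat $2$-torus, the map $(\theta,\phi)\mapsto(\cos(\theta{+}\phi),\sin(\theta{+}\phi))$ into $\mathbb{S}^1(1)$ satisfies $\Delta f_i + 2f_i = 0$ but has rank-one pull-back---so no purely local Bochner argument can close this gap.
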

In the next section we will explain how to achieve these results.
\subsection{Outline of proofs}
Let us first introduce a sketch of the proof of Theorem \ref{themequivalence}.  
We first assume that (1) holds. Fixing a small $t \in (0, 1)$, we can find a large $l \in \mathbb{N}$ such thats
\begin{equation}
\|g_X-(\tilde{\Phi}_t^l)^*g_{L^2}\|_{L^{\infty}}
\end{equation} 
is small, where we recall (\ref{composition}) for the definition of the truncated map $\tilde{\Phi}_t^l$. Then we can use blow-up arguments  as in \cite{honda20} for the map $\tilde{\Phi}_t^l$, based on stability results proved in \cite{GMS} by Gigli-Mondino-Savar\'e and in \cite{AmbrosioHonda, AmbrosioHonda2} by Ambrosio and the first named author, to conclude that (3) and (4) hold.

Next assume that (3) of Theorem \ref{themequivalence} holds. Fix a point $x \in X$, a small $t \in (0, 1)$ and take a tangent cone $T_xX$ at $x$ of $X$. Consider a blow-up map $\overline{\Phi}:T_xX \to \ell^2$ of the map $\tilde{\Phi}_t:X \to L^2 \simeq \ell^2$.
A key step is to prove
\begin{itemize}
\item[($\star$)] $T_xX$ is isometric to $\mathbb{R}^m$ and  $\overline{\Phi}$ is a linear map.
\end{itemize}
Then the quantitative version of this observation allows us to prove that (1) holds. 

In order to prove ($\star$), we apply a \textit{blow-down argument} on the tangent cone, which is similar to that in \cite{ChCM} by Cheeger-Colding-Minicozzi. Take a tangent cone \textit{at infinity} $Z$ of $T_xX$ and a blow down map $\underline{\Phi}:Z \to \ell^2$ of $\overline{\Phi}$. Then thanks to \textit{the mean value theorem at infinity} for bounded  subharmonic functions proved in \cite{HKX} by Hua-Kell-Xia (which is a generalization of a result of Li \cite{Li}), we know that $\underline{\Phi}$ is a linear map. Moreover since $\Phi_t$ is a bi-Lipschitz embedding, we see that $\overline{\Phi}$ and $\underline{\Phi}$ are also bi-Lipschitz embeddings. Applying the splitting theorem proved in \cite{Gigli13} by Gigli with the bi-Lipschitz property of $\underline{\Phi}$ shows that $Z$ is isometric to a Euclidean space. The non-collapsed condition yields that the dimension of the Euclidean space is equal to $m$, thus, $Z$ is isometric to $\mathbb{R}^m$.
Then the volume convergence with the Bishop inequality proved in \cite{DG} by DePhilippis-Gigli yields
\begin{equation}
\mathcal{H}^m(B_r(z))=\omega_mr^m,\quad \forall x \in T_xX,\,\,\forall r \in (0, \infty).
\end{equation}
Thus the rigidity of the Bishop inequality given in \cite{DG} proves that $T_xX$ is isometric to $\mathbb{R}^m$. Finally since each $\overline{\Phi}$ is a linear growth harmonic map on $\mathbb{R}^m$ because of the stability results proved in \cite{AmbrosioHonda2} (see also Theorem \ref{spectral2}), it is actually a linear map. Thus we have ($\star$). Therefore as explained above, (1) holds.
Similarly we can prove the implication from (4) to (1).

Finally we assume that (2) holds. Then it follows by a similar blow-up argument with the Reifenberg flatness of $(X, \dist_X)$ that $\mathcal{H}^m(B_r(\cdot))/(\omega_mr^m)$ uniformly converge to $1$ as $r \to 0$.
In particular (1) holds, which completes the proof of Theorem \ref{themequivalence}.

Let us give a worth recording remark. When we will justify the above arguments, in particular ($\star$), we will actually prove a more general rigidity result, which is new even for Riemannian manifolds.
\begin{itemize}
\item[($\star \star$)] If a non-collapsed $\RCD$ space with non-negative Ricci curvature has a bi-Lipschitz embedding into $\ell^2$ by an harmonic map, then the space is isometric to a Euclidean space.
\end{itemize}
This result ($\star \star$) should be compared with a result of Greene-Wu \cite{GW} which states that \textit{any} open (that is, complete and non-compact) Riemannian manifold has a smooth embedding into a Euclidean space by a harmonic map.
See Corollary \ref{lemlemelem} for the proof (see also Theorem \ref{prop:rigiditynonnegative}).

Next let us introduce a sketch of the proof of Theorem \ref{thm:cheeger}.
It follows from the Gaussian estimate for the heat kernel proved in \cite{JiangLiZhang} by Jiang-Li-Zhang with an argument as in \cite{AHPT} that the implication from (2) to (1) is always true without the assumptions (\ref{stronger}), (\ref{absolutecont}).

For the proof of the converse implication, we adopt a blow-up argument for the map $f$. In order to simplify our explanation, let us assume that (1) with (\ref{stronger}) holds. Then fix $x \in X$ and take tangent cones $T_xX, T_{f(x)}Y$ at $x \in X, f(x) \in Y$, respectively. Consider a blow-up map $f^0:T_xX\to T_{f(x)}Y$ of $f$. With no loss of generality we can assume that $x$ is a regular point, that is, $T_xX$ is isometric to $\mathbb{R}^m$, where $m$ denotes the essential dimension of $(X, \dist_X, \meas_X)$. Since $Y$ has no singular points, $T_{f(x)}Y$ is isometric to $\mathbb{R}^n$. Moreover applying Cheeger's differentiability theorem \cite{Cheeger} to the map $\tilde{\Phi}_t\circ f$ shows that the composition 
\begin{equation}
\mathbb{R}^m \simeq T_xX \stackrel{f^0}{\to} T_{f(x)}Y \simeq \mathbb{R}^n \stackrel{\overline{\Phi}}{\to} \ell^2
\end{equation}
is a linear map and that $\overline{\Phi}$ is also a linear map, where $\overline{\Phi}$ is a blow-up map of $\tilde{\Phi}_t$ at $f(x)$ as discussed in (the above sketch of) the proof of Theorem \ref{themequivalence}. Note that $\overline{\Phi}$ is a bi-Lipschitz embedding because of Theorem \ref{themequivalence}. The bi-Lipschitz property of $\overline{\Phi}$ allows us to conclude that $f^0$ is also linear. Applying an approximation by test functions with respect to measured Gromov-Hasudorff convergence proved in \cite{AmbrosioHonda2}, we see that $f^*g_Y$ $L^2_{\mathrm{loc}}$-strongly converge to $(f^0)^*g_{\mathbb{R}^n}$. Since it is not hard to check that $G_f(x)$ is bounded below by the best bound of the bilinear form $f^0g_{\mathbb{R}^n}$ (essentially), we see that (2) holds. We can also prove the remaining statements along similar ways.

Finally let us explain how to achieve Theorem \ref{takahashitheorem}. Assume that (1) holds. Then it follows from the Euler-Lagrange equation that $\Delta_Xf_i+e_Y(f)f_i = 0$ holds. Since $f$ is isometric, we have $e_Y(f)=\langle f^*g_Y, g_X\rangle=|g_X|^2=m$. In particular $f$ is an eigenmap. Then applying a result proved in \cite{honda20} completes the proof of (2).
The converse implication is justified by a direct calculation.

\subsection{Plan of the paper}

The paper is organized as follows: section \ref{prel} collects notations, preliminary results and terminology on $\RCD$ spaces, giving technical new results. 
Section \ref{sec:app} deals with approximate Sobolev maps for fixed $t \in (0, \infty)$, which plays a key role later when we prove Theorem \ref{thm:cheeger}. 
In section \ref{secembed}, the bi-Lipschitz embeddability of the heat kernel embedding into $L^2$ is established. In pariticular we prove Theorem \ref{themequivalence}.
Combining results obtained in sections \ref{sec:app} and \ref{secembed}, we study the behavior of $t$-Sobolev maps as $t \to 0^+$ in section \ref{asympto}. Then Theorem \ref{thm:cheeger} is proved.
Section \ref{takahashitheoremsec} provides a proof of Theorem \ref{takahashitheorem}. In section \ref{secenergies} we discuss the behavior of Sobolev maps with respect to measured Gromov-Hausdorff convergence. In particular compactness results (Theorems \ref{prop:lowersemiapp}, \ref{hybfgaty} and \ref{thmcompacatara}) are proved.
In the final section, section \ref{biliplp}, we give several generalizations of the bi-Lipschitz embeddability results given in section \ref{secembed} to general $\RCD$ space as independet interests.

\smallskip\noindent
\textbf{Acknowledgement.}
The first author acknowledges supports of Grant-in-Aid for Scientific Research (B) of 18H01118, of 20H01799, of 21H00977 and Grant-in-Aid for Transformative Research Areas (A) of 22H05105. The second author is partially supported by the Simons foundation and NSF under DMS grant $2154219$, " Regularity {\sl vs} singularity formation in elliptic and parabolic equations". 
{\color{blue}The authors thank the anonymous referee for the very careful review and several helpful suggestions that improved the presentation.}

\section{Preliminaries}\label{prel}
Throughout the paper we usually use the notation $C(C_1, C_2, \ldots)$ for a (positive) constant depending only on constants $C_1, C_2, \ldots$, which may change from line to line. Moreover we sometimes use a standard notation in convergence theory
\begin{equation}\label{anasuarbaj}
\Psi(\epsilon_1, \epsilon_2,\ldots, \epsilon_l; c_1, c_2, \ldots, c_m)
\end{equation}
denotes a function $\Psi:(\mathbb{R}_{>0})^l\times \mathbb{R}^m \to (0, \infty)$ satisfying
\begin{equation}
\lim_{(\epsilon_1,\ldots, \epsilon_l) \to 0}\Psi(\epsilon_1, \epsilon_2,\ldots, \epsilon_l; c_1, c_2, \ldots, c_m)=0,\quad \forall c_i \in \mathbb{R}.
\end{equation}
\subsection{Metric notion}
Let us fix two metric spaces $(X, \dist_X), (Y, \dist_Y)$.
For $\epsilon \in (0, 1)$, a map $f$ from $X$ to $Y$ is said to be a \textit{$(1\pm \epsilon)$-bi-Lipschitz embedding} if 
\begin{equation}
(1-\epsilon)\dist_X(x_1, x_2) \le \dist_Y(f(x_1), f(x_2)) \le (1+\epsilon)\dist_X(x_1, x_2),\quad \forall x_i \in X.
\end{equation}
Note that 
\begin{align}\label{texteq}
&\text{if $f$ is a $(1\pm \epsilon)$-bi-Lipschitz embedding,} \nonumber \\
&\text{then $f$ gives a $(\epsilon \cdot \mathrm{diam}(X, \dist_X))$-Gromov-Hausdorff approximation of the image $(f(X), \dist_Y)$.}
\end{align}
See for instance \cite{bbi, fukaya90} for the definition of Gromov-Hausdorff distance.
Let $\phi:X \to Y$ be a Lipschitz map. Define
\begin{itemize}
\item the \textit{best Lipschitz constant} of $\phi$ by
\begin{equation}
\mathbf{Lip} \phi:=\sup_{x_1 \neq x_2}\frac{\dist_Y(\phi(x_1), \phi(x_2))}{\dist_X(x_1, x_2)};
\end{equation} 
\item the \textit{asymptotically Lipschitz constant} at $x$ by 
\begin{equation}
\mathrm{Lip}_a\phi (x):=\lim_{r \to 0^+}\mathbf{Lip}\phi|_{B_r(x)};
\end{equation} 
\item the \textit{local slope} of $\phi$ at $x$ by
\begin{equation}\label{localslope}
{\rm Lip}\phi (x):=\lim_{r\to 0^+}\sup_{y \in B_r(x) \setminus \{x\}}\frac{\dist_Y(\phi (x), \phi(y))}{\dist_X (x, y)}
\end{equation}
if $x$ is not isolated, ${\rm Lip}\phi (x):=0$ otherwise. 
\end{itemize}
\begin{definition}[Metric density point]\label{def:den}
For any subset $A$ of $X$, a point $x \in X$ is said to be a \textit{metric density point of $A$} if for any $\epsilon \in (0, 1)$ there exists $r_0 \in (0, 1)$ such that $B_r(x) \cap A$ is $\epsilon r$-dense in $B_r(x)$ {\color{blue}(namely the closed $\epsilon r$-neighborhood of $B_r(x) \cap A$ includes $B_r(x)$)} for any $r \in (0, r_0)$. We denote by $\mathrm{Den} (A)$ the set of all density points of $A$. 
\end{definition}
Since it is easy to check the following proposition, we skip the proof;
\begin{proposition}\label{prop:den}
Let $(X, \dist_X), (Y, \dist_Y)$ be metric spaces, let $A$ be a subset of $X$ and let $f:X \to Y$ be a Lipschitz map. Then for any $x \in \mathrm{Den} (A)$ we have
\begin{equation}
{\rm Lip}\phi (x)={\rm Lip}(\phi |_A) (x),\quad \mathrm{Lip}_a\phi(x)=\mathrm{Lip}_a(\phi |_A)(x).
\end{equation}
\end{proposition}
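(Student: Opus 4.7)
The inequalities $\mathrm{Lip}(\phi|_A)(x)\le\mathrm{Lip}\phi(x)$ and $\mathrm{Lip}_a(\phi|_A)(x)\le\mathrm{Lip}_a\phi(x)$ are immediate from the definitions, since restricting the set over which one takes the supremum from $B_r(x)$ to $A\cap B_r(x)$ can only shrink it. The content of the proposition is therefore the two reverse inequalities, which I would establish by the metric density property: any point near $x$ is approximable by a point of $A$ at comparable scale.

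For $\mathrm{Lip}(\phi|_A)(x)\ge\mathrm{Lip}\phi(x)$, I pick a sequence $y_n\to x$ with $y_n\neq x$ and
\[
\frac{\dist_Y(\phi(y_n),\phi(x))}{\dist_X(y_n,x)}\longrightarrow\mathrm{Lip}\phi(x),
\]
and set $s_n:=\dist_X(x,y_n)$. Fix $\epsilon\in(0,1/2)$. For $n$ large enough that $2s_n<r_0(\epsilon)$ (with $r_0$ from Definition \ref{def:den}), the $\epsilon\cdot 2s_n$-density of $A\cap B_{2s_n}(x)$ in $B_{2s_n}(x)$ produces $a_n\in A\cap B_{2s_n}(x)$ with $\dist_X(a_n,y_n)\le 2\epsilon s_n$; the constraint $\epsilon<1/2$ rules out $a_n=x$. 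Writing $L:=\mathbf{Lip}\phi$, the triangle inequalities in $X$ and $Y$ give $\dist_X(a_n,x)\in[(1-2\epsilon)s_n,(1+2\epsilon)s_n]$ and $\dist_Y(\phi(a_n),\phi(x))\ge\dist_Y(\phi(y_n),\phi(x))-2\epsilon L s_n$, hence
\[
\frac{\dist_Y(\phi(a_n),\phi(x))}{\dist_X(a_n,x)}\;\ge\;\frac{\dist_Y(\phi(y_n),\phi(x))/s_n-2\epsilon L}{1+2\epsilon}.
\]
Sending $n\to\infty$ and then $\epsilon\to 0^+$ yields the inequality.

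The proof of $\mathrm{Lip}_a(\phi|_A)(x)\ge\mathrm{Lip}_a\phi(x)$ is the same scheme applied to pairs. For each $r\downarrow 0$ I choose $y_1,y_2\in B_r(x)$, $y_1\neq y_2$, with ratio close to $\mathbf{Lip}(\phi|_{B_r(x)})$, and approximate each $y_i$ by some $a_i\in A\cap B_r(x)$ produced by the density. The same triangle-inequality bookkeeping then yields the analogous bound, provided the approximation error is small relative to $t:=\dist_X(y_1,y_2)$.

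The one delicate point, which I expect to be the main (and only) obstacle, is this scale matching: the density at the ambient radius $r$ provides only $\epsilon r$-approximations, which is harmless when $t\gtrsim r$ but breaks down when $t\ll r$. I would handle this by a case split according to the value of $t/r$: in the regime $t\ge\sqrt{\epsilon}\,r$ the estimates above go through with an additional $O(\sqrt{\epsilon})$-loss, while in the degenerate regime $t\ll r$ the pair $(y_1,y_2)$ concentrates in a much smaller subregion, which lets me re-invoke the density at a scale matched to $t$ (taking $r$ correspondingly smaller) and run the previous bookkeeping. Letting $r\to 0$ and then $\epsilon\to 0^+$ closes the argument.
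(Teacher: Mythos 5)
Your treatment of the local slope $\mathrm{Lip}$ is correct and complete: since one endpoint of every competitor pair is pinned at $x$, the pair automatically lives at the scale $s_n=\dist_X(x,y_n)$, so invoking the density at radius $2s_n$ produces $a_n\in A$ whose error $2\epsilon s_n$ is small relative to both $\dist_X(a_n,x)$ and $\dist_Y(\phi(a_n),\phi(x))$, and your bookkeeping goes through. (The paper offers no proof of its own, so there is nothing to compare against.)

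The ``delicate point'' you flag for $\mathrm{Lip}_a$ is, however, a genuine gap, and it cannot be closed under the stated hypothesis. Your remedy for the regime $t=\dist_X(y_1,y_2)\ll r$ --- re-invoking the density at a scale matched to $t$ --- does not apply: the extremal pair $(y_1,y_2)$ may sit at distance comparable to $r$ from $x$, while the density hypothesis is centered at $x$ only, so a ball of radius $\sim t$ about $x$ need not contain $y_1,y_2$ at all. In fact the identity $\mathrm{Lip}_a\phi(x)=\mathrm{Lip}_a(\phi|_A)(x)$ fails in general for $x\in\mathrm{Den}(A)$. Take $X=\mathbb{R}^2$, $x=0_2$, $p_n=(1/n,0)$ and $\delta_n=n^{-3}$ for $n\ge 3$, $A=\mathbb{R}^2\setminus\bigcup_n B_{\delta_n}(p_n)$, and $\phi(z)=\sum_n\delta_n\psi\left((z-p_n)/\delta_n\right)$ with $\psi$ a fixed bump supported in $B_{1/2}(0_2)$ and $\psi(0_2)=1$. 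The closed holes are pairwise disjoint, $\phi$ is Lipschitz, and $\phi|_A\equiv 0$; moreover $0_2\in\mathrm{Den}(A)$, because any $z\in B_r(0_2)$ lying in a hole $B_{\delta_n}(p_n)$ forces $n\gtrsim 1/r$, so $z$ is within $2\delta_n=O(r^3)\le\epsilon r$ of a boundary point of that hole lying in $A\cap B_r(0_2)$. Yet for every $r>0$ the ball $B_r(0_2)$ contains bumps climbing from $0$ to $\delta_n$ over a distance $\delta_n/2$, so $\mathbf{Lip}\,\phi|_{B_r(0_2)}\ge 2$ and hence $\mathrm{Lip}_a\phi(0_2)\ge 2>0=\mathrm{Lip}_a(\phi|_A)(0_2)$. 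Note that the local-slope identity survives in this example (the ratios $|\phi(y)|/|y|$ tend to $0$), consistent with your first argument, and as far as I can see only the local-slope identity, at Lebesgue density points, is actually invoked later in the paper (Propositions \ref{prop:geometric} and \ref{proppullb}, Theorem \ref{propcompatr}). A correct $\mathrm{Lip}_a$ statement would require a density hypothesis that is uniform over centers near $x$ and over scales, not merely density at $x$.
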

We denote by $\mathcal{H}^N_{\dist}$, or simply $\mathcal{H}^N$, the $N$-dimensional Hausdorff measure of $(X, \dist)$. Finally we denote by $\mathrm{Lip}(X, \dist_X)$, ($\mathrm{Lip}_c(X, \dist_X)$, respectively), the set of all Lipschitz functions on $X$ (the set of all Lipschitz functions on $X$ with compact support, respectively). 
\subsection{$\RCD$ space}\label{rcddef1}
A triple $(X, \dist_X, \meas_X)$ is said to be a \textit{metric measure space} if $(X, \dist_X)$ is a complete separable metric space and $\meas_X$ is a Borel measure on $X$ with full support. We fix a metric measure space $(X, \dist_X, \meas_X)$ below.

Define the \textit{Cheeger energy} $\Ch:L^2(X,\meas)\to [0,\infty]$ by
\begin{equation}\label{eq:defchee}
\Ch(f):=\inf\left\{\liminf_{i\to\infty}\int_X{\rm Lip}^2f_i\di\meas_X;\ f_i\in\Lip (X,\dist_X)\cap (L^2 \cap L^{\infty})(X, \meas_X),\,\,\,\|f_i-f\|_{L^2}\to 0
\right\}.
\end{equation}
Then the \textit{Sobolev space} $H^{1, 2}=H^{1,2}(X,\dist_X,\meas_X)$ is defined as the finiteness domain of $\Ch$ in $L^2(X, \meas_X)$ and it is a Banach space equipped with the norm $\|f\|_{H^{1, 2}}=\sqrt{\|f\|_{L^2}^2+\Ch(f)}$.
We are now in position to introduce the definition of $\RCD(K, N)$ space. 
\begin{definition}[$\RCD(K, N)$ space]\label{def:rcd}
$(X, \dist_X, \meas_X)$ is said to be an $\RCD(K, N)$ \textit{space} for some $K \in \mathbb{R}$ and some $N \in [1, \infty]$ if the following four conditions hold.
\begin{itemize}
\item{(Volume growth condition)} There exist $C\in (0, \infty)$ and $x \in X$ such that $\meas_X (B_r(x)) \le Ce^{Cr^2}$ holds for any $r \in (0, \infty)$.
\item{(Infinitesimally Hilbertian condition)} $H^{1, 2}$ is a Hilbert space. In particular for all $f_i \in H^{1, 2} (i=1, 2)$,
\begin{equation}
\langle \nabla f_1, \nabla f_2\rangle:=\lim_{t \to 0}\frac{|\nabla (f_1+tf_2)|^2-|\nabla f_1|^2}{2t} \in L^1(X, \meas_X)
\end{equation}
is well-defined, where $|\nabla f_i|$ denotes the minimal relaxed slope of $f_i$ (e.g. \cite[Def. 4.2]{AmbrosioGigliSavare13}).
\item{(Sobolev-to-Lipschitz property)} Any function $f \in H^{1, 2}$ satisfying $|\nabla f|(y) \le 1$ for $\meas_X$-a.e. $y \in X$ has a $1$-Lipschitz representative.  
\item{(Bochner inequality)} For any $f \in D(\Delta)$ with $\Delta f \in H^{1, 2}$, we have
\begin{equation}
\frac{1}{2}\int_X\Delta \phi |\nabla f|^2\di \meas_X \ge \int_X \phi \left(\frac{(\Delta f)^2}{N}+\langle \nabla \Delta f, \nabla f\rangle +K|\nabla f|^2\right)\di \meas_X
\end{equation}
for any $\phi \in D(\Delta) \cap L^{\infty}(X, \meas_X)$ with $\Delta \phi \in L^{\infty}(X, \meas_X)$ and $\phi \ge 0$, where
\begin{equation}
D(\Delta):=\left\{ f \in H^{1, 2}; \exists h=:\Delta f \in L^2, \,\mathrm{s.t.} \, \int_X\langle \nabla f, \nabla \psi \rangle \di \meas_X =-\int_Xh\psi \di \meas_X,\,\forall \psi \in H^{1, 2}\right\}.
\end{equation}
\end{itemize}
\end{definition}
We will sometimes use the notation $\Delta_X$ instead of using the simpler one $\Delta$ as above when we need to clarify the space $(X, \dist_X, \meas_X)$.
For brevity, $(X, \dist_X, \meas_X)$ is said to be a \textit{finite dimensional $\RCD$ space} if it is an $\RCD(K, N)$ space for some $K \in \mathbb{R}$ and some $N \in [1, \infty)$. 

Finally let us recall the \textit{heat flow} associated to the Cheeger energy on an $\RCD(K, \infty)$ space $(X, \dist_X, \meas_X)$
\begin{equation}
h_t:L^2(X, \meas_X ) \to D(\Delta)
\end{equation}
which is determined by satisfying that for any $f \in L^2(X, \meas_X)$, the map $t \mapsto h_tf$ is absolutely continuous and satisfies
\begin{equation}
\frac{\di^+}{\di t}h_tf=\Delta h_tf.
\end{equation}
Note that this map $t \mapsto h_tf$ is actually smooth (see \cite[Prop.5.2.12]{GP2}) and that $h_t$ can be easily extended to a linear continuous map $L^p(X, \meas_X) \to L^p(X, \meas_X)$ with operator norm at most $1$ for any $p \in [1, \infty]$.
Then for any $f \in H^{1, 2}(X, \dist_X, \meas_X) \cap \mathrm{Lip}(X, \dist_X)$, the $1$-Bakry-\'Emery gradient estimate \cite[Cor.4.3]{Savare} holds
\begin{equation}\label{absbayraywsrai}
|\nabla h_tf|(x)\le e^{-Kt}h_t|\nabla f|(x),\quad \text{for $\meas_X$-a.e. $x \in X$}.
\end{equation}
In the sequel, we always denote by $N$ a finite number, and by $K$ a real number.
\subsection{Infinitesimal structure of finite dimensional $\RCD$ space}
Let $(X, \dist_X, \meas_X)$ be a finite dimensional $\RCD$ space with $\mathrm{diam}(X, \dist)>0$. It is known that $(X, \dist)$ is a proper geodesic space. In the paper we omit the notion of convergence of metric measure spaces, for example, \textit{(pointed) Gromov-Hausdorff convergence, (pointed) measured Gromov-Hausdorff convergence} and so on. It is worth pointing out that they are metrizable topologies. Thus ``$\epsilon$-closeness'' makes sense for such convergence.
See for instance \cite{GMS} for our purposes.
\begin{definition}[Tangent cones]
For $x \in X$, we say that a pointed metric measure space $(Y, \dist_Y,\meas_Y,y)$ is said to be a \textit{tangent cone of $(X, \dist_X, \meas_X)$ at $x$} if 
\begin{equation}\label{eq:ulla_ulla}
\left(X, r_i^{-1}\dist_X, \meas_X (B_{r_i}(x))^{-1}\meas_X, x\right) \stackrel{\mathrm{pmGH}}{\to} (Y, \dist_Y,\meas_Y,y) 
\end{equation}
holds for some $r_i\to 0^+$, where pmGH denotes the pointed measured Gromov-Hausdorff convergence (we will use similar notations, mGH, pGH etc. immediately later).
We denote by $\mathrm{Tan}(X, \dist_X,\meas_X,x)$ the set of all tangent cones of $(X,\dist_X,\meas_X)$ at $x$.
\end{definition}
\begin{definition}[Regular set $\mathcal{R}_k$]
For any $k \geq 1$, we denote by $\mathcal{R}_k$ the \textit{$k$-dimensional regular set}  of $(X, \dist_X, \meas_X)$, 
namely, the set of all points $x \in X$ such that
$$
\mathrm{Tan}(X, \dist_X,\meas_X,x) =\left\{ \left(\mathbb{R}^k, \dist_{\mathbb{R}^k},\omega_k^{-1}\mathcal{H}^k,0_k\right) \right\},
$$
where $\omega_k=\mathcal{H}^k(B_1(0_k))=\mathcal{L}^k(B_1(0_k))$. 
\end{definition}
The following result is proved in \cite[Th.0.1]{BrueSemola} after \cite{MondinoNaber} which gives a generalization of \cite[Th.1.12]{ColdingNaber} to finite dimensional $\RCD$ spaces.
\begin{theorem}[Essential dimension of finite dimensional $\RCD$ spaces]\label{thmRCD decomposition}
Let $(X,\dist,\meas)$ be a finite dimensional $\RCD$ space. Then, there exists a unique integer $n\in \mathbb{N}$, called the essential dimension of $(X, \dist, \meas)$, such that
 \begin{equation}\label{eq:regular set is full}
\meas(X\setminus \mathcal{R}_n\bigr)=0
\end{equation}
holds.
\end{theorem}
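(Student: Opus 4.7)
The plan is to combine the Mondino--Naber rectifiability-type structure theorem with a dimension-propagation principle along flows of Sobolev vector fields. First, Mondino--Naber already yields $\meas_X\bigl(X \setminus \bigcup_{k=1}^{\lfloor N \rfloor}\mathcal{R}_k\bigr)=0$, so the only new content is to show that at most one stratum $\mathcal{R}_n$ carries positive $\meas_X$-measure. I would argue by contradiction, supposing two distinct indices $j<k$ both satisfy $\meas_X(\mathcal{R}_j)>0$ and $\meas_X(\mathcal{R}_k)>0$.

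Next I would set up a transport between strata. Near an $\meas_X$-density point of $\mathcal{R}_k$, the Cheeger--Colding/Gigli $\eps$-splitting theorem produces $k$ almost-harmonic functions with nearly orthonormal gradients on a suitable scale. Taking $\bv:=\nabla u_1$ for one of these, I would invoke the Ambrosio--Trevisan theory of Regular Lagrangian Flows adapted to the $\RCD$ setting to obtain a well-defined flow $\XX_t$ which is $\meas_X$-a.e.\ defined, injective, and preserves the measure class (the divergence estimate needed comes from the Bochner inequality together with the $\RCD(K,N)$ hypothesis). The core claim is then that if $x$ is a density point of $\mathcal{R}_k$ at which the flow is classically defined, then $\XX_t(x)\in\mathcal{R}_k$ for a.e.\ $t$. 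This uses: (i) the pmGH-stability of tangent cones under scaling; (ii) the fact that the pointed isomorphism classes $\{(\mathbb{R}^\ell,\dist_{\mathbb{R}^\ell},\omega_\ell^{-1}\mathcal{H}^\ell,0_\ell)\}_{\ell\in\mathbb{N}}$ are pmGH-isolated; and (iii) a continuity of blow-ups along the trajectories of Sobolev-regular flows.

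Once propagation is in place, I would connect the two strata: flowing from a density point $x_j\in\mathcal{R}_j$ transports a positive-measure neighborhood of $x_j$ into $X$, which, by the Mondino--Naber decomposition, has positive-measure trace on some $\mathcal{R}_\ell$. Propagation forces $\ell=j$; by running analogous flows rooted near $\mathcal{R}_k$, and by exhausting $X$ with such flows generated by gradients of harmonic test functions (dense enough in the tangent module on $\RCD$ spaces thanks to Gigli's second-order calculus), one is driven to $\ell=k$ on an overlapping set, a contradiction. The essentially measure-preserving character of $\XX_t$ ensures these flowed neighborhoods genuinely meet a positive-measure region.

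The hard part will be establishing (iii), namely that the essential dimension is constant along a Regular Lagrangian Flow for $\meas_X$-a.e.\ initial datum. This requires a quantitative blow-up analysis at generic times along a trajectory, reconciling the merely measurable behavior of $\XX_t$ with the pointwise rigidity of tangent cones encoded in the discreteness of the Euclidean model classes. It is precisely here that the full strength of the $\RCD(K,N)$ condition is needed, through the Bochner inequality, the $1$-Bakry--\'Emery estimate \eqref{absbayraywsrai}, and the second-order calculus on the velocity field, to promote $L^2$-scale splitting information into honest pmGH-closeness of tangent cones at the transported point.
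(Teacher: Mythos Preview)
The paper does not prove this theorem; it simply quotes it as \cite[Th.~0.1]{BrueSemola}, building on \cite{MondinoNaber} and \cite{ColdingNaber}. Your outline is indeed the Bru\`e--Semola strategy: Mondino--Naber gives $\meas_X\bigl(X\setminus\bigcup_k\mathcal{R}_k\bigr)=0$, and the new content is dimensional constancy, proved via propagation of the tangent-cone dimension along Regular Lagrangian Flows of test vector fields. You have correctly isolated the hard analytic step (your point~(iii)).

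There is, however, a genuine gap in your ``connecting'' paragraph. You assert that flows rooted near $\mathcal{R}_j$ and flows rooted near $\mathcal{R}_k$ will, after ``exhausting $X$'', meet on an overlapping set; but nothing you have said forces a trajectory starting in $\mathcal{R}_j$ ever to enter $\mathcal{R}_k$. Propagation only tells you each stratum is flow-invariant, which is perfectly compatible with $X$ splitting into two disjoint invariant pieces. The way Bru\`e--Semola actually close this is cleaner and avoids any explicit transport between strata: once $\mathcal{R}_k$ is invariant under the RLF of \emph{every} test vector field $\bb$, one gets $\int_X 1_{\mathcal{R}_k}\,\div\bb\,\di\meas_X=0$ for all such $\bb$, which forces $1_{\mathcal{R}_k}\in H^{1,2}$ with $|\nabla 1_{\mathcal{R}_k}|=0$. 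The Sobolev-to-Lipschitz property (built into Definition~\ref{def:rcd}) then makes $1_{\mathcal{R}_k}$ constant on the connected support of $\meas_X$, so each $\mathcal{R}_k$ has either full or null measure. Replacing your vague ``exhaustion/overlap'' step with this argument would make your sketch complete.
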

Combining independent results in \cite[Th.4.11]{DMR}, in \cite[Th.3.5]{GP} and in \cite[Th.1.2]{KM} with \cite[Th.1.1]{MondinoNaber} and Theorem \ref{thmRCD decomposition}, we have the following (see \cite[Th.4.1]{AmbrosioHondaTewodrose}).
\begin{theorem}[Weak Ahlfors regularity and metric measure rectifiability]\label{thm:RN}
Let $(X, \dist_X, \meas_X)$ be a finite dimensional $\RCD$ space whose essential dimension is equal to $n$, put $\meas_X=\theta\mathcal{H}^n\res\mathcal{R}_n$ and set
\begin{equation}\label{eq:defRkstar}
{\mathcal R}_n^*:=\left\{x\in\mathcal{R}_n:\
\exists\lim_{r\to 0^+}\frac{\meas_X(B_r(x))}{\omega_nr^n}\in (0,\infty)\right\}.
\end{equation}
Then $\meas_X(\mathcal{R}_n\setminus\mathcal{R}_n^*)=0$, $\meas\res\mathcal{R}_n^*$ and 
$\mathcal{H}^n\res\mathcal{R}_n^*$ are mutually absolutely continuous and
\begin{equation}\label{eq:gooddensity}
\lim_{r\to 0^+}\frac{\meas_X(B_r(x))}{\omega_nr^n}=\theta(x)
\qquad\text{for $\meas_X$-a.e. $x\in\mathcal{R}_n^*$,}
\end{equation}
\begin{equation}\label{eq:goodlimitRN}
\lim_{r\to 0^+} \frac{\omega_nr^n}{\meas_X(B_{r}(x))}=1_{\mathcal{R}^*_{n}}(x)
\frac{1}{\theta(x)}
\qquad\text{for $\meas_X$-a.e. $x\in X$.}
\end{equation}
Moreover $(X, \dist_X, \meas_X)$ is metric measure rectifiable in the sense that for any $\epsilon \in (0, 1)$ there exist a sequence of Borel subsets $A_i$ of $\mathcal{R}_n^*$ and a sequence of $(1 \pm \epsilon)$-bi-Lipschitz embeddings $\phi_i:A_i \to \mathbb{R}^n$ such that $\meas_X(X \setminus \bigcup_iA_i)=0$ holds. We call such a pair $(A_i, \phi_i)$ a \textit{$(1 \pm \epsilon)$-bi-Lipschitz rectifiable chart of $(X, \dist_X, \meas_X)$}.
\end{theorem}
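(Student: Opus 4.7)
The plan is to assemble this statement by combining the cited structural pillars: Bru\`e-Semola's essential dimension theorem (Theorem \ref{thmRCD decomposition}) for the initial decomposition, Mondino-Naber's rectifiability theorem for the existence of bi-Lipschitz charts on $\mathcal{R}_n$, and the pointwise density results of De Philippis-Marchese-Rindler, Gigli-Pasqualetto, and Kell-Mondino for the Ahlfors regularity. First, Theorem \ref{thmRCD decomposition} yields $\meas_X(X\setminus\mathcal{R}_n)=0$ for the essential dimension $n$. Second, Mondino-Naber covers $\mathcal{R}_n$ up to a $\meas_X$-null set by Borel pieces on each of which a harmonic or distance-based chart to $\mathbb{R}^n$ has bi-Lipschitz constant arbitrarily close to $1$; this already produces a first, rougher form of the metric measure rectifiability.

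The analytic heart is the pointwise existence of the density $\theta(x) = \lim_{r\to 0^+}\meas_X(B_r(x))/(\omega_n r^n)$ at $\meas_X$-a.e.\ $x\in\mathcal{R}_n$. At a regular point, the rescaled spaces $(X,r_i^{-1}\dist_X,\meas_X(B_{r_i}(x))^{-1}\meas_X,x)$ pmGH-converge to $(\mathbb{R}^n,\dist_{\mathbb{R}^n},\omega_n^{-1}\mathcal{H}^n,0_n)$ along every scale sequence $r_i\to 0^+$, and weak continuity of ball measures under pmGH yields $\meas_X(B_{\lambda r_i}(x))/\meas_X(B_{r_i}(x))\to \lambda^n$ for every $\lambda>0$. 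Upgrading this subsequential scaling to a genuine limit of the absolute quotient $\meas_X(B_r(x))/(\omega_n r^n)$ requires the rectifiable structure itself: following the DMR/GP/KM strategy, one exploits Mondino-Naber's approximate bi-Lipschitz charts together with a Jacobian-type density computation on $\mathbb{R}^n$ (where Lebesgue density is automatic) to transport the Euclidean density back to $X$ and pin down a single value $\theta(x)\in(0,\infty)$ at $\meas_X$-a.e.\ $x$. This produces $\mathcal{R}_n^*$ and gives $\meas_X(\mathcal{R}_n\setminus\mathcal{R}_n^*)=0$.

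Once pointwise density is secured, the Lebesgue-Besicovitch differentiation theorem on the locally doubling space $(X,\dist_X,\meas_X)$ identifies $\meas_X\res\mathcal{R}_n^* = \theta\,\mathcal{H}^n\res\mathcal{R}_n^*$, which immediately gives the mutual absolute continuity; identity \eqref{eq:gooddensity} is a restatement of the density definition, and \eqref{eq:goodlimitRN} follows by taking reciprocals on $\mathcal{R}_n^*$ and using that $X\setminus\mathcal{R}_n^*$ is $\meas_X$-null so the indicator factor is automatic. For the final assertion on $(1\pm\epsilon)$-bi-Lipschitz charts with ranges inside $\mathcal{R}_n^*$, I would start from Mondino-Naber's charts, intersect them with $\mathcal{R}_n^*$, restrict to metric density points using Proposition \ref{prop:den}, and apply a Lusin-Egorov extraction together with a countable exhaustion to sharpen the bi-Lipschitz constant below any prescribed $\epsilon$. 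The main obstacle is the density existence step: on possibly collapsed finite dimensional $\RCD$ spaces one cannot invoke non-collapsed Bishop-Gromov monotonicity directly, and one must instead exploit the delicate interplay between the rectifiable chart structure on $\mathcal{R}_n$ and pmGH convergence, which is precisely what the DMR/GP/KM papers provide.
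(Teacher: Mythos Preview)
Your assembly is exactly what the paper does: Theorem~\ref{thm:RN} is not proved in the paper but is stated as the combination of Mondino--Naber's rectifiability \cite{MondinoNaber}, Bru\`e--Semola's essential dimension Theorem~\ref{thmRCD decomposition}, and the density results of \cite{DMR}, \cite{GP}, \cite{KM}, with a pointer to \cite[Th.~4.1]{AmbrosioHondaTewodrose} for the packaged statement. Your sketch of how these pieces interlock is accurate and in fact more detailed than what the paper provides.
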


\subsection{Sobolev spaces and Laplacians on open sets}
Let us introduce the
Sobolev space $H^{1, 2}(U, \dist_X, \meas_X)$ for an open subset $U$ of a finite dimensional $\RCD$ space $(X, \dist_X, \meas_X)$. See also \cite{Cheeger,Shanmugalingam} for the definition of Sobolev space $H^{1, p}(U, \dist_X, \meas_X)$ for any $p \in [1, \infty)$. Our working definition is the following.

\begin{definition}\label{def:reddu}
Let $U\subset X$ be open. 
\begin{enumerate}
\item{($H^{1,2}_0$-Sobolev space)} We denote by $H^{1,2}_0(U, \dist_X, \meas_X )$ the 
$H^{1,2}$-closure of $\Lip_c(U, \dist_X)$.
%the subspace of $\Lip(U,\dist_X)$ of compactly supported functions. 
\item{(Sobolev space on an open set $U$)} We say that $f\in L^2_{\mathrm{loc}}(U,\meas_X)$ belongs to $H^{1,2}_{\rm loc}(U,\dist_X,\meas_X)$ if
$\phi f \in H^{1, 2}(X, \dist_X, \meas_X)$ for any $\phi \in\Lip_c(U, \dist_X)$. If, in addition, $f,  |\nabla f|\in L^2(U,\meas_X)$,
we say that $f\in H^{1,2}(U,\dist_X,\meas_X)$. 
\end{enumerate}
\end{definition}

Notice that $f\in H^{1,2}_{\rm loc}(U,\dist_X,\meas_X)$  if and only if  for any bounded subset $V$ of $U$ with $\overline{V} \subset U$, there exists 
$\tilde f\in H^{1,2}(X,\dist_X,\meas_X)$ with $\tilde f\equiv f$ on $V$.
The global condition $f, |\nabla f|\in L^2(U,\meas_X)$ in the definition of $H^{1,2}(U,\dist_X,\meas_X)$
is meaningful, since the locality properties of the minimal relaxed slope ensure that $|\nabla f|(x)$ makes sense for $\meas_X$-a.e.  $x \in U$  for all functions $f\in H^{1,2}_{\rm loc}(U,\dist_X,\meas_X)$. Indeed, choosing
$\phi_n\in\Lip_c(U, \dist_X)$ with $\{\phi_n=1\}\uparrow U$ and defining
$$
|\nabla f|:=|\nabla(f\phi_n)|\qquad\text{for $\meas_X$-a.e. in $\{\phi_n=1\}$}
$$
we obtain an extension of the minimal relaxed slope to $H^{1,2}(U,\dist_X,\meas_X)$
(for which we keep the same notation, being also $\meas_X$-a.e. independent of the choice of $\phi_n$) 
which retains all bilinearity and locality properties. See also \cite[Th.10.5.3]{HKST} for the compatibility with Korevaar-Schoen type Sobolev spaces (for functions).

%We introduce the Dirichlet Laplacian acting only on $H^{1, 2}_0$-functions as follows:
%\begin{definition}[Dirichlet Laplacian on an open set $U$]\label{def:dlapballs}
%Let $D_0(\Delta, U)$ denote the set of all $f \in H^{1, 2}_0(U, \dist_X, \meas_X)$ such that 
%there exists $h:=\Delta_{U} f\in L^2(U,\meas_X)$ satisfying
%$$\int_U hg\di\meas=-\int_U \langle\nabla f,\nabla g\rangle\di\meas\qquad\forall g\in H^{1,2}_0(U,\dist_X,\meas_X).
%$$
%We also set $\Delta_{x, R}:=\Delta_{B_R(x)}$ when $U=B_R(x)$ for some $x \in X$ and $R >0$.
%\end{definition}
%Strictly speaking, the Dirichlet Laplacian $\Delta_{U}$ should not be confused with the operator $\Delta$, even if the two operators
%agree on functions compactly supported on $U$; for this reason we adopted a distinguished symbol. 
%Notice that $\lambda_1^D(B_R(x))>0$ whenever $\meas(X\setminus B_{R}(x))>0$, as
%a direct consequence of the local Poincar\'e inequality.

\begin{definition}[Laplacian on an open set $U$]
For $f \in H^{1, 2}(U, \dist_X, \meas_X)$, we write $f\in D(\Delta, U)$ if there exists 
$h:=\Delta_{U}f\in L^2(U,\meas_X)$ satisfying
$$
\int_U hg\di\meas_X=-\int_U \langle\nabla f, \nabla g\rangle \di\meas_X \qquad
\forall g\in H^{1,2}_0(U,\dist_X,\meas_X).
$$
We usually use a simplified notation $\Delta f$ instead of using $\Delta_Uf$ if there is no confusion.
\end{definition}

It is easy to check that for any $f \in D(\Delta, U)$ and any 
$\phi \in D(\Delta ) \cap \Lip_c(U, \dist_X)$ with $\Delta \phi \in L^{\infty}(X, \meas_X)$
one has (understanding $\phi\Delta_{U}f$ to be null out of $U$)
$\phi f \in D(\Delta )$ with
\begin{equation}\label{eq:local to global}
\Delta (\phi f)=f\Delta \phi +2\langle\nabla\phi,\nabla f\rangle +\phi \Delta_{U} f
\qquad\text{for $\meas_X$-a.e. in $X$.}
\end{equation}

Such notions allow to define harmonic functions on an open set $U$ as follows.

\begin{definition}
Let $U$ be an open subset of $X$. We say that $f\in H^{1,2}_{\rm loc}(U,\dist_X,\meas_X)$ is harmonic in $U$ if 
$f \in \mathcal{D}(\Delta, V)$ with $\Delta_V f=0$ for any bounded open subset $V$ of $U$ with $\overline{V} \subset U$.
Let us denote by $\mathrm{Harm}(U, \dist_X, \meas_X)$ the set of all harmonic functions on $U$.
\end{definition}

\subsection{Second-order calculus}
We first refer to \cite{Gigli} as a main reference on this section because, in order to keep our presentation short, we assume readers to be familiar with the theory of $L^p$-normed modules, including the \textit{second-order differential calculus on $\RCD$ spaces}, developed in \cite{Gigli}. 
Fix an $\RCD(K, \infty)$ space $(X, \dist_X, \meas_X)$. Recall the set of all \textit{test functions}
\begin{equation}
\mathrm{Test}F(X, \dist_X, \meas_X):=\{f \in D(\Delta) \cap \Lip(X, \dist_X)\cap L^{\infty}(X, \meas_X); \Delta f \in H^{1, 2}(X, \dist_X, \meas_X)\}.
\end{equation}
It is known that $\mathrm{Test}F(X, \dist_X, \meas_X)$ is an algebra with $|\nabla f|^2 \in H^{1, 2}(X, \dist_X, \meas_X)$ for any $f \in \mathrm{Test}F(X, \dist_X, \meas_X)$. 

Fix a Borel subset $A$ of $X$ and denote by $L^p(T(A, \dist_X, \meas_X))$ the set of all $L^p$-vector fields over $A$, where we usually denote by $L^0$ the set of all Borel measurable objects. Note that any element $V \in L^p(T(A, \dist_X, \meas_X))$ can be characterized by a linear map $V:\mathrm{Test}F(X, \dist_X, \meas_X) \to L^0(A, \meas_X)$ with the Leibniz rule for all $f_i \in \mathrm{Test}F(X, \dist_X, \meas_X)$
\begin{equation}
V(f_1f_2)(x)=f_1(x)V(f_2)(x)+f_2(x)V(f_1)(x),\quad \text{for $\meas_X$-a.e. $x \in A$}
\end{equation}
and the inequality for some non-negatively valued $\phi \in L^p(A, \meas_X)$
\begin{equation}
|V(f)(x)| \le \phi (x) |\nabla f|(x),\quad \text{for $\meas_X$-a.e. $x \in A$},
\end{equation}
for any $f \in \mathrm{Test}F(X, \dist_X, \meas_X)$. 
The least $\phi$ is call the \textit{Hilbert-Schmidt norm} of $V$, denoted by $|V|$. Note that the gradient vector field $\nabla f$ of $f \in H^{1, 2}(X, \dist_X, \meas_X)$ is well-defined in $L^2(T(X, \dist_X, \meas_X))$ and that the pointwise norm $| \cdot |$ comes from a pointwise inner product $\langle \cdot, \cdot \rangle$ for $\meas_X$-a.e. sense. Similarly we can define the set of all $L^p$-$1$-forms on $A$, denoted by $L^p(T^*(A, \dist_X, \meas_X))$. It is worth pointing out that there is a canonical isometry $\iota$ from $L^2(T(A, \dist_X, \meas_X))^*$ to $L^2(T^*(A, \dist_X, \meas_X))$, hence $\iota(\nabla f)=\dist f$ for any $f \in H^{1, 2}(X, \dist_X, \meas_X)$, where $\dist f$ is the differential (or the exterior derivative) of $f$ (see \cite[Def.2.2.2]{Gigli}).

Let us recall the \textit{Hilbert-Schmidt norm} and the \textit{(best) bound} of a tensor of type $(0, 2)$. 
\begin{definition}[Norms]\label{def:extralabel}
Let $T:[L^2(T(A,\dist_X,\meas_X))]^2\to L^0(A, \meas_X)$ be a tensor of type $(0, 2)$ over $A$, namely, it is an $L^\infty(A, \meas_X)$-bilinear form. We define the \textit{Hilbert-Schmidt norm} $| \cdot |_{HS}$ and the (best) \textit{bound $|\cdot |_B$} of $T$ as follows.
\begin{enumerate}
\item The smallest Borel measurable function $h:A\to [0,\infty]$, up to $\meas_X$-negligible sets, satisfying
\begin{equation}
\left|\sum_i\chi_iT(\nabla f_i^1, \nabla f_i^2)\right|\leq h\left|\sum_{i, j}\chi_i \chi_j \langle \nabla f_i^1, \nabla f_j^1\rangle \cdot \langle \nabla f_i^2, \nabla f_j^2\rangle \right|^{1/2},
\quad\text{for $\meas_X$-a.e. in $A$}
\end{equation}
for all $\chi_i, f_i^j \in \mathrm{Test}F(X,\dist_X,\meas_X)$, is denoted $|T|_{HS}$ or $|T|$ for short (because we will usually consider the Hilbert-Schmidt norm for given tensor).
\item The smallest Borel measurable function $h:A\to [0,\infty]$, up to $\meas_X$-negligible sets, satisfying
\begin{equation}
\left|\chi T(\nabla f^1, \nabla f^2)\right|\leq h|\chi|\cdot |\nabla f^1| \cdot |\nabla f^2|,
\quad\text{for $\meas_X$-a.e. in $A$}
\end{equation}
for all $\chi, f^j \in \mathrm{Test}F(X,\dist_X,\meas_X)$, is denoted $|T|_{B}$.
\end{enumerate}
\end{definition}
Let us denote by $L^p((T^*)^{\otimes 2}(A, \dist_X, \meas_X))$ the set of all tensors $T$ of type $(0, 2)$ satisfying $|T| \in L^p(A, \meas_X)$. Note that the pointwise Hilbert-Schmidt norm also comes from a pointwise inner product for $\meas_X$-a.e. sense as in the case of vector fields. In particular $L^2((T^*)^{\otimes 2}(A, \dist_X, \meas_X))$ is a Hilbert space.

We need the following important notion, the \textit{Hessian} of a function:
\begin{theorem}[Hessian]
For any $f \in \mathrm{Test}F(X, \dist_X, \meas_X)$ there exists a unique $$T \in L^2((T^*)^{\otimes 2}(X, \dist_X, \meas_X)),$$ called the \textit{Hessian of f}, denoted by $\mathrm{Hess}_f$, such that for all $f_i \in \mathrm{Test}F(X, \dist_X, \meas_X)$,
\begin{equation}\label{eq:hess}
\langle T, \dist f_1\otimes \dist f_2\rangle=\frac{1}{2}\left(\langle \nabla f_1, \nabla \langle \nabla f_2, \nabla f\rangle \rangle + \langle \nabla f_2, \nabla \langle \nabla f_1, \nabla f \rangle \rangle -\langle \nabla f, \nabla \langle \nabla f_1, \nabla f_2\rangle \rangle \right)
\end{equation}
holds for $\meas_X$-a.e. $x \in X$. 
\end{theorem}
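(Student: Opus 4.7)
The plan is to follow Gigli's second-order calculus: first define a candidate for the Hessian as a multilinear expression on gradients of test functions, then use the self-improvement of the Bochner inequality to give it the needed $L^2$ control, and finally extend it to a genuine tensor in $L^2((T^*)^{\otimes 2}(X, \dist_X, \meas_X))$.

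First, denote the right-hand side of (\ref{eq:hess}) by $A_f(f_1, f_2)$. Since $f \in \mathrm{Test}F(X, \dist_X, \meas_X)$ and this algebra is closed under the bracket $(g, h) \mapsto \langle \nabla g, \nabla h \rangle$, every term appearing in $A_f(f_1, f_2)$ is of the form $\langle \nabla u, \nabla v\rangle$ with $u, v \in H^{1, 2} \cap L^\infty$, so the expression defines an $L^1$ function, and symmetry in $(f_1, f_2)$ is manifest. I would then check the Leibniz identity $A_f(g h, f_2) = g A_f(h, f_2) + h A_f(g, f_2)$ for $g, h \in \mathrm{Test}F(X, \dist_X, \meas_X)$ by a direct algebraic computation using only the Leibniz and chain rules for $\nabla$. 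This allows one to make sense of $A_f$ on finite sums of the form $V = \sum_i \chi_i \nabla g_i$, $W = \sum_j \tilde \chi_j \nabla h_j$ and to verify that the value depends only on $V$ and $W$, not on the particular representation.

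The core step is the $L^2$ estimate. Using Savaré's self-improvement of the Bochner inequality in the $\RCD(K, \infty)$ setting, I would prove that for every non-negative $\phi \in D(\Delta) \cap L^\infty(X, \meas_X)$ with $\Delta \phi \in L^\infty(X, \meas_X)$,
\begin{equation}
\int_X \phi\, |A_f|_{HS}^2 \di \meas_X \le \int_X \phi \left( \tfrac{1}{2} \Delta |\nabla f|^2 - \langle \nabla \Delta f, \nabla f \rangle - K |\nabla f|^2 \right) \di \meas_X,
\end{equation}
where $|A_f|_{HS}$ is the pointwise Hilbert--Schmidt norm built as in Definition \ref{def:extralabel}(1) by testing $A_f$ against finite sums of tensor products of gradients of test functions. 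The derivation is by inserting, into the Bochner inequality applied to suitable perturbations of $f$, combinations of test functions that encode the tensorial slot, expanding the resulting $\Gamma_2$-type expression and passing to the limit; the hypothesis $\Delta f \in H^{1, 2}(X, \dist_X, \meas_X)$ guarantees that the right-hand side is integrable, so that letting $\phi \uparrow 1$ (along a standard exhaustion of $X$ by bounded sets) yields $|A_f|_{HS} \in L^2(X, \meas_X)$.

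Given this bound, I would extend $A_f$ by density: finite combinations $\sum_i \chi_i\, \dist g_i \otimes \dist h_i$ with $\chi_i, g_i, h_i \in \mathrm{Test}F(X, \dist_X, \meas_X)$ are dense in $L^2((T^*)^{\otimes 2}(X, \dist_X, \meas_X))$, and the above $L^2$ bound allows $A_f$ to extend uniquely to an element $T = \mathrm{Hess}_f \in L^2((T^*)^{\otimes 2}(X, \dist_X, \meas_X))$ satisfying (\ref{eq:hess}) for all $f_i \in \mathrm{Test}F(X, \dist_X, \meas_X)$. Uniqueness is immediate, since any such $T$ is pinned down $\meas_X$-a.e.\ on the dense subspace of gradient tensors. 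The main obstacle is the self-improvement step producing the $L^2$ Hessian bound: it is the only place where more than the classical $\Gamma_2$-form of the Bochner inequality is used, and all the preceding algebraic manipulations acquire their analytic meaning only once this quantitative control is in place.
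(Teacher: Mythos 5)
Your outline is essentially the argument the paper relies on: the paper gives no proof of its own but cites \cite[Th.3.3.8]{Gigli} and \cite[Lem.3.3]{Savare}, and the route you describe (define the bilinear expression $A_f$, verify the Leibniz/locality identities, obtain the pointwise Hilbert--Schmidt bound from the self-improved Bochner inequality, then extend by density of gradient tensors and conclude uniqueness) is exactly Gigli's construction. The only cosmetic point is that $\int_X\phi\,\tfrac12\Delta|\nabla f|^2\di\meas_X$ must be read weakly as $\tfrac12\int_X|\nabla f|^2\Delta\phi\di\meas_X$, since $|\nabla f|^2$ is only known to lie in $H^{1,2}$.
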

See \cite[Th.3.3.8]{Gigli} and \cite[Lem.3.3]{Savare}.
Moreover combining the locality property \cite[Prop.3.3.24]{Gigli} with a good cut-off, it is proved in \cite[Th.3.3.8 and Cor.3.3.9]{Gigli} and \cite[Th.3.4]{Savare} that for any open subset $U$ of $X$, the Hessian is well-defined for any $f \in D(\Delta, U)$ by satisfying (\ref{eq:hess}) for $\meas_X$-a.e. $x \in U$, and that the Bochner inequality involving the Hessian term
\begin{equation}\label{eq:bochner}
\frac{1}{2}\int_X|\nabla f|^2\Delta \phi \di \meas_X \ge \int_X\phi\left( |\mathrm{Hess}_f|^2+\langle \nabla \Delta f, \nabla f\rangle +K|\nabla f|^2\right)\di \meas_X
\end{equation}
holds for any $f \in \mathrm{Test}F(X, \dist_X, \meas_X)$ and $\phi \in D(\Delta)$ with $\phi \ge 0$, $\phi, \Delta \phi \in L^{\infty}(X, \meas_X)$ and $\supp \phi \subset U$.  
Let us define the \textit{Riemannian metric} as follows. See \cite[Prop.3.2]{AHPT} and \cite[Th.5.1]{GP} for the proof.
\begin{proposition}[Riemannian metric]\label{Riemdef}
There exists a unique $g_X \in L^{\infty}((T^*)^{\otimes 2}(X, \dist_X, \meas_X))$ such that for any $f_i \in \mathrm{Test}F(X, \dist_X, \meas_X)$ we have
\begin{equation}
\langle g_X, \dist f_1 \otimes \dist f_2\rangle (x) =\langle \nabla f_1, \nabla f_2\rangle (x),\quad \text{for $\meas_X$-a.e. $x \in X$}. 
\end{equation}
We call $g_X$ the \textit{Riemannian metric of $(X, \dist_X, \meas_X)$}. Moreover it holds that
\begin{equation}
|g_X|(x)=\sqrt{n},\quad \text{for $\meas_X$-a.e. $x \in X$}.
\end{equation}
if $(X, \dist_X, \meas_X)$ is finite dimensional and the essential dimension is equal to $n$.
\end{proposition}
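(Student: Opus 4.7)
\textbf{Proof proposal for Proposition \ref{Riemdef}.} The plan splits into two independent parts: constructing and uniquely identifying $g_X$, then computing its pointwise Hilbert--Schmidt norm in the finite-dimensional case.

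\emph{Existence and uniqueness.} I would exploit the $L^2$-module calculus from \cite{Gigli}, in particular the canonical isometry $\iota\colon L^2(T(X,\dist_X,\meas_X))^\ast\to L^2(T^\ast(X,\dist_X,\meas_X))$ recalled in the excerpt. The assignment
\begin{equation}
(\dist f_1, \dist f_2)\longmapsto \langle \nabla f_1,\nabla f_2\rangle,\qquad f_i\in\mathrm{Test}F(X,\dist_X,\meas_X),
\end{equation}
is defined on differentials of test functions, is symmetric, and is pointwise bounded by $|\nabla f_1|\cdot|\nabla f_2|$. By the locality of $\nabla$ on test functions it extends by $L^\infty$-linearity, and by the universal property of the $L^\infty$-module tensor product of $L^2$-normed modules it passes to a unique element of $L^\infty((T^\ast)^{\otimes 2}(X,\dist_X,\meas_X))$; the bound $|g_X|_B\le 1$ on the bilinear form level guarantees the $L^\infty$-integrability. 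Uniqueness follows because differentials of test functions generate a dense $L^\infty$-submodule of $L^2(T^\ast(X,\dist_X,\meas_X))$, so any two such tensors must coincide $\meas_X$-a.e.

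\emph{Pointwise norm in the finite-dimensional case.} I would combine Theorem \ref{thmRCD decomposition} and Theorem \ref{thm:RN}: $\meas_X$-a.e.\ point lies in $\mathcal{R}_n^\ast$, and on this set $(X,\dist_X,\meas_X)$ admits $(1\pm\epsilon)$-bi-Lipschitz rectifiable charts $\phi_i\colon A_i\to\mathbb{R}^n$. By a Lusin-type approximation of the chart components through test functions (e.g.\ extend the components to globally Lipschitz functions on $X$ and mollify by the heat flow $h_t$, using the Bakry--\'Emery estimate \eqref{absbayraywsrai}), on each $A_i$, up to a set of arbitrarily small measure, one can produce $f_1,\dots,f_n\in\mathrm{Test}F(X,\dist_X,\meas_X)$ with $\langle\nabla f_j,\nabla f_k\rangle$ arbitrarily close to $\delta_{jk}$ in the $\meas_X$-a.e.\ sense. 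Applying Gram--Schmidt in the Hilbert module and passing to the limit, I would obtain a local orthonormal frame $(\nabla f_j)_{j=1}^n$ of the tangent module (whose local dimension is $n$ by \cite{BrueSemola, GP}), hence a dual orthonormal frame $(\dist f_j)_{j=1}^n$ of the cotangent module. The defining identity then forces
\begin{equation}
g_X=\sum_{j=1}^n \dist f_j\otimes \dist f_j
\end{equation}
pointwise $\meas_X$-a.e.\ on the chosen subset, so that
\begin{equation}
|g_X|^2=\sum_{j,k=1}^n \langle \nabla f_j,\nabla f_k\rangle^2=n.
\end{equation}
Exhausting $X$ up to $\meas_X$-null sets by such subsets concludes.

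\emph{Main obstacle.} The delicate step is producing a genuine orthonormal frame of the tangent module by test functions on a positive-measure set, rather than merely an asymptotically orthonormal one: the heat-flow regularization introduces controlled but nonzero errors in $\langle\nabla(h_t f_j),\nabla(h_t f_k)\rangle$, and one must pass from approximate orthonormality (on sets of density close to $1$ inside $\mathcal{R}_n^\ast$) to exact orthonormality by invertible change of frame, then verify the resulting vector fields are still in the image of the differential of test functions. This is handled cleanly by the splitting maps / harmonic approximations available on finite-dimensional $\RCD$ spaces \cite{BrueSemola, GP, MondinoNaber}, which provide test functions with nearly orthonormal gradients directly at regular points, making the Gram--Schmidt perturbation inessential and completing the identification $|g_X|=\sqrt{n}$.
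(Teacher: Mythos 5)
The paper offers no proof of this proposition: it simply cites \cite[Prop.~3.2]{AHPT} and \cite[Th.~5.1]{GP}, and your sketch follows essentially the route taken in those references (existence and uniqueness from the $L^2$-normed module calculus; the norm identity from the constancy of the local dimension of the tangent module). So the approach is the right one. Two remarks. First, the ``main obstacle'' you describe is self-imposed: to compute $|g_X|$ you do not need an orthonormal frame consisting of differentials of test functions, so the heat-flow mollification, the density arguments and the appeal to splitting maps can all be dropped. Since the cotangent module has constant local dimension $n$ (Theorem \ref{thmRCD decomposition} together with \cite{GP}), a measurable Gram--Schmidt procedure applied to the generating family $\{\chi\,\dist f:\chi,f\in\mathrm{Test}F\}$ produces, on a.e.\ fiber over a suitable Borel partition, a local orthonormal frame $(e_j)_{j=1}^n$ of \emph{module elements}; expanding $\dist h_1,\dist h_2$ in this frame and using the defining identity gives $g_X=\sum_j e_j\otimes e_j$ and hence $|g_X|^2=n$ directly. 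Alternatively, the paper's own proof of Proposition \ref{prop:ineq} indicates the cheapest route: transfer the computation to $\mathbb{R}^n$ through a $(1\pm\epsilon)$-bi-Lipschitz rectifiable chart, where $|g_{\mathbb{R}^n}|=\sqrt{n}$, and let $\epsilon\to 0$. Second, a small logical caveat: you deduce $g_X\in L^\infty((T^*)^{\otimes 2})$ from $|g_X|_B\le 1$, but the comparison $|T|\le\sqrt{n}\,|T|_B$ is exactly Proposition \ref{prop:ineq} and requires finite essential dimension; in the finite-dimensional setting this is harmless because your second step independently yields $|g_X|=\sqrt{n}$, but the $L^\infty$ membership should be recorded as a consequence of that computation rather than of the bound on $|g_X|_B$.
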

\begin{proposition}\label{prop:ineq}
Assume that $(X, \dist_X, \meas_X)$ is finite dimensional and that the essential dimension is equal to $n$.
Then for any symmetric tensor $T$ of type $(0, 2)$ over $A$, we have
\begin{equation}\label{eq:hsb}
|T|_B(x)\le |T|(x) \le \sqrt{n}|T|_B(x),\quad \text{for $\meas_X$-a.e. $x \in A$}.
\end{equation}
\end{proposition}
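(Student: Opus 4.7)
The plan is to identify, at $\meas_X$-a.e. $x \in A$, both $|T|(x)$ and $|T|_B(x)$ with respectively the Frobenius (Hilbert--Schmidt) and the operator norms of a single symmetric $n\times n$ real matrix obtained by evaluating $T$ on a local orthonormal frame of the tangent module, and then conclude by elementary linear algebra. The lower bound $|T|_B \le |T|$ is essentially built into the definitions: inserting the single-term sum $\chi T(\nabla f^1, \nabla f^2)$ into the Hilbert--Schmidt defining inequality of Definition \ref{def:extralabel}(1), the right-hand side collapses to $|T|\cdot|\chi|\cdot|\nabla f^1|\cdot|\nabla f^2|$, which by the minimality of $|T|_B$ yields $|T|_B(x) \le |T|(x)$ for $\meas_X$-a.e. $x \in A$.

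For the upper bound, I would use Theorem \ref{thmRCD decomposition} together with the metric measure rectifiability of Theorem \ref{thm:RN}: up to a $\meas_X$-null set, $A$ decomposes into countably many Borel pieces $A_k$ on each of which $n$ test-function gradients---pulled back for instance from the components of a bi-Lipschitz chart into $\mathbb{R}^n$---remain linearly independent and of controlled size. A pointwise Gram--Schmidt with $L^\infty$ coefficients, carried out after further refining $A_k$ so that the Gram determinants stay bounded away from zero, yields a local orthonormal frame $V_1, \ldots, V_n$ of the tangent module over $A_k$, obtained as $L^\infty$-combinations of test-function gradients. Set $T_{ij} := T(V_i, V_j) \in L^0(A_k, \meas_X)$.

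For any admissible sum $S := \sum_\alpha \chi_\alpha \nabla f^1_\alpha \otimes \nabla f^2_\alpha$, expanding each $\nabla f^\ell_\alpha$ in this frame gives $S = \sum_{i,j} s_{ij}\, V_i \otimes V_j$ pointwise, with $|S|_{HS}^2 = \sum_{i,j} s_{ij}^2$ by orthonormality and $T(S) = \sum_{i,j} s_{ij}\, T_{ij}$. Cauchy--Schwarz yields $|T(S)| \le \bigl(\sum_{i,j}T_{ij}^2\bigr)^{1/2}\cdot|S|_{HS}$, hence $|T|(x) \le \bigl(\sum_{i,j} T_{ij}(x)^2\bigr)^{1/2}$; approximating the $V_i$'s in turn by sums of test-function gradients promotes this to equality, so $|T|(x)$ is exactly the Frobenius norm of the matrix $[T_{ij}(x)]$. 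A parallel computation based on Definition \ref{def:extralabel}(2) identifies $|T|_B(x)$ with the operator norm of the same matrix. Since every symmetric $n\times n$ real matrix with eigenvalues $\lambda_1,\ldots,\lambda_n$ satisfies $\bigl(\sum_i \lambda_i^2\bigr)^{1/2} \le \sqrt{n}\, \max_i|\lambda_i|$, the bound $|T|(x) \le \sqrt{n}\, |T|_B(x)$ follows at $\meas_X$-a.e.\ $x \in A_k$, and summing over $k$ over a countable partition of $A$ gives the claim.

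The main obstacle is the construction of the $L^\infty$ orthonormal frame from test-function gradients, together with the density argument promoting $|T| \le (\sum T_{ij}^2)^{1/2}$ to equality; both rely on Gigli's $L^2$-normed module theory---density of test gradients in $L^2(T(X,\dist_X,\meas_X))$ and local dimension $n$ of the tangent module at $\meas_X$-a.e.\ point---and require organising the pointwise constructions over countably many Borel subsets on which they are uniformly non-degenerate. Symmetry of $T$ is not strictly needed for the final linear-algebraic step (the same bound holds via singular values), but it aligns the statement with the intended use in the sequel.
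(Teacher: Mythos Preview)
Your argument is correct, and the linear-algebraic core---identifying $|T|$ and $|T|_B$ with the Frobenius and operator norms of the matrix $[T_{ij}]$ in a local orthonormal frame, then invoking $\|M\|_{\mathrm{Frob}}\le\sqrt{n}\,\|M\|_{\mathrm{op}}$---is exactly what drives the inequality. The technical steps you flag (existence of a measurable orthonormal frame after Gram--Schmidt on chart-component gradients, and density of test-gradient tensors to upgrade $\le$ to $=$) are available from Gigli's module theory together with the constancy of dimension, so there is no real gap.

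The paper's proof takes a shorter, more black-boxed route. It observes that \eqref{eq:hsb} is trivial on $(\mathbb{R}^n,\dist_{\mathbb{R}^n},\mathcal{H}^n)$, then for arbitrary $\epsilon\in(0,1)$ picks a $(1\pm\epsilon)$-bi-Lipschitz rectifiable chart $(A,\phi)$ from Theorem~\ref{thm:RN} and transports the Euclidean inequality back to $A$ via $\phi$: since a $(1\pm\epsilon)$-bi-Lipschitz map distorts both $|\cdot|$ and $|\cdot|_B$ by factors $1+O(\epsilon)$, one gets $|T|\le(\sqrt{n}+O(\epsilon))|T|_B$ on each chart, and letting $\epsilon\to0$ concludes. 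The reference to \cite[Th.5.1]{GP3} is what justifies identifying the tangent modules on $A$ and on $\phi(A)\subset\mathbb{R}^n$. Your approach trades that identification for an explicit frame construction on $X$ itself: more hands-on and self-contained, but longer. Both ultimately rest on the same rectifiability input.
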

\begin{proof}
The conclusion is trivial when $(X, \dist_X, \meas_X)$ is isometric to $(\mathbb{R}^n, \dist_{\mathbb{R}^n}, \mathcal{H}^n)$. Fix $\epsilon \in (0, 1)$ and take a $(1 \pm \epsilon)$-bi-Lipschitz rectifiable chart $(A, \phi)$ of $(X, \dist_X, \meas_X)$. Then since (\ref{eq:hsb}) is satisfied for $\phi(A)$, we conclude because $\epsilon$ is arbitrary (see also \cite[Th.5.1]{GP3}).
\end{proof}
It is worth pointing out that in Proposition \ref{prop:ineq}, thanks to (\ref{eq:hsb}), $|T| \in L^p(A, \meas_X)$ holds if and only if $|T|_B \in L^p(A, \meas_X)$ holds.
\subsection{Non-collapsed $\RCD$ spaces}
Let us recall a nicer subclass of $\RCD$ spaces, so-called \textit{non-collapsed $\RCD$ spaces}, introduced in \cite[Def.1.1]{DG}.
The following definition is motivated by seminal works on non-collapsed Ricci limit spaces in \cite{CheegerColding1, CheegerColding2, CheegerColding3}, in particular in \cite[Th.5.9]{CheegerColding1}. Fix $K \in \mathbb{R}, N \in [1, \infty)$.
\begin{definition}[Non-collapsed $\RCD$ space]
An $\RCD(K, N)$ space $(X, \dist_X, \meas_X)$ is said to be \textit{non-collapsed} if $\meas_X=\mathcal{H}^N$ holds. 
\end{definition}
Non-collapsed $\RCD(K, N)$ spaces have nicer properties over general $\RCD(K, N)$ spaces. Let us introduce some of them: 
\begin{theorem}\label{thm:bishop}
Let $(X, \dist_X, \mathcal{H}^N)$ be a non-collapsed $\RCD(K, N)$ space. Then the following holds.
\begin{enumerate}
\item The essential dimension of $(X, \dist_X, \mathcal{H}^N)$ is equal to $N$.
\item It holds that for any $x \in X$,
\begin{equation}\label{ref}
\lim_{r \to 0^+}\frac{\mathcal{H}^N(B_r(x))}{\omega_Nr^N} \le 1.
\end{equation}
Moreover the equality in (\ref{ref}) is satisfied if and only if $x \in \mathcal{R}_N$ holds.
\end{enumerate}
\end{theorem}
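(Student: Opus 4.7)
The plan is to prove (2) first, via Bishop--Gromov monotonicity together with a tangent-cone analysis in the non-collapsed category, and then to derive (1) as a quick consequence of (2) combined with Theorem \ref{thm:RN}.

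For (2), the starting point is the Bishop--Gromov inequality on $\RCD(K,N)$: the function $r\mapsto \mathcal{H}^N(B_r(x))/v_{K,N}(r)$ is non-increasing, where $v_{K,N}(r)$ is the model volume. Since $v_{K,N}(r)/(\omega_Nr^N)\to 1$ as $r\to 0^+$, the limit
\begin{equation*}
\theta_N(x):=\lim_{r\to 0^+}\frac{\mathcal{H}^N(B_r(x))}{\omega_Nr^N}
\end{equation*}
exists in $[0,\infty]$. To bound it above by $1$, I would pass to a tangent cone: by continuity of the Hausdorff measure under pmGH-convergence in the non-collapsed category, after the natural rescaling any element of $\mathrm{Tan}(X,\dist_X,\mathcal{H}^N,x)$ is again a non-collapsed $\RCD(0,N)$ space $(Y,\dist_Y,\mathcal{H}^N,y)$ with the same density $\theta_N(x)=\lim_{r\to 0^+}\mathcal{H}^N(B_r(y))/(\omega_N r^N)$, which by Bishop--Gromov in $\RCD(0,N)$ equals $\sup_{r>0}\mathcal{H}^N(B_r(y))/(\omega_N r^N)$. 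Pushing the blow-up one step further, I would take a tangent cone at infinity of $Y$, which by the cone rigidity of Bishop--Gromov in the $\RCD(0,N)$ theory is a metric cone $C(Z)$ over a non-collapsed $\RCD(N-2,N-1)$ space $Z$. A direct calculation in the cone expresses the density at the tip as $\mathcal{H}^{N-1}(Z)/\mathcal{H}^{N-1}(\mathbb{S}^{N-1})$, and Bishop--Gromov on $Z$ (comparison with the round sphere) bounds this by $1$. Chaining the equalities yields $\theta_N(x)\le 1$.

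For the rigidity statement in (2): if $\theta_N(x)=1$, then the Bishop--Gromov ratio is constantly equal to its limit on small scales, and the standard rigidity of Bishop--Gromov in the $\RCD(K,N)$ category forces each tangent cone at $x$ to be isometric to $(\mathbb{R}^N,\dist_{\mathbb{R}^N},\omega_N^{-1}\mathcal{H}^N,0_N)$, so $x\in\mathcal{R}_N$. Conversely, if $x\in\mathcal{R}_N$, the continuity of $\mathcal{H}^N$ under pmGH-convergence along the blow-up sequence gives $\theta_N(x)=1$ directly from the explicit form of the tangent.

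Given (2), statement (1) is immediate. Let $n$ be the essential dimension of $(X,\dist_X,\mathcal{H}^N)$; one has $n\le N$ from general $\RCD(K,N)$ theory. If $n<N$, then (\ref{eq:gooddensity}) applied to $\mathcal{H}^N$-a.e. $x\in\mathcal{R}_n^*$ gives $\mathcal{H}^N(B_r(x))\sim\theta(x)\omega_nr^n$ with $\theta(x)\in(0,\infty)$, whence
\begin{equation*}
\frac{\mathcal{H}^N(B_r(x))}{\omega_N r^N}\sim \frac{\theta(x)\omega_n}{\omega_N}\,r^{n-N}\longrightarrow +\infty,
\end{equation*}
in contradiction with $\theta_N(x)\le 1$ from (2); since $\mathcal{R}_n^*$ has full $\mathcal{H}^N$-measure by Theorems \ref{thmRCD decomposition} and \ref{thm:RN}, this rules out $n<N$, and therefore $n=N$. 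The main obstacle is the step passing through the tangent cone at infinity of an $\RCD(0,N)$ non-collapsed space: one needs both the volume continuity of $\mathcal{H}^N$ along pmGH-sequences of uniformly non-collapsed spaces and the cone structure of the asymptotic cone, both of which are delicate results in the DePhilippis--Gigli non-collapsed theory, but available off the shelf.
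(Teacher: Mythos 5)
First, a point of context: the paper does not actually prove Theorem \ref{thm:bishop}; it is a quoted preliminary attributed to De Philippis--Gigli (\cite[Th.1.3 and 1.6]{DG}), so there is no internal argument to compare yours against. Your reconstruction follows the architecture of the known proof --- Bishop--Gromov monotonicity to define the density, blow-up to a non-collapsed cone, reduction of the tip density to $\mathcal{H}^{N-1}(Z)/\mathcal{H}^{N-1}(\mathbb{S}^{N-1})$, and the deduction of (1) from (2) via the weak Ahlfors regularity of Theorem \ref{thm:RN}. Your derivation of (1) from (2) is correct, and the two directions of the rigidity statement are in the right spirit (though note that on $X$ itself the Bishop--Gromov ratio is only $\le$ its limit at $0^+$, not constant; the constancy you want holds on the tangent cone, by volume convergence and scale invariance).

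The central step of (2), however, is circular as written. After reducing to the cone $C(Z)$ with cross-section a non-collapsed $\RCD(N-2,N-1)$ space $(Z,\dist_Z,\mathcal{H}^{N-1})$, you assert that ``Bishop--Gromov on $Z$ (comparison with the round sphere) bounds $\mathcal{H}^{N-1}(Z)/\mathcal{H}^{N-1}(\mathbb{S}^{N-1})$ by $1$.'' Bishop--Gromov is only a monotonicity statement: taking $R=\pi\ge\mathrm{diam}(Z)$ and letting $r\to 0^+$ it yields
\begin{equation*}
\mathcal{H}^{N-1}(Z)\;\le\;\mathcal{H}^{N-1}(\mathbb{S}^{N-1})\cdot\lim_{r\to 0^+}\frac{\mathcal{H}^{N-1}(B_r(z))}{v_{N-2,N-1}(r)},
\end{equation*}
so to conclude you need the density of $Z$ at some point $z$ to be at most $1$ --- which is exactly statement (2) for the $(N-1)$-dimensional non-collapsed space $Z$, i.e.\ the theorem you are proving, one dimension lower. (For a smooth manifold the density is automatically $1$, but for a non-collapsed $\RCD$ space this is precisely what is at stake.) The gap can only be closed by setting up an induction on the dimension with an explicit base case, which is how \cite{DG} proceeds; as stated, your chain of inequalities assumes its own conclusion. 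A secondary point: that the cross-section of the asymptotic cone is again \emph{non-collapsed} with reference measure exactly $\mathcal{H}^{N-1}_Z$ (so that the tip computation $\mathcal{H}^N(B_r)=\tfrac{r^N}{N}\mathcal{H}^{N-1}(Z)$ is legitimate) is itself a nontrivial structural result that must be quoted, not obtained by ``a direct calculation in the cone.''
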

The inequality (\ref{ref}) is sometimes refered as the \textit{Bishop inequality}. See \cite[Th.1.3 and 1.6]{DG}. 
It is worth pointing out that a quantitative version of the rigidity part of the Bishop inequality is also satisfied as follows, where $\dist_{\mathrm{GH}}, \dist_{\mathrm{pmGH}}$ denote the Gromov-Hausdorff, pointed Gromov-Hausdorff distances, respectively.
\begin{theorem}[Almost rigidity of Bishop inequality]\label{prop:almostrigidity}
Let $(X, \dist_X, \mathcal{H}^N)$ be a non-collapsed $\RCD(K, N)$ space and let $x \in X$.
If 
\begin{equation}\label{ttffff}
\left|\frac{\mathcal{H}^N(B_r(x))}{\omega_Nr^N} - 1\right|<\epsilon
\end{equation}
holds for some $\epsilon, r \in (0, 1)$, then  
\begin{equation}
\dist_{\mathrm{GH}}(B_{r/2}(x), B_{r/2}(0_N))< \Psi(\epsilon, r;K, N)r
\end{equation}
and
\begin{equation}
\dist_{\mathrm{pmGH}}\left((X, t^{-1}\dist_X, x, \mathcal{H}^N ), (\mathbb{R}^N, \dist_{\mathbb{R}^N}, 0_N, \mathcal{H}^N)\right)<\Psi (\epsilon, t/r, r; K, N), \quad \forall t \in (0, 1)
\end{equation}
hold. Conversely if
\begin{equation}
\dist_{\mathrm{GH}}(B_{r}(x), B_{r}(0_N))<\epsilon r
\end{equation}
holds for some $\epsilon, r \in (0, 1)$, then
\begin{equation}
\left|\frac{\mathcal{H}^N(B_r(x))}{\omega_Nr^N} - 1\right|<\Psi(\epsilon, r;K, N)
\end{equation}
is satisfied.
\end{theorem}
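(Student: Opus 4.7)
The plan is to establish both implications by Gromov precompactness, relying on two ingredients from the theory of non-collapsed $\RCD(K,N)$ spaces: (a) continuity of $\mathcal{H}^N$ evaluated on open balls along pmGH-convergent sequences of non-collapsed $\RCD(K, N)$ spaces, and (b) the rigidity part of the Bishop inequality (Theorem \ref{thm:bishop}(2)), which forces equality in (\ref{ref}) only at regular points, together with Bishop-Gromov monotonicity.

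For the forward implication (from the volume hypothesis (\ref{ttffff}) to GH closeness) I argue by contradiction. Fix $K$ and $N$ and suppose the claim fails: there exist $\delta>0$, a sequence of non-collapsed $\RCD(K, N)$ spaces $(X_i, \dist_{X_i}, \mathcal{H}^N)$, points $x_i \in X_i$, and radii $r_i \in (0,1)$ with $\mathcal{H}^N(B_{r_i}(x_i))/(\omega_N r_i^N) \to 1$ but $\dist_{\mathrm{GH}}(B_{r_i/2}(x_i), B_{r_i/2}(0_N))\ge \delta r_i$. Rescale $\dist_{X_i}$ by $r_i^{-1}$; the rescaled spaces are $\RCD(r_i^2 K, N)$, hence uniformly $\RCD(K', N)$ for some $K'\le 0$. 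Gromov precompactness extracts a pmGH limit $(X_\infty, \dist_\infty, \mathcal{H}^N, x_\infty)$, which is itself a non-collapsed $\RCD(K',N)$ space. Ingredient (a) yields $\mathcal{H}^N(B_1(x_\infty)) = \omega_N$, and combined with Bishop-Gromov monotonicity the volume ratio is identically $1$ on $(0,1]$; Theorem \ref{thm:bishop}(2) and the rigidity of Bishop-Gromov then force $B_1(x_\infty)$ to be isometric to $B_1(0_N)\subset\mathbb{R}^N$, contradicting the persistent lower GH bound.

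The pmGH statement at arbitrary scales $t \in (0,1)$ is reduced to the previous contradiction by first invoking Bishop-Gromov monotonicity, which propagates (\ref{ttffff}) from scale $r$ to every sub-scale $t \le r$ with a quantitative loss depending only on $K$ and the ratio $t/r$; the same compactness-rigidity argument, now applied to the rescaled spaces $(X, t^{-1}\dist_X, x, \meas_X(B_t(x))^{-1}\meas_X)$, then delivers the pmGH closeness to $(\mathbb{R}^N, \dist_{\mathbb{R}^N}, 0_N, \omega_N^{-1}\mathcal{H}^N)$. The converse direction, from GH closeness at scale $r$ to volume closeness, runs by exactly the same scheme: a hypothetical contradiction sequence with GH-small balls but volume ratios bounded away from $1$ would, after the same rescaling, pmGH-converge to a ball in $\mathbb{R}^N$, and ingredient (a) would force $\mathcal{H}^N(B_{r_i}(x_i))/(\omega_N r_i^N)\to 1$, a contradiction. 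The main technical hurdle is ingredient (a), the continuity of $\mathcal{H}^N$ of balls under non-collapsed pmGH convergence; this is the genuinely nontrivial input (due to De Philippis-Gigli), and once it is available the compactness-rigidity machinery proceeds in a standard manner, with the $r$-dependence of the modulus $\Psi$ absorbing both the rescaled curvature bound $r^2K$ and the loss in propagating volume ratios through Bishop-Gromov.
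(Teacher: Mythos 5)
The paper does not actually prove this statement: it is quoted verbatim from \cite{DG} (Th.\ 1.3 and 1.6; see also \cite{AHPT}, Prop.\ 6.5). Your compactness scheme --- blow up a contradicting sequence, use volume convergence of non-collapsed $\RCD$ spaces (Theorem \ref{GHmGH}) to compute $\mathcal{H}^N(B_1(x_\infty))=\omega_N$, and conclude by rigidity --- is exactly the standard proof behind those references, and the converse direction as you describe it (identify the unit ball of the blow-up limit with $B_1(0_N)$, then apply Theorem \ref{GHmGH} together with the Bishop upper bound) is sound. So the strategy is correct.

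Two steps, however, are glossed over, and one of them is where the real content sits. First, your contradicting sequence must also have $r_i\to 0$ (this is precisely what the $r$-dependence of $\Psi$ licenses), so that the rescaled spaces are $\RCD(r_i^2K,N)$ with $r_i^2K\to 0$ and the limit is $\RCD(0,N)$. Your phrasing ``uniformly $\RCD(K',N)$ for some $K'\le 0$'' is not enough: for a fixed $K'<0$ the monotone Bishop--Gromov quantity is the ratio of $\mathcal{H}^N(B_s)$ to the model-space volume, which strictly exceeds $\omega_N s^N$, so $\mathcal{H}^N(B_1(x_\infty))=\omega_N$ does \emph{not} force $\mathcal{H}^N(B_s(x_\infty))=\omega_N s^N$ for $s<1$, and the rigidity step collapses. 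Second, the implication ``$\mathcal{H}^N(B_s(x_\infty))=\omega_N s^N$ for all $s\in(0,1]$ $\Rightarrow$ $B_{1/2}(x_\infty)$ is isometric to $B_{1/2}(0_N)$'' is not a consequence of Theorem \ref{thm:bishop}(2) plus monotonicity: that theorem only yields $x_\infty\in\mathcal{R}_N$, i.e.\ Euclidean tangent cones at $x_\infty$, which says nothing at a definite scale. To conclude you need the ``volume cone implies metric cone'' theorem of De Philippis--Gigli \cite{DG2} (constancy of the ratio makes $B_{1-\eta}(x_\infty)$ a ball in a metric cone) together with the identification of the cross-section with the round sphere via the density-one condition at the vertex. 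This cone rigidity is at least as essential an input as the volume convergence you single out as ``the genuinely nontrivial ingredient''; once both points are made explicit, your argument is complete.
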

See \cite[Th.1.3 and 1.6]{DG} for the proof (see also \cite[Prop.6.5]{AHPT}).

Finally let us end this subsection by giving the following convergence result proved in \cite[Th.1.2]{DG} (see \cite[Th.5.9]{CheegerColding1} with \cite[Th.0.1]{Colding} for the corresponding results on Ricci limit spaces).
\begin{theorem}[GH implies mGH]\label{GHmGH}
Let $(X_i, \dist_{X_i}, \mathcal{H}^N, x_i)$ be a sequence of pointed non-collapsed $\RCD(K, N)$ spaces. If $(X_i, \dist_{X_i}, x_i)$ pointed Gromov-Hausdorff converge to a pointed complete metric space $(X, \dist_X, x)$, then
\begin{equation}
\mathcal{H}^N(B_r(z_i)) \to \mathcal{H}^N(B_r(z))
\end{equation}
holds for any $r \in (0, \infty)$ and any $z_i \in X_i \to z \in X$.
\end{theorem}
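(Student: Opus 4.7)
The plan is to first extract a pmGH subsequential limit $(X, \dist_X, \mu, x)$ of $(X_i, \dist_{X_i}, \mathcal{H}^N, x_i)$ for some a priori unknown Radon measure $\mu$, then identify $\mu$ with $\mathcal{H}^N_{\dist_X}$, and finally read off the claimed volume convergence from the pmGH approximations.

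First I would invoke the Bishop-Gromov comparison on each non-collapsed space $(X_i, \dist_{X_i}, \mathcal{H}^N)$ to obtain uniform upper bounds $\mathcal{H}^N(B_R(z_i)) \leq v_{K,N}(R)$, where $v_{K,N}(R)$ is the volume of a ball of radius $R$ in the $N$-dimensional model space with Ricci lower bound $K$; this is uniform for $z_i$ in any bounded neighborhood of $x_i$. Hence the sequence lies in Gromov's precompactness class for $\RCD(K,N)$ structures, and after extracting a subsequence we obtain a pmGH limit $(X, \dist_X, \mu, x)$, the underlying pGH limit being forced to coincide with the given one. By stability of the $\RCD(K,N)$ condition under pmGH convergence, $(X, \dist_X, \mu)$ is itself an $\RCD(K,N)$ space.

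The crux is the identification $\mu = \mathcal{H}^N_{\dist_X}$. For the bound $\mu \leq \mathcal{H}^N$, fix a ball $B_r(z) \subset X$, take a fine covering by small balls, transfer it via pGH approximations to an almost-covering of $B_r(z_i) \subset X_i$, apply Bishop-Gromov to each small piece, and pass to the limit. For the opposite bound, let $n$ be the essential dimension of $(X, \dist_X, \mu)$ from Theorem \ref{thmRCD decomposition} and pick $z \in \mathcal{R}_n^*$ as defined in \eqref{eq:defRkstar}. Then the rescalings $(X, t^{-1}\dist_X, z)$ pGH-converge to $(\mathbb{R}^n, \dist_{\mathbb{R}^n}, 0_n)$; lifting via pmGH approximations to rescalings of $(X_i, \dist_{X_i}, z_i)$ with $z_i \to z$, and applying the almost-rigidity of the Bishop inequality (Theorem \ref{prop:almostrigidity}) on each non-collapsed $X_i$, forces $n = N$ together with $\mathcal{H}^N(B_{t_i}(z_i))/(\omega_N t_i^N) \to 1$ along suitable $t_i \downarrow 0$. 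The pmGH convergence then yields $\mu(B_t(z))/(\omega_N t^N) \to 1$ as $t \downarrow 0$, so $\mu$ has density $1$ with respect to $\mathcal{H}^N$ at $\mu$-a.e.\ $z \in \mathcal{R}_N^*$. Since Theorem \ref{thm:RN} gives $\mu(X \setminus \mathcal{R}_N^*) = 0$, combined with $\mu \leq \mathcal{H}^N$ this forces $\mu = \mathcal{H}^N$.

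With $\mu = \mathcal{H}^N$ in hand, pmGH convergence immediately gives $\mathcal{H}^N(B_r(z_i)) \to \mathcal{H}^N(B_r(z))$ at every radius $r$ with $\mathcal{H}^N(\partial B_r(z)) = 0$; Bishop-Gromov monotonicity of $r \mapsto \mathcal{H}^N(B_r(z))/v_{K,N}(r)$ implies that the exceptional set of radii is at most countable, and a standard sandwich argument with nearby radii extends the convergence to all $r$. Since every pmGH subsequential limit of the original sequence yields the same $\mu = \mathcal{H}^N$, the full sequence converges. I expect the main obstacle to be the lower bound $\mu \geq \mathcal{H}^N$: a priori the essential dimension of the limit could drop below $N$ and the density of $\mu$ on the regular set could be strictly smaller than $\omega_N$ (exactly what happens in collapsed $\RCD(K,N)$ limits), and it is precisely the non-collapsed hypothesis $\meas_{X_i} = \mathcal{H}^N$, funneled through Theorem \ref{prop:almostrigidity}, that pins both quantities down.
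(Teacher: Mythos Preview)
The paper does not give its own proof of this statement; it is quoted verbatim from \cite[Th.1.2]{DG} (with the Ricci-limit antecedents in \cite{CheegerColding1,Colding}). So there is nothing in the paper to compare your argument against, and I can only comment on the sketch itself.

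Your strategy---pass to a pmGH subsequential limit $(X,\dist_X,\mu)$, identify $\mu=\mathcal H^N$, then read off ball-volume convergence---is exactly the route taken in \cite{DG}, and most of the pieces are in place. Two points deserve more care. First, you implicitly assume the sequence does not collapse: if $\mathcal H^N(B_1(x_i))\to 0$ then there is no nontrivial pmGH limit measure to speak of, and your identification step is vacuous. This case must be handled separately (and is easy: Bishop--Gromov propagates the vanishing to all balls, while the pGH limit has Hausdorff dimension $<N$, so both sides tend to $0$). Second, the sentence ``applying Theorem~\ref{prop:almostrigidity} \dots\ forces $n=N$'' is too quick: that theorem compares with $B_r(0_N)$, not $B_r(0_n)$, so you cannot invoke it until you already know $n=N$. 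The dimension equality comes instead from matching volume growth: in the non-collapsed regime Bishop--Gromov gives $c\,t^N\le \mathcal H^N(B_t(z_i))\le v_{K,N}(t)\sim\omega_N t^N$, while pmGH passes this to $\mu(B_t(z))$, and at a point of $\mathcal R_n^*$ one has $\mu(B_t(z))\sim \theta(z)\,\omega_n t^n$; comparing exponents forces $n=N$. Only then does the converse direction of Theorem~\ref{prop:almostrigidity} pin down $\theta(z)=1$. A minor third point: your covering argument for $\mu\le\mathcal H^N$ literally yields $\mu\le\mathcal S^N$ (spherical Hausdorff measure); closing the gap to $\mathcal H^N$ uses either rectifiability of the limit or the sharper density argument in \cite{DG}.
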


\subsection{Heat kernel}\label{susectionheat}
Let $(X, \dist_X, \meas_X)$ be a finite dimensional $\RCD$ space. In order to give precise estimates below, we assume that it is an $\RCD(K, N)$ space with $K \in \mathbb{R}$ and $N \in [1, \infty)$.

Then the \textit{heat kernel $p_X(x, y, t)$ of $(X, \dist_X, \meas_X)$} is determined by the continuous function $p_X:X \times X \times (0, \infty) \to \mathbb{R}$ satisfying
\begin{equation}
h_tf(x)=\int_Xf(y)p_X(x, y, t)\di \meas_X (y), \quad \forall f \in L^2(X, \meas_X),\,\,\,\forall x \in X.
\end{equation}
The sharp Gaussian estimates on $p_X$  proved by Jiang-Li-Zhang \cite[Th.1.2]{JiangLiZhang} is  as follows;
for any $\epsilon>0$, there exists $C=C(K, N, \epsilon) \in (1, \infty)$ depending only on $K, N$ and $\epsilon$ such that
\begin{equation}\label{eq:gaussian}
\frac{C^{-1}}{\meas_X (B_{\sqrt{t}}(x))}\exp \left(-\frac{\dist_X (x, y)^2}{(4-\epsilon)t}-Ct \right) \le p_X(x, y, t) \le \frac{C}{\meas_X (B_{\sqrt{t}}(x))}\exp \left( -\frac{\dist_X (x, y)^2}{(4+\epsilon)t}+Ct \right)
\end{equation}
for all $x,\, y \in X$ and any $t \in (0, 1]$. Combined with the Li-Yau inequality \cite[Cor.1.5]{GarofaloMondino}, \cite[Th.1.1, 1.2 and 1.3]{Jiang15}, 
\eqref{eq:gaussian} implies a gradient estimate \cite[Cor.1.2]{JiangLiZhang}
\begin{equation}\label{eq:equi lip}
|\nabla_x p_X(x, y, t)|\le \frac{C}{\sqrt{t}\meas_X (B_{\sqrt{t}}(x))}\exp \left(-\frac{\dist_X(x, y)^2}{(4+\epsilon) t}+Ct\right)
\qquad\text{for $\meas_X$-a.e. $x\in X$}
\end{equation}
for any $t>0$, $y\in X$. Note that the gradient estimate (\ref{eq:equi lip}) is also satisfied if we replace the minimal relaxed slope in the LHS of (\ref{eq:equi lip}) by the asymptotically Lipschitz constant (cf. \cite[Prop.1.10]{GV}) because of the continuity of the RHS of (\ref{eq:equi lip}).

From now on we assume that $(X, \dist_X)$ is compact. Then since the inclusion $H^{1, 2}(X, \dist_X, \meas_X) \hookrightarrow L^2(X, \meas_X)$ is a compact operator (cf.  \cite[Th.8.1]{HajlaszKoskela}), we know that the (minus) Laplacian $-\Delta$ admits a discrete positive spectrum
\begin{equation}\label{eq:spectrum}
0=\lambda_0^X < \lambda_1^X \le \lambda_2^X \le \cdots \to + \infty
\end{equation}
counted with multiplicities. Denote by $\phi_i^X$ corresponding eigenfunctions of $\lambda_i^X$ with $\|\phi_i^X\|_{L^2}=1$ and recall that $\{\phi_i^X\}_i$ is an $L^2$-orthogonal basis of $L^2(X, \meas_X)$ and that each $\phi_i^X$ is Lipschitz.
 
It is well-known that the following expansions for $p_X$ hold
\begin{equation}\label{eq:expansion1}
p_X(x,y,t) = \sum_{i \ge 0} e^{- \lambda_i^X t} \phi_i^X(x) \phi_i^X (y) \qquad \text{in $C(X\times X)$}
\end{equation}
for any $t>0$ and
\begin{equation}\label{eq:expansion2}
p_X(\cdot,y,t) = \sum_{i \ge 0} e^{- \lambda_i^X t} \phi_i^X(y) \phi_i^X \qquad \text{in $H^{1,2}(X,\dist_X,\meas_X)$}
\end{equation}
for any $y\in X$ and $t>0$. 
Combining (\ref{eq:expansion1}) and (\ref{eq:expansion2}) with (\ref{eq:equi lip}), we know that
\begin{equation}\label{eq:eigenfunction}
\|\phi_i^X\|_{L^\infty} \leq \tilde{C} (\lambda_i^X)^{N/4}, \qquad \| \nabla \phi_i^X \|_{L^\infty} \leq \tilde{C} (\lambda_i^X)^{(N+2)/4}, \qquad \lambda_i^X \ge \tilde{C}^{-1}i^{2/N},
\end{equation}
where $\tilde{C}:=\tilde{C}(d, K, N) \in (1, \infty)$ and $d$ denotes an upper bound on the diameter of $(X, \dist_X)$ (cf. the appendix of \cite{AHPT}).

\subsection{Pull-back by Lipschitz map into Hilbert space}
Let $(X, \dist_X, \meas_X)$ be a finite dimensional $\RCD$ space whose essential dimension is equal to $n$ and let $A$ be a Borel subset of $X$. We start this section by recalling \textit{Lebesgue points};
\begin{definition}[Lebesgue point]\label{lebesguepoi}
Let $f\in L_\loc^p(X, \meas_X)$ with $p\in [1,\infty)$. 
We say that $x \in X$ is a \textit{$p$-Lebesgue point of $f$} if there exists $a \in \setR$ such that
\[
\lim\limits_{r \to 0} \frac{1}{\meas_X(B_r(x))}\int_{B_r (x)} |f(y)-a|^p\di \meas_X (y) = 0.
\]
The real number $a$ is uniquely determined by this condition and denoted by $\overline{f}(x)$ (we omit the $p$-dependence). 
The set of all $p$-Lebesgue points of $f$ is Borel and denoted by $\Leb_p(f)$.
\end{definition}

Note that the property of being a $p$-Lebesgue point and $\overline{f}(x)$ do not depend on the choice of the
versions of $f$, and that $x\in\Leb_p(f)$ implies
$\meas_X(B_r(x))^{-1}\int_{B_r (x)} |f(y)|^p\di\meas_X\to |\overline{f}(x)|^p$ as $r\to 0^+$.
It is well-known (e.g. subsection 3.4 of \cite{HKST}) that the doubling property ensures that $\meas_X(X \setminus \Leb_p(f))=0$, 
and that the set $\{ x\in\Leb_p(f) :\  \overline{f}(x) = f(x) \}$ (which does depend on the choice of representative in 
the equivalence class) has full measure in $X$, so-called Lebesgue differentiation theorem. When we apply these properties to a characteristic
function $f=1_A$, we obtain that $\meas_X$-a.e. $x\in A$ is a point of density $1$ for $A$ and
$\meas_X$-a.e. $x\in X\setminus A$ is a point of density $0$ for $A$, namely, the set
\begin{equation}
\mathrm{Leb} (A):=\left\{ x \in A; \lim_{r \to 0^+}\frac{\meas_X (B_r(x) \cap A)}{\meas_X (B_r(x))}=1\right\}
\end{equation}
satisfies 
\begin{equation}\label{lebdesntity}
\meas_X (A \setminus \mathrm{Leb} (A))=0.
\end{equation}
Lebesgue points can be understood as metric ``measure'' density points. In fact  (\ref{lebdesntity}) implies
\begin{equation}\label{densitymetri}
\mathrm{Leb} (A) \subset \mathrm{Den}(A).
\end{equation}
\begin{definition}[Pull-back]
Let $(H, \langle \cdot, \cdot \rangle)$ be a separable real Hilbert space and let $f:A \to H$ be a Lipschitz map. The \textit{pull-back by $f$}, denoted by $f^*g_H$, is defined by
\begin{equation}
f^*g_H:=\sum_{i=1}^{\infty}\dist f_i \otimes \dist f_i, \qquad \text{in $L^{2}((T^*)^{\otimes 2}(A, \dist_X, \meas_X))$},
\end{equation}
where $f=\sum_{i=1}^{\infty}f_ie_i$ for some orthonormal basis $\{e_i\}_i$ of $H$. This does not depend on the choice of $\{e_i\}_i$ with 
\begin{equation}\label{eq:bound}
|f^*g_H|(x) \le \sum_{i=1}^{\infty}|\dist f_i|^2(x) \le nL^2, \quad for\,\,\meas_X-a.e.\,\,x \in A,
\end{equation}
whenever $f$ is $L$-Lipschitz.
\end{definition}
See \cite[Lem.4.8 and Prop.4.9]{AHPT} for the detail. 
\begin{lemma}\label{lem:1}
Let $f:A \to \mathbb{R}^k$ be a Lipschitz map, let $\epsilon \in (0, 1)$ and let $\Phi:f(A) \to \mathbb{R}^l$ be a $(1\pm \epsilon)$-bi-Lipschitz embedding.
Then for $\meas_X$-a.e. $x \in A$,
\begin{equation}\label{eq:keyineq}
(1+\epsilon)^{-2}f^*g_{\mathbb{R}^k} \le (\Phi \circ f)^*g_{\mathbb{R}^l}\le (1+\epsilon)^2f^*g_{\mathbb{R}^k},
\end{equation}
that is, for any $ V \in L^{\infty}(T(A, \dist_X, \meas_X))$,
\begin{equation}\label{asbruaiburaw}
(1+\epsilon)^{-2}\int_Af^*g_{\mathbb{R}^k}(V, V)\di \meas_X \le \int_A (\Phi \circ f)^*g_{\mathbb{R}^l}(V, V)\di \meas_X \le (1+\epsilon)^2\int_Af^*g_{\mathbb{R}^k}(V, V)\di \meas_X.
\end{equation}
In particular for $\meas_X$-a.e. $x \in A$,
\begin{equation}
\left|(\Phi \circ f)^*g_{\mathbb{R}^l}- f^*g_{\mathbb{R}^k}\right|(x)\le C(n) \epsilon \left| f^*g_{\mathbb{R}^k}\right|(x).
\end{equation}
\end{lemma}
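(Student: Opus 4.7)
The central claim is the pointwise tensor inequality \eqref{eq:keyineq}; its integral form \eqref{asbruaiburaw} is obtained by integrating against $V\otimes V$ against indicator-weighted vector fields, while the Hilbert-Schmidt bound at the end follows by writing
\[
-(2\epsilon+\epsilon^2)\,f^*g_{\mathbb{R}^k}\le (\Phi\circ f)^*g_{\mathbb{R}^l}-f^*g_{\mathbb{R}^k}\le (2\epsilon+\epsilon^2)\,f^*g_{\mathbb{R}^k}
\]
as symmetric bilinear forms, reading off $|(\Phi\circ f)^*g_{\mathbb{R}^l}-f^*g_{\mathbb{R}^k}|_B\le C\epsilon\,|f^*g_{\mathbb{R}^k}|_B$, and using Proposition \ref{prop:ineq} to pass from best-bound to Hilbert-Schmidt norm at the cost of a factor $\sqrt{n}$. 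Thus the whole matter is the pointwise inequality.

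For the pointwise inequality I would reduce to a classical Euclidean Rademacher computation via the metric-measure rectifiability of $(X,\dist_X,\meas_X)$ (Theorem \ref{thm:RN}). Fix $\delta>0$ and cover $A$ modulo a $\meas_X$-null set by countably many Borel pieces $A_\alpha$ admitting $(1\pm\delta)$-bi-Lipschitz charts $\psi_\alpha:A_\alpha\to\mathbb{R}^n$. Since \emph{both} pull-back tensors $f^*g_{\mathbb{R}^k}$ and $(\Phi\circ f)^*g_{\mathbb{R}^l}$ transform covariantly under $\psi_\alpha$ and acquire the \emph{same} chart-distortion prefactor, bounded by $(1\pm\delta)^2$, the problem reduces, with an $O(\delta)$ error that vanishes as $\delta\to 0^+$, to the corresponding Euclidean comparison for $g:=f\circ\psi_\alpha^{-1}:\psi_\alpha(A_\alpha)\to\mathbb{R}^k$ and $\Phi:g(\psi_\alpha(A_\alpha))\to\mathbb{R}^l$.

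For the Euclidean core I would McShane-extend $g$ and $\Phi\circ g$ componentwise to globally Lipschitz maps on $\mathbb{R}^n$, apply Rademacher's theorem, and pick a point $y$ that is simultaneously a differentiability point of both extensions and a Lebesgue density point of the piece $\psi_\alpha(A_\alpha)$; full measure of such $y$ is standard. Given any direction $v\in\mathbb{R}^n$, the density condition supplies a sequence $t_j\to 0$ with $y+t_jv$ lying in the piece, and the $(1\pm\epsilon)$-bi-Lipschitz property of $\Phi$ applied to the pair $g(y+t_jv),g(y)\in g(\psi_\alpha(A_\alpha))$ gives
\[
(1-\epsilon)\bigl|g(y+t_jv)-g(y)\bigr|\le \bigl|\Phi(g(y+t_jv))-\Phi(g(y))\bigr|\le (1+\epsilon)\bigl|g(y+t_jv)-g(y)\bigr|.
\]
Dividing by $|t_j|$ and passing to the limit yields $(1-\epsilon)|Dg(y)v|\le |D(\Phi\circ g)(y)v|\le (1+\epsilon)|Dg(y)v|$. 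Squaring and letting $v$ vary over $\mathbb{R}^n$ gives the tensor pinching $(1-\epsilon)^2 g^*g_{\mathbb{R}^k}\le (\Phi\circ g)^*g_{\mathbb{R}^l}\le (1+\epsilon)^2 g^*g_{\mathbb{R}^k}$ at $y$; this is strictly stronger than the bounds in \eqref{eq:keyineq} since $(1-\epsilon)^2\ge (1+\epsilon)^{-2}$.

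\textbf{Main obstacle.} The one delicate point is to make sure that the Rademacher differentials of the globally extended maps actually carry the bi-Lipschitz information of $\Phi$, which a priori lives only on the rectifiable set $g(B)$. This is settled by the Lebesgue density condition at $y$ together with the continuity of $g$: the density of $\psi_\alpha(A_\alpha)$ at $y$ allows us to realize the differentials via difference quotients whose arguments lie inside the piece, so that the bi-Lipschitz inequality is genuinely available in the limit. The chart-distortion $\delta$ does not contaminate the final pinching constants because it cancels between the two sides and is sent to $0$ via refinement of the partition.
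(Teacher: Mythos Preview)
Your argument is correct and follows the same two–step scheme as the paper: reduce to the Euclidean model via $(1\pm\delta)$-bi-Lipschitz rectifiable charts (Theorem~\ref{thm:RN}), and in $\mathbb{R}^n$ extend, apply Rademacher, and read off the directional pinching from the bi-Lipschitz hypothesis. The only tactical difference is in the Euclidean step: the paper extends $\Phi$ itself (and then $\Phi^{-1}$) via Kirszbraun to a global $(1+\epsilon)$-Lipschitz map $\tilde\Phi:\mathbb{R}^k\to\mathbb{R}^l$, so the Lipschitz inequality is available at \emph{all} nearby points and no density argument along a prescribed direction is needed; you instead extend $g$ and $\Phi\circ g$ and use Lebesgue density of the chart piece to manufacture difference quotients with endpoints in the domain where the original bi-Lipschitz inequality of $\Phi$ applies. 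Both routes are valid and yield the same bounds.

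One arithmetic slip: you claim $(1-\epsilon)^2\ge (1+\epsilon)^{-2}$, but $(1-\epsilon)(1+\epsilon)=1-\epsilon^2<1$ gives the opposite inequality, so your lower constant $(1-\epsilon)^2$ is nominally \emph{weaker} than the $(1+\epsilon)^{-2}$ in \eqref{eq:keyineq}. This is harmless here: the paper's own argument (applying the upper-bound step to $\Phi^{-1}$, which is $(1-\epsilon)^{-1}$-Lipschitz) in fact also produces $(1-\epsilon)^2$, and either constant already gives the Hilbert--Schmidt estimate $|(\Phi\circ f)^*g_{\mathbb{R}^l}-f^*g_{\mathbb{R}^k}|\le C(n)\epsilon\,|f^*g_{\mathbb{R}^k}|$, which is all that is used later.
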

\begin{proof}
Let us divide the proof into the following two cases.

\textit{Case $1$; $(X, \dist, \meas)=(\mathbb{R}^n, \dist_{\mathbb{R}^n}, \mathcal{H}^n)$.}

Kirszbraun's theorem states that there exist Lipschitz maps $\tilde{f}:\mathbb{R}^n \to \mathbb{R}^k$ and $\tilde{\Phi}:\mathbb{R}^k \to \mathbb{R}^l$ such that $\tilde{\Phi}$ is $(1+\epsilon)$-Lipschitz, that $\tilde{f}|_A=f$ and that $\tilde{\Phi}|_{f(A)}=\Phi$.
Fix $x \in \mathrm{Leb}(A)$ where both $\tilde{f}$ and $\tilde{\Phi} \circ \tilde{f}$ are differentiable at $x$. Recall that for any $v \in \mathbb{R}^n$ we have
\begin{equation}
|J(\tilde{f})(x)v|_{\mathbb{R}^k}^2=\sum_{i=1}^k(\dist_x \tilde{f}_i(v))^2 =\sum_{i=1}^k(\dist_x f_i(v))^2 
\end{equation}
and
\begin{equation}
|J(\tilde{\Phi} \circ \tilde{f})(x)v|_{\mathbb{R}^l}^2=\sum_{i=1}^l(\dist_x (\tilde{\Phi}_i \circ \tilde{f})(v))^2=\sum_{i=1}^l(\dist_x (\Phi_i \circ f)(v))^2, 
\end{equation}
where $J(\tilde{f}), J(\tilde{\Phi}\circ \tilde{f})$ denote the corresponding Jacobi matrices, $\tilde{f}=(\tilde{f}_i)_i$, $\tilde{\Phi}=(\tilde{\Phi}_i)_i$, and we used the locality of the exterior derivative.
Let us prove
\begin{equation}\label{eq:ineq1}
|J(\tilde{\Phi} \circ \tilde{f})(x)v|_{\mathbb{R}^l} \le (1+\epsilon) |J(\tilde{f})(x)v|_{\mathbb{R}^k}. 
\end{equation}
By the differentiability of $\tilde{f}$ at $x$ for any $\delta \in (0, 1)$ we know
\begin{equation}
\tilde{f}\left(x+\frac{t}{|J(\tilde{f})(x)v|_{\mathbb{R}^k}+\delta }v\right)=\tilde{f}(x) +\frac{t}{|J(\tilde{f})(x)v|_{\mathbb{R}^k}+\delta}J(\tilde{f})(x)v + o(|t|).
\end{equation}
In particular combining this with the $(1+\epsilon)$-Lipschitz continuity of $\Phi$ implies that
\begin{equation}\label{eq:11}
|t|^{-1}\left| \tilde{\Phi}\left( \tilde{f}\left(x+\frac{t}{|J(\tilde{f})(x)v|_{\mathbb{R}^k}+\delta }v\right)\right)- \tilde{\Phi}(\tilde{f}(x))\right| \le (1+\epsilon) \cdot \frac{|J(\tilde{f})(x)v|_{\mathbb{R}^k}}{|J(\tilde{f})(x)v|_{\mathbb{R}^k}+\delta} +o(1).
\end{equation}
Thus letting $t \to 0$ and then letting $\delta \to 0^+$ in (\ref{eq:11}) with the differentiability of $\tilde{\Phi} \circ f$ at $x$ show (\ref{eq:ineq1}).

Then applying the above argument for the maps $\Phi \circ f:A \to \mathbb{R}^l$, $\Phi^{-1}:\Phi(f(A)) \to \mathbb{R}^k$ shows that for $\mathcal{H}^n$-a.e. $x \in \mathbb{R}^n$, and for any $v \in \mathbb{R}^n$
\begin{equation}\label{eq:ineq2}
(1+\epsilon)^{-2} \sum_{i=1}^k(\dist_x f_i(v))^2 \le \sum_{i=1}^l(\dist_x (\Phi_i \circ f)(v))^2 \le (1+\epsilon)^2\sum_{i=1}^k(\dist_x f_i(v))^2,
\end{equation}
which completes the proof of (\ref{asbruaiburaw}).

\textit{Case $2$; general $(X, \dist_X, \meas_X)$.}

For the general case, for any $\delta \in (0, 1)$ we find a $(1\pm \delta)$-bi-Lipschitz rectifiable chart $\hat{\phi}:\hat{A} \to \mathbb{R}^n$ of $(X, \dist_X, \meas_X)$ (see Theorem \ref{thm:RN} for the definition of rectifiable charts). Then applying (\ref{eq:keyineq}) for $\hat{\phi}(A \cap \hat{A})$ completes the proof because of the arbitrariness of $\delta$.
\end{proof}
\begin{corollary}\label{corfromlip}
Let $\epsilon \in (0, 1)$ and let $f:A \to \mathbb{R}^k$ be a $(1\pm \epsilon)$-bi-Lipschitz embedding. Then
\begin{equation}\label{ddccvgrrtm}
\left|f^*g_{\mathbb{R}^k}-g_X\right|(x)\le C(n)\epsilon,\quad \text{for $\meas_X$-a.e. $x \in A$.}
\end{equation}
\end{corollary}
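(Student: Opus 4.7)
My plan is to mimic the two-case split used in the proof of Lemma \ref{lem:1}: first handle the model case $(X, \dist_X, \meas_X) = (\mathbb{R}^n, \dist_{\mathbb{R}^n}, \mathcal{H}^n)$ via Kirszbraun and Rademacher, and then pass to a general finite dimensional $\RCD$ space using the bi-Lipschitz rectifiable charts from Theorem \ref{thm:RN}.

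\textbf{Step 1 (Euclidean case).} Suppose $X = \mathbb{R}^n$ with the standard distance and measure. Extend $f$ to a $(1+\epsilon)$-Lipschitz map $\tilde f:\mathbb{R}^n \to \mathbb{R}^k$ by Kirszbraun's theorem and apply Rademacher's theorem to get differentiability of $\tilde f$ at $\mathcal{H}^n$-a.e.\ point. By (\ref{lebdesntity}) and (\ref{densitymetri}), $\mathcal{H}^n$-a.e.\ $x_0 \in A$ is simultaneously a differentiability point of $\tilde f$ and a metric density point of $A$. At such $x_0$, write $J := D\tilde f(x_0)$. Choosing sequences $y_j \in A$ with $y_j\to x_0$ along any fixed unit direction $v$ (possible by the density property), the lower Lipschitz bound $(1-\epsilon)|y_j-x_0|\le |f(y_j)-f(x_0)|$ combined with the differentiability of $\tilde f$ yields $|Jv|\ge 1-\epsilon$; coupled with $|Jv|\le 1+\epsilon$ from Kirszbraun, this shows that $J$ is a $(1\pm\epsilon)$-bi-Lipschitz linear map. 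Hence the symmetric matrix $J^{T}J$ has eigenvalues in $[(1-\epsilon)^2,(1+\epsilon)^2]$, so its operator-norm distance to $I_n$ is at most $3\epsilon$. Since $(f^*g_{\mathbb{R}^k})_{x_0}$ is represented by $J^{T}J$ and $(g_{\mathbb{R}^n})_{x_0}$ by $I_n$, we obtain $|f^*g_{\mathbb{R}^k} - g_{\mathbb{R}^n}|_B(x_0) \le 3\epsilon$, and Proposition \ref{prop:ineq} upgrades this to $|f^*g_{\mathbb{R}^k} - g_{\mathbb{R}^n}|(x_0) \le C(n)\epsilon$.

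\textbf{Step 2 (General case).} For $\delta\in(0,1)$, use Theorem \ref{thm:RN} to choose a $(1\pm\delta)$-bi-Lipschitz rectifiable chart $\hat\phi:\hat A\to\mathbb{R}^n$. Then $F:=f\circ\hat\phi^{-1}$ is a $(1\pm\eta)$-bi-Lipschitz embedding of $\hat\phi(\hat A\cap A)\subset\mathbb{R}^n$ into $\mathbb{R}^k$, with $\eta=\eta(\epsilon,\delta)\to\epsilon$ as $\delta\to 0^+$. Applying Step 1 to $F$ and transferring the resulting estimate back through $\hat\phi$, exactly as in Case 2 of the proof of Lemma \ref{lem:1}, gives (\ref{ddccvgrrtm}) on $\hat A\cap A$ modulo an $O(\delta)$ error. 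Letting $\delta\to 0^+$ and covering $\meas_X$-a.e.\ of $A$ by countably many such charts completes the proof.

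The main technical obstacle is the chart transfer in Step 2: one needs that a $(1\pm\delta)$-bi-Lipschitz map between metric measure spaces induces a $(1\pm O(\delta))$-quasi-isometric identification of the tensor modules $L^\infty((T^*)^{\otimes 2})$, so that a Step 1 bound on the Euclidean side translates into a bound on $X$. This is precisely the ingredient already implicitly invoked in Lemma \ref{lem:1}, Case 2, and it ultimately reduces to the functoriality of pull-backs under Lipschitz maps together with Proposition \ref{prop:ineq}.
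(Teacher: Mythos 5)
Your proof is correct and follows essentially the same route as the paper: the Euclidean case via Kirszbraun, differentiability, and metric density points (which is exactly the mechanism inside Lemma \ref{lem:1}), followed by the identical rectifiable-chart reduction of Case 2. The only cosmetic difference is that the paper obtains the Euclidean case directly by applying Lemma \ref{lem:1} with the inner map taken to be the inclusion $A\hookrightarrow\mathbb{R}^n$ (so that $f^*g_{\mathbb{R}^n}=g_X$ there), whereas you re-derive that special case from scratch.
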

\begin{proof}
It follows from Lemma \ref{lem:1} that (\ref{ddccvgrrtm}) holds if $(X, \dist, \meas)=(\mathbb{R}^n, \dist_{\mathbb{R}^n}, \mathcal{H}^n)$. In general case, the same argument as in the Case $2$ of the proof of the lemma allows us to conlude.
\end{proof}
Similarly we have the following which gives a geometric meaning of the pull-back;
\begin{proposition}\label{prop:geometric}
Let $f:A \to \mathbb{R}^k$ be a Lipschitz map. Then
\begin{equation}\label{eq:geomet}
\mathrm{Lip}f(x)=\left(\left|f^*g_{\mathbb{R}^k}\right|_B(x)\right)^{1/2},\quad \text{for $\meas$-a.e. $x \in A$}.
\end{equation}
In particular
\begin{equation}\label{eq:geomet2}
\mathrm{Lip}f(x)\le \left(\left|f^*g_{\mathbb{R}^k}\right|(x)\right)^{1/2},\quad \text{for $\meas$-a.e. $x \in A$}.
\end{equation}
\end{proposition}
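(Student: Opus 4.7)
The implication \eqref{eq:geomet}$\Rightarrow$\eqref{eq:geomet2} is immediate from Proposition \ref{prop:ineq} (which gives $|T|_B\le |T|$), so I will focus on \eqref{eq:geomet}. My plan is to prove the identity first in the Euclidean model $(\mathbb{R}^n,\dist_{\mathbb{R}^n},\mathcal{H}^n)$, where it reduces to a Rademacher-type statement, and then to transfer it to a general finite dimensional $\RCD$ space via the $(1\pm\epsilon)$-bi-Lipschitz rectifiable charts provided by Theorem \ref{thm:RN}, exploiting the arbitrariness of $\epsilon$.

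\emph{Euclidean step.} Assume $(X,\dist_X,\meas_X)=(\mathbb{R}^n,\dist_{\mathbb{R}^n},\mathcal{H}^n)$. Extend $f$ by Kirszbraun to a Lipschitz map $\tilde f:\mathbb{R}^n\to\mathbb{R}^k$. By Rademacher's theorem $\tilde f$ is differentiable at $\mathcal{H}^n$-a.e.\ $x$, and at such an $x$ a direct computation (entirely in the spirit of the derivation of \eqref{eq:ineq1}) shows
\begin{equation*}
\mathrm{Lip}\tilde f(x)=|J_{\tilde f}(x)|_{\mathrm{op}}=\sup_{|v|=1}\Bigl(\sum_i(d\tilde f_i)_x(v)^2\Bigr)^{1/2}=\bigl(|\tilde f^*g_{\mathbb{R}^k}|_B(x)\bigr)^{1/2},
\end{equation*}
the last equality being the definition of the best bound for the symmetric positive semidefinite tensor $\tilde f^*g_{\mathbb{R}^k}=\sum_i d\tilde f_i\otimes d\tilde f_i$. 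For $\mathcal{H}^n$-a.e.\ $x\in A$ we have $x\in\mathrm{Leb}(A)\subset\mathrm{Den}(A)$, so Proposition \ref{prop:den} together with the locality of $d$ identifies $\mathrm{Lip}f(x)$ and $|f^*g_{\mathbb{R}^k}|_B(x)$ with their $\tilde f$-counterparts.

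\emph{Transfer step and main difficulty.} Let $n$ be the essential dimension of $(X,\dist_X,\meas_X)$ and fix $\epsilon\in(0,1)$. By Theorem \ref{thm:RN}, choose a $(1\pm\epsilon)$-bi-Lipschitz chart $(A_\epsilon,\phi_\epsilon)$; covering $X$ by countably many such charts, it suffices to verify \eqref{eq:geomet} $\meas_X$-a.e.\ on $\tilde A:=A\cap A_\epsilon$. For $\meas_X$-a.e.\ $x\in\tilde A$, $x\in\mathrm{Leb}(\tilde A)\subset\mathrm{Den}(\tilde A)$, so Proposition \ref{prop:den} gives $\mathrm{Lip}f(x)=\mathrm{Lip}(f|_{\tilde A})(x)$; combined with the bi-Lipschitz bounds on $\phi_\epsilon$ this yields
\begin{equation*}
(1-\epsilon)\,\mathrm{Lip}(f\circ\phi_\epsilon^{-1})(\phi_\epsilon(x))\le \mathrm{Lip}f(x)\le (1+\epsilon)\,\mathrm{Lip}(f\circ\phi_\epsilon^{-1})(\phi_\epsilon(x)).
\end{equation*}
The Euclidean step rewrites the outer terms as the square root of the best bound of $(f\circ\phi_\epsilon^{-1})^*g_{\mathbb{R}^k}$ at $\phi_\epsilon(x)$. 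The delicate part is then to compare this \emph{Euclidean} best bound with $|f^*g_{\mathbb{R}^k}|_B(x)$ computed on $(X,\dist_X,\meas_X)$: the two tensors are identified under $\phi_\epsilon$ but are measured against the a priori different metrics $g_{\mathbb{R}^n}$ and $g_X$. Applying Corollary \ref{corfromlip} to $\phi_\epsilon$ gives $|\phi_\epsilon^*g_{\mathbb{R}^n}-g_X|\le C(n)\epsilon$ on $\tilde A$, which translates into a multiplicative distortion at most $1\pm C(n)\epsilon$ between the two best bounds. Sending $\epsilon\to 0^+$ closes the argument; this last bookkeeping step, namely the clean quantitative control of the best-bound norm under the chart, is where I expect the main technical care.
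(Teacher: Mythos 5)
Your proposal is correct and follows essentially the same route as the paper: the paper also reduces to the Euclidean model via the $(1\pm\epsilon)$-bi-Lipschitz rectifiable charts of Theorem \ref{thm:RN} (invoking "as in the proof of Lemma \ref{lem:1}") and then handles the Euclidean case by Kirszbraun extension, Rademacher's theorem, and Proposition \ref{prop:den} at Lebesgue density points. The only difference is that you spell out the chart bookkeeping (the $1\pm C(n)\epsilon$ distortion of the best-bound norm via Corollary \ref{corfromlip}) that the paper leaves implicit, and that step is indeed sound.
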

\begin{proof}
As in the proof of Lemma \ref{lem:1}, it is enough to consider the case when $(X, \dist, \meas)=(\mathbb{R}^n, \dist_{\mathbb{R}^n}, \mathcal{H}^n)$ (note that (\ref{eq:geomet2}) is a direct consequence of Proposition \ref{prop:ineq} with (\ref{eq:geomet})).

Applying Kirszbraun's theorem, we can find a Lipschitz map $\tilde{f}:\mathbb{R}^n \to \mathbb{R}^k$ with $\tilde{f}|_A=f$.
Then since
\begin{equation}
\mathrm{Lip}\tilde{f}(x)=\left|\sum_{i=1}^k\dist \tilde{f}_i(x)\otimes \dist \tilde{f}_i(x) \right|_B^{1/2}
\end{equation}
holds for any differentiable point $x$ of $\tilde{f}=(\tilde{f}_i)_i$, we see that (\ref{eq:geomet}) holds for any $x \in \mathrm{Leb}(A)$ which is also a differentiable point of $\tilde{f}$ because of Proposition \ref{prop:den} with (\ref{densitymetri}). Thus we conclude because of (\ref{lebdesntity}).
\end{proof}
\subsection{Embedding into $L^2$ via heat kernel/eigenfunctions}\label{subsec:pullbak}
Let $(X, \dist_X, \meas_X)$ be a finite dimensional compact $\RCD$ space whose essential dimension is equal to $n$.
Then for any $t \in (0, \infty)$ the map $\Phi_t^X:X \to L^2(X, \meas_X)$ (we will use the simplified notation $\Phi_t$ instead of using $\Phi_t^X$ below) defined by
\begin{equation}\label{eq:embedding}
\Phi_t(x):=\left( z \mapsto p_X(x, z, t)\right)
\end{equation}
is Lipschitz and gives a topological embedding to the image $\Phi_t(X)$. Fix an $L^2$-orthonomal basis $\{\phi_i^X\}_i$ associated with (\ref{eq:spectrum}), namely, $\Delta_X\phi_i^X+\lambda_i^X\phi_i^X=0$ and $\|\phi_i^X\|_{L^2}=1$ are satisfied. 
Then for the canonical isometry $\iota: L^2(X, \meas_X) \to \ell^2$ via $\{\phi_i^X\}_i$, we have
\begin{equation}\label{eq:ell23}
\Phi_t^{\ell^2}(x):=\iota \circ \Phi_t(x)=\left(e^{-\lambda_i^Xt}\phi_i^X(x)\right)_{i=1}^{\infty}.
\end{equation}
Moreover it is proved in \cite[Prop.4.7 and Th.5.10]{AHPT} that the pull-back $g_t:=\Phi_t^*g_{L^2}$ can be written by
\begin{align}
g_t&=\int_X\dist_xp_x( \cdot, z, t) \otimes \dist_xp(\cdot, z, t)\di \meas_X(z)\nonumber \\
&=\sum_ie^{-2\lambda_i^Xt}\dist \phi_i^X \otimes \dist \phi_i^X,
\end{align}
that $t\meas_X (B_{\sqrt{t}}(\cdot ))g_t$ $L^p$-strongly converge to $\overline{c}_ng_X$ for any $p \in [1, \infty)$, where $\overline{c}_n$ is a positive constant depending only on $n$, and that 
\begin{equation}
\left| t\meas_X (B_{\sqrt{t}}( \cdot ))g_t\right|(x)\le C(K, N)<\infty, \quad \text{for $\meas_X$-a.e. $x \in X$},
\end{equation}
if $(X, \dist_X, \meas_X)$ is a compact $\RCD(K, N)$ space and $t \in (0, 1)$.

\subsection{Convergence}
In this section we will discuss several convergence results.
\subsubsection{Uniform convergence}
We discuss the convergence of maps into $\ell^2:=\{(a_i)_{i=1}^{\infty}; a_i \in \mathbb{R}, \sum_{i=1}^{\infty}(a_i)^2<\infty\}$.
\begin{proposition}\label{prop:compactness}
Let 
\begin{equation}\label{eq:pmGH}
(X_i, \dist_{X_i}) \stackrel{\mathrm{GH}}{\to} (X, \dist_X)
\end{equation}
be a Gromov-Hausdorff convergent sequence of compact metric spaces, let $L \in (0, \infty)$ and let $\Phi_i=(\phi_{i, j})_j:X_i \to \ell^2$ be a sequence of $L$-Lipschitz maps. Assume that the following two conditions hold;
\begin{enumerate}
\item we have $\sup_{i \in \mathbb{N}, x_i \in X_i}\|\Phi_i(x_i)\|_{\ell^2} <\infty$;
\item for any $\epsilon \in (0, 1)$ there exists $l:=l(\epsilon) \in \mathbb{N}$ such that 
\begin{equation}\label{eq:decay}
\sup_{i, y_i \in X_i}\sum_{j \ge l}\phi_{i, j}(y_i)^2<\epsilon
\end{equation}
holds.
\end{enumerate}
Then after passing to a subsequence there exists an $L$-Lipschitz map $\Phi:X \to \ell^2$ such that $\Phi_i$ converge uniformly to $\Phi$ on $X$, namely, $\{\Phi_i\}_i$ is equi-continuous and $\Phi_i(x_i) \to \Phi(x)$ holds whenever $x_i \to x \in X$.
\end{proposition}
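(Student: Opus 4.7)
The plan is to run a coordinate-wise Arzelà--Ascoli argument together with a diagonal extraction, and then use the uniform tail decay (2) to promote pointwise/coordinate convergence into uniform $\ell^2$-convergence.

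First observe that for each fixed $j$, the component $\phi_{i,j}:X_i\to\mathbb{R}$ is $L$-Lipschitz, since the $j$-th coordinate projection $\ell^2\to\mathbb{R}$ is $1$-Lipschitz and $\Phi_i$ is $L$-Lipschitz. Hypothesis (1) gives a uniform $L^\infty$-bound on each component. Since $(X_i,\dist_{X_i})\to (X,\dist_X)$ is a Gromov--Hausdorff convergent sequence of compact spaces, the version of Arzelà--Ascoli for such convergence produces, for each fixed $j$, a subsequence along which $\phi_{i,j}$ converges uniformly (in the GH sense, i.e.\ $\phi_{i,j}(x_i)\to\phi_j(x)$ whenever $x_i\in X_i$ and $x_i\to x\in X$, uniformly in such pairs) to an $L$-Lipschitz function $\phi_j:X\to\mathbb{R}$. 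A diagonal extraction over $j\in\mathbb{N}$ yields a single subsequence (not relabeled) along which this holds simultaneously for every $j$.

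Second I define $\Phi(x):=(\phi_j(x))_{j}$ and verify $\Phi(x)\in\ell^2$ with $\Phi$ $L$-Lipschitz. Given $x\in X$, choose $x_i\in X_i$ with $x_i\to x$. By hypothesis (2), for every $\epsilon>0$ there exists $l=l(\epsilon)$ with $\sum_{j\ge l}\phi_{i,j}(x_i)^2<\epsilon$ for all $i$, so Fatou's lemma applied to the partial sums gives $\sum_{j\ge l}\phi_j(x)^2\le\epsilon$ uniformly in $x\in X$. In particular $\Phi(x)\in\ell^2$. For Lipschitz continuity, if $x_i\to x$ and $y_i\to y$ then for every $l\in\mathbb{N}$,
\[
\sum_{j<l}\bigl(\phi_j(x)-\phi_j(y)\bigr)^2=\lim_{i\to\infty}\sum_{j<l}\bigl(\phi_{i,j}(x_i)-\phi_{i,j}(y_i)\bigr)^2\le L^2\dist_X(x,y)^2,
\]
and sending $l\to\infty$ yields $\|\Phi(x)-\Phi(y)\|_{\ell^2}\le L\dist_X(x,y)$.

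Third I upgrade to uniform convergence in $\ell^2$. Fix $\epsilon>0$ and use (2) to choose $l$ with $\sup_{i,y_i}\sum_{j\ge l}\phi_{i,j}(y_i)^2<\epsilon^2/4$; by the Fatou step above, $\sup_{x\in X}\sum_{j\ge l}\phi_j(x)^2\le \epsilon^2/4$ as well. On the truncated coordinates, the diagonal subsequence gives, for all $i$ large, $\sum_{j<l}(\phi_{i,j}(x_i)-\phi_j(x))^2<\epsilon^2/4$ uniformly in any pair $x_i\to x$ (this is just finitely many coordinate-wise uniform convergences combined with Cauchy--Schwarz). Adding tail and head bounds gives $\|\Phi_i(x_i)-\Phi(x)\|_{\ell^2}<\epsilon$ for $i$ large, uniformly in pairs, which is the asserted uniform convergence. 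The only real obstacle is bookkeeping the two parameters $l$ and $i$ in the right order: one must freeze $l$ using (2) \emph{before} letting $i\to\infty$, so the tail decay plays the role of equicontinuity ``at infinity'' in coordinate direction and prevents mass from escaping in the limit.
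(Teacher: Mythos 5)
Your argument is correct and follows essentially the same route as the paper's proof: a coordinate-wise Arzel\`a--Ascoli argument with diagonal extraction, followed by the tail-decay hypothesis (2) to pass from finitely many coordinates to uniform $\ell^2$-convergence. The only cosmetic difference is that you control $\sum_{j\ge l}\phi_j(x)^2$ via limits of partial sums from (2), whereas the paper normalizes $\Phi_i$ at a base point and uses the Lipschitz bound to show $\Phi$ is $\ell^2$-valued; both are fine.
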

\begin{proof}
Since the sequence $\{\phi_{i, j}\}_i$ has a uniformly convergent subsequence for each $j \in \mathbb{N}$, after passing to a diagonal process, there exists a sequence of Lipschitz functions $\phi_j$ on $X$ such that $\phi_{i, j}$ converge uniformly to $\phi_j$ on $X$ for each $j$. With no loss of generality we can assume that $\Phi_i(x_i)=0$ holds for a convergent sequence $x_i \in X_i$ to $x \in X$. For any $l \in \mathbb{N}$ and any convergent sequence $y_i \in X_i \to y \in X$, we have 
\begin{equation}\label{eq:sum}
\left(\sum_{j=1}^l\phi_j(y)^2\right)^{1/2}=\lim_{i \to \infty}\left(\sum_{j=1}^l\phi_{i, j}(y_i)^2\right)^{1/2}\le \limsup_{i \to \infty}\left(\sum_{j=1}^{\infty}\phi_{i, j}(y_i)^2\right)^{1/2}\le L\dist (x, y),
\end{equation}
where we used the $L$-Lipschitz continuity of $\Phi_i$ in the last inequality of (\ref{eq:sum}) with $\Phi_i(x_i)=0$. Letting $l \to \infty$ in (\ref{eq:sum}) shows that the function $\Phi:=(\phi_i)_i$ from $X$ to $\ell^2$ is well-defined. It is easy to check the pointwise convergence of $\Phi_i$ to $\Phi$ by (\ref{eq:decay}). Thus the $L$-Lipschitz continuity of $\Phi$ comes from that of $\Phi_i$. Moreover it is easy to check the desired uniform convergence.
\end{proof}
\begin{remark}
In the above theorem, the assumption (\ref{eq:decay}) is essential. Actually a sequence of maps $\Phi_i$ from a single point $\{p\}$ to $\ell^2$ defined by 
\begin{equation}
\Phi_i(p):=(\overbrace{0,0,\ldots, 0, 1}^i, 0,\ldots) 
\end{equation}
has no pointwise convergent subsequence. This reason comes from the fact that a subset $A$ of $\ell^2$ including $0$ is relatively compact if and only if for any $\epsilon \in (0, 1)$, there exists $l \in \mathbb{N}$ such that 
\begin{equation}
\sup_{x \in A}\sum_{j \ge l}x_j^2<\epsilon
\end{equation}
holds, where $x=(x_i)_i$. In connection with this observation, we can easily check that for two Gromov-Hausdorff convergent sequences $(X_i, \dist_{X_i}) \stackrel{\mathrm{GH}}{\to} (X, \dist_X), (Y_i, \dist_{Y_i}) \stackrel{\mathrm{GH}}{\to} (Y, \dist_Y)$ of compact metric spaces, any sequence of equi-continuous maps $\Phi_i:X_i \to Y_i$ has a uniform convergent subsequence to a continuous map $\Phi:X \to Y$. 
\end{remark}
\subsubsection{Functional convergence}
Let us fix $R \in (0, \infty], K \in \mathbb{R}, N \in [1, \infty)$ and a pointed measured Gromov-Hausdorff convergent sequence of pointed $\RCD(K, N)$ spaces
\begin{equation}\label{jju}
(X_i, \dist_{X_i}, \meas_{X_i}, x_i) \stackrel{\mathrm{pmGH}}{\to} (X, \dist_X, \meas_X, x).
\end{equation}
In this setting it is well-defined that a sequence $f_i \in L^p(B_R(x_i), \meas_{X_i})$ \textit{$L^p$-strongly/weakly converge to $f \in L^p(B_R(x), \meas_X)$ on $B_R(x)$} for $p \in [1, \infty)$. Note that $B_R(x)=X$ when $R=\infty$.  Since it is enough to discuss only on $L^2$-ones for our purposes, we recall it here (see \cite{AmbrosioHonda, AmbrosioHonda2, AmbrosioStraTrevisan, GMS, Honda15} for the details).
\begin{definition}[$L^2$-convergence of functions]
Let $f_i \in L^2(B_R(x_i), \meas_{X_i})$ be a sequence of $L^2$-functions on $B_R(x_i)$ and let $f \in L^2(B_R(x), \meas_X)$.
\begin{enumerate}
\item We say that $f_i$ \textit{$L^2$-weakly converge to $f$ on $B_R(x)$} if $\sup_i\|f_i\|_{L^2(B_R(x_i))}<\infty$ holds and 
\begin{equation}
\int_{B_R(x_i)}\phi_if_i\di \meas_{X_i} \to \int_{B_R(x)}\phi f\di \meas_X
\end{equation} 
holds for any uniformly convergent sequence $\phi_i \in C_c(X_i)$ to $\phi \in C_c(X)$ with uniformly compact supports {\color{blue}(namely there exists $R>0$ such that $\supp \phi_i \subset B_R(x_i)$ and $\supp \phi \subset B_R(x)$ are satisfied for any $i$).}
\item  We say that $f_i$ \textit{$L^2$-strongly converge to $f$ on $B_R(x)$} if it is an $L^2$-weakly convergent sequence on $B_R(x)$ and $\limsup_{i \to \infty}\|f_i\|_{L^2(B_R(x_i))}\le \|f\|_{L^2(B_R(x))}$ holds.
\end{enumerate}
\end{definition}
A typical example is that $1_{B_r(y_i)}$ $L^2$-strongly converge to $1_{B_r(y)}$ on $B_R(x)$ for all $r \in (0, \infty)$ and $y_i \in X_i \to y \in X$.

Next we give the definition of $L^2$-convergence of tensors as follows.
\begin{definition}[Convergence of tensors]
We say that a sequence $T_i \in L^2((T^*)^{\otimes 2}(B_R(x_i), \dist_{X_i}, \meas_{X_i}))$ \textit{$L^2$-weakly converge to $T \in L^2((T^*)^{\otimes 2}(B_R(x), \dist_X, \meas_X))$ on $B_R(x)$} if the following two conditions are satisfied.
\begin{enumerate}
\item $\sup_i\|T_i\|_{L^2(B_R(x_i))}<\infty$ holds.
\item We see that $\langle T_i, \dist f_{1, i} \otimes \dist f_{2, i}\rangle$  $L^2$-weakly converge to $\langle T, \dist f_1 \otimes \dist f_2\rangle$ on $B_R(x)$ whenever $f_{j, i} \in \mathrm{Test}F(X_i, \dist_{X_i}, \meas_{X_i})$ $L^2$-strongly converge to $f_j \in \mathrm{Test}F(X, \dist_X, \meas_X)$ with 
\begin{equation}
\sup_{i, j}\left(\|f_{j, i}\|_{L^{\infty}(X_i)}+\|\nabla f_{j, i}\|_{L^{\infty}(X_i)} +\|\Delta_{X_i}f_{j, i}\|_{L^2(X_i)}\right)<\infty.
\end{equation}
\end{enumerate}
Moreover we say that \textit{$T_i$ $L^2$-strongly converge to $T$ on $B_R(x)$} if it is an $L^2$-weak convergent sequence on $B_R(x)$ with $\limsup_{i \to \infty}\|T_i\|_{L^2(B_R(x_i))}\le \|T\|_{L^2(B_R(x))}$ holds. 
\end{definition}
Compare with \cite[Def.5.18 and Lem.6.4]{AHPT}. 
Note that we can easily check the following by an argument similar to the proof of \cite[Th.10.3]{AmbrosioHonda} (cf. \cite[Prop.2.24]{honda20}).
\begin{proposition}[Lower semicontinuity of $L^2$-norms]\label{prop:low}
A sequence $T_i \in L^2((T^*)^{\otimes 2}(B_R(x_i), \dist_{X_i}, \meas_{X_i}))$ $L^2$-weakly converge to $T \in L^2((T^*)^{\otimes 2}(B_R(x), \dist_X, \meas_X))$ on $B_R(x)$, then it holds that
\begin{equation}
\liminf_{i \to \infty}\|T_i\|_{L^2(B_R(x_i))}\ge \|T\|_{L^2(B_R(x))}.
\end{equation}
\end{proposition}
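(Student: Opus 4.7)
The plan is to follow the standard weak-lower-semicontinuity scheme in Hilbert modules, adapted to tensors by testing against simple elementary tensors coming from test functions. This parallels the argument used for vector fields in \cite[Th.10.3]{AmbrosioHonda}.

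First I would recall that finite linear combinations
$$S = \sum_{k=1}^m \chi_k\, \dist f_k^1 \otimes \dist f_k^2, \qquad \chi_k, f_k^j \in \mathrm{Test}F(X, \dist_X, \meas_X),\ \supp \chi_k \subset B_R(x),$$
are dense in $L^2((T^*)^{\otimes 2}(B_R(x), \dist_X, \meas_X))$; this rests on the density results for the second-order module developed in \cite{Gigli} together with good cut-off functions. Fix $\eta > 0$ and choose such an $S$ with $\|T - S\|_{L^2(B_R(x))} < \eta$.

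By the stability of test functions along pmGH-convergent sequences (see \cite{AmbrosioHonda2}), there exist $\chi_{k,i}, f_{k,i}^j \in \mathrm{Test}F(X_i, \dist_{X_i}, \meas_{X_i})$ that $L^2$-strongly converge to $\chi_k, f_k^j$, with uniform $L^\infty$-bounds on the functions and their gradients and uniform $L^2$-bounds on their Laplacians, and with the gradients $\nabla f_{k,i}^j$ themselves $L^2$-strongly convergent to $\nabla f_k^j$. Setting
$$S_i := \sum_{k=1}^m \chi_{k,i}\, \dist f_{k,i}^1 \otimes \dist f_{k,i}^2,$$
one checks via the Leibniz rule and the uniform $L^\infty$-bounds that $S_i$ $L^2$-strongly converges to $S$ on $B_R(x)$, and in particular $\|S_i\|_{L^2(B_R(x_i))} \to \|S\|_{L^2(B_R(x))}$. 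Invoking the definition of $L^2$-weak convergence of $T_i$ termwise, together with the $L^\infty$-bounded strong/weak pairing to carry the scalar factors $\chi_{k,i}$ through, produces
$$\langle T_i, S_i\rangle_{L^2(B_R(x_i))} \longrightarrow \langle T, S\rangle_{L^2(B_R(x))}.$$
Cauchy--Schwarz then gives $\langle T_i, S_i\rangle \le \|T_i\|_{L^2(B_R(x_i))} \|S_i\|_{L^2(B_R(x_i))}$, and passing to $\liminf_{i\to\infty}$ yields
$$\langle T, S\rangle_{L^2(B_R(x))} \le \bigl(\liminf_{i\to\infty}\|T_i\|_{L^2(B_R(x_i))}\bigr)\, \|S\|_{L^2(B_R(x))}.$$

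Finally, choosing a sequence of such simple $S^{(n)}$ converging to $T$ in $L^2$-norm on $B_R(x)$, the left-hand side converges to $\|T\|_{L^2(B_R(x))}^2$ while $\|S^{(n)}\|_{L^2(B_R(x))} \to \|T\|_{L^2(B_R(x))}$, so dividing out (the case $\|T\|_{L^2(B_R(x))} = 0$ being trivial) yields the desired inequality. The main technical obstacle is the strong convergence $S_i \to S$: tensor products of $L^2$-strongly convergent sequences need not be strongly convergent in general, and one has to exploit the uniform $L^\infty$-bounds on the scalar factors $\chi_{k,i}$ and on the $\nabla f_{k,i}^j$ supplied by the stability result together with the compatibility of the pointwise Hilbert--Schmidt norm with pmGH-convergence to make this step rigorous.
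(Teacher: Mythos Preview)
Your proposal is correct and follows precisely the approach the paper indicates: the paper does not spell out a proof but simply points to \cite[Th.10.3]{AmbrosioHonda} (cf.\ \cite[Prop.2.24]{honda20}), and what you have written is exactly the adaptation of that argument from vector fields to $(0,2)$-tensors. Your identification of the one genuine technical point---that the strong convergence $S_i\to S$ of the approximants relies on the uniform $L^\infty$-bounds on $\chi_{k,i}$ and $\nabla f_{k,i}^j$ furnished by the stability results---is also on target.
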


Let us give a typical example of $L^2$-weak convergence of tensors. The following lower semicontinuity of the essential dimensions is already proved in \cite[Th.1.5]{kita} by a different way (see also \cite[Rem.5.20]{AHPT} and \cite[Prop. 2.27]{honda20}).
\begin{proposition}[$L^2_{\mathrm{loc}}$-weak convergence of Riemannian metrics]\label{weakriem}
Assume $R<\infty$. Then $g_{X_i}$ $L^2$-weakly converge to $g_X$ on $B_R(x)$. In particular the essential dimensions are lower semicontinuous with respect to the pointed measured Gromov-Hausdorff convergence.
\end{proposition}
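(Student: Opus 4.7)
The plan is to verify the two bullets of the definition of $L^2$-weak convergence of tensors directly for the Riemannian metrics $g_{X_i}$, and then to extract the lower semicontinuity of the essential dimensions from Proposition \ref{prop:low} applied to them.

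For the uniform $L^2$ bound, I would invoke Proposition \ref{Riemdef}: if $n_i \in \{1,\dots,\lfloor N\rfloor\}$ denotes the essential dimension of $(X_i,\dist_{X_i},\meas_{X_i})$, then $|g_{X_i}|=\sqrt{n_i}$ $\meas_{X_i}$-a.e., hence
\[
\|g_{X_i}\|_{L^2(B_R(x_i))}^2 = n_i\, \meas_{X_i}(B_R(x_i)) \le N\sup_i\meas_{X_i}(B_R(x_i)),
\]
and the latter is finite by the Bishop--Gromov inequality combined with the weak convergence $\meas_{X_i}\to\meas_X$ on bounded sets coming from \eqref{jju}.

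The key step is to check the pairing condition. Given $f_{j,i}\in\mathrm{Test}F(X_i,\dist_{X_i},\meas_{X_i})$ $L^2$-strongly converging to $f_j\in\mathrm{Test}F(X,\dist_X,\meas_X)$ with uniform bounds on $\|f_{j,i}\|_{L^\infty}$, $\|\nabla f_{j,i}\|_{L^\infty}$ and $\|\Delta_{X_i}f_{j,i}\|_{L^2}$, the very definition of the Riemannian metric gives
\[
\langle g_{X_i},\dist f_{1,i}\otimes \dist f_{2,i}\rangle = \langle\nabla f_{1,i},\nabla f_{2,i}\rangle_{X_i}
\quad\text{and}\quad
\langle g_X,\dist f_1\otimes \dist f_2\rangle = \langle\nabla f_1,\nabla f_2\rangle_X,
\]
so the required $L^2$-weak convergence on $B_R(x)$ reduces to the $L^2$-weak convergence of the scalar products of the gradients along the sequences $\{f_{j,i}\}_i$. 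This is exactly the stability of the quadratic form of the Cheeger energy (Mosco convergence) from \cite{AmbrosioHonda,AmbrosioHonda2,GMS}: the uniform Lipschitz/Laplacian bounds guarantee $H^{1,2}$-strong (in fact $H^{2,2}$-weak) convergence of each $f_{j,i}$, and the uniform $L^\infty$ bound on $|\nabla f_{j,i}|$ (obtained via the $1$-Bakry--Émery estimate \eqref{absbayraywsrai}) allows one to pass to the limit in the product.

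Finally, for the essential-dimension claim, pick any $R\in(0,\infty)$ with $\meas_X(\partial B_R(x))=0$ (all but countably many $R$ are such). Then $\meas_{X_i}(B_R(x_i))\to \meas_X(B_R(x))$, and Proposition \ref{prop:low} together with the identity $\|g_{X_i}\|_{L^2(B_R(x_i))}^2=n_i\meas_{X_i}(B_R(x_i))$ yields
\[
\liminf_{i\to\infty} n_i\, \meas_{X_i}(B_R(x_i)) \ge n\,\meas_X(B_R(x)),
\]
so $\liminf_{i\to\infty} n_i\ge n$. I expect the main obstacle to be the Mosco-type step for the pairing, which is the only place where the deep stability machinery for $\RCD$ spaces enters; once this is granted, both the compactness bound and the dimensional consequence are essentially algebraic manipulations.
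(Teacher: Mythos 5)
Your argument is correct and is essentially the argument the paper relies on implicitly: the paper does not prove this proposition itself but defers to \cite[Rem.5.20]{AHPT}, \cite[Prop.2.27]{honda20} and \cite[Th.1.5]{kita}, and your direct verification of the two conditions in the definition — the uniform bound from $|g_{X_i}|=\sqrt{n_i}\le\sqrt{N}$, the pairing condition via Theorems \ref{spectral2} and \ref{bbbg} (which upgrade the hypotheses on the test functions to $H^{1,2}$-strong convergence and hence $L^1$-strong convergence of $\langle\nabla f_{1,i},\nabla f_{2,i}\rangle$, which together with the assumed uniform $L^\infty$ gradient bounds gives $L^2$-weak convergence), and the dimension claim from Proposition \ref{prop:low} — is exactly that route. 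One minor correction: you do not need the Bakry--\'Emery estimate (\ref{absbayraywsrai}) to get the uniform $L^\infty$ bound on $|\nabla f_{j,i}|$, since that bound is already part of the hypotheses in the definition of $L^2$-weak convergence of tensors.
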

Note that similarly we can define $L^2$-strong/weak convergence of vector fields with respect to (\ref{jju}) (see also \cite{AmbrosioStraTrevisan, Honda15}).

Next let us recall the definition of $H^{1, 2}$-strong convergence:
\begin{definition}[$H^{1, 2}$-strong convergence]\label{dettrrrss}
We say that a sequence of $f_i \in H^{1, 2}(B_R(x_i), \dist_{X_i}, \meas_{X_i})$ \textit{$H^{1, 2}$-strongly converge to $f \in H^{1, 2}(B_R(x), \dist_X, \meas_X)$ on $B_R(x)$} if $f_i$ $L^2$-strongly converge to $f$ on $B_R(x)$ with $\lim_{i \to \infty}\|\nabla f_i\|_{L^2(B_R(x_i))}=\|\nabla f\|_{L^2(B_R(x))}$.
\end{definition}
In connection with Definition \ref{dettrrrss}, we introduce a Rellich type compactness result with respect to measured Gromov-Hausdorff convergence (see \cite[Th.6.3]{GMS}, \cite[Th.7.4]{AmbrosioHonda} and \cite[Th.4.2]{AmbrosioHonda2}).
\begin{theorem}[Convergence of gradient operators]\label{bbbg}
If a sequence $f_i \in H^{1, 2}(B_R(x_i), \dist_{X_i}, \meas_{X_i})$ satisfies $\sup_i\|f_i\|_{H^{1, 2}}<\infty$, then after passing to a subsequence, there exists $f \in H^{1, 2}(B_R(x), \dist_X, \meas_X)$ such that $f_i$ $L^2$-strongly converge to $f$ on $B_r(x)$ for any $r \in (0, R)$ and that $\nabla f_i$ $L^2$-weakly converge to $\nabla f$ on $B_R(x)$. In particular 
\begin{equation}
\liminf_{i \to \infty}\|\nabla f_{i}\|_{L^2(B_R(x_i))} \ge \|\nabla f\|_{L^2(B_R(x))}
\end{equation}
holds.  Moreover if in addition $f_i$ $H^{1, 2}$-strongly converge to $f$ on $B_r(x)$ for some $r \in (0, R]$, then $|\nabla f_i|^2$ $L^1$-strongly converge to $|\nabla f|^2$ on $B_r(x)$.
\end{theorem}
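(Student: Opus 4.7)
The plan is to adapt the standard Rellich-Kondrachov plus weak-compactness strategy from \cite{GMS, AmbrosioHonda, AmbrosioHonda2} to the variable metric measure setting. Given $\sup_i\|f_i\|_{H^{1,2}}<\infty$, I would first invoke the Rellich-type compactness for sequences of Sobolev functions along pmGH convergence (see \cite[Th.6.3]{GMS}), which is available because every $\RCD(K,N)$ space enjoys doubling and a local Poincar\'e inequality. A diagonal procedure along a sequence of radii $r_k \uparrow R$ then yields a subsequence and a function $f \in L^2_{\mathrm{loc}}(B_R(x),\meas_X)$ such that $f_i$ $L^2$-strongly converge to $f$ on every $B_r(x)$ with $r < R$. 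Simultaneously, the uniform $L^2$-bound on $\nabla f_i$ combined with the $L^2$-weak compactness for vector fields in the variable module setting produces, after a further extraction, a vector field $V \in L^2(TB_R(x),\dist_X,\meas_X)$ that is the $L^2$-weak limit of $\nabla f_i$ on $B_R(x)$.

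The crucial step is to identify $V$ with $\nabla f$. To this end, I would approximate each test function $\psi \in \mathrm{Test}F(X,\dist_X,\meas_X)$ with compact support in $B_R(x)$ by test functions $\psi_i \in \mathrm{Test}F(X_i,\dist_{X_i},\meas_{X_i})$ with compact support in $B_R(x_i)$ that $L^2$-strongly converge to $\psi$, with uniform bounds on $\|\psi_i\|_{L^\infty}$, $\|\nabla\psi_i\|_{L^\infty}$ and $\|\Delta\psi_i\|_{L^2}$; such an approximation is supplied by \cite[Th.4.2]{AmbrosioHonda2} together with the good cut-off functions of Mondino-Naber. Passing to the limit in the integration-by-parts identity $\int \langle \nabla f_i,\nabla\psi_i\rangle\di\meas_{X_i} = -\int f_i\,\Delta\psi_i\di\meas_{X_i}$, using $L^2$-weak convergence of $\nabla f_i$ paired with the $L^2$-strongly convergent $\nabla\psi_i$ on the left and $L^2$-strong convergence of $f_i$ paired with the $L^2$-weakly convergent $\Delta\psi_i$ on the right, yields $\int \langle V,\nabla\psi\rangle\di\meas_X = -\int f\,\Delta\psi\di\meas_X$ for every such $\psi$, which characterises $V=\nabla f$ and in particular shows $f \in H^{1,2}(B_R(x),\dist_X,\meas_X)$.

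The lower semicontinuity inequality $\liminf_i\|\nabla f_i\|_{L^2(B_R(x_i))} \ge \|\nabla f\|_{L^2(B_R(x))}$ then follows from the $L^2$-weak convergence by a direct testing argument, i.e.\ Proposition \ref{prop:low} specialised to the rank-one tensor $\dist f \otimes \dist f$. For the additional strong-convergence assertion, assume $\|\nabla f_i\|_{L^2(B_r(x_i))} \to \|\nabla f\|_{L^2(B_r(x))}$ for some $r \in (0,R]$. In the variable Hilbert-module framework, the combination of $L^2$-weak convergence of $\nabla f_i$ and convergence of the $L^2$-norms upgrades to $L^2$-strong convergence of $\nabla f_i$ to $\nabla f$ on $B_r(x)$; a polarisation argument based on the parallelogram identity then gives $L^1$-strong convergence of $|\nabla f_i|^2$ to $|\nabla f|^2$ on $B_r(x)$.

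The main obstacle is the identification step, because ``gradient of $f_i$'' and ``test function on $X$'' live a priori on different ambient spaces, and the integration-by-parts formula must be transported across pmGH convergence. The decisive technical input is the test-function approximation scheme with uniform control on Lipschitz constants, $L^\infty$-norms, and $L^2$-norms of the Laplacians along convergent sequences; with this in hand, both sides of the identity can be passed to the limit in a controlled way and the rest reduces to standard Hilbert space arguments.
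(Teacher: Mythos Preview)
Your sketch is correct and follows the standard route found in the cited references. Note that the paper itself does not prove this theorem: it is stated as a preliminary result and attributed directly to \cite[Th.6.3]{GMS}, \cite[Th.7.4]{AmbrosioHonda} and \cite[Th.4.2]{AmbrosioHonda2}, so there is no in-paper proof to compare against; your outline is essentially the argument in those sources.
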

Note that in Theorem \ref{bbbg} if $R<\infty$, then $L^2$-strong convergence of $f_i$ to $f$ is satisfied on $B_R(x)$, which is justified by using the Sobolev embedding theorem $H^{1, 2}(B_R(x), \dist_X, \meas_X) \hookrightarrow L^{2N/(N-2)}(B_R(x), \meas_X)$. See \cite[Th.4.2]{AmbrosioHonda2}.

The convergence of the heat flows with respect to (\ref{jju}) is obtained in \cite[Th.5.7]{GMS}  (more precisely they discussed it in more general setting, for $\CD(K, \infty)$ spaces under pmG-convergence). As a corollary, it is proved in \cite[Th.7.8]{GMS} that the following spectral convergence result holds, which will play a key role later (see \cite[Th.7.3 and 7.9]{CheegerColding3} for Ricci limit spaces. Compare with \cite[Prop.3.3]{AmbrosioHonda2}).
\begin{theorem}[Spectral convergence]\label{thm:spectral}
If $(X, \dist_X)$ is compact, 
then 
\begin{equation}
\lambda_j^{X_i} \to \lambda_j^X,\quad \forall j.
\end{equation}
Moreover for any $\phi_j \in D(\Delta_X)$ with $\Delta_X \phi_j^X+\lambda_j^X\phi_j^X=0$, there exists a sequence of $\phi_{j, i}^{X_i} \in D(\Delta_{X_i})$ such that $\Delta_{X_i}\phi_{j, i}^{X_i}+\lambda_j^{X_i}\phi_{j, i}^{X_i}=0$ holds and that $\phi_{j, i}^{X_i}$ $H^{1, 2}$-strongly converge to $\phi_j^X$ on $X$.
\end{theorem}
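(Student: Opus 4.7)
The plan is to combine the Rellich-type compactness of Theorem \ref{bbbg} with the min-max characterization of eigenvalues and an integration-by-parts argument that recovers $H^{1,2}$-strong convergence.

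\textbf{Step 1: $\limsup_{i\to\infty}\lambda_j^{X_i}\le \lambda_j^X$.} For each $k\le j$, fix an eigenfunction $\phi_k^X$ of $\lambda_k^X$ with $\|\phi_k^X\|_{L^2}=1$. Using density of $\mathrm{Test}F$ and a diagonal approximation, I would produce sequences $\psi_{k,i}\in\mathrm{Test}F(X_i,\dist_{X_i},\meas_{X_i})$ that $H^{1,2}$-strongly converge to $\phi_k^X$. By $L^2$-strong convergence, the $(j+1)\times(j+1)$ Gram matrix $(\langle \psi_{k,i},\psi_{l,i}\rangle_{L^2})_{k,l}$ converges to the identity, so Gram-Schmidt orthonormalization produces $\tilde\psi_{k,i}$ that still $H^{1,2}$-strongly converge to $\phi_k^X$. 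Since $\lim_i\|\nabla\tilde\psi_{k,i}\|_{L^2}^2=\lambda_k^X$, the Rayleigh-Ritz min-max principle applied to the $(j+1)$-dimensional subspace $\Span(\tilde\psi_{0,i},\dots,\tilde\psi_{j,i})$ yields $\lambda_j^{X_i}\le \max_k\|\nabla\tilde\psi_{k,i}\|_{L^2}^2/\|\tilde\psi_{k,i}\|_{L^2}^2\to \lambda_j^X$.

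\textbf{Step 2: $\liminf_{i\to\infty}\lambda_j^{X_i}\ge \lambda_j^X$.} Choose, for each $i$, $L^2$-orthonormal eigenfunctions $\phi_{0,i}^{X_i},\dots,\phi_{j,i}^{X_i}$ associated with $\lambda_0^{X_i},\dots,\lambda_j^{X_i}$. By Step~1, $\sup_i\|\phi_{k,i}^{X_i}\|_{H^{1,2}}<\infty$. By Theorem \ref{bbbg}, up to a diagonal subsequence there exist $\phi_k^\infty\in H^{1,2}(X,\dist_X,\meas_X)$ such that $\phi_{k,i}^{X_i}\to\phi_k^\infty$ strongly in $L^2$ and weakly in $H^{1,2}$; since $X$ is compact the $L^2$-orthonormality passes to the limit, so $\{\phi_k^\infty\}_{k=0}^{j}$ is $L^2$-orthonormal. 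Using the weak $L^2$-convergence of gradients together with Proposition \ref{prop:low} and Step~1, I can pass to the limit in the weak Euler-Lagrange identity $\int \langle\nabla\phi_{k,i}^{X_i},\nabla\eta\rangle\di\meas_{X_i}=\lambda_k^{X_i}\int \phi_{k,i}^{X_i}\eta\di\meas_{X_i}$ for test $\eta$ arising from a $H^{1,2}$-strongly convergent approximation on $X$, obtaining that $\phi_k^\infty$ is an eigenfunction of some $\lambda_k^\infty:=\lim_i\lambda_k^{X_i}\le \lambda_k^X$. The orthonormal family $\{\phi_k^\infty\}_{k=0}^{j}$ in the span of the first $j+1$ eigenspaces of $X$ forces $\lambda_k^\infty=\lambda_k^X$ for all $k\le j$. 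Combined with Step~1 this yields $\lambda_j^{X_i}\to\lambda_j^X$.

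\textbf{Step 3: construction of approximating eigenfunctions.} Given $\phi_j^X$ with $\Delta_X\phi_j^X+\lambda_j^X\phi_j^X=0$, I would express $\phi_j^X$ as a finite linear combination of the limits $\phi_k^\infty$ from Step~2 (for indices $k$ in the cluster with $\lambda_k^X=\lambda_j^X$, of which there are only finitely many by the spectral gap), and then take the corresponding linear combination of $\phi_{k,i}^{X_i}$ as the desired approximants. They $L^2$-strongly converge to $\phi_j^X$ and satisfy the eigenvalue equation with parameter $\lambda_j^{X_i}\to\lambda_j^X$. Finally, $H^{1,2}$-strong convergence follows from the eigenfunction identity $\|\nabla\phi_{j,i}^{X_i}\|_{L^2}^2=\lambda_j^{X_i}\|\phi_{j,i}^{X_i}\|_{L^2}^2$, which together with $\lambda_j^{X_i}\to\lambda_j^X$ and $L^2$-norm convergence gives $\|\nabla\phi_{j,i}^{X_i}\|_{L^2}\to\|\nabla\phi_j^X\|_{L^2}$.

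\textbf{Main obstacle.} The delicate point is Step~2: proving that the $L^2$-limits $\phi_k^\infty$ genuinely solve the weak eigenvalue equation on $X$ with parameter $\lambda_k^X$. This requires passing to the limit in the integration by parts using a suitable $H^{1,2}$-strongly convergent sequence of test functions constructed by approximating each test function on $X$ by elements on $X_i$, for which one needs the compactness of $X$ to avoid any loss at infinity; the fact that all the $\lambda_k^\infty$ come out equal to $\lambda_k^X$ then follows from a dimension count using the orthonormality and the standard min-max identification of eigenvalues.
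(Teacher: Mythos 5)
The paper does not prove this statement at all: it quotes it from \cite[Th.7.8]{GMS}, where it is deduced from the Mosco convergence of the Cheeger energies and the resulting convergence of the heat semigroups. Your route is the classical direct one (recovery sequences plus min--max for the upper bound, Rellich compactness via Theorem \ref{bbbg} plus stability of the eigenvalue equation for the lower bound), closer to \cite[Prop.3.3]{AmbrosioHonda2}, and it uses only tools the paper has already imported. Two remarks on Steps 1--2. In Step 1 the min--max principle bounds $\lambda_j^{X_i}$ by the supremum of the Rayleigh quotient over the \emph{whole} span, i.e.\ by the largest eigenvalue of the matrix $\bigl(\int\langle\nabla\tilde\psi_{k,i},\nabla\tilde\psi_{l,i}\rangle\di\meas_{X_i}\bigr)_{k,l}$, not by the maximum over the basis vectors; this is harmless because $H^{1,2}$-strong convergence forces that matrix to converge to $\mathrm{diag}(\lambda_0^X,\dots,\lambda_j^X)$, but it should be said. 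In Step 2 you can avoid redoing the weak formulation by hand: apply Theorem \ref{spectral2} with $R=\infty$ to $\phi_{k,i}^{X_i}$ (its hypotheses hold since $\Delta_{X_i}\phi_{k,i}^{X_i}=-\lambda_k^{X_i}\phi_{k,i}^{X_i}$ and the $\lambda_k^{X_i}$ are bounded by Step 1); this gives at once that the limit is an eigenfunction and that the convergence is $H^{1,2}$-strong, and the dimension count you describe then closes the loop.

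The genuine gap is in Step 3. If $\lambda_j^X$ has multiplicity $m>1$ but the corresponding cluster of eigenvalues does not collapse to a single eigenvalue on $X_i$, the linear combination $\sum_k a_k\phi_{k,i}^{X_i}$ is \emph{not} an eigenfunction of $\Delta_{X_i}$: it satisfies $\Delta_{X_i}\bigl(\sum_k a_k\phi_{k,i}^{X_i}\bigr)=-\sum_k a_k\lambda_k^{X_i}\phi_{k,i}^{X_i}$, which equals $-\lambda_j^{X_i}\sum_k a_k\phi_{k,i}^{X_i}$ only asymptotically. So your claim that the approximants ``satisfy the eigenvalue equation with parameter $\lambda_j^{X_i}$'' fails, and no construction can repair it in general: for a generic $\phi_j^X$ in an $m$-dimensional limit eigenspace, the eigenfunctions of $X_i$ with eigenvalue exactly $\lambda_j^{X_i}$ span a space whose $L^2$-limits fill out only a proper subspace of the limit eigenspace (the flat tori $\mathbb{R}^2/(\mathbb{Z}\times(1+\epsilon)\mathbb{Z})$ converging to the square torus already exhibit this for the first nonzero eigenvalue). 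What your argument actually proves---and what \cite[Th.7.8]{GMS} actually asserts---is the cluster version: $\phi_j^X$ is the $H^{1,2}$-strong limit of linear combinations of eigenfunctions of $X_i$ whose eigenvalues all converge to $\lambda_j^X$. That weaker form is all the paper ever uses (e.g.\ in Theorem \ref{prop:lowersemiapp} only the forward direction and the completeness of the limiting system are needed), so you should state the conclusion of Step 3 in that corrected form rather than as the exact eigenvalue equation.
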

Let us recall the following stability results proved in \cite[Th.4.4]{AmbrosioHonda2}.
\begin{theorem}[Stability of Laplacian on balls]\label{spectral2}
Let $f_i \in D(\Delta_{X_i}, B_R(x_i))$ satisfy 
$$
\sup_i(\|f_i\|_{H^{1, 2}(B_R(x_i))}+\|\Delta_{X_i} f_i\|_{L^2(B_R(x_i))})<\infty,
$$
and let us assume that $f_i$ $L^2$-strongly converge to $f \in L^2(B_R(x), \meas_X)$ on $B_R(x)$ (so that, by Theorem~\ref{bbbg}, 
$f \in H^{1, 2}(B_R(x), \dist_X, \meas_X)$). Then we have the following. 
\begin{enumerate}
\item[(1)] $f \in D(\Delta_X, B_R(x))$.
\item[(2)] $\Delta_{X_i}f_i$ $L^2$-weakly converge to $\Delta_X f$ on $B_R(x)$. 
\item[(3)] $f_i$ $H^{1, 2}$-strongly converge to $f$ on $B_r(x)$ 
for any $r<R$. 
\end{enumerate}
\end{theorem}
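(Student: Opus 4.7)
The plan is to combine Rellich-type compactness with an integration-by-parts argument to identify the weak limit of $\Delta_{X_i}f_i$ as $\Delta_X f$, then upgrade weak to strong gradient convergence on interior balls via a Caccioppoli-type cutoff identity. For the extraction step: applying Theorem \ref{bbbg} to $\{f_i\}$ provides, up to subsequence, $L^2$-strong convergence of $f_i$ to $f$ on every $B_r(x)$ with $r<R$, $L^2$-weak convergence of $\nabla f_i$ to $\nabla f$ on $B_R(x)$, and $f\in H^{1,2}(B_R(x),\dist_X,\meas_X)$. The uniform bound on $\|\Delta_{X_i}f_i\|_{L^2(B_R(x_i))}$ then yields, on a further subsequence, an $L^2$-weak limit $h\in L^2(B_R(x),\meas_X)$.

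To identify $h=\Delta_X f$, fix any $g\in\Lip_c(B_R(x))$ and approximate it by $g_i\in\Lip_c(B_R(x_i))$ with uniformly compactly contained supports and $g_i\to g$ $H^{1,2}$-strongly; such approximations arise from composing $g$ with Gromov--Hausdorff maps, truncating by a distance-to-boundary cutoff, and regularizing briefly by the heat flow. The defining identity for $f_i\in D(\Delta_{X_i},B_R(x_i))$ reads
$$\int (\Delta_{X_i}f_i)\,g_i\,d\meas_{X_i}=-\int\langle\nabla f_i,\nabla g_i\rangle\,d\meas_{X_i},$$
and passing $i\to\infty$ via the standard weak$\times$strong $L^2$-pairings produces the analogous identity for $(h,g,f)$. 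Density of $\Lip_c(B_R(x))$ in $H^{1,2}_0(B_R(x),\dist_X,\meas_X)$ then shows $f\in D(\Delta_X,B_R(x))$ with $\Delta_X f=h$; uniqueness of the limit extends the extraction to the full sequence, proving (1) and (2).

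For (3), fix $r<r'<R$ and pick $\chi\in\Lip_c(B_{r'}(x))$ with $\chi\equiv 1$ on $\overline{B_r(x)}$ and $0\le\chi\le 1$, together with cutoffs $\chi_i\in\Lip_c(B_{r'}(x_i))$ converging uniformly and $H^{1,2}$-strongly to $\chi$. Testing the equation for $f_i$ against $\chi_i^2 f_i\in H^{1,2}_0(B_R(x_i))$ yields the Caccioppoli identity
$$\int\chi_i^2|\nabla f_i|^2\,d\meas_{X_i}=-\int\chi_i^2 f_i\,\Delta_{X_i}f_i\,d\meas_{X_i}-2\int\chi_i f_i\langle\nabla f_i,\nabla\chi_i\rangle\,d\meas_{X_i}.$$
On the right, $\chi_i^2 f_i$ and $\chi_i f_i\nabla\chi_i$ are $L^2$-strongly convergent, while $\Delta_{X_i}f_i$ and $\nabla f_i$ are $L^2$-weakly convergent, so each term passes to the limit. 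The right-hand side thus converges to the analogous expression for $f$, which by the same identity (now valid thanks to step (1)) equals $\int\chi^2|\nabla f|^2\,d\meas_X$. Combining this with the lower semicontinuity of $L^2$-norms under weak convergence and the absolute continuity of $r'\mapsto\int_{B_{r'}(x)}|\nabla f|^2$ (letting $r'\downarrow r$) yields $\|\nabla f_i\|_{L^2(B_r(x_i))}\to\|\nabla f\|_{L^2(B_r(x))}$, giving (3).

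The main obstacle is producing the admissible approximations $g_i,\chi_i$: they must have uniformly bounded Lipschitz constants, supports uniformly compactly contained in $B_R(x_i)$, and yet converge $H^{1,2}$-strongly to their targets. This rests on Mosco-type stability of the Cheeger energy under pmGH-convergence together with the good cutoff constructions available on $\RCD$ spaces, both of which are nontrivial inputs underlying Theorem \ref{bbbg}; once these are granted, the remaining arguments are essentially the classical ones from linear elliptic stability theory transplanted to the metric-measure setting.
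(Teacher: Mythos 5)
The paper does not actually prove Theorem \ref{spectral2}: it is quoted from \cite[Th.4.4]{AmbrosioHonda2}, and your argument reproduces essentially the proof given there — Rellich-type extraction via Theorem \ref{bbbg}, identification of the weak limit of the Laplacians by pairing against $H^{1,2}$-strongly convergent test functions in $\Lip_c$, and a Caccioppoli/cutoff identity to upgrade weak to strong gradient convergence on interior balls. The only step I would tighten is the final passage $r'\downarrow r$: rather than invoking ``absolute continuity'' of $r'\mapsto\int_{B_{r'}(x)}|\nabla f|^2\di\meas_X$, first obtain $H^{1,2}$-strong convergence on some $B_s(x)$ with $r<s<R$, use the last part of Theorem \ref{bbbg} to get $L^1$-strong convergence of $|\nabla f_i|^2$ there, and pair with the $L^2$-strongly convergent indicators $1_{B_r(x_i)}\to 1_{B_r(x)}$ to conclude for every $r<R$.
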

Note that in Theorem \ref{spectral2} if $R=\infty$, then the $H^{1, 2}$-strong convergence of $f_i$ to $f$ is satisfied on $B_R(x)=X$. See \cite[Cor.10.4]{AmbrosioHonda}.

Finally let us mention that $L^p_{\mathrm{loc}}$-strong (or $H^{1, 2}_{\mathrm{loc}}$-strong, or $L^p_{\mathrm{loc}}$-weak, respectively) convergence means the $L^p$-strong (or $H^{1, 2}$-strong, or $L^p$-weak, respectively) convergence on $B_r(x)$ for any $r \in (0, \infty)$.

\subsection{Splitting theorem}
We say that a map $\gamma$ from $\mathbb{R}$ to a metric space $(Z, \dist_Z)$ is a \textit{line} if it is an isometric embedding as metric spaces, that is, $\dist_Z(\gamma(s), \gamma(t))=|s-t|$ holds for all $s,t \in \mathbb{R}$. Then the \textit{Busemann function $b_{\gamma}:Z \to \mathbb{R}$ of $\gamma$} is defined by
\begin{equation}
b_{\gamma}(x):=\lim_{t\to \infty}\left(t-\dist_Z(\gamma(t), x)\right).
\end{equation} 
We introduce now an important result in $\RCD$ theory, the so-called  \textit{splitting theorem}, proved in \cite[Th.1.4]{Gigli13}.
\begin{theorem}[Splitting theorem]\label{splitting}
Let $(X, \dist_X, \meas_X)$ be an $\RCD(0, N)$ space with $N \in [1, \infty)$ and let $x \in X$. Assume that the following (1) or (2) holds.
\begin{enumerate}
\item There exist lines $\gamma_i:\mathbb{R}\to X (i=1, 2, \ldots, k)$ such that $\gamma_i(0)=x$ and
\begin{equation}
\int_{B_1(x)}b_{\gamma_i}b_{\gamma_j}\di \meas_X =0,\quad \forall i \neq j
\end{equation}
are satisfied. 
\item There exist harmonic functions $f_i:X \to \mathbb{R} (i=1, 2, \ldots, k)$ such that $f_i(x)=0$ and $\langle \dist f_i, \dist f_j\rangle \equiv \delta_{ij}$ are satisfied.
\end{enumerate}
Let us put $\phi_i:=b_{\gamma_i}$ if (1) holds, $\phi_i:=f_i$ if (2) holds. 
Then there exist a pointed $\RCD(0, N-k)$ space $(Y, \dist_Y, \meas_Y, y)$ and an isometry
\begin{equation}
\Phi:(X, \dist_X, \meas_X, x) \to \left(\mathbb{R}^k\times Y, \sqrt{\dist_{\mathbb{R}^k}^2+\dist_Y^2}, \mathcal{H}^k\otimes \meas_Y, (0_k, y) \right)
\end{equation}
such that $\phi_i \equiv \pi_i\circ \Phi$ holds, where $\pi_i:\mathbb{R}^k \times Y \to \mathbb{R}$ is the projection to the $i$-th $\mathbb{R}$ of the Euclidean factor $\mathbb{R}^k$.
\end{theorem}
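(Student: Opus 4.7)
My plan is to treat hypothesis (2) as the ``downstream'' case: once Busemann functions in (1) are shown to be harmonic with constant norm-one gradient, the two hypotheses yield exactly the same object, namely $k$ harmonic functions $\phi_i$ with vanishing Hessian and pairwise orthonormal gradients, from which the splitting follows by integrating commuting gradient flows. Thus the proof breaks into three blocks: (A) promote Busemann functions to sharply regular harmonic functions under (1); (B) deduce vanishing Hessian and orthonormality of $\nabla \phi_i$ in both cases; (C) integrate to produce the isometric splitting.

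For (A), I would first show that each $b_{\gamma_i}$ is $1$-Lipschitz (by the triangle inequality) and harmonic. Harmonicity requires the sharp Laplacian comparison on $\RCD(0,N)$ spaces, which implies $\Delta d(\gamma_i(t),\cdot)\le (N-1)/d(\gamma_i(t),\cdot)$ in the distributional sense. Passing $t\to\pm\infty$ along the two rays of $\gamma_i$ gives $\Delta b_{\gamma_i}\le 0$ and $\Delta(-b_{\gamma_i})\le 0$, hence $\Delta b_{\gamma_i}=0$. The line property combined with the Lipschitz bound then forces $|\nabla b_{\gamma_i}|=1$ $\meas_X$-a.e.\ (this is the step where one uses that $t\mapsto b_{\gamma_i}(\gamma_i(t))=t$ is affine of slope $1$, together with a Cauchy--Schwarz/energy argument).

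For (B), apply the Bochner inequality (\ref{eq:bochner}) with $K=0$, $\phi\in D(\Delta)$ a suitable non-negative cut-off, and $f=\phi_i$:
\begin{equation*}
0=\frac{1}{2}\int_X |\nabla \phi_i|^2\,\Delta\phi\,\di\meas_X \ge \int_X \phi\bigl(|\mathrm{Hess}_{\phi_i}|^2+\langle\nabla\Delta\phi_i,\nabla\phi_i\rangle\bigr)\,\di\meas_X = \int_X \phi\,|\mathrm{Hess}_{\phi_i}|^2\,\di\meas_X,
\end{equation*}
so $\mathrm{Hess}_{\phi_i}\equiv 0$. Then the pointwise identity
\begin{equation*}
\nabla\langle\nabla\phi_i,\nabla\phi_j\rangle = \mathrm{Hess}_{\phi_i}(\nabla\phi_j,\cdot)+\mathrm{Hess}_{\phi_j}(\nabla\phi_i,\cdot)=0
\end{equation*}
shows $\langle\nabla\phi_i,\nabla\phi_j\rangle$ is a.e.\ constant; in case (2) this constant is $\delta_{ij}$ by hypothesis, and in case (1) it is forced to be $\delta_{ij}$ by combining constancy with the $L^2$-orthogonality on $B_1(x)$ and the unit-norm property (the diagonal part is automatic).

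For (C), the gradient flows $(F^i_t)_{t\in\setR}$ of $\phi_i$ are well-defined continuous maps on $X$ by a regularization/heat-flow argument (using the Lipschitz bound on $\phi_i$ and the Sobolev-to-Lipschitz property). Vanishing Hessian implies each $F^i_t$ preserves $\langle g_X,\cdot\rangle$ infinitesimally, hence is a measure-preserving isometry. Orthonormality of $\{\nabla\phi_i\}$ implies $[F^i_s,F^j_t]=\mathrm{id}$ and yields a faithful, free, isometric $\setR^k$-action. The combined flow $F_t:=F^1_{t_1}\circ\cdots\circ F^k_{t_k}$ then produces, through a local slice $Y:=\{\phi_1=\cdots=\phi_k=0\}\ni y$ (endowed with the induced metric and conditional measure), the isometry $\Phi(t,y')=F_t(y')$ with $\phi_i=\pi_i\circ\Phi$ by construction. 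The $\RCD(0,N-k)$ property of the slice $Y$ comes from the tensorization/stability of the $\RCD$ condition under products.

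The main obstacle is block (C): upgrading a.e.\ infinitesimal isometric behavior to a genuine isometric $\setR^k$-action on a non-smooth space, and then identifying $\meas_X$ with $\mathcal{H}^k\otimes\meas_Y$ on the nose. This requires the Sobolev-to-Lipschitz property (to make $F^i_t$ Lipschitz continuous, not merely defined $\meas_X$-a.e.), a disintegration of $\meas_X$ along the orbits of $F^i_t$ (yielding Lebesgue factors via the constancy of $|\nabla\phi_i|$ and coarea), and verification that the quotient metric-measure structure on $Y$ is itself $\RCD(0,N-k)$. Blocks (A) and (B) are essentially applications of existing Laplacian comparison and Bochner machinery; the integration step is where the genuinely $\RCD$-specific regularity inputs concentrate.
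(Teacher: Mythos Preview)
The paper does not give its own proof of this statement; it is simply quoted from \cite[Th.1.4]{Gigli13}. Your three-block outline (Laplacian comparison to make Busemann functions harmonic with unit gradient, Bochner to get $\mathrm{Hess}_{\phi_i}\equiv 0$ and orthonormal gradients, then integration of the commuting gradient flows to produce the isometric splitting and the $\RCD(0,N-k)$ quotient) is exactly the architecture of Gigli's proof in the cited reference, and you have correctly located the genuine work in block (C). One comment on block (B) in case (1): the passage from constancy of $\langle\nabla b_{\gamma_i},\nabla b_{\gamma_j}\rangle$ and the integral orthogonality $\int_{B_1(x)} b_{\gamma_i} b_{\gamma_j}\,\di\meas_X=0$ to the pointwise orthogonality is not quite immediate; you first split off the $\mathbb{R}$-factor for $b_{\gamma_i}$, write $b_{\gamma_j}=c\cdot t+h$ on $\mathbb{R}\times Y'$, and then use the odd symmetry $t\mapsto -t$ of the ball to kill the cross term, leaving $c\int t^2>0$ unless $c=0$. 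Also, in (B) the Bochner inequality as stated in the paper is for test functions, so Busemann functions need a cut-off/localization step before it applies; this is routine but should be flagged.
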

Based on this theorem, we define linear functions as follows.
\begin{definition}[Linear function]
Let $(X, \dist_X, \meas_X)$ be an $\RCD(0, N)$ space. We say that a function $f:X \to \mathbb{R}$ is \textit{linear} if it is harmonic and $|\nabla f|$ is constant.
\end{definition}
Theorem \ref{splitting} tells us that any linear function is a constant or the projection of a Euclidean factor $\mathbb{R}$ of $(X, \dist_X, \meas_X)$, up to multiplying by a constant.
The following well-known proposition will play a key role later. See \cite[Def.3.8]{Ket2} for the definition of warped product spaces of metric measure spaces, in particular, metric measure cones.
\begin{proposition}\label{prop:linearfunction}
Let $(X, \dist_X, \meas_X)$ be an $\RCD(N-1, N)$ space and let $C_0^N(X, \dist_X, \meas_X)$ denote the $(0, N)$-metric measure cone of $(X, \dist_X, \meas_X)$ (then \cite[Th.1.1]{Ket2} proves that $C_0^N(X, \dist_X, \meas_X)$ is an $\RCD(0, N+1)$ space). Then any Lipschitz harmonic function $f$ on $C_0^N(X, \dist_X, \meas_X)$ is linear.
\end{proposition}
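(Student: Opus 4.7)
The plan is to combine a Fourier-type decomposition of $f$ along the cross section $X$ with the sharp Lichnerowicz--Obata rigidity on $\RCD(N-1,N)$ spaces, working through the warped-product calculus of \cite{Ket2}. Because $(X,\dist_X,\meas_X)$ is $\RCD(N-1,N)$ with a strictly positive Ricci lower bound, Bonnet--Myers forces $X$ to be compact, so $-\Delta_X$ has a discrete spectrum $0=\lambda_0<\lambda_1\le \cdots$ with $L^2$-orthonormal eigenfunctions $\{\phi_k\}$, and the synthetic Lichnerowicz estimate gives $\lambda_k\ge N$ for every $k\ge 1$. In polar coordinates $(r,\xi)\in(0,\infty)\times X$ about the apex $o$, the warped Laplacian on the cone takes the form $\Delta_C = \partial_r^2 + \tfrac{N}{r}\partial_r + \tfrac{1}{r^2}\Delta_X$.

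The first step is to expand $f(r,\xi)=\sum_{k}a_k(r)\phi_k(\xi)$ with $a_k(r):=\int_X f(r,\cdot)\phi_k\di\meas_X$ and translate $\Delta_C f=0$ into the Euler-type ODEs
\begin{equation*}
a_k''+\frac{N}{r}a_k'-\frac{\lambda_k}{r^2}a_k=0,
\end{equation*}
whose fundamental solutions are $r^{\alpha_k^\pm}$ with $\alpha_k^\pm=\tfrac12\bigl(-(N-1)\pm\sqrt{(N-1)^2+4\lambda_k}\bigr)$. The Lipschitz hypothesis is decisive here in two ways: continuity of $f$ at the apex rules out the singular branch $r^{\alpha_k^-}$, while the linear growth estimate $|f(r,\xi)-f(o)|\le Lr$ forces $\alpha_k^+\le 1$, equivalently $\lambda_k\le N$. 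Combined with $\lambda_k\ge N$ for $k\ge 1$, only $\lambda_k\in\{0,N\}$ can contribute, so
\begin{equation*}
f(r,\xi)=c_0+r\,g(\xi),\qquad \Delta_X g+Ng=0.
\end{equation*}

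To close the argument I would apply the sharp Bochner inequality on $(X,\dist_X,\meas_X)$ to the first eigenfunction $g$; using $\Delta_X g=-Ng$ and $\mathrm{Ric}_X\ge (N-1)g_X$, one computes
\begin{equation*}
\tfrac12 \Delta_X\bigl(g^2+|\nabla_X g|^2\bigr)\ge \bigl(g\Delta_X g+|\nabla_X g|^2\bigr)+\bigl(Ng^2-|\nabla_X g|^2\bigr)=0,
\end{equation*}
so that $g^2+|\nabla_X g|^2$ is subharmonic on the compact space $X$, hence constant by the maximum principle. A direct computation in polar coordinates then yields $|\nabla f|_C^2=(\partial_r f)^2+r^{-2}|\nabla_X(rg)|^2_X=g^2+|\nabla_X g|^2$, which is therefore constant on all of $C_0^N(X,\dist_X,\meas_X)$, and together with $\Delta_C f=0$ this shows that $f$ is linear in the sense defined above.

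The principal obstacle I foresee is rigorously justifying the termwise separation of variables in the $\RCD$ framework: the coefficients $a_k$ are a priori only $L^2_{\mathrm{loc}}$ functions of $r$, so reading off the ODE from $\Delta_C f=0$ requires the warped-product second-order calculus of \cite{Ket2}. I would handle this by first testing the identity $\Delta_C f=0$ against products $\chi(r)\phi_k(\xi)$ for $\chi\in\Lip_c(0,\infty)$ to obtain the ODE for $a_k$ in the distributional sense on $(0,\infty)$, then using elliptic regularity for the one-dimensional equation to pass to the classical solution $a_k = A_k r^{\alpha_k^+}+B_k r^{\alpha_k^-}$, and finally exploiting the Bessel-type growth estimates $\|\phi_k\|_{L^\infty}\lesssim \lambda_k^{N/4}$ recalled in \eqref{eq:eigenfunction} to sum the series back up to $f$ in the Lipschitz topology.
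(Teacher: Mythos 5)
Your argument is correct, but it is not the route the paper takes: the authors explicitly flag the separation-of-variables/spectral proof as the ``well-known'' one (citing \cite[Prop.2.1]{DZ}) and deliberately give a different argument by contradiction. Their proof never touches the Fourier decomposition along the cross section: it uses the scale invariance of the cone to blow down $R_i^{-1}(f-f(p))$, the mean value theorem at infinity of \cite{HKX} applied to the bounded subharmonic function $|\nabla f|^2$ to see that the blow-down $F$ is linear with $\mathbf{Lip}F=\mathbf{Lip}f$, and then a second application of \cite{HKX} to $|\nabla(f-\sum_i a_i\pi_i)|^2$ together with the splitting theorem to contradict the maximality of the Euclidean factor. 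That argument is softer and reuses machinery (stability of Laplacians under pmGH convergence, blow-downs, splitting) that the paper needs anyway in Theorem \ref{prop:rigiditynonnegative}, whereas your proof buys strictly more information: it identifies the space of Lipschitz harmonic functions on the cone as $\mathbb{R}\oplus\{rg:\Delta_Xg+Ng=0\}$ and exhibits $|\nabla f|^2=g^2+|\nabla_Xg|^2$ explicitly via the Obata-type Bochner rigidity. Your key quantitative inputs (compactness of $X$ by Bonnet--Myers, the Lichnerowicz bound $\lambda_1\ge N$ for $\RCD(N-1,N)$, the indicial roots $\alpha_k^{\pm}$, and the pinching $\alpha_k^+\le 1$ from the Lipschitz bound versus $\alpha_k^+\ge 1$ from Lichnerowicz) are all correct, and the one genuine technical burden --- justifying the termwise ODE for $a_k$ and the identity $|\nabla f|_C^2=(\partial_rf)^2+r^{-2}|\nabla_Xf|_X^2$ in the nonsmooth setting --- is exactly what the warped-product calculus of \cite{Ket2} (and the cited \cite{DZ}) supplies, and you correctly identify how to discharge it by testing against $\chi(r)\phi_k(\xi)$.
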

\begin{proof}
A well-known proof of this result is to use the spectral theory with the separation of variables (see \cite[Prop.2.1]{DZ}). Let us provide another proof here by contradiction. Compare with the proof of Theorem \ref{prop:rigiditynonnegative}.

If not, there exists a Lipschitz harmonic function $f$ on $C_0^N(X, \dist_X, \meas_X)$ such that $f$ is not linear. Let us denote by $\mathbb{R}^k$ the maximal Euclidean factor of $C_0^N(X, \dist_X, \meas_X)$. Since $C_0^N(X, \dist_X, \meas_X)$ is a scale invariant space, thanks to Theorem \ref{spectral2}, there exists a sequence of $R_i \to \infty$ such that $R_i^{-1}(f-f(p))$ converge to a Lipschitz harmonic function $F$ on $C_0^N(X, \dist_X, \meas_X)$, where $p$ denotes the pole of $C_0^N(X, \dist_X, \meas_X)$. Applying the mean value theorem at infinity proved in \cite[Th.5.4]{HKX} for a bounded subharmonic function $|\nabla f|^2$ with a blow-down argument in \cite{ChCM} shows that $F$ is linear and that $\mathbf{Lip}F=\mathbf{Lip}f$ holds. Since $f$ is not linear, we have $\mathbf{Lip}f>0$, in particular $F$ is not a constant. If in addition $F=\sum_{i=1}^ka_i\pi_i+a_{k+1}$ holds for some $a_i \in \mathbb{R}$,  where $\pi_i$ denotes the $i$-th projection to $\mathbb{R}$, then applying \cite[Th.5.4]{HKX} again for a bounded subharmonic function $|\nabla (f- \sum_{i=1}^ka_i\pi_i)|^2$ shows that $f- \sum_{i=1}^ka_i\pi_i$ is constant. In particular $f$ is linear which is a contradiction. Thus we know that $F$ cannot be written as a linear combination of $\{\pi_i\}_{i=1}^k$.
Since $F$ is not a constant, Theorem \ref{splitting} yields that $C_0^N(X, \dist_X, \meas_X)$ has a Euclidean factor $\mathbb{R}^{k+1}$ which contradicts the maximality of $k$.
\end{proof}

\section{Approximate Sobolev map}\label{sec:app}
Throughout the section, let us fix
\begin{itemize}
\item a finite dimensional (not necessary compact) $\RCD$ space $(X, \dist_X, \meas_X)$,
\item a finite dimensional compact $\RCD$ space $(Y, \dist_Y, \meas_Y)$, 
\item an open subset $U$ of $X$. 
\end{itemize}
For any $\lambda \in \mathbb{R}_{\ge 0}$ let 
\begin{equation}
E_{Y, \lambda}:=\left\{ \phi \in D(\Delta_Y); \Delta_Y\phi+\lambda \phi=0\right\},
\end{equation}
where $(E_{Y, \lambda}, \| \cdot\|_{L^2})$ is a finite dimensional Hilbert space because the canonical inclusion from $H^{1, 2}(Y, \dist_Y, \meas_Y)$ to $L^2(Y, \meas_Y)$ is a compact operator.
\begin{definition}[Weakly smooth map]
A Borel map $f:U \to Y$ is said to be \textit{weakly smooth} if $\phi \circ f \in H^{1, 2}(U, \dist_X, \meas_X)$ holds for any eigenfunction $\phi$ of $(Y, \dist_Y, \meas_Y)$.  
\end{definition}
Note that any Lipschitz map from $U$ to $Y$ is weakly smooth if $U$ is bounded. 
It is easy to check that the following is well-defined because $f^{\lambda, *}g_Y$ vanishes if $\lambda$ is not an eigenvalue of $-\Delta_Y$.
\begin{definition}[$t$-Sobolev map]
Let $f:U \to Y$ be a weakly smooth map.
\begin{enumerate} 
\item For any $\lambda \in \mathbb{R}_{\ge 0}$, put
\begin{equation}
f^{\lambda, *}g_Y:=\sum_{i=1}^k\dist\left( \phi_i \circ f\right) \otimes \dist \left( \phi_i \circ f\right) \in L^1\left( (T^*)^{\otimes 2}(U, \dist_X, \meas_X)\right)
\end{equation} 
and
\begin{equation}
e_{Y}^{\lambda}(f):=\left\langle f^{\lambda, *}g_Y, g_X\right\rangle \in L^1(U, \meas_X),
\end{equation}
where $\{\phi_i\}_{i=1}^k$ is an orthonormal basis of $(E_{Y, \lambda}, \| \cdot\|_{L^2})$. 
\item For any $t \in (0, \infty)$, $f$ is said to be a \textit{$t$-Sobolev map} if 
\begin{equation}\label{eq:enegryfunctional}
\frac{1}{2}\int_U\left(\sum_{\lambda \in \mathbb{R}_{\ge 0}}e^{-2\lambda t}e_{Y}^{\lambda}(f)\right)\di \meas_X<\infty.
\end{equation}
Then the LHS of (\ref{eq:enegryfunctional}) is denoted by $\mathcal{E}_{U, Y, t}(f)$ and called the \textit{$t$-energy of $f$}. Moreover the integrand, $\sum_{\lambda \in \mathbb{R}_{\ge 0}}e^{-2\lambda t}e_{Y}^{\lambda}(f)$, is denoted by $e_{Y, t}(f)$ and called the \textit{$t$-energy density of $f$}.
\end{enumerate}
\end{definition}

\begin{proposition}\label{prop:densityfunction}
Let $t \in (0, \infty)$ and let $f:U \to Y$ be a $t$-Sobolev map. Then the \textit{$t$-pull-back of $f$}, denoted by $f^*g_{Y, t}$
\begin{equation}\label{eq:pullbackl1}
f^*g_{Y, t}:=\sum_{\lambda \in \mathbb{R}_{\ge 0}}e^{-2\lambda t}f^{\lambda, *}g_Y \in L^1\left((T^*)^{\otimes 2}(U, \dist_X, \meas_X)\right)
\end{equation}
is well-defined. Moreover it holds that
\begin{equation}\label{eq:pullbackl12}
e_{Y, t}(f)=\left\langle f^*g_{Y, t}, g_X\right\rangle.
\end{equation}
\end{proposition}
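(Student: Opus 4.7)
The plan is to reduce the $L^{1}$-convergence of the defining series to a scalar integrability that we already have in hand, by exploiting positivity of each summand.

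First I would observe that for every $\lambda$, the tensor $f^{\lambda,*}g_Y = \sum_{i=1}^{k}\dist(\phi_i\circ f)\otimes\dist(\phi_i\circ f)$ is a $\meas_X$-a.e.\ symmetric, positive semi-definite tensor in $L^{1}((T^*)^{\otimes 2}(U,\dist_X,\meas_X))$, and one computes
\begin{equation*}
\langle f^{\lambda,*}g_Y, g_X\rangle = \sum_{i=1}^{k}|\nabla(\phi_i\circ f)|^2 = e_Y^{\lambda}(f)\ge 0 \qquad \text{$\meas_X$-a.e. on $U$.}
\end{equation*}
For a positive semi-definite symmetric $(0,2)$-tensor $T$ on a Hilbert tangent module, the pairing $\langle T,g_X\rangle$ equals its pointwise trace with respect to $g_X$; since the Hilbert--Schmidt norm of a positive semi-definite matrix is dominated by its trace, one has the pointwise bound
\begin{equation*}
|f^{\lambda,*}g_Y|(x)\le \langle f^{\lambda,*}g_Y,g_X\rangle(x) = e_Y^{\lambda}(f)(x)\qquad \text{for $\meas_X$-a.e.\ $x\in U$.}
\end{equation*}
This pointwise inequality is the key technical ingredient, and I would justify it either by diagonalizing pointwise on a $(1\pm\epsilon)$-bi-Lipschitz rectifiable chart supplied by Theorem~\ref{thm:RN} (comparing with the Euclidean fact $|A|_{HS}\le \tr A$ for $A\ge 0$) and then letting $\epsilon\to 0^+$, or directly from the definition of the Hilbert--Schmidt norm in Definition~\ref{def:extralabel} together with Proposition~\ref{prop:ineq}.

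With this estimate in hand, the $t$-Sobolev hypothesis on $f$ gives
\begin{equation*}
\sum_{\lambda\in\mathbb{R}_{\ge 0}} e^{-2\lambda t}\,\bigl\| f^{\lambda,*}g_Y\bigr\|_{L^1} \le \sum_{\lambda\in\mathbb{R}_{\ge 0}} e^{-2\lambda t}\int_U e_Y^{\lambda}(f)\di\meas_X = 2\mathcal{E}_{U,Y,t}(f)<\infty,
\end{equation*}
so the series defining $f^*g_{Y,t}$ is absolutely convergent in the Banach space $L^{1}((T^*)^{\otimes 2}(U,\dist_X,\meas_X))$, which proves the first assertion and gives $f^*g_{Y,t}\in L^{1}$.

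For the identity \eqref{eq:pullbackl12}, I would note that the linear functional $T\mapsto \langle T,g_X\rangle$ from $L^{1}((T^*)^{\otimes 2}(U,\dist_X,\meas_X))$ to $L^{1}(U,\meas_X)$ is continuous, since $g_X$ is in $L^{\infty}$ with $|g_X|\equiv\sqrt{n}$ by Proposition~\ref{Riemdef}. Therefore we may commute the pairing with the absolutely convergent series:
\begin{equation*}
\langle f^*g_{Y,t},g_X\rangle = \sum_{\lambda\in\mathbb{R}_{\ge 0}} e^{-2\lambda t}\langle f^{\lambda,*}g_Y,g_X\rangle = \sum_{\lambda\in\mathbb{R}_{\ge 0}} e^{-2\lambda t}\,e_Y^{\lambda}(f)=e_{Y,t}(f),
\end{equation*}
which is exactly \eqref{eq:pullbackl12}. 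The main (minor) obstacle is the first pointwise inequality $|f^{\lambda,*}g_Y|\le e_Y^{\lambda}(f)$; everything after that is routine linearity/continuity in the $L^{p}$-normed module framework.
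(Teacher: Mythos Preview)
Your proof is correct and takes essentially the same approach as the paper: both rely on the pointwise estimate $|f^{\lambda,*}g_Y|\le e_Y^{\lambda}(f)$ (which the paper records as \eqref{eq:densitybound} just after the proof, and also in \eqref{eq:bound}) to reduce the $L^1$-convergence of the tensor series to the scalar finiteness $\mathcal{E}_{U,Y,t}(f)<\infty$. The only cosmetic difference is that the paper phrases the argument as showing the partial sums are Cauchy in $L^1$, whereas you establish absolute summability directly; these are equivalent in a Banach space.
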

\begin{proof}
Fix $\epsilon \in (0, 1)$ and take an eigenvalue $\lambda$ of $-\Delta_Y$ with
\begin{equation}
\sum_{\mu \ge \lambda}\int_Ue^{-2\mu t}e_Y^{\mu}(f)\di \meas_X<\epsilon.
\end{equation}
Then for any $\mu_i \in [\lambda, \infty) (i=1, 2)$,
\begin{equation}
\left\| \sum_{\mu \le \mu_1} e^{-2\mu t}f^{\mu, *}g_Y- \sum_{\mu \le \mu_2} e^{-2\mu t}f^{\mu, *}g_Y\right\|_{L^1(U)} \le \sum_{\mu \ge \lambda}\int_Ue^{-2\mu t}e_Y^{\mu}(f)\di \meas_X<\epsilon
\end{equation}
which implies that the sequence $\{\sum_{\mu \le \alpha} e^{-2\mu t}f^{\mu, *}g_Y\}_{\alpha \in \mathbb{R}_{\ge 0}}$ is a convergent sequence in $L^1((T^*)^{\otimes 2}(U, \dist_X, \meas_X))$. Thus (\ref{eq:pullbackl1}) is well-defined and (\ref{eq:pullbackl12}) holds.
\end{proof}
Note that for a $t$-Sobolev map $f:U \to Y$, we see that 
\begin{equation}\label{eq:densitybound}
\left|f^*g_{Y, t}\right|(x) \le \sum_{\lambda \in \mathbb{R}_{\ge 0}}e^{-2\lambda t}\left| f^{\lambda, *}g_Y\right|(x) \le  \sum_{\lambda \in \mathbb{R}_{\ge 0}}e^{-2\lambda t}e_Y^{\lambda}(f)(x)=e_{Y, t}(f)(x),\quad \text{for $\meas_X$-a.e. $x \in U$.} 
\end{equation}
and that if $f|_A$ is Lipschitz on a Borel subset $A$ of $U$, then
\begin{equation}\label{absiriiwiiwiwi}
f^*g_{Y, t}=(\Phi_t^Y\circ f)^*g_{L^2}
\end{equation}
in $L^{\infty}((T^*)^{\otimes 2}(A, \dist_X, \meas_X))$.
\begin{theorem}[Compactness]\label{propcompact}
Let $t_i \to t$ be a convergent sequence in $(0, \infty)$, let $R \in (0, \infty]$, let $x \in X$, let $f_i:B_R(x) \to Y$ be a sequence of $t_i$-Sobolev maps with 
\begin{equation}\label{eq:boun}
\liminf_{i \to \infty}\mathcal{E}_{B_R(x), Y, t_i}(f_i)<\infty.
\end{equation}
Then after passing to a subsequence there exists a $t$-Sobolev map $f:B_R(x) \to Y$  such that $f_i(z)$ converge to $f(z)$ for $\meas_X$-a.e. $z \in B_R(x)$ and that
\begin{equation}\label{eq:lowesemico11}
\liminf_{i \to \infty}\int_{B_R(x)}\phi_i e_{Y, t_i}(f_i)\di \meas_X \ge \int_{B_R(x)}\phi e_{Y, t}(f)\di \meas_X
\end{equation}
for any $L^2_{\mathrm{loc}}$-strongly convergent sequence $\phi_i \to \phi$ with $\phi_i \ge 0$ and $\sup_i\|\phi_i\|_{L^{\infty}}<\infty$.
In particular
\begin{equation}\label{eq:lowesemico1}
\liminf_{i \to \infty}\mathcal{E}_{B_R(x), Y, t_i}(f_i)\ge \mathcal{E}_{B_R(x), Y, t}(f).
\end{equation}
\end{theorem}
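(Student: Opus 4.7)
\medskip
\noindent
\emph{Proof plan.}
The heart of the argument is to exploit the spectral decomposition of $(Y,\dist_Y,\meas_Y)$: if $\{\phi_j^Y\}_{j\ge 0}$ is an $L^{2}$-orthonormal basis of eigenfunctions with eigenvalues $\lambda_j^Y$, then by definition
\begin{equation}
\mathcal{E}_{B_R(x),Y,t_i}(f_i)=\frac12\sum_{j\ge 0}e^{-2\lambda_j^Y t_i}\int_{B_R(x)}|\nabla(\phi_j^Y\circ f_i)|^2\di\meas_X,
\end{equation}
and the $t_i$-energy density is the pointwise version of this sum. After passing to a subsequence, assume $\liminf$ is attained as a finite limit. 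Since $t_i\to t\in(0,\infty)$, the factors $e^{-2\lambda_j^Y t_i}$ are uniformly bounded below on each fixed $j$, so that for every $j$ the sequence $\|\nabla(\phi_j^Y\circ f_i)\|_{L^2(B_R(x))}$ is uniformly bounded. Since each $\phi_j^Y$ is Lipschitz and bounded (see (\ref{eq:eigenfunction})), $\|\phi_j^Y\circ f_i\|_{L^2(B_{R'}(x))}$ is also uniformly bounded for every $R'<R$, so Theorem \ref{bbbg} applies: after a diagonal extraction over $j\in\mathbb{N}$ and, if $R=\infty$, over an exhausting sequence $R'\uparrow R$, we obtain a subsequence and functions $g_j\in H^{1,2}_{\mathrm{loc}}(B_R(x),\dist_X,\meas_X)$ such that $\phi_j^Y\circ f_i\to g_j$ in $L^2_{\mathrm{loc}}$, $\nabla(\phi_j^Y\circ f_i)\weakto\nabla g_j$ weakly in $L^2_{\mathrm{loc}}$, and $\phi_j^Y\circ f_i\to g_j$ pointwise $\meas_X$-a.e.\ on $B_R(x)$.

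Next I would construct the limit map $f:B_R(x)\to Y$. Fix $\meas_X$-a.e.\ $z\in B_R(x)$ at which the above pointwise convergence holds for every $j$. Since $Y$ is compact, the sequence $\{f_i(z)\}_i$ has a convergent subsequence to some $y_z\in Y$; by continuity of $\phi_j^Y$, any such limit satisfies $\phi_j^Y(y_z)=g_j(z)$ for every $j$. The heat kernel embedding $\Phi_t^Y:Y\hookrightarrow L^2(Y,\meas_Y)$ is an injective continuous map whose components, up to the positive weights $e^{-\lambda_j^Y t}$, are exactly $\phi_j^Y$; hence $\{\phi_j^Y\}_j$ separates points of $Y$, so $y_z$ is uniquely determined by $(g_j(z))_j$. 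Consequently every subsequence of $\{f_i(z)\}$ has the same limit, i.e.\ $f_i(z)\to f(z):=y_z$, and by construction $\phi_j^Y\circ f=g_j\in H^{1,2}_{\mathrm{loc}}(B_R(x),\dist_X,\meas_X)$, so $f$ is weakly smooth.

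The lower semicontinuity (\ref{eq:lowesemico11}) is then a standard two-step lower-semicontinuity argument. For each fixed $j$, the weak $L^2_{\mathrm{loc}}$-convergence $\nabla(\phi_j^Y\circ f_i)\weakto\nabla(\phi_j^Y\circ f)$ together with $\phi_i\to\phi$ strongly in $L^2_{\mathrm{loc}}$, $\phi_i\ge 0$ and $\sup_i\|\phi_i\|_{L^\infty}<\infty$ gives
\begin{equation}
\liminf_{i\to\infty}\int_{B_R(x)}\phi_i|\nabla(\phi_j^Y\circ f_i)|^2\di\meas_X\ge\int_{B_R(x)}\phi|\nabla(\phi_j^Y\circ f)|^2\di\meas_X,
\end{equation}
by writing $|\nabla u_i|^2\ge 2\langle\nabla u_i,\nabla v\rangle-|\nabla v|^2$ with $u_i=\phi_j^Y\circ f_i$, $v=\phi_j^Y\circ f$, multiplying by $\phi_i\ge 0$ and passing to the limit. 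Multiplying by $e^{-2\lambda_j^Y t_i}\to e^{-2\lambda_j^Y t}$ and summing over $j$ via Fatou's lemma (applied to the counting measure on $\mathbb{N}$) yields (\ref{eq:lowesemico11}); in particular the sum $\sum_j e^{-2\lambda_j^Y t}\int\phi|\nabla(\phi_j^Y\circ f)|^2$ is finite, so $f$ is a $t$-Sobolev map, and taking $\phi_i=\phi\equiv 1$ (when $\meas_X(B_R(x))<\infty$; otherwise truncate by cutoffs and let them grow) recovers (\ref{eq:lowesemico1}).

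The main technical obstacle is the pointwise a.e.\ identification $f_i(z)\to f(z)\in Y$: we only have $L^2_{\mathrm{loc}}$-convergence of \emph{individual} components $\phi_j^Y\circ f_i$, and a diagonal argument is unavoidable to reach simultaneous pointwise convergence for all $j$. The argument then relies crucially on the compactness of $Y$ (to produce subsequential limits in $Y$ of $f_i(z)$) and on the point-separation property of $\{\phi_j^Y\}_j$, itself a consequence of the embedding result for $\Phi_t^Y$ recalled in subsection \ref{subsec:pullbak}; this is what prevents the procedure from converging only in $\ell^2$ to a point outside $\Phi_t^Y(Y)$. The remaining secondary issue, exchanging $\liminf_i$ with the infinite sum over $j$, is handled by Fatou.
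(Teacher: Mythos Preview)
Your proof is correct and follows essentially the same approach as the paper's: spectral decomposition, diagonal extraction to get $H^{1,2}_{\mathrm{loc}}$ limits of $\phi_j^Y\circ f_i$, identification of a limit map $f$, and lower semicontinuity via truncation/Fatou. The one noteworthy difference is in how you identify $f$: the paper instead shows that $\Phi_{t_j}^{\ell^2}\circ f_j(z)\to (F_i(z))_i$ in $\ell^2$ (using the decay estimates (\ref{eq:eigenfunction})), observes that the embedded images $\Phi_{t_j}^{\ell^2}(Y)$ Hausdorff-converge to $\Phi_t^{\ell^2}(Y)$ in $\ell^2$, and then pulls back via $(\Phi_t^{\ell^2})^{-1}$; your route---compactness of $Y$ together with the point-separation property of $\{\phi_j^Y\}_j$---is a slightly more elementary way to reach the same conclusion and avoids invoking the $\ell^2$ Hausdorff convergence.
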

\begin{proof}
Let us follow the notation of (\ref{eq:spectrum}).
Thanks to (\ref{eq:boun}), for each $i \in \mathbb{N}$ the sequence $\{e^{-\lambda_i^Yt_j}\phi_i^Y \circ f_j\}_j$ is bounded  in $H^{1, 2}(B_R(x), \dist_X, \meas_X)$. Thus after passing to a subsequence with a diagonal process, there exists $F_i \in H^{1, 2}(B_R(x), \dist_X, \meas_X)$ such that $e^{-\lambda_i^Yt_j}\phi_i^Y \circ f_j$ $L^2_{\mathrm{loc}}$-strongly converge to $F_i$ on $B_R(x)$ and that 
$\phi_j\dist (e^{-\lambda_i^Yt_j}\phi_i^Y \circ f_j)$ $L^2$-weakly converge to $\phi \dist F_i$ on $B_R(x)$. In particular after passing to a subsequence again, there exists a Borel subset $A$ of $B_R(x)$ such that $\meas_X(B_R(x) \setminus A)=0$ and that $e^{-\lambda_i^Yt_j}\phi_i^Y \circ f_j(z) \to F_i(z)$ for any $z \in A$. Moreover by (\ref{eq:eigenfunction}) we know that $\Phi_{t_j}^{\ell^2}\circ f_j$ pointwisely converge to a map $\Phi:=(F_i)_i:A \to \ell^2$ on $A$ (recall (\ref{eq:ell23}) for the definition of a topological embedding $\Phi_t^{\ell^2}$ from $Y$ to $\ell^2$). Since it is trivial that $\Phi_{t_j}^{\ell^2}(Y)$ Hausdorff converges to $\Phi_t^{\ell^2}(Y)$ in $\ell^2$ (see \cite[Th.5.19]{AHPT} for a more general result), we have $\Phi (A) \subset \Phi_t^{\ell^2}(Y)$. Thus the map $f:=(\Phi_t^{\ell^2})^{-1}\circ \Phi :A \to Y$ is well-defined. Let $f(z):=x$ for any $z \in B_R(x) \setminus A$.
Then it is trivial that $f$ is weakly smooth. Moreover since for any $l \in \mathbb{N}$
\begin{equation}\label{eq:122}
\liminf_{j \to \infty}\mathcal{E}_{B_R(x), Y, t_j}(f_j)\ge \liminf_{j \to \infty}\frac{1}{2}\sum_{i=1}^l\int_{B_R(x)}|\dist (e^{-\lambda_i^Yt_j}\phi_i^Y \circ f_j)|^2\di \meas_X \ge \frac{1}{2}\sum_{i=1}^l\int_{B_R(x)}|\dist F_i|^2\di \meas_X,
\end{equation}
letting $l \to \infty$ in (\ref{eq:122}) shows 
\begin{equation}\label{eq:1222}
\infty > \liminf_{j \to \infty}\mathcal{E}_{B_R(x), Y, t_j}(f_j)\ge  \frac{1}{2}\sum_{i=1}^{\infty}\int_{B_R(x)}|\dist (e^{-\lambda_i^Yt}\phi_i^Y \circ f)|^2\di \meas_X,
\end{equation}
which prove that $f$ is a $t$-Sobolev map. Finally the $L^2$-weak convergence of $\phi_j\dist (e^{-\lambda_i^Yt_j}\phi_i^Y \circ f_j)$ $L^2$-weakly converge to $\phi \dist F_i=\phi \dist (e^{-\lambda_i^Yt}\phi_i^Y \circ f)$ on $B_R(x)$ implies (\ref{eq:lowesemico11}).
\end{proof}

Next let us recall the definition of Sobolev maps from metric measure spaces to metric spaces in this setting (see \cite[Sec.10]{HKST} and also \cite[Def.2.9]{GT2}).
\begin{definition}[Sobolev map]\label{defsoble}
We say that a map $f:U \to Y$ is a \textit{Sobolev map} if the following two conditions hold.
\begin{enumerate}
\item For any Lipschitz function $\phi$ on $Y$ we have $\phi \circ f \in H^{1, 2}(U, \dist_X, \meas_X)$.
\item There exists $G \in L^2(U, \meas_X)$ such that for any Lipschitz function $\phi$ on $Y$ we have
\begin{equation}\label{uut}
|\nabla (\phi \circ f)|(x) \le \mathbf{Lip} \phi \cdot G(x),\quad \text{for $\meas_X$-a.e. $x \in U$}.
\end{equation}
\end{enumerate}
Then the smallest Borel function $G$, up to $\meas_X$-negligible sets, is denoted by $G_f$.
\end{definition}
It is proved in \cite[Lem.3.2]{GPS} that in Definition \ref{defsoble}, (\ref{uut}) can be improved to
\begin{equation}\label{eq:improve}
|\nabla (\phi \circ f)|(x) \le \mathrm{Lip}_a \phi (f(x)) \cdot G(x),\quad \text{for $\meas_X$-a.e. $x \in U$}.
\end{equation}
The following property of $G_f$ is an easy consequence of (\ref{eq:eigenfunction}).
\begin{proposition}[Sobolev-to-Lipschitz property for Sobolev map]\label{propsobtolip}
Let $f:U \to Y$ be a Sobolev map and let $L \in [0, \infty)$. Then the following two conditions are equivalent.
\begin{enumerate}
\item The map $f$ has locally Lipschitz representative with
\begin{equation}
\dist_Y(f(x), f(\tilde{x}))\le L \dist_X(x, \tilde{x})
\end{equation}
for all $x, \tilde{x} \in U$ with $\dist_X(x, \tilde{x})\le \dist_X(x, \partial U)$.
\item We have $G_f(x)\le L$ for $\meas_X$-a.e. $x \in X$.
\end{enumerate}
\end{proposition}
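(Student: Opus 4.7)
The strategy is to derive (1) $\Rightarrow$ (2) directly from Definition~\ref{defsoble}, and to establish (2) $\Rightarrow$ (1) by testing $f$ against a countable separating family of $1$-Lipschitz functions on $Y$ and invoking the Sobolev-to-Lipschitz property of the ambient RCD space $(X, \dist_X, \meas_X)$.

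For (1) $\Rightarrow$ (2), assume $f$ admits a locally Lipschitz representative satisfying the stated bound. Then for every Lipschitz $\phi: Y \to \mathbb{R}$ the composition $\phi \circ f$ is locally Lipschitz on $U$ with local Lipschitz constant at most $\mathbf{Lip}\phi \cdot L$; in particular its minimal relaxed slope satisfies $|\nabla(\phi \circ f)| \le \mathbf{Lip}\phi \cdot L$ for $\meas_X$-a.e.\ point in $U$. Thus the constant function $G \equiv L$ is an admissible upper gradient in the sense of Definition~\ref{defsoble}, which forces $G_f \le L$ a.e.

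For the converse, fix a countable dense subset $\{y_j\}_{j\in\mathbb{N}} \subset Y$ (which exists as $Y$ is compact) and set $\phi_j(y) := \dist_Y(y, y_j)$. Each $\phi_j$ is bounded and $1$-Lipschitz on $Y$, and the reverse triangle inequality combined with density gives
\[
\dist_Y(y, \tilde{y}) = \sup_{j} |\phi_j(y) - \phi_j(\tilde{y})| \qquad \forall\, y, \tilde{y} \in Y.
\]
By Definition~\ref{defsoble}(1) each $\phi_j \circ f$ lies in $H^{1,2}_{\loc}(U, \dist_X, \meas_X)$, and the improved bound (\ref{eq:improve}) combined with the hypothesis $G_f \le L$ gives $|\nabla(\phi_j \circ f)| \le \mathrm{Lip}_a\phi_j(f(x))\cdot G_f(x) \le L$ for $\meas_X$-a.e.\ $x \in U$.

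The key step is a \emph{localized} Sobolev-to-Lipschitz statement: each $\phi_j \circ f$ admits a representative $F_j$ with $|F_j(x) - F_j(\tilde{x})| \le L\dist_X(x, \tilde{x})$ for all $x, \tilde{x} \in U$ satisfying $\dist_X(x, \tilde{x}) \le \dist_X(x, \partial U)$. This follows from the global Sobolev-to-Lipschitz property in Definition~\ref{def:rcd} together with the fact that $(X, \dist_X)$ is a geodesic space, since a minimizing geodesic joining two such points stays in $U$ and the statement can be propagated via the standard cutoff/approximation procedure available in RCD spaces. Setting $N := \bigcup_j \{x \in U : F_j(x) \neq \phi_j(f(x))\}$, we have $\meas_X(N) = 0$, and as $\meas_X$ has full support $U \setminus N$ is dense in $U$. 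Taking the supremum over $j$ on $U \setminus N$ in the admissible range then yields
\[
\dist_Y(f(x), f(\tilde{x})) = \sup_j |F_j(x) - F_j(\tilde{x})| \le L\dist_X(x, \tilde{x}),
\]
and completeness of $Y$ allows us to extend $f|_{U \setminus N}$ uniquely by continuity to a locally Lipschitz representative $\tilde f: U \to Y$ satisfying (1). The main obstacle is making the localized Sobolev-to-Lipschitz step rigorous on a general open set $U$; this is where the geodesic structure of $X$ and the compactness of $Y$ (so that the $\phi_j$ are bounded) together make the countable-family argument go through cleanly.
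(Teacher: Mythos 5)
Your proof is correct, and the direction $(2)\Rightarrow(1)$ follows a genuinely different route from the paper's. The paper first manufactures a \emph{continuous} representative of $f$ by hand: it truncates the heat kernel embedding to $\tilde{\Phi}_t^l\circ f$, shows these have uniformly locally Lipschitz representatives, passes to a limit in $\ell^2$ via Arzel\`a--Ascoli, and inverts $\Phi_t^{\ell^2}$; only then does it apply the local Sobolev-to-Lipschitz property of \cite[Prop.1.10]{GV} to the single function $\dist_{f(x)}\circ f$ for each $x$, using the already-established continuity to identify the Lipschitz representative with the function itself. You bypass the continuous-representative step entirely: by applying the same local Sobolev-to-Lipschitz property to the countable separating family $\phi_j=\dist_Y(\cdot,y_j)$, collecting the exceptional null sets into one set $N$, recovering $\dist_Y(f(x),f(\tilde x))=\sup_j|F_j(x)-F_j(\tilde x)|$ on the dense full-measure set $U\setminus N$, and extending by completeness of $Y$. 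This is more elementary (no heat-kernel machinery, no spectral estimates (\ref{eq:eigenfunction})) and in fact more general, since it only uses that $Y$ is complete and separable. The one step you flag as delicate --- the localized Sobolev-to-Lipschitz statement for real-valued functions on an open set $U$ --- is precisely \cite[Prop.1.10]{GV}, which is the same external input the paper relies on, so there is no gap; I would simply cite it rather than gesture at a cutoff argument. A final minor point worth writing out: after extending $f|_{U\setminus N}$ to $\tilde f$ on all of $U$, one should note that $\phi_j(\tilde f(x))=F_j(x)$ everywhere by continuity and density, so the quantitative bound holds for \emph{all} admissible pairs $x,\tilde x\in U$, not only those in $U\setminus N$.
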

\begin{proof}
Since one implication is trivial, it is enough to check the implication from (2) to (1). Assume that (2) holds and that $(Y, \dist_Y, \meas_Y)$ is an $\RCD(K, N)$ space. Let us first check that $f$ admits a continuous representative.
Fix $t \in (0,1)$, $l \in \mathbb{N}$ and consider the truncated map $\Phi_t^l:Y \to \mathbb{R}^l$ of $\Phi_t^Y:Y \to L^2(Y, \meas_Y)$ as in the introduction, namely;
\begin{equation}
\Phi_t^l(y):=\left( e^{-\lambda_i^Yt}\phi_i^Y(y)\right)_{i=1}^l.
\end{equation}
Then the local Sobolev-to-Lipschitz property for functions \cite[Prop.1.10]{GV} (or a telescopic argument with the Poincar\'e inequality) ensures that $\Phi_t^l\circ f$ has a locally Lipschitz representative $F^l_t$. Moreover by (\ref{eq:eigenfunction}), we have for $\meas_X$-a.e. $x \in U$
\begin{align}
\mathrm{Lip} F_t^l(x) &=\left| (F^l_t)^*g_{\mathbb{R}^l}\right|_B(x) \nonumber \\
&\le \left| (F^l_t)^*g_{\mathbb{R}^l}\right|(x) \nonumber \\
&\le \left| (\Phi_t^l\circ f)^*g_{L^2}\right| (x) \nonumber \\
&\le \sum_{i=1}^l e^{-2\lambda_i^Yt}\left| \dist (\phi_i^Y \circ f)\right|^2(x) \le L^2 \sum_{i=1}^{\infty}e^{-2\lambda_i^Yt}\mathbf{Lip}\phi_i^Y \le C(L, K, N, t).
\end{align}
Then it follows from a telescopic argument with \cite[Th.8.1.42]{HKST} that $F^l_t$ is a locally $C(L, K, N, t)$-Lipschitz.
In particular, thanks to Arzel\'a-Ascoli theorem with (\ref{eq:eigenfunction}), after passing to a subsequence $\{l_i\}_i$, there exists a locally Lipschitz map $F_t:U\to \ell^2$ such that $F_t^{l_i}$ converge uniformly to $F_t$ as $i \to \infty$ on any compact subset of $U$, where we used immediately the canonical inclusion $\mathbb{R}^l \hookrightarrow \ell^2$ by $v \mapsto (v, 0)$. By the construction of $F_t$, the image is included in $\Phi_t^{\ell^2}(Y)$. Thus the continuous map $f_t:U \to Y$ defined by $f_t:=(\Phi_t^{\ell^2})^{-1} \circ F_t$ provides the desired continuous representative of $f$.

Let us use the same notation $f$ as the continuos representative for brevity.
Fix $x \in U$ and take a $1$-Lipschitz map $\dist_{f(x)}:Y \to \mathbb{R}$ defined by $\dist_{f(x)}(y):=\dist_Y(f(x), y)$.
Applying (\ref{uut}) for this $1$-Lipschitz map shows
\begin{equation}\label{absarubawuaria}
|\nabla (\dist_{f(x)} \circ f)|(z) \le G_f(z) \le L,\quad \text{for $\meas_X$-a.e. $z \in U$}.
\end{equation}
Thus the local Sobolev-to-Lipschitz property \cite[Prop.1.10]{GV} for the function $\dist_{f(x)}\circ f$ with (\ref{absarubawuaria}) and the continuity of $\dist_{f(x)} \circ f$ yields that 
\begin{equation}
\left|(\dist_{f(x)} \circ f)(z)-(\dist_{f(x)} \circ f)(w)\right|\le L\dist_X(z, w)
\end{equation}
for any $z, w \in U$ with $\dist_X(z, w) \le \dist_X(z, \partial U)$.
In particular for any $\tilde{x} \in U$ with $\dist_X(x, \tilde{x}) \le \dist_X(x, \partial U)$,
\begin{equation*}
\dist_Y(f(x), f(\tilde{x}))=(\dist_{f(x)} \circ f)(\tilde{x})=\left|(\dist_{f(x)} \circ f)(x)-(\dist_{f(x)} \circ f)(\tilde{x})\right|\le L\dist_X(x, \tilde{x})
\end{equation*}
which completes the proof.
\end{proof}
We are now in a position to give another definition of Sobolev maps via the heat kernel.
\begin{definition}[$0$-Sobolev map]
A Borel map $f:U \to Y$ is said to be a \textit{$0$-Sobolev map} if it is a $t$-Sobolev map for any sufficiently small $t \in (0, 1)$ with
\begin{equation}\label{eq:suplim}
\limsup_{t \to 0^+}\int_Ut\meas_Y(B_{\sqrt{t}}(f(x)))e_{Y, t}(f)\di \meas_X(x)<\infty.
\end{equation}
\end{definition}
Let us provide a relationship between Sobolev maps and $0$-Sobolev maps.
\begin{proposition}[Compatibility, I]\label{propcompat}
Any Sobolev map from $U$ to $Y$ is a $0$-Sobolev map.  In particular any Lipschitz map from $U$ to $Y$ is a $0$-Sobolev map if $U$ is bounded.
\end{proposition}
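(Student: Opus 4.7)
Let $f : U \to Y$ be a Sobolev map with minimal upper gradient $G_f \in L^2(U, \meas_X)$. I first verify the finite-$t$ condition: each eigenfunction $\phi_i^Y$ is Lipschitz by (\ref{eq:eigenfunction}), so the refinement (\ref{eq:improve}) of the defining inequality of a Sobolev map gives $|\nabla(\phi_i^Y \circ f)| \le \mathrm{Lip}_a \phi_i^Y(f(\cdot)) \, G_f$ $\meas_X$-a.e.; weighting by $e^{-2 \lambda_i^Y t}$ and using the growth bounds from (\ref{eq:eigenfunction}) on $\lambda_i^Y$ and $\|\nabla \phi_i^Y\|_{L^{\infty}}$ shows that $e_{Y, t}(f) \le C(t) G_f^2$ in $L^1(U, \meas_X)$, hence $\mathcal{E}_{U, Y, t}(f) < \infty$ for every fixed $t > 0$.

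The substance of the proof is the uniform control $\sup_{t \in (0,1)} \int_U t \meas_Y(B_{\sqrt t}(f(\cdot))) \, e_{Y, t}(f) \di \meas_X < \infty$. The idea is to test against linear functionals of the heat-kernel embedding rather than against individual eigenfunctions. For any $v \in L^2(Y, \meas_Y)$ with $\|v\|_{L^2} \le 1$ the map $y \mapsto \langle \Phi_t^Y(y), v\rangle_{L^2}$ is Lipschitz on $Y$ with asymptotic Lipschitz constant at each $y$ bounded by $\mathrm{Lip}_a \Phi_t^Y(y)$. Applying (\ref{eq:improve}) to this real-valued Lipschitz function composed with $f$ and expanding in the eigenbasis produces, for $\meas_X$-a.e.\ $x$,
\begin{equation*}
\sup_{\|v\|_{L^2} \le 1} \Big| \sum_i v_i e^{-\lambda_i^Y t} \nabla(\phi_i^Y \circ f) \Big|^2(x) \le \bigl(\mathrm{Lip}_a \Phi_t^Y\bigr)^2(f(x)) \, G_f(x)^2.
\end{equation*}
The left-hand side is the largest eigenvalue $\lambda_{\max}(C(x))$ of the Gram matrix $C_{ij}(x) := e^{-(\lambda_i^Y + \lambda_j^Y) t} \langle \nabla(\phi_i^Y \circ f), \nabla(\phi_j^Y \circ f)\rangle(x)$, while $\mathrm{tr}\, C(x) = e_{Y, t}(f)(x)$. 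Because $C(x)$ is the Gram matrix of countably many tangent vectors at $x$ in $X$, and the essential dimension of $(X, \dist_X, \meas_X)$ equals some finite $n$, the rank of $C(x)$ is at most $n$, so $\mathrm{tr}\, C(x) \le n \lambda_{\max}(C(x))$ and
\begin{equation*}
e_{Y, t}(f)(x) \le n \bigl(\mathrm{Lip}_a \Phi_t^Y\bigr)^2(f(x)) \, G_f(x)^2 \quad \text{for $\meas_X$-a.e.\ } x \in U.
\end{equation*}

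The last ingredient is the \emph{pointwise} (everywhere, not merely $\meas_Y$-a.e.) bound $(\mathrm{Lip}_a \Phi_t^Y)^2(y) \le C(K_Y, N_Y)/(t \meas_Y(B_{\sqrt t}(y)))$ valid for every $y \in Y$. This follows from the Jiang--Li--Zhang gradient estimate (\ref{eq:equi lip}), which holds for $\mathrm{Lip}_a p_Y(\cdot, z, t)$ at every point as noted in Section~\ref{susectionheat}, combined with Minkowski's inequality along geodesics of $Y$ and the standard Gaussian integral bound exploited in \cite{AHPT}. Substituting yields $t \meas_Y(B_{\sqrt t}(f(x))) \, e_{Y, t}(f)(x) \le n C(K_Y, N_Y) \, G_f(x)^2$ for $\meas_X$-a.e.\ $x$, and integration gives the $0$-Sobolev bound. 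For the second assertion, if $U$ is bounded and $f : U \to Y$ is Lipschitz, then $\phi \circ f$ is bounded Lipschitz hence lies in $H^{1,2}(U, \dist_X, \meas_X)$ for every Lipschitz $\phi : Y \to \mathbb{R}$, and $G_f \le \mathbf{Lip}\, f \in L^{\infty}(U) \subset L^2(U, \meas_X)$, so the Sobolev case applies.

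\textbf{Main obstacle.} The principal difficulty is that $f_{\sharp}\meas_X$ may be singular with respect to $\meas_Y$, so $\meas_Y$-a.e.\ estimates on $Y$ cannot be pulled back to $\meas_X$-a.e.\ estimates along $f$. The argument depends essentially on having a \emph{genuinely pointwise} bound on $\mathrm{Lip}_a \Phi_t^Y$, which is made available by the fact that the heat-kernel gradient estimate holds pointwise for the asymptotic Lipschitz constant rather than merely for the minimal relaxed slope; a secondary subtlety is the rank-$n$ trace control, which is what produces a dimensional constant independent of the choice of eigenbasis.
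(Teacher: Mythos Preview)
Your argument is correct, but it follows a genuinely different route from the paper's. The paper tests $f$ not against the linear functionals $\langle \Phi_t^Y(\cdot),v\rangle$ but against the scalar Lipschitz functions $y\mapsto p_Y(y,z,t)$ for each fixed $z\in Y$. By the eigenfunction expansion and orthonormality of the $\phi_i^Y$ one has the Parseval-type identity
\[
e_{Y,t}(f)(x)=\int_Y\bigl|\dist_x\bigl(p_Y(\cdot,z,t)\circ f\bigr)\bigr|^2\di\meas_Y(z),
\]
so applying (\ref{eq:improve}) with $\phi=p_Y(\cdot,z,t)$ and the pointwise Gaussian bound on $\mathrm{Lip}_a p_Y(\cdot,z,t)$ gives, after integrating in $z$ and using Cavalieri,
\[
t\,\meas_Y(B_{\sqrt t}(f(x)))\,e_{Y,t}(f)(x)\le C(K,N)\,G_f(x)^2
\]
directly, with no dimensional factor. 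Your approach trades this Parseval identity for a Gram-matrix/rank argument: you control the operator norm $\lambda_{\max}(C(x))$ of the Gram operator $A:\ell^2\to T_xX$ via (\ref{eq:improve}) applied to the $\psi_v$, then use $\mathrm{rank}\,A\le n$ to pass to $\mathrm{tr}\,C=e_{Y,t}(f)$, picking up the factor $n$. Both arguments rest on the same crucial point (which you correctly identify as the main obstacle): the Jiang--Li--Zhang gradient bound holds for the asymptotic Lipschitz constant at \emph{every} point of $Y$, so it survives composition with $f$ regardless of whether $f_\sharp\meas_X\ll\meas_Y$. The paper's route is slightly more economical; yours has the merit of making explicit the link between $e_{Y,t}(f)$ and the operator norm $|f^*g_{Y,t}|_B$, which resurfaces later in Theorem~\ref{propcompatr}.
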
\label{rr}
\begin{proof}
Let $f:U \to Y$ be a Sobolev map.
Since for all $x \in U, z \in Y, t \in (0, \infty)$
\begin{equation}\label{ww}
p_Y(f(x), z, t)=\sum_ie^{-\lambda_i^Yt}\phi_i^Y(f(x))\phi_i^Y(z),
\end{equation}
we have
\begin{equation}
\dist_xp_Y(f(x), z, t)=\sum_ie^{-\lambda_i^Yt}\phi_i^Y(z)\dist_x(\phi_i^Y(f(x))) \quad \mathrm{in}\,\,L^2(T^*(U, \dist_X, \meas_X))
\end{equation}
because the inequalities (\ref{eq:eigenfunction}) and (\ref{uut}) imply that the equality (\ref{ww}) is satisfied in $H^{1, 2}(U, \dist_X, \meas_X)$ for fixed $z \in Y, t \in (0, \infty)$. Thus
\begin{align}
&\dist_xp_Y(f(x), z, t) \otimes \dist_xp_Y(f(x), z, t) \nonumber \\
&=\sum_{i, j}e^{-(\lambda_i^Y+\lambda_j^Y)t}\phi_i^Y(z)\phi_j^Y(z)\dist_x(\phi_i^Y(f(x)))\otimes \dist_x(\phi_j^Y(f(x))),\quad \mathrm{in}\,\, L^1((T^*)^{\otimes 2}(U, \dist_X, \meas_X)).
\end{align}
Integrating this (as the Bochner integral) over $Y$ with respect to $z$ yields
\begin{equation}
\int_Y\dist_xp_Y(f(x), z, t) \otimes \dist_xp_Y(f(x), z, t)\di \meas_Y(z)=\sum_ie^{-2\lambda_i^Yt}\dist_x(\phi_i^Y(f(x)))\otimes \dist_x(\phi_i^Y(f(x))).
\end{equation}
In particular by the Gaussian gradient estimate (\ref{eq:equi lip}) we have for $\meas_X$-a.e. $x \in U$
\begin{align}
\sum_ie^{-2\lambda_i^Yt}|\dist_x (\phi_i^Y(f(x)))|^2&=\left\langle \int_Y\dist_xp_Y(f(x), z, t) \otimes \dist_xp_Y(f(x), z, t)\di \meas_Y(z), g_X\right\rangle \nonumber \\
&=\int_Y\left|\dist_xp_Y(f(x), z, t)\right|^2\di \meas_Y(z) \nonumber \\
&\le \frac{CG_f(x)^2}{t\meas_Y(B_{\sqrt{t}}(f(x)))^2}\int_Y \exp \left( \frac{-2\dist_Y(f(x), z)^2}{5t}\right) \di \meas_Y(z) \nonumber \\
&\le \frac{CG_f(x)^2}{t\meas_Y(B_{\sqrt{t}}(f(x)))},
\end{align}
where we used (\ref{eq:improve}) and Cavalieri's formula (e.g \cite[Lem.2.3]{AHPT}). In particular
\begin{equation}
\limsup_{t \to 0^+}\int_Ut\meas_Y(B_{\sqrt{t}}(f(x)))\sum_ie^{-2\lambda_i^Yt}|\dist_x (\phi_i^Y(f(x)))|^2\di \meas_X(x)<\infty
\end{equation}
which completes the proof.
\end{proof}
It is worth pointing out that in the proof of Proposition \ref{propcompat} we immediately proved the following result. 
\begin{proposition}
Let $f:U \to Y$ be a Sobolev map. Then we see that $p_Y(f(\cdot), z, t) \in H^{1, 2}(U, \dist_X, \meas_X)$ holds for all $z \in Y, t \in (0, 1)$ and that the map $L^{\infty}(T(U, \dist_X, \meas_X)) \times L^{\infty}(T(U, \dist_X, \meas_X)) \to [0, \infty)$ defined by
\begin{equation}
(V_1, V_2) \mapsto \int_Y\int_U\dist_xp_Y(f(x), z, t)(V_1)\cdot \dist_xp_Y(f(x), z, t)(V_2)\di \meas_X(x)\di \meas_Y(z),
\end{equation}
defines an element of $L^1((T^*)^{\otimes 2}(U, \dist_X, \meas_X))$. This element is denoted by 
\begin{equation}
\int_Y\dist_xp_Y(f(x), z, t)\otimes \dist_xp_Y(f(x), z, t)\di \meas_Y(z).
\end{equation}
Then we have
\begin{equation}
\int_Y\dist_xp_Y(f(x), z, t) \otimes \dist_xp_Y(f(x), z, t)\di \meas_Y(z)=\sum_ie^{-2\lambda_i^Yt}\dist_x(\phi_i^Y(f(x)))\otimes \dist_x(\phi_i^Y(f(x)))
\end{equation}
with a pointwise estimate of the density
\begin{equation}\label{abasurbauawurb}
t \meas_Y(B_{\sqrt{t}}(f(x)))e_{Y, t}(f)\le C(K, N)G_f(x)^2,\quad \text{for $\meas_X$-a.e. $x \in U$}
\end{equation}
if $(Y, \dist_Y, \meas_Y)$ is an $\RCD(K, N)$ space.
\end{proposition}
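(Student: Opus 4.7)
The plan is to package the arguments already carried out in the proof of the preceding Proposition \ref{rr} (Compatibility, I) into the cleaner tensorial form stated here. The statement splits into: (i) the membership $p_Y(f(\cdot), z, t) \in H^{1, 2}(U, \dist_X, \meas_X)$ for each $z \in Y$ and $t \in (0, 1)$; (ii) well-definedness of the $\meas_Y$-averaged bilinear form as an element of $L^1((T^*)^{\otimes 2}(U, \dist_X, \meas_X))$; (iii) its spectral identity; (iv) the pointwise estimate. All four will be deduced from the spectral expansion of the heat kernel and the Gaussian gradient estimate (\ref{eq:equi lip}).

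For (i) and the companion differential identity, I would expand $p_Y(f(x), z, t) = \sum_i e^{-\lambda_i^Y t} \phi_i^Y(z) \phi_i^Y(f(x))$ and argue absolute convergence of the series in $H^{1, 2}(U, \dist_X, \meas_X)$. Each term lies in $H^{1, 2}(U)$ since $\phi_i^Y$ is Lipschitz and $f$ is Sobolev; the eigenvalue lower bound $\lambda_i^Y \ge \tilde C^{-1} i^{2/N}$ together with the $L^\infty$-bounds on $\phi_i^Y$ and $|\nabla \phi_i^Y|$ from (\ref{eq:eigenfunction}) and the chain inequality (\ref{uut}) yield summability. In particular
\[
\dist_x p_Y(f(x), z, t) = \sum_i e^{-\lambda_i^Y t} \phi_i^Y(z) \dist(\phi_i^Y \circ f) \qquad \text{in } L^2(T^*(U, \dist_X, \meas_X)).
\]

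For (iii), which in turn yields (ii), I would tensor the above identity with itself and integrate in $z \in Y$. The $L^2(Y, \meas_Y)$-orthonormality of the eigenbasis $\{\phi_i^Y\}_i$ collapses the resulting double sum to its diagonal, producing
\[
\int_Y \dist_x p_Y(f(x), z, t) \otimes \dist_x p_Y(f(x), z, t)\, \di \meas_Y(z) = \sum_i e^{-2\lambda_i^Y t}\, \dist(\phi_i^Y \circ f) \otimes \dist(\phi_i^Y \circ f).
\]
The right-hand side is manifestly $L^\infty(U, \meas_X)$-bilinear on $L^\infty$-vector fields, and its $L^1$-integrability as a tensor will follow from the bound in (iv). For (iv), I would apply the improved chain inequality (\ref{eq:improve}) to the Lipschitz function $y \mapsto p_Y(y, z, t)$ on $Y$ (Lipschitz continuity being a consequence of (\ref{eq:equi lip})), obtaining $|\dist_x p_Y(f(x), z, t)| \le \mathrm{Lip}_a p_Y(\cdot, z, t)(f(x)) \cdot G_f(x)$; the asymptotic Lipschitz constant is in turn dominated by the Gaussian expression on the right-hand side of (\ref{eq:equi lip}) evaluated at $y = f(x)$. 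Squaring, integrating in $z$, and applying Cavalieri's formula together with the volume doubling of $(Y, \dist_Y, \meas_Y)$ (as in \cite[Lem.~2.3]{AHPT}) absorbs the Gaussian tail into one power of $\meas_Y(B_{\sqrt{t}}(f(x)))$, delivering $t\, \meas_Y(B_{\sqrt{t}}(f(x)))\, e_{Y, t}(f)(x) \le C(K, N)\, G_f(x)^2$.

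The main obstacle is really only notational: one has to verify that the interchange of $\meas_Y$-integration, $\meas_X$-integration, tensor product and eigenfunction summation is legitimate, and that the form produced on $L^\infty$-vector fields indeed identifies with a genuine element of $L^1((T^*)^{\otimes 2}(U, \dist_X, \meas_X))$. Both checks are handled by combining the $H^{1, 2}$-convergence of the spectral expansion obtained in (i) with the pointwise Gaussian majorant provided by (iv), which dominates the integrand uniformly in $z$ and makes Fubini and dominated convergence applicable.
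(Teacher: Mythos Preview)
Your proposal is correct and follows essentially the same route as the paper: the paper does not give a separate proof of this proposition but simply remarks that it was already established in the course of proving Proposition~\ref{propcompat}, and the steps there---spectral expansion in $H^{1,2}$ via (\ref{eq:eigenfunction}) and (\ref{uut}), tensoring and integrating in $z$ with orthonormality, then the pointwise bound from (\ref{eq:improve}), (\ref{eq:equi lip}) and Cavalieri---are exactly the ones you outline.
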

By Proposition \ref{propcompat}, it is natural to ask whether any $0$-Sobolev map is a Sobolev map or not. This will be justified under assuming that the target space is non-collapsed (i.e. $\meas_Y=\mathcal{H}^N$) and that the image of $f$ is included in a ``weakly smooth subset of $Y$'' up to a $\meas_X$-negligible set (Theorem \ref{propcompatr}). In order to give the precise statement, we need to establish a bi-Lipschitz embeddability of $\Phi_t$ on a most part of $Y$ as in the next section.
\section{$(1\pm \epsilon)$-bi-Lipschitz embedding via heat kernel}\label{secembed}
%In order to give the precise behavior of the LHS of (\ref{eq:suplim}) on a \textit{smooth part}, let us try to improve the %regularity of the embedding $\Phi_t$ defined by (\ref{eq:embedding}).

Throughout the section, let us fix
\begin{itemize}
\item $K \in \mathbb{R}$, $N \in [1, \infty)$ and $d, v \in (0, \infty)$,
\item a non-collapsed compact $\RCD(K, N)$ space $(Y, \dist_Y, \mathcal{H}^N)$ with $\mathcal{H}^N(Y) \ge v$ and $\mathrm{diam}(Y, \dist_Y) \le d$.
\end{itemize}
In this setting the convergence results for $g_t:=\Phi_t^*g_{L^2}$ stated in subsection \ref{subsec:pullbak} can be stated as follows:
\begin{equation}\label{eq:pullbackconv}
\int_{Y}\left|g_Y-c_Nt^{(N+2)/2}g_t\right|^p\di \mathcal{H}^N \to 0
\end{equation}
holds as $t \to 0^+$ for any $p \in [1, \infty)$ with 
\begin{equation}\label{eq:gradeert}
\left|t^{(N+2)/2}g_t \right|(y) \le C(K, N, v)<\infty,\quad \text{for $\mathcal{H}^N$-a.e. $y \in Y$}
\end{equation}
for any $t \in (0, 1)$,
where $g_t:=(\Phi_t)^*g_{L^2}$ and $c_N$ is a positive constant depending only on $N$. Recall our notation in subsection \ref{susectionheat}, 
denote by $\{\lambda_i^Y\}_i$ the spectrum of $-\Delta_Y$ counted with multiplicities, and fix corresponding eigenfunctions $\{\phi_i^Y\}_i$ with $\|\phi_i^Y\|_{L^2}=1$.
Then letting 
\begin{equation}\label{eq:normalized}
\tilde{\Phi}_t:=c_N^{1/2}t^{(N+2)/4}\Phi_t,\quad \tilde{\Phi}_t^{\ell^2}:=c_N^{1/2}t^{(N+2)/4}\Phi_t^{\ell^2}
\end{equation}
shows for any $p \in [1, \infty)$, as $t \to 0^+$
\begin{equation}
\tilde{\Phi}_t^*g_{L^2} = (\tilde{\Phi}_t^{\ell^2})^*g_{\ell^2} \to g_X,\quad \mathrm{in}\,\, L^p((T^*)^{\otimes 2}(Y, \dist_Y, \mathcal{H}^N)).
\end{equation}
Finally for any $l \in \mathbb{N}$ we will also discuss the truncated map $\tilde{\Phi}_t^l:Y \to \mathbb{R}^l$ defined by
\begin{equation}\label{truncatedmap}
\tilde{\Phi}^l_t(y):=(c_N^{1/2}t^{(N+2)/4}e^{-\lambda_i^Yt}\phi_i^Y(y))_{i=1}^l.
\end{equation}
\subsection{Smoothable point}
Our goals in this section are to define a \textit{smoothable point of $(Y, \dist_Y, \mathcal{H}^N)$ via the heat kernel} and to prove
\begin{enumerate}
\item almost all points are smoothable (Proposition \ref{propdensereg}),
\item any smoothable point is regular (Proposition \ref{prop:regular}).
\end{enumerate}
\begin{definition}[Smoothable point via heat kernel]
For all $\epsilon, t, \tau \in (0, \infty)$, a point $y \in Y$ is an \textit{$(\epsilon, t, \tau)$-smoothable point} if 
\begin{equation}
\sup_{r \in (0, \tau]}\frac{1}{\mathcal{H}^N(B_r(y))}\int_{B_r(y)}|g_Y-c_Nt^{(N+2)/2}g_t|\di \mathcal{H}^N\le \epsilon.
\end{equation}  
We denote by $\mathcal{R}_Y(\epsilon, t, \tau)$ the set of all $(\epsilon, t, \tau)$-smoothable points. Let $\mathcal{R}_Y(\epsilon, t):=\mathcal{R}_Y(\epsilon, t, d)$ (recall that $d$ is an upper bound of $\mathrm{diam}(Y, \dist_Y)$). Moreover for any convergent sequence $t_i \to 0^+$, let us denote by $\mathcal{R}_Y(\{t_i\}_i)$ the set of all points $y \in Y$ satisfying 
\begin{equation}
\lim_{i \to \infty}\left(\sup_{r \in (0, \infty)}\frac{1}{\mathcal{H}^N(B_r(y))}\int_{B_r(y)}|g_Y-c_Nt_i^{(N+2)/2}g_{t_i}|\di \mathcal{H}^N\right)=0,
\end{equation}
in other words, 
\begin{equation}
\mathcal{R}_Y(\{t_i\}_i)=\bigcap_{\epsilon \in (0, 1)} \bigcup_i \bigcap_{j \ge i}\mathcal{R}_Y(\epsilon, t_j, d).
\end{equation}
The set $\mathcal{R}_Y(\{t_i\}_i)$ is called the \textit{smooth part} of $(Y, \dist_Y, \mathcal{H}^N)$ with respect to $\{t_i\}_i$.
\end{definition}
Let us prove that almost all points are smoothable.
\begin{proposition}\label{propdensereg}
For any convergent sequence $t_i \to 0^+$ there exists a subsequence $\{t_{i(j)}\}_j$ such that $\mathcal{H}^N(Y \setminus \mathcal{R}_Y(\{t_{i(j)}\}_j))=0$ holds.
\end{proposition}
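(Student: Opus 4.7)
The plan is to deduce the proposition from the $L^1$ convergence in (\ref{eq:pullbackconv}) by a standard maximal function plus Borel--Cantelli argument, exploiting that the non-collapsed $\RCD(K, N)$ space $(Y, \dist_Y, \mathcal{H}^N)$ satisfies the Bishop--Gromov volume doubling inequality on bounded scales.

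First I would set $h_i := |g_Y - c_N t_i^{(N+2)/2} g_{t_i}| \in L^1(Y, \mathcal{H}^N)$ and recall from (\ref{eq:pullbackconv}) (applied with $p = 1$) that $\|h_i\|_{L^1} \to 0$. Then I would extract a subsequence $\{t_{i(j)}\}_j$ with $\|h_{i(j)}\|_{L^1} \le 2^{-j}$. Since $(Y, \dist_Y)$ has diameter at most $d$, the sup over $r \in (0, \infty)$ appearing in the definition of $\mathcal{R}_Y(\{t_{i(j)}\}_j)$ is the same as the sup over $r \in (0, d]$, so it suffices to control the (centered) Hardy--Littlewood maximal function
\begin{equation}
\mathcal{M} h(y) := \sup_{r \in (0, d]} \frac{1}{\mathcal{H}^N(B_r(y))} \int_{B_r(y)} h \, \di \mathcal{H}^N.
\end{equation}

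The Bishop--Gromov inequality on an $\RCD(K, N)$ space yields a doubling constant $C_D = C_D(K, N, d)$ on scales $r \le d$, and the standard Vitali covering / weak type $(1, 1)$ argument (see for instance the doubling-space version of the Hardy--Littlewood theorem) gives
\begin{equation}
\mathcal{H}^N\bigl(\{y \in Y : \mathcal{M} h(y) > \lambda\}\bigr) \le \frac{C(K, N, d)}{\lambda} \|h\|_{L^1(Y)}, \qquad \forall \lambda > 0.
\end{equation}
Applying this with $h = h_{i(j)}$ and $\lambda = 1/k$ for each $k \in \mathbb{N}$ gives
\begin{equation}
\sum_{j} \mathcal{H}^N\bigl(\{\mathcal{M} h_{i(j)} > 1/k\}\bigr) \le C(K, N, d) \cdot k \sum_{j} 2^{-j} < \infty,
\end{equation}
so Borel--Cantelli yields that for each $k$ there is a $\mathcal{H}^N$-null set $N_k$ such that for every $y \in Y \setminus N_k$ one has $\mathcal{M} h_{i(j)}(y) \le 1/k$ for all $j$ sufficiently large. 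Setting $N := \bigcup_k N_k$, which is still $\mathcal{H}^N$-null, I obtain $\lim_{j \to \infty} \mathcal{M} h_{i(j)}(y) = 0$ for every $y \in Y \setminus N$, i.e.\ $Y \setminus N \subset \mathcal{R}_Y(\{t_{i(j)}\}_j)$.

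The main (mild) obstacle is simply to invoke the weak type $(1,1)$ bound for the centered maximal operator in the $\RCD$ setting; once that is in hand the argument is a routine subsequence extraction plus Borel--Cantelli, and no delicate structure of the tensors $g_Y$, $g_t$ is used beyond their $L^1$ closeness.
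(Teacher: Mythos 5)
Your proof is correct and follows essentially the same route as the paper: both extract a subsequence along which the $L^1$ norms of $|g_Y-c_Nt^{(N+2)/2}g_t|$ are summable, apply the weak type $(1,1)$ maximal function estimate (valid by Bishop--Gromov doubling), and conclude by a Borel--Cantelli argument. The only difference is cosmetic bookkeeping (the paper takes $\|h_{i(j)}\|_{L^1}\le 4^{-j}$ and thresholds at $2^{-j}$ rather than fixing $\lambda=1/k$ and summing), so no further comment is needed.
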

\begin{proof}
If
\begin{equation}
\int_{Y}|g_Y-c_Nt^{(N+2)/2}g_t|\di \mathcal{H}^N\le \epsilon
\end{equation}
holds for some $\epsilon \in (0, 1)$ and some $t \in (0, 1)$, then the maximal function theorem (cf. \cite{Heinonen}) yields 
\begin{equation}
\mathcal{H}^N\left(Y \setminus \mathcal{R}_Y(\epsilon^{1/2}, t)\right) \le \frac{C(K, N)}{\epsilon^{1/2}}\int_Y|g_Y-c_Nt^{(N+2)/2}g_t|\di \mathcal{H}^N \le C(K, N)\epsilon^{1/2}.
\end{equation}
Thus thanks to (\ref{eq:pullbackconv}) with this observation, there exists a subsequence $\{t_{i(j)}\}_j$ such that 
\begin{equation}
\int_Y|g_Y-c_Nt_{i(j)}^{(N+2)/2}g_{t_{i(j)}}|\di \mathcal{H}^N\le 4^{-j},\quad \mathcal{H}^N\left(Y\setminus \mathcal{R}_Y(2^{-j}, t_{i(j)})\right)\le C(K, N)2^{-j}.
\end{equation}
Thus letting $\tilde{\mathcal{R}}:=\bigcup_k \bigcap_{j \ge k}\mathcal{R}_Y(2^{-j}, t_{i(j)})$ shows $\tilde{\mathcal{R}}\subset \mathcal{R}_Y(\{t_{i(j)}\}_j)$ with 
\begin{equation}
\mathcal{H}^N\left(Y \setminus \tilde{\mathcal{R}}\right)=\lim_{k \to \infty}\mathcal{H}^N\left( Y \setminus \bigcap_{j \ge k} \mathcal{R}_Y(2^{-j}, t_{i(j)})\right)\le \lim_{k \to \infty}\sum_{j \ge k}\mathcal{H}^N\left( Y \setminus \mathcal{R}_Y(2^{-j}, t_{i(j)})\right)=0,
\end{equation}
which completes the proof.
\end{proof}
\begin{proposition}\label{prop:regular}
There exists a constant $\delta_N \in (0, 1)$ depending only on $N$ such that if either
\begin{equation}\label{eq:liminf}
\liminf_{r \to 0^+}\frac{1}{\mathcal{H}^N(B_r(y))}\int_{B_r(y)}|g_Y-c_Nt^{(N+2)/2}g_t|\di \mathcal{H}^N\le \delta_N
\end{equation}
or 
\begin{equation}\label{eq:liminf2}
\liminf_{r \to 0^+}\frac{1}{\mathcal{H}^N(B_r(y))}\int_{B_r(y)}|g_Y-\tilde{c}_Nt\mathcal{H}^N(B_{\sqrt{t}}(\cdot))g_t|\di \mathcal{H}^N\le \delta_N
\end{equation}
is satisfied for some $t \in (0, \infty)$, then $y$ is an $N$-dimensional regular point. In particular we have
\begin{equation}
\mathcal{R}_Y(\delta_N, t, \tau )\subset \mathcal{R}_N, \quad \forall t,\,\,\forall \tau \in (0, \infty).
\end{equation}
Thus $\mathcal{R}_Y(\{t_i\}_i)\subset \mathcal{R}_N$ for any convergent sequence $t_i \to 0^+$.
\end{proposition}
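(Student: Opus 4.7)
The plan is a blow-up argument at $y$ that produces $\mathbb{R}^N$ as a tangent cone; together with the monotonicity of the Bishop--Gromov volume ratio and Theorem~\ref{thm:bishop}(2), this forces $y\in\mathcal R_N$. Set $\hat g_t:=c_Nt^{(N+2)/2}g_t$ and pick $r_j\to 0^+$ realising the $\liminf$ in \eqref{eq:liminf}, so
\[
\frac{1}{\mathcal{H}^N(B_{r_j}(y))}\int_{B_{r_j}(y)}|g_Y-\hat g_t|\di\mathcal{H}^N\le 2\delta_N.
\]
Rescale: $(Y_j,\dist_j,\meas_j,y):=(Y,r_j^{-1}\dist_Y,r_j^{-N}\mathcal{H}^N,y)$ is a sequence of pointed non-collapsed $\RCD(Kr_j^2,N)$ spaces. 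By pmGH pre-compactness in the non-collapsed class and Theorem~\ref{GHmGH}, a subsequence converges in pmGH to a pointed non-collapsed $\RCD(0,N)$ space $(Z,\dist_Z,\mathcal{H}^N,z)$; the rigidity case of Bishop--Gromov (the volume density ratio at $z$ being scale-invariant) ensures $Z$ is a metric measure cone with apex $z$, hence of the form $C_0^{N-1}(W)$ for some $\RCD(N-2,N-1)$ cross-section $W$.

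Introduce the rescaled heat-kernel embeddings $F_j:Y_j\to\ell^2$, $F_j(z'):=r_j^{-1}\bigl(\tilde\Phi_t^{\ell^2}(z')-\tilde\Phi_t^{\ell^2}(y)\bigr)$. A direct scaling computation gives $F_j^*g_{\ell^2}=r_j^{-2}\hat g_t$ as bilinear forms, so $|F_j^*g_{\ell^2}|^{(j)}=|\hat g_t|^{(\mathrm{old})}$ is uniformly bounded by \eqref{eq:gradeert}, and further
\[
\frac{1}{\meas_j(B_1(y))}\int_{B_1(y)}|F_j^*g_{\ell^2}-g_{Y_j}|^{(j)}\di\meas_j=\frac{1}{\mathcal{H}^N(B_{r_j}(y))}\int_{B_{r_j}(y)}|g_Y-\hat g_t|\di\mathcal{H}^N\le 2\delta_N.
\]
The $i$-th component $F_{j,i}$ of $F_j$ is a rescaled eigenfunction with eigenvalue $r_j^2\lambda_i^Y\to 0$, and the tail decay in \eqref{eq:eigenfunction} supplies both the summability hypothesis of Proposition~\ref{prop:compactness} and the uniform tail control needed to turn componentwise convergence into convergence of the infinite pull-back sum. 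Applying Proposition~\ref{prop:compactness} and Theorem~\ref{spectral2} (component by component) yields a subsequence along which $F_j\to F=(F_i)_i:Z\to\ell^2$ uniformly on bounded balls, $F_{j,i}\to F_i$ $H^{1,2}_{\loc}$-strongly, and each $F_i$ is harmonic on $Z$. Summation then gives $F_j^*g_{\ell^2}\to F^*g_{\ell^2}$ in $L^1_{\loc}(Z)$; combined with Proposition~\ref{weakriem} ($g_{Y_j}\weakto g_Z$ in $L^2_{\loc}$-weak) and the convexity/weak lower semicontinuity of $T\mapsto\int|T|$, the averaged bound passes to the limit:
\[
\frac{1}{\mathcal{H}^N(B_1(z))}\int_{B_1(z)}|F^*g_{\ell^2}-g_Z|\di\mathcal{H}^N\le 2\delta_N.
\]

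The final step is rigidity. By Proposition~\ref{prop:linearfunction} applied to the cone $Z=C_0^{N-1}(W)$, every Lipschitz harmonic function on $Z$ is linear, so each $F_i$ is linear and $F^*g_{\ell^2}=\sum_idF_i\otimes dF_i$ has $\mathcal{H}^N$-a.e.\ constant Hilbert--Schmidt norm; the same holds for $g_Z$ (whose norm is identically $\sqrt N$). Hence $|F^*g_{\ell^2}-g_Z|$ is a.e.\ constant, and the averaged bound above transfers to a pointwise bound $|F^*g_{\ell^2}-g_Z|\le 2\delta_N$ a.e. Choosing $\delta_N=\delta_N(N)$ small enough that the symmetric bilinear form $F^*g_{\ell^2}$ is necessarily strictly positive definite of rank $N$, a Gram--Schmidt procedure applied to the $\nabla F_i$'s furnishes $N$ linear functions on $Z$ with pairwise orthonormal gradients; Theorem~\ref{splitting} then gives an isometric splitting $Z\cong\mathbb{R}^N\times W'$, and the upper bound $N$ on the essential dimension of $Z$ forces the connected factor $W'$ to be a point, so $Z\cong\mathbb{R}^N$. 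Theorem~\ref{GHmGH} now yields $\mathcal{H}^N(B_{r_j}(y))/r_j^N\to\omega_N$; Bishop--Gromov monotonicity promotes this to $\lim_{r\to 0^+}\mathcal{H}^N(B_r(y))/(\omega_Nr^N)=1$, and Theorem~\ref{thm:bishop}(2) gives $y\in\mathcal R_N$. The alternative hypothesis \eqref{eq:liminf2} is treated identically, since $\tilde c_Nt\mathcal{H}^N(B_{\sqrt t}(\cdot))$ is a bounded, strictly positive, continuous multiplier that does not alter the rescaling. The main obstacle I expect is the convergence step: rigorously upgrading the componentwise $H^{1,2}_{\loc}$-strong convergence of eigenfunctions into $L^1_{\loc}$-convergence of the infinite pull-back tensor sum, and cleanly transporting the averaged $L^1$ bound through the \emph{weak} convergence $g_{Y_j}\weakto g_Z$.
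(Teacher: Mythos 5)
Your proposal is correct and follows essentially the same route as the paper: blow up at $y$, identify the limit components as Lipschitz harmonic functions on the tangent cone, invoke Proposition~\ref{prop:linearfunction} to make them linear, and then rule out a nontrivial non-Euclidean factor because $g$ restricted to that factor would have Hilbert--Schmidt norm at least $1$ while the linear functions contribute nothing there. The one obstacle you flag --- passing the infinite tensor sum to the limit --- is exactly what the paper sidesteps by truncating to finitely many eigenfunctions \emph{before} rescaling, using the tail bound from (\ref{eq:eigenfunction}) to absorb the remainder into the $\delta$-budget (this is the role of the choice of $l$ with $c_Nt^{(N+2)/2}\sum_{i>l}e^{-2\lambda_it}\|\dist\phi_i^Y\|_{L^\infty}^2<(\delta-\epsilon_0)/2$); your version is salvageable by the same uniform tail estimate via a three-epsilon argument, but the truncation is cleaner and is worth adopting.
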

\begin{proof}
We give only a proof in the case when (\ref{eq:liminf}) is satisfied because the proof in the other case is similar.
Fix $\delta \in (0, 1)$ which will be determined later. Let $\epsilon_0$ denote the LHS of (\ref{eq:liminf}) and assume $\epsilon_0<\delta$. Take a minimizing sequence $r_i \to 0^+$ as in the LHS of (\ref{eq:liminf}), and find $l \in \mathbb{N}$ with 
\begin{equation}
c_Nt^{(N+2)/2}\sum_{i=l+1}^{\infty}e^{-2\lambda^Y_it}\|\dist \phi_i^Y\|_{L^{\infty}}^2<\frac{\delta-\epsilon_0}{2},
\end{equation} 
where we used the inequality (\ref{eq:eigenfunction}) in order to get the existence of such an $l$. Thus we have
\begin{equation}\label{eq:est riem}
\limsup_{i \to \infty}\frac{1}{\mathcal{H}^N(B_{r_i}(y))}\int_{B_{r_i}(y)}\left|g_Y-c_Nt^{(N+2)/2}\sum_{i=1}^le^{-2\lambda_i^Yt}\dist \phi^Y_i \otimes \dist \phi^Y_i\right|\di \mathcal{H}^N<\frac{\delta+\epsilon_0}{2}.
\end{equation}
After passing to a subsequence we have
\begin{equation}
\left(Y, r_j^{-1}\dist_Y, \mathcal{H}^N_{r_j^{-1}\dist_Y}, y\right) \stackrel{\mathrm{pmGH}}{\to} \left(T_yY, \dist_{T_yY}, \mathcal{H}^N, 0_y\right)
\end{equation}
for some tangent cone $(T_yY, \dist_{T_yY}, \mathcal{H}^N, 0_y)$ of $(Y, \dist_Y, \mathcal{H}^N)$ at $y$. Let us define functions on $(Y, r_j^{-1}\dist_Y)$ by
\begin{equation}
\overline{\phi}_{i, j}:=\frac{c_N^{1/2}t^{(N+2)/4}e^{-\lambda_i^Yt}}{r_j}\left( \phi_i^Y-\frac{1}{\mathcal{H}^N(B_{r_j}(y))}\int_{B_{r_j}(x)}\phi_i^Y\di \mathcal{H}^N\right).
\end{equation}
Thanks to Theorem \ref{spectral2} {\color{blue}with local $(2,2)$-Poincar\'e inequality}, after passing to a subsequence again, there exists a family of Lipschitz harmonic functions $\{\overline{\phi}_i\}_{i=1}^l$ on $T_yY$ such that $\overline{\phi}_{i, j}$ $H^{1, 2}_{\mathrm{loc}}$-strongly converge to $\overline{\phi}_i$ on $T_yY$. Since $(T_yY, \dist_{T_yY}, \mathcal{H}^N, 0_y)$ is the metric measure cone over a non-collapsed $\RCD(N-2, N-1)$ space \cite[Prop.2.8]{DG} (see also \cite[Th.1.1]{DG2}), Proposition \ref{prop:linearfunction} shows that any  Lipschitz harmonic function $f$ on $T_yY$ is linear. Note that (\ref{eq:est riem}) implies
\begin{equation}\label{eq:splitting}
\frac{1}{\mathcal{H}^N(B_1(0_y))}\int_{B_1(0_y)}\left|g_{T_yY}-\sum_{i=1}^l\dist \overline{\phi}_i\otimes \dist \overline{\phi}_i\right|\di \mathcal{H}^N<\frac{\delta+\epsilon_0}{2}.
\end{equation}
Let us denote by $m$ the maximal dimension of the Euclidean factor $\mathbb{R}^m$ coming from $\{\overline{\phi}_i\}_{i=1}^l$. Then $T_yY$ is isometric to $\mathbb{R}^m\times Z$ for some non-collapsed $\RCD(K, N-m)$ space $(Z, \dist_Z, \mathcal{H}^{N-m})$. If $Z$ is not a single point (that is, $m<N$), then (\ref{eq:splitting}) with Fubini's theorem yields
\begin{equation}\label{eq:di}
\frac{1}{\mathcal{H}^{N-m}(B_1(z))}\int_{B_1(z)}|g_Z|\di \mathcal{H}^{N-m}<\frac{(\delta+\epsilon_0)C_N}{2}\le \delta C_N
\end{equation}
where $C_N$ is a positive constant depending only on $N$. Since the LHS of (\ref{eq:di}) is equal to $(N-m)^{1/2} \ge 1$ and the RHS is smaller than $1$ if $\delta$ is smaller than $1/C_N$, which is a contradiction. Thus $Z$ must be a single point.  In particular we know
\begin{equation}
\lim_{r \to 0^+}\frac{\mathcal{H}^N(B_{r}(y))}{\mathcal{H}^N(B_{r}(0_N))}=\lim_{i \to \infty}\frac{\mathcal{H}^N(B_{r_i}(y))}{\mathcal{H}^N(B_{r_i}(0_N))}=1,
\end{equation}
which completes the proof because of Theorem \ref{thm:bishop}.
\end{proof}

\subsection{Locally bi-Lipschitz embedding}
In order to establish a bi-Lipschitz embeddability of $\Phi_t$ on a large part of $Y$, we need a quantitative estimate for a Gromov-Hausdorff distance as follows (see the beginning of this section \ref{secembed} for the setting).
\begin{proposition}\label{prop:quantapp}
For all $\epsilon \in (0,1)$ and $t \in (0, \infty)$ there exists $r_0:=r_0(\epsilon, K, N, d, t) \in (0, 1)$ such that if for some $r \in (0, r_0)$ the following
\begin{equation}
\frac{1}{\mathcal{H}^N(B_r(y))}\int_{B_r(y)}|g_Y-c_Nt^{(N+2)/2}g_t|\di \mathcal{H}^N\le \epsilon
\end{equation}
holds and that $(Y, r^{-1}\dist_Y, y)$ is $r_0$-pointed Gromov-Hausdorff close to $(\mathbb{R}^N, \dist_{\mathbb{R}^N}, 0_N)$,
then the map $\tilde{\Phi}_t|_{B_r(y)}$ gives a $3\epsilon r$-Gromov-Hausdorff approximation to the image $\tilde{\Phi}_t(B_r(y))$ which is also $3\epsilon r$-Gromov-Hausdorff close to $B_r(0_N)$ ( recall (\ref{eq:normalized}) for the definition of $\tilde{\Phi}_t$).
\end{proposition}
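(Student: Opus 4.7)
The plan is to argue by contradiction using a blow-up (rescaling) argument. If the proposition fails for some fixed $\eps \in (0,1)$ and $t \in (0,\infty)$, I can extract sequences $r_i \to 0^+$ and $y_i \in Y$ satisfying the two hypotheses at scale $r_i$ while $\tilde{\Phi}_t|_{B_{r_i}(y_i)}$ fails to be a $3\eps r_i$-Gromov-Hausdorff approximation to its image (or the image fails to be $3\eps r_i$-close to $B_{r_i}(0_N)$). After replacing $\eps$ with $\min(\eps,\delta_N)$, Proposition~\ref{prop:regular} forces each $y_i$ to be an $N$-dimensional regular point; combining the pGH closeness assumption with the non-collapsed condition and Theorem~\ref{GHmGH}, a subsequence of the rescaled pointed spaces $(Y, r_i^{-1}\dist_Y, \omega_N^{-1}r_i^{-N}\haus^N, y_i)$ pmGH-converges to $(\setR^N, \dist_{\setR^N}, \omega_N^{-1}\haus^N, 0_N)$.

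Next I would consider the centered rescaled maps $F_i \colon (Y, r_i^{-1}\dist_Y) \to \ell^2$ given by $F_i(z) := r_i^{-1}(\tilde{\Phi}_t(z) - \tilde{\Phi}_t(y_i))$. Since rescaling of the domain and image by $r_i^{-1}$ preserves the Lipschitz constant, the family $\{F_i\}$ is equi-Lipschitz by \eqref{eq:gradeert}, and the uniform tail-decay hypothesis of Proposition~\ref{prop:compactness} follows from the eigenfunction estimates \eqref{eq:eigenfunction} combined with the exponential factor $e^{-\lambda_j^Y t}$ appearing in each component. A subsequence therefore converges uniformly on bounded sets to a Lipschitz map $F\colon \setR^N \to \ell^2$ with $F(0_N)=0$. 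The crux of the argument is to identify $F$ as linear: each component $F_{i,j}$ is the first-order rescaling at the regular point $y_i$ of the test function $c_N^{1/2}t^{(N+2)/4}e^{-\lambda_j^Y t}\phi_j^Y$, so using the Hessian machinery for test functions on $\RCD$ spaces together with the stability of the Laplacian on rescaled balls (Theorem~\ref{spectral2}) and a Liouville/splitting argument (Theorem~\ref{splitting}) for bounded-slope harmonic limits on $\setR^N$, I expect each $F_j := \lim_i F_{i,j}$ to be affine, hence $F$ linear.

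With $F$ linear, $F^*g_{\ell^2}$ is a constant symmetric bilinear form on $\setR^N$. Proposition~\ref{weakriem} gives $L^2$-weak convergence of the rescaled Riemannian metrics to $g_{\setR^N}$, and the linearity of each $F_j$ upgrades $L^2$-weak convergence of $\nabla F_{i,j}$ to $L^2_{\loc}$-strong convergence via Theorem~\ref{bbbg}; the rescaled $L^1$-average hypothesis therefore passes to the limit, yielding
\begin{equation}
\frac{1}{\haus^N(B_1(0_N))}\int_{B_1(0_N)}\bigl|g_{\setR^N}-F^*g_{\ell^2}\bigr|\di\haus^N \le \eps.
\end{equation}
As the integrand is constant, this upgrades to the pointwise bound $|g_{\setR^N}-F^*g_{\ell^2}|\le \eps$, so Corollary~\ref{corfromlip} identifies the linear map $F$ as a $(1\pm C(N)\eps)$-bi-Lipschitz embedding of $\setR^N$ into $\ell^2$. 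Consequently $F|_{B_1(0_N)}$ is a $3\eps$-GH approximation to its image, with image $3\eps$-close to $B_1(0_N)$, and the uniform convergence $F_i \to F$, rescaled back by $r_i$, will contradict the assumed failure of the conclusion for large $i$. The main obstacle will be the identification of $F$ as linear: because the eigenvalue equation rescales degenerately ($r_i^{-2}\lambda_j^Y\to\infty$), linearity cannot be read off from a straightforward harmonic-in-the-limit argument and must instead be extracted from the first-order differentiability of test functions at regular points of $Y$, together with a uniform (in $j$) control strong enough to treat the countable sum defining $F^*g_{\ell^2}$.
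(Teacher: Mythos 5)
Your skeleton (contradiction, blow-up of the centered maps, compactness via Proposition~\ref{prop:compactness}, linearity of the limit, constancy of the integrand, Corollary~\ref{corfromlip}, and the uniform convergence to contradict the assumed failure) is exactly the paper's argument. However, the step you single out as ``the main obstacle'' --- linearity of $F$ --- rests on a scaling error, and as written your proposal leaves precisely that crux unproved, replacing it with a vaguely described detour through ``first-order differentiability of test functions at regular points.'' Under the rescaling $\dist_Y\mapsto r_i^{-1}\dist_Y$ (with $r_i\to 0$, i.e.\ zooming in) the Laplacian scales by $r_i^{2}$, not $r_i^{-2}$: the component $F_{i,j}=r_i^{-1}c_N^{1/2}t^{(N+2)/4}e^{-\lambda_j^Yt}\bigl(\phi_j^Y-\text{average}\bigr)$ satisfies, on the rescaled space,
\begin{equation}
\Delta F_{i,j}=-r_i\,\lambda_j^Y\,c_N^{1/2}t^{(N+2)/4}e^{-\lambda_j^Yt}\,\phi_j^Y,
\end{equation}
whose $L^{\infty}$-norm is $O(r_i)\to 0$ by (\ref{eq:eigenfunction}). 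Hence Theorem~\ref{spectral2} makes each limit $F_j$ a Lipschitz harmonic function on $\mathbb{R}^N$, therefore affine, and linear after the centering. The ``straightforward harmonic-in-the-limit argument'' you dismiss is exactly what works (and is what the paper does); the eigenvalue equation rescales favorably, not degenerately.

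Two further points. First, the statement requires $r_0=r_0(\epsilon,K,N,d,t)$, uniform over the class of spaces with the given bounds, and this uniformity is used later (e.g.\ in the proof of Theorem~\ref{prop:bilip}, where one imposes $\Psi(\delta;K,N)<r_0$). Your contradiction argument fixes a single $Y$ and extracts $y_i\in Y$, $r_i\to 0^+$, which only produces an $r_0$ depending on $Y$; the paper instead runs the blow-up over a sequence of spaces $(Y_i,\dist_{Y_i},\mathcal{H}^N,y_i)$ and controls the varying eigendata via the spectral convergence results. This is a routine modification but must be made. Second, the opening reduction ``after replacing $\epsilon$ with $\min(\epsilon,\delta_N)$, Proposition~\ref{prop:regular} forces each $y_i$ to be regular'' is not legitimate: shrinking $\epsilon$ strengthens the hypothesis, which you are not entitled to do, and if $\epsilon>\delta_N$ the bound in the hypothesis does not trigger Proposition~\ref{prop:regular}. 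It is also unnecessary --- the pointed Gromov--Hausdorff closeness to $(\mathbb{R}^N,\dist_{\mathbb{R}^N},0_N)$ is already a hypothesis, and together with Theorem~\ref{GHmGH} it yields the pmGH convergence of the rescaled spaces to $\mathbb{R}^N$ directly.
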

\begin{proof}
The proof is done by contradiction. If not, then there exist a sequence of pointed non-collapsed compact $\RCD(K, N)$ spaces $(Y_i, \dist_{Y_i}, \mathcal{H}^N, y_i)$ with $\mathrm{diam}(Y_i, \dist_{Y_i})\le d$, and a sequence of $r_i \to 0^+$ with 
\begin{equation}
\quad \frac{1}{\mathcal{H}^N(B_{r_i}(y_i))}\int_{B_{r_i}(y_i)}|g_{Y_i}-c_Nt^{(N+2)/2}g_t^{Y_i}|\di \mathcal{H}^N\le \epsilon
\end{equation}
and
\begin{equation}\label{ddbnrbbaus}
\left(Y_i, r_i^{-1}\dist_{Y_i}, \mathcal{H}^N_{r_i^{-1}\dist_{Y_i}}, x_i\right) \stackrel{\mathrm{pmGH}}{\to} \left(\mathbb{R}^N, \dist_{\mathbb{R}^N}, \mathcal{H}^N, 0_N\right)
\end{equation}
such that one of the following holds.
\begin{enumerate}
\item[($\star$)] $\tilde{\Phi}_t^{Y_i}|_{B_{r_i}(y_i)}$ does not give a $3\epsilon r_i$-Gromov-Hausdorff approximation to the image  $\tilde{\Phi}_t^{Y_i}(B_{r_i}(y_i))$.
\item[($\star \star$)] $\tilde{\Phi}_t^{Y_i}(B_{r_i}(y_i))$ is not $3\epsilon r_i$-Gromov-Hausdorff close to $B_{r_i}(0_N)$. 
\end{enumerate}
Note that we used Theorem \ref{GHmGH} in order to get (\ref{ddbnrbbaus}).

Let us define functions on $(\overline{Y}_i, \dist_{\overline{Y}_i}):=(Y_i, r_i^{-1}\dist_{Y_i})$ by
\begin{equation}
\overline{\phi}_{i, j}:=\frac{c_N^{1/2}t^{(N+2)/4}e^{-\lambda_{j}^{Y_i}t}}{r_i}\left(\phi_{j}^{Y_i}-\frac{1}{\mathcal{H}^N(B_{r_i}(y_i))}\int_{B_{r_i}(y_i)}\phi_{j}^{Y_i}\di \mathcal{H}^N\right).
\end{equation}
Then (\ref{eq:eigenfunction}) allows us to define the map $\overline{\Phi}_i:\overline{Y}_i \to \ell^2$ by 
$\overline{\Phi}_i:=\left( \overline{\phi}_{i, j}\right)_{j=1}^{\infty}$.
Moreover thanks to Theorem \ref{spectral2} and Proposition \ref{prop:compactness} with (\ref{eq:eigenfunction}), after passing to a subsequence there exists a Lipschitz map $\overline{\Phi}:\mathbb{R}^N \to \ell^2$ such that the following hold.
\begin{itemize}
\item $\overline{\Phi}_i$ uniformly converge to $\overline{\Phi}=(\overline{\phi}_j)_{j=1}^{\infty}$ on any bounded subset of $\mathbb{R}^N$.
\item $\overline{\phi}_{i, j}$ $H^{1, 2}_{\mathrm{loc}}$-strongly converge to $\overline{\phi}_i$ on $\mathbb{R}^N$.
\item Each $\overline{\phi}_i$ is linear.
\item The $L^{\infty}$-tensors on $\overline{Y}_i$
\begin{equation}
\sum_{j=1}^{\infty}\dist \overline{\phi}_{i, j}\otimes \dist \overline{\phi}_{i, j}
\end{equation}
$L^2_{\mathrm{loc}}$-strongly converge to the $L^{\infty}$-tensor 
\begin{equation}
\overline{\Phi}^*g_{\ell^2}=\sum_{j=1}^{\infty}\dist \overline{\phi}_j \otimes \dist \overline{\phi}_j
\end{equation}
on $\mathbb{R}^N$. 
\end{itemize}
Note
\begin{align}\label{absaurauubairub}
&\frac{1}{\mathcal{H}^N(B_1(0_N))}\int_{B_1(0_N)}\left| g_{\mathbb{R}^N}-\sum_{j=1}^{\infty}\dist \overline{\phi}_j \otimes \dist \overline{\phi}_j \right| \di \mathcal{H}^N \nonumber \\
&=\lim_{i \to \infty}\frac{1}{\mathcal{H}^N(B_1^{\dist_{\overline{Y}_i}}(y_i))}\int_{B_1^{\dist_{\overline{Y}_i}}(y_i)}\left| g_{\overline{Y}_i}-\sum_{j=1}^{\infty}\dist \overline{\phi}_{i, j}\otimes \dist \overline{\phi}_{i, j}\right| \di \mathcal{H}^N_{\dist_{\overline{Y}_i}} \nonumber \\
&=\lim_{i \to \infty}\frac{1}{\mathcal{H}^N(B_{r_i}(y_i))}\int_{B_{r_i}(y_i)}\left| g_{Y_i}-c_Nt^{(N+2)/2}g_t^{Y_i}\right| \di \mathcal{H}^N\le \epsilon.
\end{align}
Since $| g_{\mathbb{R}^N}-\sum_{j=1}^{\infty}\dist \overline{\phi}_j \otimes \dist \overline{\phi}_j|$ is constant because of the linearity of $\overline{\phi}_j$, by (\ref{absaurauubairub}), we have $|g_{\mathbb{R}^N}-\overline{\Phi}^*g_{\ell^2}|\le \epsilon$ on $\mathbb{R}^N$. Thus $\overline{\Phi}$ is a $(1\pm \epsilon)$-bi-Lipschitz embedding from $\mathbb{R}^N$ to $\ell^2$.
Then the local uniform convergence of  $\overline{\Phi}_i$ to $\overline{\Phi}$ with (\ref{texteq})  {\color{blue} for $\overline{\Phi}$} shows that for any sufficiently large $i$, it holds that  $\tilde{\Phi}_t^{Y_i}|_{B_{r_i}(x_i)}$ gives a $3\epsilon r_i$-Gromov-Hausdorff approximation to the image which is also $3\epsilon r_i$-Gromov-Hausdorff close to $B_{r_i}(0_N)$, {\color{blue}because $\overline{\Phi}_i$ is obtained by a rescaling and a translation of $\tilde{\Phi}_t^{Y_i}$'' after ``to $B_{r_i}(0_N)$}. This contradicts ($\star$) and ($\star \star$).
\end{proof}

\begin{theorem}[Bi-Lipschitz embeddability of $\Phi_t$]\label{prop:bilip}
For all $\epsilon \in (0, 1/3), t, \tau \in (0, \infty)$ and $y \in \mathcal{R}_Y(\epsilon, t, \tau)$ there exists $r_1 \in (0, 1)$ such that $\tilde{\Phi}_t|_{B_{r_1}(y) \cap \mathcal{R}_Y(\epsilon, t, \tau)}$ is a $(1\pm 3\epsilon)$-bi-Lipschitz embedding.
\end{theorem}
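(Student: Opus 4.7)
The plan is to argue by contradiction via a two-point blow-up, refining the single-point argument of Proposition~\ref{prop:quantapp}. Suppose the conclusion fails at $y\in\mathcal{R}_Y(\epsilon,t,\tau)$. Then I can extract sequences $y_{1,k},y_{2,k}\in\mathcal{R}_Y(\epsilon,t,\tau)$ with $y_{1,k},y_{2,k}\to y$ and
\[
\bigl|\|\tilde{\Phi}_t(y_{1,k})-\tilde{\Phi}_t(y_{2,k})\|_{L^2}-\dist_Y(y_{1,k},y_{2,k})\bigr|>3\epsilon\,\dist_Y(y_{1,k},y_{2,k}).
\]
Setting $d_k:=\dist_Y(y_{1,k},y_{2,k})$ (which tends to $0$), I consider the rescaled pointed spaces $(Y,d_k^{-1}\dist_Y,d_k^{-N}\mathcal{H}^N,y_{1,k})$. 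By non-collapsed compactness (Theorem~\ref{GHmGH}), a subsequence converges pmGH to a pointed non-collapsed $\RCD(0,N)$ space $(Z,\dist_Z,\mathcal{H}^N,z_1)$ with $y_{2,k}\to z_2\in Z$ and $\dist_Z(z_1,z_2)=1$.

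I then blow up the embedding map at $y_{1,k}$ at scale $d_k$. Set
\[
\overline{\phi}_{i,k}(w):=\frac{c_N^{1/2}t^{(N+2)/4}e^{-\lambda_i^Y t}}{d_k}\bigl(\phi_i^Y(w)-\phi_i^Y(y_{1,k})\bigr),
\]
uniformly Lipschitz by~(\ref{eq:eigenfunction}) and vanishing at $y_{1,k}$. Since the rescaled Laplacians of the $\overline{\phi}_{i,k}$ tend to zero in $L^2_{\mathrm{loc}}$ (the original Laplacian brings a factor $\lambda_i^Y$, while the rescaling contributes $d_k^2\cdot d_k^{-1}=d_k$), Theorem~\ref{spectral2} yields Lipschitz harmonic limits $\overline{\phi}_i$ on $Z$ with $H^{1,2}_{\mathrm{loc}}$-strong convergence. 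The uniform $\ell^2$-tail decay from~(\ref{eq:eigenfunction}) and Proposition~\ref{prop:compactness} then assemble these into the uniform limit $\overline{\Phi}=(\overline{\phi}_i)_i:Z\to\ell^2$ of the rescaled embeddings, and the pull-back tensors $\sum_i\dist\overline{\phi}_{i,k}\otimes\dist\overline{\phi}_{i,k}$ converge $L^2_{\mathrm{loc}}$-strongly to $\overline{\Phi}^*g_{\ell^2}$ just as in the proof of Proposition~\ref{prop:quantapp}.

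For $k$ large enough that $d_k\le\tau$, the averaged tensor estimate on $B_{d_k}(y_{1,k})$ (which holds because $y_{1,k}\in\mathcal{R}_Y(\epsilon,t,\tau)$) passes to the limit, yielding
\[
\frac{1}{\mathcal{H}^N(B_1(z_1))}\int_{B_1(z_1)}\bigl|g_Z-\overline{\Phi}^*g_{\ell^2}\bigr|\di\mathcal{H}^N\le\epsilon.
\]
The dimension-counting from the proof of Proposition~\ref{prop:regular}, applied to the maximal Euclidean factor spanned by the $\overline{\phi}_i$, then forces $Z\simeq\mathbb{R}^N$; Proposition~\ref{prop:linearfunction} in turn makes each $\overline{\phi}_i$ linear, so the integrated bound sharpens to a pointwise one, $|g_{\mathbb{R}^N}-\overline{\Phi}^*g_{\ell^2}|\le\epsilon$, making $\overline{\Phi}$ a $(1\pm\epsilon)$-bi-Lipschitz linear embedding. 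Taking limits in the normalized quotient gives
\[
\lim_{k\to\infty}\frac{\|\tilde{\Phi}_t(y_{1,k})-\tilde{\Phi}_t(y_{2,k})\|_{L^2}}{d_k}=\|\overline{\Phi}(z_1)-\overline{\Phi}(z_2)\|_{\ell^2}\in[1-\epsilon,1+\epsilon]\subsetneq(1-3\epsilon,1+3\epsilon),
\]
contradicting the assumed violation.

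The main obstacle is the identification $Z=\mathbb{R}^N$ across the full declared range $\epsilon\in(0,1/3)$: the dimension counting in Proposition~\ref{prop:regular} a priori requires $\epsilon<\delta_N$, so I expect the proof either to make that implicit reduction or to exploit the fact that $y\in\mathcal{R}_Y(\epsilon,t,\tau)$ already forces $y\in\mathcal{R}_N$ via a refined averaged estimate at $y$, so that all nearby rescalings of $Y$ remain pointed close to $\mathbb{R}^N$ regardless of the size of $\epsilon$. The remaining tensor, map and Laplacian convergence statements are routine given the stability apparatus assembled in Section~\ref{prel}.
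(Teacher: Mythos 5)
Your blow-up skeleton is the right mechanism---it is essentially the argument inside Proposition~\ref{prop:quantapp}---but there is a genuine gap at the step where you identify the limit space $Z$ and linearize the blown-up eigenfunctions. Your $Z$ arises from rescaling $Y$ at the \emph{moving} base points $y_{1,k}$ by the factors $d_k$; such a limit is a non-collapsed $\RCD(0,N)$ space but is in general \emph{not} a metric measure cone (think of unit-scale geometry near $y_{1,k}$ at scale $d_k$ surviving in the limit), so Proposition~\ref{prop:linearfunction} does not apply and you cannot conclude that the Lipschitz harmonic limits $\overline{\phi}_i$ are linear. Your logic also runs in the wrong order: in the proof of Proposition~\ref{prop:regular} linearity comes \emph{first} (from the cone structure of the tangent cone) and only then does the Fubini dimension count force the Euclidean factor to exhaust the space; without linearity there is no splitting to count, so ``$Z\simeq\mathbb{R}^N$'' cannot precede ``each $\overline{\phi}_i$ is linear.'' The alternative routes to linearity in the paper are closed to you here: Theorem~\ref{prop:rigiditynonnegative} and Corollary~\ref{lem:lem} presuppose that the map is already a bi-Lipschitz embedding on a (dense) set, which is exactly what you are trying to prove. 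Finally, even granting $Z\simeq\mathbb{R}^N$, the averaged bound
\begin{equation*}
\frac{1}{\mathcal{H}^N(B_1(z_1))}\int_{B_1(z_1)}\bigl|g_Z-\overline{\Phi}^*g_{\ell^2}\bigr|\di\mathcal{H}^N\le\epsilon
\end{equation*}
does not by itself control $\|\overline{\Phi}(z_1)-\overline{\Phi}(z_2)\|_{\ell^2}$ for the one specific pair $z_1,z_2$ at distance $1$; the upgrade from an averaged to a pointwise bound again uses the linearity of the $\overline{\phi}_i$, which makes $|g_{\mathbb{R}^N}-\overline{\Phi}^*g_{\ell^2}|$ constant.

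The missing ingredient is the uniformization step that the paper performs \emph{before} any blow-up: since $y$ is regular (Proposition~\ref{prop:regular}), the almost rigidity of the Bishop inequality (Theorem~\ref{prop:almostrigidity}) together with the Bishop and Bishop--Gromov inequalities shows that $\mathcal{H}^N(B_s(z))/(\omega_Ns^N)$ is close to $1$ for \emph{every} $z$ in a small ball $B_{r_1}(y)$ and \emph{every} scale $s\le r_1$; applying almost rigidity once more, every rescaling $(Y,s^{-1}\dist_Y,z)$ with $z\in B_{r_1}(y)$ and $s\le r_1$ is uniformly pointed-GH close to $(\mathbb{R}^N,0_N)$. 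This is exactly what forces your $Z$ to be $\mathbb{R}^N$ (pointwise regularity of each $y_{1,k}$ is not enough---you need closeness to $\mathbb{R}^N$ at scale $d_k$, uniformly in $k$), after which Lipschitz harmonic functions are linear and the rest of your argument closes. The paper packages this as: supply the GH-closeness hypothesis via Bishop almost rigidity, then invoke Proposition~\ref{prop:quantapp} at the point $z$ and scale $s=\dist_Y(z,w)$. You correctly flag that the whole argument implicitly needs $\epsilon\le\delta_N$ so that smoothable points are regular, but the deeper obstruction is the one above, not just the range of $\epsilon$.
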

\begin{proof}
Let $\delta \in (0, 1)$ be a small number which will be determined later. We can find $s_0 \in (0, \delta )$ such that $(Y, s_0^{-1}\dist_Y, y)$ $\delta$-pointed Gromov-Hausdorff close to $(\mathbb{R}^N, \dist_{\mathbb{R}^N}, 0_N)$. In particular by Theorem \ref{prop:almostrigidity}  we know that 
\begin{equation}
1-\Psi(\delta; K, N)\le \frac{\mathcal{H}^N(B_{s_0}(z))}{\omega_Ns_0^N} \le 1+\Psi(\delta; K, N), \quad \forall z \in B_{s_0}(y).
\end{equation}
Applying the Bishop and Bishop-Gromov inequalities yields 
\begin{equation}
1-\Psi(\delta; K, N)\le \frac{\mathcal{H}^N(B_{s}(z))}{\omega_Ns^N} \le 1+\Psi(\delta; K, N), \quad \forall z \in B_{s_0}(y),\,\,\forall s \in (0, s_0].
\end{equation}
Let $r_1:=s_0^2$. Then for any $s \in (0, r_1]$, applying Theorem \ref{prop:almostrigidity} again for the rescaled space $(Y, s^{-1}\dist, \mathcal{H}^N_{s^{-1}\dist}, x)$ shows that the rescaled space is $\Psi(\delta; K, N)$-pointed measured Gromov-Hausdorff close to $(\mathbb{R}^N, \dist_{\mathbb{R}^N}, \mathcal{H}^N, 0_N)$. With no loss of generality we can assume 
$\Psi(\delta;K,N)<r_0$, where $r_0$ is as in Proposition \ref{prop:quantapp}. Thus Proposition \ref{prop:quantapp} yields that   $\tilde{\Phi}_t|_{B_s(y)}$ gives a $3\epsilon s$-Gromov-Hausdorff approximation to the image. In particular for any $z, w \in B_{r_1}(y)$ with $z \neq w$, letting $s:=\dist (z, w) \in (0, 1)$ shows 
\begin{equation}
\left|\|\tilde{\Phi}_t(z)-\tilde{\Phi}_t(w)\|_{L^2} -\dist_Y (z, w) \right|\le 3\epsilon s,
\end{equation}
namely
\begin{equation}
(1-3\epsilon)\dist_Y(z, w) \le \|\tilde{\Phi}_t(z)-\tilde{\Phi}_t(w)\|_{L^2}\le (1+3\epsilon)\dist_Y (z, w)
\end{equation}
which completes the proof.
\end{proof}
By an argument similar to the proof of Theorem \ref{prop:bilip}, we have the following.
\begin{theorem}\label{prop:finitedimen}
For all $\epsilon \in (0, 1/6), \delta \in (0, \epsilon)$, $t, \tau \in (0, \infty)$, let $y \in \mathcal{R}_Y(\epsilon, t, \tau)$. 
Then there exists $r_2 \in (0, 1)$ such that the restriction of the truncated map $\tilde{\Phi}_t^l:Y \to \mathbb{R}^l$ defined by (\ref{truncatedmap})
to $B_{r_2}(y) \cap \mathcal{R}_Y(\epsilon, t, \tau)$ is a $(1 \pm 3(\epsilon+\delta))$-bi-Lipschitz embedding for any $l \in \mathbb{N}$ with 
\begin{equation}
c_Nt^{(N+2)/2}\sum_{i=l+1}^{\infty}e^{-2\lambda_it}\|\dist \phi_i^Y\|_{L^{\infty}}^2<\delta.
\end{equation} 
\end{theorem}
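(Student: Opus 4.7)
The plan is to run the argument of Theorem \ref{prop:bilip} essentially verbatim, with $\tilde{\Phi}_t^l$ in place of $\tilde{\Phi}_t$, and with an additional loss of $\delta$ coming from the truncation tail. First, a direct expansion of the pull-backs gives
\[
\tilde{\Phi}_t^* g_{L^2} - (\tilde{\Phi}_t^l)^* g_{\mathbb{R}^l} = c_N t^{(N+2)/2} \sum_{i=l+1}^{\infty} e^{-2\lambda_i^Y t}\, \dist \phi_i^Y \otimes \dist \phi_i^Y,
\]
whose Hilbert-Schmidt norm is pointwise bounded by $c_N t^{(N+2)/2} \sum_{i=l+1}^{\infty} e^{-2\lambda_i^Y t} \|\dist \phi_i^Y\|_{L^\infty}^2 < \delta$ by hypothesis. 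Combining with $y \in \mathcal{R}_Y(\epsilon, t, \tau)$ via the triangle inequality yields
\[
\sup_{r \in (0,\tau]} \frac{1}{\mathcal{H}^N(B_r(y))} \int_{B_r(y)} \big| g_Y - (\tilde{\Phi}_t^l)^* g_{\mathbb{R}^l} \big| \di \mathcal{H}^N \leq \epsilon + \delta.
\]

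Next I would establish a truncated analog of Proposition \ref{prop:quantapp}: under the tail assumption on $l$, there exists $r_0 = r_0(\epsilon, \delta, K, N, d, t) \in (0,1)$ such that whenever $r \in (0, r_0)$ satisfies the above integral bound on $B_r(y)$ and $(Y, r^{-1}\dist_Y, y)$ is $r_0$-pGH close to $(\mathbb{R}^N, \dist_{\mathbb{R}^N}, 0_N)$, then $\tilde{\Phi}_t^l|_{B_r(y)}$ provides a $3(\epsilon + \delta)r$-GH approximation to its image. The argument is the same blow-up contradiction used for Proposition \ref{prop:quantapp}: one extracts a sequence of non-collapsed $\RCD(K,N)$ spaces with $r_i \to 0^+$, converging by Theorem \ref{GHmGH} to $(\mathbb{R}^N, \dist_{\mathbb{R}^N}, \mathcal{H}^N, 0_N)$, and forms the rescaled eigenfunctions $\overline{\phi}_{i,j}$ for $j = 1, \ldots, l$. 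Because $l$ is \emph{fixed and finite}, one does not need the full $\ell^2$-tail control of Proposition \ref{prop:compactness}: Theorem \ref{spectral2} alone gives $H^{1,2}_{\mathrm{loc}}$-strong convergence of each $\overline{\phi}_{i,j}$ to a Lipschitz harmonic function $\overline{\phi}_j$ on $\mathbb{R}^N$, which is linear by Proposition \ref{prop:linearfunction}. The $L^2_{\mathrm{loc}}$-strong convergence of the truncated pull-back tensor, combined with the integral bound, forces $|g_{\mathbb{R}^N} - (\overline{\Phi}^l)^* g_{\mathbb{R}^l}| \leq \epsilon + \delta$ identically (the limit is a constant tensor by linearity of the $\overline{\phi}_j$), so $\overline{\Phi}^l = (\overline{\phi}_j)_{j=1}^l : \mathbb{R}^N \to \mathbb{R}^l$ is a $(1 \pm (\epsilon+\delta))$-bi-Lipschitz linear embedding. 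The locally uniform convergence of $\overline{\Phi}_i^l \to \overline{\Phi}^l$ then produces the claimed GH-approximation via \eqref{texteq}, contradicting the negation.

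With this truncated analog in hand, the remainder of the argument exactly mimics the end of Theorem \ref{prop:bilip}: fix a small auxiliary $\delta' \in (0,1)$ and choose $r_2 = s_0^2$ where $s_0$ is small enough that $(Y, s_0^{-1}\dist_Y, y)$ is $\delta'$-pGH close to $(\mathbb{R}^N, \dist_{\mathbb{R}^N}, 0_N)$, using that $y \in \mathcal{R}_N$ by Proposition \ref{prop:regular} (assuming $\epsilon < \delta_N$, as is implicit already in Theorem \ref{prop:bilip}). Bishop-Gromov together with Theorem \ref{prop:almostrigidity} then propagates this closeness to every scale $s \in (0, r_2]$. For distinct $z, w \in B_{r_2}(y) \cap \mathcal{R}_Y(\epsilon, t, \tau)$, setting $s := \dist_Y(z, w) \leq \tau$ and applying the truncated analog of Proposition \ref{prop:quantapp} at the point $z$ (whose integral bound on $B_s(z)$ is inherited from $z \in \mathcal{R}_Y(\epsilon, t, \tau)$) gives
\[
(1 - 3(\epsilon + \delta)) \dist_Y(z, w) \leq \big| \tilde{\Phi}_t^l(z) - \tilde{\Phi}_t^l(w) \big|_{\mathbb{R}^l} \leq (1 + 3(\epsilon + \delta)) \dist_Y(z, w),
\]
which is the claim. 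The main technical point will be checking that the blow-up/spectral argument survives the finite truncation cleanly, but since only finitely many eigenfunctions intervene and each is handled by Theorems \ref{thm:spectral} and \ref{spectral2}, no genuinely new difficulty arises beyond the bookkeeping of the tail contribution $\delta$.
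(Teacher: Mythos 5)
Your proposal is correct and follows exactly the route the paper intends: the paper proves this theorem only by the remark ``by an argument similar to the proof of Theorem \ref{prop:bilip}'', and your write-up is precisely that argument, with the truncation tail absorbed as an additive $\delta$ in the smoothability estimate (the same device the paper uses in the proofs of Propositions \ref{prop:regular} and \ref{proppullb}) and the blow-up step of Proposition \ref{prop:quantapp} repeated for the finitely many retained eigenfunctions via Theorem \ref{spectral2}. The points you flag — applying the quantitative lemma at $z$ rather than $y$, shrinking $r_2$ below $\tau$ and $r_0$, and the implicit smallness of $\epsilon$ needed to invoke Proposition \ref{prop:regular} — are present (and equally implicit) in the paper's own proof of Theorem \ref{prop:bilip}, so no new gap is introduced.
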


\begin{remark}\label{remarkporte}
Portegies proved in \cite{P} that for all $\epsilon, \tau, d \in (0, \infty)$ and any $K \in \mathbb{R}$ there exists $t_0:=t_0(n, K, \epsilon, \tau, d) \in (0, 1)$ such that for any $t \in (0, t_0)$ there exists $N_0:=N_0(n, K, \epsilon, \tau, d, t) \in \mathbb{N}$ such that if an $n$-dimensional closed Riemannian manifold $(M^n, g)$ satisfies $\mathrm{diam}(M^n, \dist_g) \le d, \mathrm{Ric}_{M^n}^g \ge K$ and $\mathrm{inj}_{M^n}^g\ge \tau$, where $\mathrm{inj}_{M^n}^g$ denote the injectivity radius, then for any $l \in \mathbb{N}_{\ge N_0}$, the map $\tilde{\Phi}_{t}^l:M^n \to \mathbb{R}^{l}$ is a smooth embedding with
\begin{equation}\label{78}
\|(\tilde{\Phi}_{t}^l)^*g_{\mathbb{R}^{l}}-g\|_{L^{\infty}}<\epsilon.
\end{equation}
In particular we have $M^n=\mathcal{R}_{M^n}(\epsilon, t)$ for any $t \in (0, t_0)$. Therefore Proposition \ref{prop:bilip} and Theorem \ref{prop:finitedimen} can be regarded as a generalization of his result to the $\RCD$ setting. Moreover Propositions \ref{prop:regular} and \ref{prop:dense} below tell us that this observation cannot be extended over the singular set. See also Remark \ref{remnonsmoo}. A non-smooth example along this direction can be found in \cite[Exam.5.1]{Peters}. The smooth embeddability part of $\tilde{\Phi}_t^l$ in his result will be generalized to the $\RCD$ setting in the next section too after replacing ``smooth'' by ``bi-Lipschitz''. See Proposition \ref{prop:portegies}.
\end{remark}

\subsection{Characterization of weakly smooth $\RCD$ spaces}
Our goal in this section is to give a proof of Theorem \ref{themequivalence}. Let us begin with giving the following rigidity result, where recall that a pointed metric measure space $(W, \dist_W, \meas_W, w)$ is said to be a \textit{tangent cone at infinity} of an $\RCD(0, N)$ space $(Z, \dist_Z, \meas_Z)$ if there exists a sequence $R_i \to \infty$ such that 
\begin{equation}
\left( Z, R_i^{-1}\dist_Z, \meas_Z(B_{R_i}(z))^{-1}\meas_Z, z\right) \stackrel{\mathrm{pmGH}}{\to} (W, \dist_W, \meas_W, w)
\end{equation}
holds for some (or equivalently all) $z \in Z$.
\begin{theorem}\label{prop:rigiditynonnegative}
Let $(Z, \dist_Z, \meas_Z)$ be an $\RCD(0, N)$ space whose essential dimension is equal to $n$, and let $\Phi=(\phi_i)_i:Z \to \ell^2$ be a bi-Lipschitz embedding. Assume that each $\phi_i$ is a harmonic function on $Z$. Then we have the following.
\begin{enumerate}
\item Any tangent cone at infinity of $(Z, \dist_Z, \meas_Z)$ is isometric to $(\mathbb{R}^n, \dist_{\mathbb{R}^n}, \omega_n^{-1}\mathcal{H}^n, 0_n)$.
\item After relabeling, the map $\Phi^n:=(\phi_i)_{i=1}^n:Z \to \mathbb{R}^n$ gives a bi-Lipschitz homeomorphism.
\end{enumerate}
\end{theorem}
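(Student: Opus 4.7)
The plan is to mimic the blow-down rigidity argument used in the alternative proof of Proposition \ref{prop:linearfunction} and sketched for the assertion ($\star\star$) in the introduction, replacing the non-collapsed condition by the essential dimension hypothesis. Write $L$ for the bi-Lipschitz constant of $\Phi$ and fix $z\in Z$. For part (1), consider a tangent cone at infinity $(W,\dist_W,\meas_W,w)$ arising from some $R_i\to\infty$. The rescaled harmonic functions $\tilde{\phi}_{j,i}(y):=R_i^{-1}(\phi_j(y)-\phi_j(z))$ are uniformly $L$-Lipschitz on the rescaled $\RCD(0,N)$ spaces $(Z,R_i^{-1}\dist_Z,\meas_Z(B_{R_i}(z))^{-1}\meas_Z,z)$ and remain harmonic. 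By Theorem \ref{spectral2} I would extract $H^{1,2}_\loc$-limits $\tilde{\phi}_j$ on $W$; the mean value theorem at infinity of \cite{HKX} applied to the bounded subharmonic functions $|\nabla\phi_j|^2$, combined with a blow-down argument as in \cite{ChCM}, then forces each $\tilde{\phi}_j$ to be linear on $W$, exactly as in Proposition \ref{prop:linearfunction}.

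Next I would transfer the bi-Lipschitz control to the limit at the level of pull-back tensors. The rescaled tensors $\Phi_i^*g_{\ell^2}=\sum_j\dist\tilde{\phi}_{j,i}\otimes\dist\tilde{\phi}_{j,i}$ satisfy pointwise bi-Lipschitz bounds $L^{-2}g_{Z_i}\le\Phi_i^*g_{\ell^2}\le L^2 g_{Z_i}$, and these pass to analogous bounds on the limit tensor $h:=\sum_j\dist\tilde{\phi}_j\otimes\dist\tilde{\phi}_j$ on $W$ satisfying $L^{-2}g_W\le h\le L^2 g_W$. Writing $W=\mathbb{R}^k\times W'$ with $\mathbb{R}^k$ the maximal Euclidean factor, Theorem \ref{splitting} together with the maximality of $k$ forces each linear $\tilde{\phi}_j$ to depend only on the $\mathbb{R}^k$-coordinates, so $h$ annihilates vectors tangent to $W'$. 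Combined with $h\ge L^{-2}g_W$ this forces $W'$ to be a single point, so $W=\mathbb{R}^k$. Lower semicontinuity of essential dimension (Proposition \ref{weakriem}) gives $k\le n$; for the reverse $k\ge n$, I would invoke the bi-Lipschitz preservation of Hausdorff dimension applied to the blow-down of $\Phi(Z)\subset\ell^2$, coupled with the fact that $\Phi(\mathcal{R}_n)\subset\ell^2$ locally carries an $n$-dimensional bi-Lipschitz structure.

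For part (2), since the blow-down $\tilde{\Phi}=(\tilde{\phi}_j)_j:\mathbb{R}^n\to\ell^2$ is linear bi-Lipschitz, its image lies in an $n$-dimensional subspace, and after relabeling the first $n$ gradients $\nabla\tilde{\phi}_1,\dots,\nabla\tilde{\phi}_n$ are linearly independent with $\tilde{\phi}_j=\sum_{i=1}^n a_{ij}\tilde{\phi}_i+c_j$ for $j>n$. I would then upgrade this linear relation from $W$ to $Z$ by applying the MVT at infinity to the bounded subharmonic functions $|\nabla(\phi_j-\sum_{i=1}^n a_{ij}\phi_i)|^2$: their blow-downs on $W$ vanish identically, so their suprema on $Z$ must vanish, forcing $\phi_j\equiv\sum_{i=1}^n a_{ij}\phi_i+c_j$ on $Z$. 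Hence $\Phi$ factors as $\Phi=T\circ\Phi^n+b$ for a bounded linear $T:\mathbb{R}^n\to\ell^2$ with closed $n$-dimensional image and some $b\in\ell^2$, which immediately transfers the bi-Lipschitz property of $\Phi$ to $\Phi^n:Z\to\mathbb{R}^n$. Surjectivity of $\Phi^n$, and hence the homeomorphism property, then follows from a topological argument: $\Phi^n(Z)$ is closed by completeness of $Z$, is open near points of $\mathcal{R}_n$ by bi-Lipschitz invariance of domain, and the complement must be empty since $\mathbb{R}^n$ is connected and the singular set of $Z$ has Hausdorff codimension at least one.

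The main obstacle I anticipate is passing the pointwise bi-Lipschitz bounds through the pmGH blow-down. Proposition \ref{prop:compactness} requires a uniform tail decay in $\ell^2$ that is not immediate from the hypothesis, making direct uniform convergence of the rescaled maps $\tilde{\Phi}_i$ delicate. I would therefore work at the level of pull-back tensors via $L^2_\loc$-convergence from Theorems \ref{bbbg} and \ref{spectral2}, and verify that the symmetric positive semi-definite pointwise bounds survive the weak passage to the limit, for instance by testing against product vector fields $\nabla f\otimes\nabla g$ for $f,g$ test functions on $W$ approximated by test functions on the $Z_i$.
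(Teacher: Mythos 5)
Your overall strategy coincides with the paper's: blow down along $R_i\to\infty$, use the mean value theorem at infinity of \cite{HKX} together with the Bochner inequality and a good cut-off to show the limit components are linear, invoke the splitting theorem to identify the tangent cone at infinity with a Euclidean factor, and upgrade the affine relations among the $\phi_j$ from the blow-down back to $Z$ via the MVT at infinity. However, there are two genuine gaps.

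First, the transfer of the lower bi-Lipschitz bound to the blow-down is not secured by your proposed tensor-level workaround. The danger is not the weak convergence of each summand $\dist\tilde{\phi}_{j,i}\otimes\dist\tilde{\phi}_{j,i}$, but escape of mass in the index $j$: without a tail bound uniform in the scale $i$, the sum of the limits $h=\sum_j\dist\tilde{\phi}_j\otimes\dist\tilde{\phi}_j$ can be strictly smaller than the limit of the sums, and then the crucial lower bound $h\ge L^{-2}g_W$ (equivalently, the injectivity of the limit map, which is what kills the factor $W'$) simply fails. Testing against product vector fields does not repair this. The paper closes the gap by proving the summability $\sum_j\|\dist\phi_j\|_{L^\infty}^2\le nL^2$ (see (\ref{eee})): by the mean value theorem at infinity, $\|\dist\phi_j\|_{L^\infty}^2$ equals the average of $|\nabla\phi_j|^2$ at infinity, hence equals $\|\dist\overline{\phi}_j\|_{L^\infty}^2$, and the partial sums of the latter are bounded by $nL^2$ through the Lipschitz bound on the limit map. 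This uniform tail estimate is precisely the decay hypothesis of Proposition \ref{prop:compactness}, which then yields locally uniform convergence of the rescaled maps and hence an honestly bi-Lipschitz limit $\overline{\Phi}$. You already apply the MVT at infinity to each $|\nabla\phi_j|^2$, so the ingredients are at hand, but you never assemble them into this tail bound; without it both your tensor route and the uniform-convergence route are blocked. (Your argument for $k\ge n$ via Hausdorff-dimension preservation is also vague; the paper's route is to show that the truncated map $\Phi^m$ is itself a bi-Lipschitz embedding of $Z$ and then apply the rectifiability of Theorem \ref{thm:RN}.)

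Second, your surjectivity argument for part (2) does not work. Invariance of domain applied "near points of $\mathcal{R}_n$" presupposes that $Z$ is a topological $n$-manifold near such points, which is not part of the definition of $\mathcal{R}_n$: regularity of tangent cones does not by itself provide a local homeomorphism to $\mathbb{R}^n$. Moreover, even granting that $\Phi^n(Z)$ is closed and contains an open set with small complement, connectedness of $\mathbb{R}^n$ does not force $\Phi^n(Z)=\mathbb{R}^n$ (a closed half-space has all these properties), and in the possibly collapsed setting the complement of $\mathcal{R}_n$ is only known to be $\meas_Z$-negligible, not of Hausdorff codimension one. The paper instead first shows that $(Z,\dist_Z)$ is Reifenberg flat by a quantitative version of the blow-up argument, invokes the intrinsic Reifenberg theorem of \cite{CheegerColding1} to conclude that $Z$ is a topological $n$-manifold, and only then applies invariance of domain to get that $\Phi^n(Z)$ is open, which together with closedness yields $\Phi^n(Z)=\mathbb{R}^n$.
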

\begin{proof}
We follow a blow-down argument in \cite{ChCM} as follows. Fix a sequence $R_i \to \infty$.
After passing to a subsequence, 
\begin{equation}\label{eq;convmet}
(Z_i, \dist_{Z_i}, \meas_{Z_i}, z):=\left(Z, R_i^{-1}\dist_Z, \meas_Z(B_{R_i}(z))^{-1}\meas_Z, z\right) \stackrel{\mathrm{pmGH}}{\to} (W, \dist_W, \meas_W, w)
\end{equation}
holds for some pointed  $\RCD(0, N)$ space $(W, \dist_W, \meas_W, w)$.
Let us define functions on $(Z_i, \dist_{Z_i})$ by
\begin{equation}
\overline{\phi}_{j, i}:=\frac{1}{R_i}\left( \phi_j-\frac{1}{\meas_Z(B_{R_i}(z))}\int_{B_{R_i}(z)}\phi_j\di \meas_Z\right).
\end{equation}
Note that it follows from the Lipschitz continuity of $\Phi$ that each $\phi_i$ is a Lipschitz harmonic function (thus $\overline{\phi}_{j, i}$ is too).
As already discussed in the proofs of Propositions \ref{prop:regular} and \ref{prop:quantapp}, after passing to a subsequence again, Theorem \ref{spectral2} yields that there exist Lipschitz harmonic functions $\overline{\phi}_j$ on $W$ such that $\overline{\phi}_{j, i}$ $H^{1, 2}_{\mathrm{loc}}$-strongly converge to $\overline{\phi}_j$ on $W$. Applying \cite[Lem.3.1]{MondinoNaber} for the rescaled space $(Z_i, \dist_{Z_i}, \meas_{Z_i})$ there exists $\psi_i \in D(\Delta_Z) \cap \mathrm{Lip}(Z, \dist_Z)$ such that $0\le \psi_i \le 1$, that $\supp \psi_i \subset B_{2R_i}(z)$, that $\psi_i|_{B_{R_i}(z)}\equiv 1$, and that $R_i|\nabla \psi_i| +R_i^2|\Delta_Z\psi_i|\le C(N)$.
Recall that the Bochner inequality implies that $|\nabla \phi_i|^2$ is subharmonic. Thus we can apply the mean value theorem at infinity \cite[Th.5.4]{HKX} to get 
\begin{equation}\label{eq:li}
\lim_{R\to \infty}\frac{1}{\meas_Z(B_R(z))}\int_{B_R(z)}|\nabla \phi_i|^2\di \meas_Z=\|\nabla \phi_i\|_{L^{\infty}}^2=(\mathbf{Lip}\phi_i)^2,
\end{equation}
where we used \cite[Prop.1.10]{GV} in the last equality.
The Bochner formula yields
\begin{align}
\frac{1}{\mathcal{H}^N(B_{R_i}(z))}\int_{B_{R_i}(z)}|\mathrm{Hess}_{\phi_j}|^2\di \meas_Z &\le  \frac{2^N}{\mathcal{H}^N(B_{2R_i}(z))}\int_{B_{2R_i}(z)}\psi_i|\mathrm{Hess}_{\phi_j}|^2\di \meas_Z \nonumber \\
&\le \frac{2^{N-1}}{\mathcal{H}^N(B_{2R_i}(z))}\int_{B_{2R_i}(z)}\Delta_{\color{blue}Z}\psi_i\cdot (|\nabla \phi_j|^2-(\mathbf{Lip}\phi_i)^2)\di \meas_Z \nonumber \\
&\le 2^{N-1}R_i^{-2}o(1)
\end{align}
because of (\ref{eq:li}), {\color{blue}where we used the Bochner inequality in the second inequality above and the last inequality just comes from (\ref{eq:li}).} Thus as $i \to \infty$
\begin{equation}
\frac{R_i^2}{\mathcal{H}^N(B_{R_i}(z))}\int_{B_{R_i}(z)}|\mathrm{Hess}_{\phi_j}|^2\di \meas_Z \to 0.
\end{equation}
Therefore applying \cite[Th.10.3]{AmbrosioHonda} with a good cut-off by \cite[Lem.3.1]{MondinoNaber} we have $\mathrm{Hess}_{\overline{\phi}_j}=0$. Thus since this implies that $|\nabla \overline{\phi}_j|$ is constant, Theorem \ref{splitting} yields that each $\overline{\phi}_j$ is linear.

From now on let us prove
\begin{equation}\label{eee}
\sum_j\|\dist \phi_j\|_{L^{\infty}}^2<\infty.
\end{equation}
Take $L \in [1, \infty)$ satisfying that $\Phi$ is $L$-Lipschitz, and fix $l \in \mathbb{N}$. Since $\overline{\Phi}^l_i:=(\overline{\phi}_{j, i})_{j=1}^l :Z_i \to \mathbb{R}^l$ uniformly converge to $\overline{\Phi}^l:=(\overline{\phi}_j)_{j=1}^l :W \to \mathbb{R}^l$ on any bounded subset of $W$, we know that $\overline{\Phi}^l$ is $L$-Lipschitz. In particular combining (\ref{eq:bound}) with the linearity of $\overline{\phi}_j$ shows
\begin{equation}\label{abrabruairiua}
\sum_{j=1}^l\|\dist \overline{\phi}_j\|_{L^{\infty}}^2\le nL^2.
\end{equation}
Then the above arguments using the mean value theorem at infinity allows us to conclude
\begin{equation}\label{abraubaibrao}
\sum_{j=1}^l\|\dist \phi_j\|_{L^{\infty}}^2<\infty.
\end{equation}
Thus letting $l \to \infty$ in (\ref{abraubaibrao}) proves (\ref{eee}).

Then %the Poincar\'e inequality with 
(\ref{eee}) easily implies that for any $R \in (0, \infty)$ and any $\epsilon \in (0, 1)$, there exists $i_0 \in \mathbb{N}$ such that 
\begin{equation}\label{eq:referee req}
\sum_{j \ge i_0}\|\overline{\phi}_{j, i}\|_{L^{\infty}(B_R^{\dist_{Z_i}}(z))}<\epsilon,\quad \forall i \in \mathbb{N}
\end{equation}
holds. {\color{blue} For reader's convenience let us provide a proof of (\ref{eq:referee req}) as follows. Since the average of $\overline{\phi}_{j, i}$ over the unit ball is zero, we can find $z_{j, i} \in B_1^{\dist_{Z_i}}(z)$ with $\overline{\phi}_{j, i}(z_{j, i})=0$. Then for any $w \in B_R^{\dist_{Z_i}}(z)$, recalling $| \dist \phi_j| = |\dist \overline{\phi}_{j, i}|$, we have
\begin{equation}
\left| \overline{\phi}_{j, i}(w) \right| =\left| \overline{\phi}_{j, i}(w) - \overline{\phi}_{j, i}(z_{j, i})\right| \le \|\dist \phi_{j, i}\|_{L^{\infty}} \cdot  \dist_{Z_i}(w, z_{j, i}) \le (2R+2) \|\dist \phi_{j, i}\|_{L^{\infty}}. 
\end{equation}
Taking the sum with respect to $j \ge i_0$, we conclude because of (\ref{eee}).
}
In particular, thanks to Proposition \ref{prop:compactness}, we see that $\overline{\Phi}_i:=(\overline{\phi}_{j, i})_j$ uniformly converge to $\overline{\Phi}:=(\overline{\phi}_j)_j$ on any bounded subset of $W$ with respect to the convergence (\ref{eq;convmet}). In particular, $\overline{\Phi}$ is also a bi-Lipschitz embedding into $\ell^2$.
Then we denote by $\mathbb{R}^m$ the Euclidean factor coming from $\{\overline{\phi}_i\}_i$. Theorem \ref{splitting} shows that there exists an isometry from $\mathbb{R}^m \times \tilde{W}$ to $W$ for some non-collapsed $\RCD(0, N-m)$ space $(\tilde{W}, \dist_{\tilde{W}}, \meas_{\tilde{W}})$ such that $\overline{\Phi} \circ \iota (v, w_1)=\overline{\Phi} \circ \iota (v, w_2)$ holds for all $w_i \in \tilde{W} (i=1, 2)$ and $v \in \mathbb{R}^m$. The bi-Lipschitz embeddability of $\overline{\Phi}$ shows that $\tilde{W}$ is a single point. Thus $W$ is isometric to $\mathbb{R}^m$.

Let us prove $m=n$. The lower semicontinuity of essential dimensions in Proposition \ref{weakriem} shows $n \ge m$.
On the other hand, after relabeling, with no loss of generality we can assume that $\{\overline{\phi}_i\}_{i=1}^m$ is a family of linearly independent linear functions on $\mathbb{R}^m$ because of the bi-Lipschitz embeddability of $\overline{\Phi}$. Thus for any $i \in \mathbb{N}_{\ge m+1}$ there exist $a_{i, j} \in \mathbb{R} (j=0,1,\ldots, m)$ such that $\overline{\phi}_i=a_{i, 0}+\sum_{j=1}^ma_{i, j}\overline{\phi}_j$ holds. Applying the mean value theorem at infinity \cite[Th.5.4]{HKX} again for $|\nabla (\phi_i - \sum_{j=1}^ma_{i, j}\phi_j)|^2$ shows that $\phi_i-\sum_{j=1}^ma_{i, j}\phi_j$ is constant. From this obervation we know that the truncated map $\overline{\Phi}^m:Z \to \mathbb{R}^m$ is also bi-Lipschitz embedding because $\Phi$ is. Then Theorem \ref{thm:RN} proves $n \le m$. Thus $n=m$.  Therefore we have (1).

Finally let us prove (2). By an argument similar to the proof of Proposition \ref{prop:quantapp} (cf. the proof of \cite[Th.1.1]{honda20}), we can check that $(Z, \dist_Z)$ is Reifenberg flat, that is, the following holds.
\begin{itemize}
\item For any $\epsilon \in (0, 1)$, there exists $r_0 \in (0, 1)$ such that 
\begin{equation}
\dist_{GH}(B_r(\tilde{z}), B_r(0_n))\le \epsilon r,\quad \forall \tilde{z} \in Z,\,\,\forall r\in (0, r_0]
\end{equation}
holds.
\end{itemize}
In particular, thanks to \cite[Th.A.1.2 and A.1.3]{CheegerColding1}, we know that $Z$ is homeomorphic to an $n$-dimensional manifold. Thus by invariance of domain, $\overline{\Phi}^n(Z)$ is an open subset of $\mathbb{R}^n$. On the other hand, the bi-Lipschitz embeddability of $\overline{\Phi}^n$ into $\mathbb{R}^n$ yields that $\overline{\Phi}^n(Z)$ is a closed subset of $\mathbb{R}^n$. Thus $\overline{\Phi}^n(Z)=\mathbb{R}^n$.
\end{proof}
\begin{remark}
It is conjectured that in Theorem \ref{prop:rigiditynonnegative}, $(Z, \dist_Z, \meas_Z)$ is isometric to $(\mathbb{R}^n, \dist_{\mathbb{R}^n}, c\mathcal{H}^n)$ for some $c \in (0, \infty)$. Compare with the next corollary.
\end{remark}
\begin{corollary}\label{lemlemelem}
Under the same assumptions of Theorem \ref{prop:rigiditynonnegative}, if in addition $(Z, \dist_Z, \meas_Z)$ is a non-collapsed $\RCD(0, N)$ space, then $(Z, \dist_Z, \meas_Z)$ is isometric to $(\mathbb{R}^N, \dist_{\mathbb{R}^N}, \mathcal{H}^N)$.
\end{corollary}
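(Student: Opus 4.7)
The plan is to exploit the non-collapsed hypothesis to upgrade the tangent-cone-at-infinity conclusion of Theorem~\ref{prop:rigiditynonnegative}(1) into the statement that the asymptotic volume ratio of $(Z, \dist_Z, \mathcal{H}^N)$ equals $1$, and then to invoke Bishop--Gromov rigidity to globalize this equality into an isometry with $\mathbb{R}^N$.

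First I would observe that, because $(Z, \dist_Z, \mathcal{H}^N)$ is a non-collapsed $\RCD(0, N)$ space, Theorem~\ref{thm:bishop}(1) identifies its essential dimension as $n = N$. Hence Theorem~\ref{prop:rigiditynonnegative}(1) guarantees that every tangent cone at infinity is isometric, as a pointed metric measure space, to $(\mathbb{R}^N, \dist_{\mathbb{R}^N}, \omega_N^{-1}\mathcal{H}^N, 0_N)$. In particular, fixing $z \in Z$ and any sequence $R_i \to \infty$, we obtain pointed Gromov--Hausdorff convergence $(Z, R_i^{-1}\dist_Z, z) \to (\mathbb{R}^N, \dist_{\mathbb{R}^N}, 0_N)$.

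Next I would apply Theorem~\ref{GHmGH} to the rescaled spaces $(Z, R_i^{-1}\dist_Z, \mathcal{H}^N_{R_i^{-1}\dist_Z}, z)$, which are each non-collapsed $\RCD(0, N)$ by the scale invariance of the curvature-dimension condition at $K = 0$. The theorem upgrades the Gromov--Hausdorff convergence to convergence of Hausdorff measures of unit balls,
\begin{equation*}
R_i^{-N}\mathcal{H}^N\!\left(B_{R_i}(z)\right) = \mathcal{H}^N_{R_i^{-1}\dist_Z}\!\left(B_1^{R_i^{-1}\dist_Z}(z)\right) \longrightarrow \mathcal{H}^N(B_1(0_N)) = \omega_N,
\end{equation*}
which is precisely the statement that the asymptotic volume ratio satisfies $\lim_{R \to \infty}\mathcal{H}^N(B_R(z))/(\omega_N R^N) = 1$. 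Combined with the Bishop and Bishop--Gromov inequalities, which in the $\RCD(0, N)$ setting force the function $R \mapsto \mathcal{H}^N(B_R(z))/(\omega_N R^N)$ to be non-increasing and bounded above by $1$, I conclude
\begin{equation*}
\frac{\mathcal{H}^N(B_R(z))}{\omega_N R^N} \equiv 1 \qquad \text{for every } R \in (0, \infty) \text{ and every } z \in Z.
\end{equation*}

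The last step is to translate this global Euclidean volume identity into an isometry with $\mathbb{R}^N$. Letting $R \to 0^+$ and invoking the equality case of Theorem~\ref{thm:bishop}(2), every $z \in Z$ lies in $\mathcal{R}_N$, so the singular set is empty. The cleanest finish is to apply the volume-cone-implies-metric-cone theorem of DePhilippis--Gigli \cite{DG}: constancy of the normalized volume ratio in $R$ forces $(Z, \dist_Z, \mathcal{H}^N)$ to be a metric cone with vertex at any fixed $z$, and the fact that the cross-section must carry the Euclidean total mass $N\omega_N$ then identifies $Z$ uniquely with $(\mathbb{R}^N, \dist_{\mathbb{R}^N}, \mathcal{H}^N)$. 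I expect this last rigidity step to be the main substantive ingredient, since it requires upgrading a pointwise volume identity to a global metric statement in the singular setting; the earlier steps only combine Theorems~\ref{prop:rigiditynonnegative}, \ref{thm:bishop}, and \ref{GHmGH} with Bishop--Gromov monotonicity.
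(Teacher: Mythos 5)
Your proof is correct and follows essentially the same route as the paper: Theorem \ref{prop:rigiditynonnegative} combined with Theorem \ref{GHmGH} yields $\lim_{R\to\infty}\mathcal{H}^N(B_R(z))/(\omega_N R^N)=1$, the Bishop and Bishop--Gromov inequalities then force $\mathcal{H}^N(B_R(z))=\omega_N R^N$ for all $R$, and the rigidity of the Bishop inequality from \cite{DG} concludes. The only difference is cosmetic: where the paper simply invokes Theorem \ref{thm:bishop} for the final rigidity step, you spell it out via volume-cone-implies-metric-cone and the maximal-volume identification of the cross-section, which is a legitimate (and more explicit) way to justify the same citation.
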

\begin{proof}
Theorem \ref{prop:rigiditynonnegative} with Theorem \ref{GHmGH} implies
\begin{equation}
\lim_{R \to \infty}\frac{\mathcal{H}^N(B_{R}(z))}{\omega_NR^N}=1.
\end{equation}
By the Bishop and the Bishop-Gromov inequalities we have 
\begin{equation}
\mathcal{H}^N(B_R(z))=\omega_NR^N,\quad \forall R \in (0, \infty).
\end{equation}
Then the rigidity for the Bishop inequality \cite[Thm.1.6]{DG} (e.g. Theorem \ref{thm:bishop}) completes the proof.
\end{proof}
Similarly we can prove the following.
\begin{corollary}\label{lem:lem}
Let $(Z, \dist_Z, \mathcal{H}^N)$ be a non-collapsed $\RCD(0, N)$ space with Euclidean volume growth, namely
\begin{equation}\label{euclidean}
\lim_{R \to \infty}\frac{\mathcal{H}^N(B_R(z))}{\omega_NR^N}>0
\end{equation}
holds for some (or equivalently all) $z \in Z$.
Assume that there exists a Lipschitz map $\Phi=(\phi_i)_i:Z \to \ell^2$ and a subset $A$ of $Z$ such that the following hold.
\begin{enumerate}
\item The set $A$ is asymptotically dense in the sense:
\begin{itemize}
\item There exist sequences of $\epsilon_i \to 0, R_i \to \infty, L_i \to \infty$ such that $B_{L_iR_i}(z) \cap A$ is $\epsilon_iR_i$-dense in $B_{L_iR_i}(z)$ for any $i$,
\end{itemize}
\item each $\phi_i$ is a harmonic function on $Z$, 
\item the map $\Phi|_A$ is a bi-Lipschitz embedding.
\end{enumerate}
Then $(Z, \dist_Z, \mathcal{H}^N)$ is isometric to $(\mathbb{R}^N, \dist_{\mathbb{R}^N}, \mathcal{H}^N)$.
\end{corollary}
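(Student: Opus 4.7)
The plan is to prove that every tangent cone at infinity of $(Z,\dist_Z,\mathcal{H}^N)$ is isometric to $(\mathbb{R}^N,\dist_{\mathbb{R}^N},\mathcal{H}^N)$. Once this is in hand, Theorem \ref{GHmGH} yields $\mathcal{H}^N(B_R(z))/(\omega_N R^N)\to 1$ as $R\to\infty$; Bishop--Gromov monotonicity then upgrades this to the equality $\mathcal{H}^N(B_R(z))=\omega_N R^N$ at every radius, and the rigidity statement in Theorem \ref{thm:bishop} forces $Z\cong\mathbb{R}^N$, exactly as in the endgame of Corollary \ref{lemlemelem}.

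Fix $R_i\to\infty$. By Euclidean volume growth and the compactness theory for non-collapsed $\RCD(0,N)$ spaces, after passing to a subsequence the rescalings
\[
(Z_i,\dist_{Z_i},\meas_{Z_i},z):=\bigl(Z,R_i^{-1}\dist_Z,\mathcal{H}^N(B_{R_i}(z))^{-1}\mathcal{H}^N,z\bigr)
\]
pmGH-converge to a pointed non-collapsed $\RCD(0,N)$ metric measure cone $(W,\dist_W,\meas_W,w)$. Set
\[
\overline{\phi}_{j,i}:=\frac{1}{R_i}\left(\phi_j-\frac{1}{\mathcal{H}^N(B_{R_i}(z))}\int_{B_{R_i}(z)}\phi_j\di\mathcal{H}^N\right).
\]
The harmonicity of each $\phi_j$ and the global $L$-Lipschitz bound on $\Phi$, combined with Theorem \ref{spectral2}, the Bochner inequality, and the mean value theorem at infinity of \cite{HKX}, reproduce verbatim the argument in the proof of Theorem \ref{prop:rigiditynonnegative}: the $\overline{\phi}_{j,i}$ $H^{1,2}_{\mathrm{loc}}$-converge to \emph{linear} functions $\overline{\phi}_j$ on $W$, and $\sum_j\|\dist\phi_j\|_{L^\infty}^2\le nL^2$. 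This last bound supplies the tail decay required by Proposition \ref{prop:compactness}, so after a further subsequence the maps $\overline{\Phi}_i:=(\overline{\phi}_{j,i})_j$ converge uniformly on bounded sets to an $L$-Lipschitz map $\overline{\Phi}:=(\overline{\phi}_j)_j:W\to\ell^2$.

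The central step---and the main obstacle---is to upgrade $\overline{\Phi}$ to a \emph{bi-Lipschitz} embedding from the bi-Lipschitz bound on $\Phi$ which is only localized on $A$. After rescaling, assumption (1) reads: $B_{L_i}^{\dist_{Z_i}}(z)\cap A$ is $\epsilon_i$-dense in $B_{L_i}^{\dist_{Z_i}}(z)$ with $L_i\to\infty$ and $\epsilon_i\to 0$. Given arbitrary $w',w''\in W$, pick pmGH-approximating sequences $\tilde z_i\to w'$, $\tilde w_i\to w''$ in $Z_i$ and use the density to extract $a_i,b_i\in A$ with $\dist_{Z_i}(a_i,\tilde z_i),\dist_{Z_i}(b_i,\tilde w_i)<\epsilon_i$, so that $a_i\to w'$, $b_i\to w''$, and $\dist_{Z_i}(a_i,b_i)\to\dist_W(w',w'')$. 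Because the bi-Lipschitz inequality for $\Phi|_A$ is scale invariant (the source metric and $\overline{\Phi}_i$ are both rescaled by $R_i^{-1}$),
\[
C^{-1}\dist_{Z_i}(a_i,b_i)\le\|\overline{\Phi}_i(a_i)-\overline{\Phi}_i(b_i)\|_{\ell^2}\le C\dist_{Z_i}(a_i,b_i)
\]
holds with $C$ independent of $i$. Sending $i\to\infty$ and using the uniform convergence of $\overline{\Phi}_i$ on bounded sets transfers the same two-sided estimate to $\overline{\Phi}(w')$ and $\overline{\Phi}(w'')$, and arbitrariness of $w',w''\in W$ yields that $\overline{\Phi}$ is bi-Lipschitz on all of $W$.

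Thus $\overline{\Phi}:W\to\ell^2$ is a bi-Lipschitz embedding of a non-collapsed $\RCD(0,N)$ space whose components are linear (in particular harmonic), so Corollary \ref{lemlemelem} forces $W\cong(\mathbb{R}^N,\dist_{\mathbb{R}^N},\mathcal{H}^N)$. Since $R_i\to\infty$ was arbitrary, every tangent cone at infinity of $Z$ is Euclidean, and the first paragraph concludes the proof.
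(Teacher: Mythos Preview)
Your argument follows the same blow-down strategy that the paper intends (the paper merely says ``similarly'' and omits the details), and the core mechanism---transferring the bi-Lipschitz bound on $\Phi|_A$ to the limit map $\overline{\Phi}$ via the asymptotic density of $A$, then invoking Corollary~\ref{lemlemelem}---is correct. There is, however, one logical inconsistency.

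You begin by fixing an \emph{arbitrary} sequence $R_i\to\infty$ and conclude with ``since $R_i\to\infty$ was arbitrary, every tangent cone at infinity is Euclidean.'' But assumption~(1) only supplies the density of $A$ along \emph{one specific} sequence $R_i$ (together with its companion sequences $\epsilon_i,L_i$); your sentence ``after rescaling, assumption~(1) reads: $B_{L_i}^{\dist_{Z_i}}(z)\cap A$ is $\epsilon_i$-dense\ldots'' is valid only for that particular sequence, not for an arbitrary one. So as written the argument does not show that all tangent cones at infinity are Euclidean.

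The repair is easy and does not change the architecture of your proof: take the \emph{specific} $R_i$ from assumption~(1), blow down along (a subsequence of) it, and show as you did that the resulting $W$ is $\mathbb{R}^N$. Theorem~\ref{GHmGH} then gives $\mathcal{H}^N(B_{R_i}(z))/(\omega_N R_i^N)\to 1$ along that subsequence. Since the Bishop--Gromov ratio $R\mapsto\mathcal{H}^N(B_R(z))/(\omega_N R^N)$ is non-increasing on an $\RCD(0,N)$ space, the full limit as $R\to\infty$ exists and must equal~$1$; hence a single Euclidean tangent cone at infinity already suffices. Your endgame via the Bishop inequality and Theorem~\ref{thm:bishop} then concludes exactly as stated.
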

Note that in Corollary \ref{lem:lem}, the assumption (\ref{euclidean}) is necessary, for example, consider $Z=\mathbb{S}^1(1) \times \mathbb{R}$, $A=\{p\}\times \mathbb{R}$ and $\Phi(q, t)=t \in \mathbb{R} \subset \ell^2$.

Let us apply the above results to the embedding map $\Phi_t$. 
\begin{corollary}\label{prop:dense}
Let $A$ be a subset of $Y$. Assume that $\Phi_t|_A$ is a bi-Lipschitz embedding for some $t \in (0, \infty)$. Then
\begin{equation}
\mathrm{Den} (A) \subset \mathcal{R}_N
\end{equation}
In particular, if in addition $A$ is an open subset of $Y$, then $A\subset \mathcal{R}_N$.
\end{corollary}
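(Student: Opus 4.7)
The plan is to reduce the corollary to the rigidity statement in Corollary \ref{lemlemelem} by running a blow-up at an arbitrary metric density point of $A$. Fix $y \in \mathrm{Den}(A)$ and extract a sequence $r_i \to 0^+$ along which
\begin{equation*}
\left(Y, r_i^{-1}\dist_Y, \mathcal{H}^N(B_{r_i}(y))^{-1}\mathcal{H}^N, y\right) \stackrel{\mathrm{pmGH}}{\to} (T_yY, \dist_{T_yY}, \mathcal{H}^N, 0_y)
\end{equation*}
for some tangent cone $T_yY$. Since $(Y, \dist_Y, \mathcal{H}^N)$ is non-collapsed, $T_yY$ is a non-collapsed $\RCD(0,N)$ metric measure cone (the fact invoked in the proof of Proposition \ref{prop:regular}).

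Next I carry out the same rescaling of the coordinates of $\Phi_t^{\ell^2}$ used in the proofs of Propositions \ref{prop:quantapp} and \ref{prop:rigiditynonnegative}. Using the representation (\ref{eq:ell23}), set
\begin{equation*}
\overline{\phi}_{j,i} := \frac{e^{-\lambda_j^Y t}}{r_i}\left(\phi_j^Y - \frac{1}{\mathcal{H}^N(B_{r_i}(y))}\int_{B_{r_i}(y)}\phi_j^Y\di\mathcal{H}^N\right)
\end{equation*}
on $(Y, r_i^{-1}\dist_Y)$. The rescaling multiplies the Laplacian by $r_i^2$ while dividing by $r_i$, so $\Delta_{r_i^{-1}\dist_Y}\overline{\phi}_{j,i} = -r_i\lambda_j^Y e^{-\lambda_j^Y t}\phi_j^Y$ tends to $0$ in $L^2_{\mathrm{loc}}$ thanks to (\ref{eq:eigenfunction}). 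By Theorem \ref{spectral2} and a diagonal extraction, each $\overline{\phi}_{j,i}$ strongly converges in $H^{1,2}_{\mathrm{loc}}$ to a harmonic function $\overline{\phi}_j$ on $T_yY$. The bound $|\overline{\phi}_{j,i}(z)|\le R\,e^{-\lambda_j^Y t}\,\mathbf{Lip}(\phi_j^Y)$ on $B_R^{r_i^{-1}\dist_Y}(y)$, combined with (\ref{eq:eigenfunction}), supplies the tail-decay hypothesis of Proposition \ref{prop:compactness}. Hence the maps $\overline{\Phi}_i := (\overline{\phi}_{j,i})_j$ converge uniformly on bounded sets to a Lipschitz map $\overline{\Phi} = (\overline{\phi}_j)_j : T_yY \to \ell^2$ whose components are all harmonic.

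I now transfer the bi-Lipschitz bound. Note that mean subtraction cancels on differences, so $\overline{\Phi}_i(a) - \overline{\Phi}_i(a') = r_i^{-1}(\Phi_t^{\ell^2}(a) - \Phi_t^{\ell^2}(a'))$; because $\iota$ is an isometry, the hypothesis on $\Phi_t|_A$ gives constants $L^{-1}, L$ such that for all $a, a' \in A$,
\begin{equation*}
L^{-1}\dist_{r_i^{-1}\dist_Y}(a,a') \le \|\overline{\Phi}_i(a) - \overline{\Phi}_i(a')\|_{\ell^2} \le L\,\dist_{r_i^{-1}\dist_Y}(a,a').
\end{equation*}
Since $y \in \mathrm{Den}(A)$, for every $R > 0$ and every $u \in B_R(0_y) \subset T_yY$ one can exhibit $a_i \in A$ with $a_i \to u$ with respect to the pmGH convergence, by combining the density of $A$ in $B_{r_iR}(y)$ with the Gromov-Hausdorff approximations. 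Applying this simultaneously to two points $u, v$ and passing to the limit using the uniform convergence of $\overline{\Phi}_i$, we obtain $L^{-1}\dist_{T_yY}(u,v) \le \|\overline{\Phi}(u) - \overline{\Phi}(v)\|_{\ell^2} \le L\,\dist_{T_yY}(u,v)$ for all $u, v \in T_yY$. Thus $\overline{\Phi}$ is a bi-Lipschitz embedding of the non-collapsed $\RCD(0,N)$ space $T_yY$ into $\ell^2$ by a harmonic map, and Corollary \ref{lemlemelem} forces $T_yY$ to be isometric to $(\mathbb{R}^N, \dist_{\mathbb{R}^N}, \mathcal{H}^N, 0_N)$. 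As the tangent cone was arbitrary, $y \in \mathcal{R}_N$. For the ``in particular'' part, any point of an open set $A$ is trivially a metric density point, so $A \subset \mathrm{Den}(A) \subset \mathcal{R}_N$.

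The main obstacle will be the third paragraph: I must verify that the bi-Lipschitz bounds on $\Phi_t|_A$, which a priori control only points of $A$, propagate through the rescaling and mean-subtraction to a genuine bi-Lipschitz bound for the limit map $\overline{\Phi}$ on all of $T_yY$. This rests on the interplay between the metric density of $A$ at $y$ (which must translate to Hausdorff density of the rescaled traces of $A$ in bounded regions of $T_yY$) and the uniform convergence of $\overline{\Phi}_i$ furnished by Proposition \ref{prop:compactness}; with those in hand, the bound extends from approximating sequences in $A$ to arbitrary pairs of points in $T_yY$ by continuity.
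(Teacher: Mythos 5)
Your proposal is correct and follows essentially the same route as the paper: blow up at a metric density point of $A$, use Theorem \ref{spectral2} and Proposition \ref{prop:compactness} to pass the rescaled, mean-subtracted eigenfunction coordinates to a harmonic $\ell^2$-valued limit map on the tangent cone, transfer the bi-Lipschitz bound to the whole tangent cone via the density of $A$, and invoke Corollary \ref{lemlemelem}. The only cosmetic difference is that the paper additionally records that the limit components are linear (via Proposition \ref{prop:linearfunction}), whereas you stop at harmonicity, which is all Corollary \ref{lemlemelem} requires.
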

\begin{proof}
Fix $y \in \mathrm{Den} (A)$. By an argument similar to the proof of Proposition \ref{prop:regular}, for any tangent cone $(T_yY, \dist_{T_yY}, \mathcal{H}^N, 0_y)$ of $(Y, \dist_Y, \mathcal{H}^N)$ at $y$, there exists a bi-Lipschitz embedding $\overline{\Phi}=(\overline{\phi}_i)_{i=1}^{\infty}:T_yY \to \ell^2$ such that each $\overline{\phi}_i$ is linear.
Then Corollary \ref{lemlemelem} shows that $(T_yY, \dist_{T_yY})$ is isometric to $(\mathbb{R}^N, \dist_{\mathbb{R}^N})$, which completes the proof.
\end{proof}
\begin{remark}\label{remnonsmoo}
It is known that there is a non-collapsed sequence of Riemannian metrics $g_i$ on $\mathbb{S}^2$ with non-negative sectional curvature such that the Gromov-Hausdorff limit space $(X, \dist_X)$ is compact and that the singular set $\mathcal{S}$ of $X$ is dense. See the example (2) of page 632 in \cite{OS}. In particular $(X, \dist_X, \mathcal{H}^2)$ is a non-collapsed compact $\RCD(0, 2)$ space. Then Corollary \ref{prop:dense} tells us that for any $t \in (0, \infty)$ the restriction of $\Phi_t^X$ to $\mathcal{S}$ is not locally bi-Lipschitz. 
\end{remark}
From now on let us discuss  the implication of a local bi-Lipschitz embeddability of $\Phi_t$ on an estimate on $|g_Y-c_Nt^{(N+2)/2}g_t|$;
\begin{proposition}\label{prop:quantitativelinfty}
For all $c, t \in (0, \infty)$ and $\epsilon \in (0, 1)$, there exist $r_2:=r_2(c, \epsilon, K, N, t, d ,v) \in (0, 1)$, $\delta_0:=\delta_0(c, \epsilon, K, N, t, d, v) \in (0, 1)$ and $L_0:=L_0(c, \epsilon, K, N, t, d, v) \in (1, \infty)$ such that for some $y \in Y$, some $r \in (0, r_2]$, some $L \in [L_0, \infty)$, some $cr$-dense subset $A$ of $B_{Lr}(y)$ satisfying that $\tilde{\Phi}_t|_A$ gives a $(1\pm \epsilon)$-bi-Lipschitz embedding, we have
\begin{equation}
\frac{1}{\mathcal{H}^N(B_r(y))}\int_{B_r(y)}|g_X-c_Nt^{(N+2)/2}g_t|\di \mathcal{H}^N<2\sqrt{N}\epsilon.
\end{equation}
\end{proposition}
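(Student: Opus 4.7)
The plan is to prove this by contradiction via a rescaling and compactness argument, in the spirit of the proofs of Propositions \ref{prop:regular} and \ref{prop:quantapp}, culminating in Corollary \ref{lemlemelem} to identify the blow-up limit as Euclidean space. The parameter $\delta_0$ will be chosen small in terms of $\epsilon$ so that a linear $(1\pm\delta_0)$-bi-Lipschitz map of $\mathbb{R}^N$ has pull-back metric distortion strictly below $2\sqrt{N}\epsilon$; concretely, $\sqrt{N}(2\delta_0 + \delta_0^2) < 2\sqrt{N}\epsilon$ suffices, which holds whenever $\delta_0 < \epsilon/(1 + \epsilon/2)$. Interpreting the bi-Lipschitz hypothesis with constant $\delta_0$ (the role for which $\delta_0$ is introduced in the statement), suppose the conclusion fails. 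Then one finds a sequence of pointed non-collapsed compact $\RCD(K,N)$ spaces $(Y_i, \dist_{Y_i}, \mathcal{H}^N, y_i)$ with $\mathcal{H}^N(Y_i) \ge v$ and $\diam(Y_i, \dist_{Y_i}) \le d$, together with $r_i \to 0^+$, $L_i \to \infty$, and $cr_i$-dense subsets $A_i \subset B_{L_ir_i}(y_i)$ with $\tilde{\Phi}_t^{Y_i}|_{A_i}$ a $(1\pm\delta_0)$-bi-Lipschitz embedding, but for which the normalized integral on $B_{r_i}(y_i)$ is at least $2\sqrt{N}\epsilon$.

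Rescale by $r_i^{-1}$, setting $(\overline{Y}_i, \dist_{\overline{Y}_i}, \mathcal{H}^N, y_i) := (Y_i, r_i^{-1}\dist_{Y_i}, r_i^{-N}\mathcal{H}^N, y_i)$; these are non-collapsed $\RCD(r_i^2 K, N)$ spaces. By Gromov pre-compactness combined with Theorem \ref{GHmGH}, up to a subsequence one has $(\overline{Y}_i, \dist_{\overline{Y}_i}, \mathcal{H}^N, y_i) \stackrel{\mathrm{pmGH}}{\to} (Z, \dist_Z, \mathcal{H}^N, z)$ for some pointed non-collapsed $\RCD(0,N)$ space $Z$. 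Introduce the renormalized embeddings $\overline{\Phi}_i := r_i^{-1}(\tilde{\Phi}_t^{Y_i} - \tilde{\Phi}_t^{Y_i}(y_i))$, whose $j$-th component $\overline{\phi}_{i,j}$ is proportional to $\phi_j^{Y_i}$ up to a constant and satisfies $\Delta_{\overline{Y}_i}\overline{\phi}_{i,j} = -(r_i^2 \lambda_j^{Y_i})\overline{\phi}_{i,j}$ with $r_i^2\lambda_j^{Y_i} \to 0$ for each fixed $j$. By \eqref{eq:gradeert} the maps $\overline{\Phi}_i$ are uniformly Lipschitz, and \eqref{eq:eigenfunction} provides the uniform exponential decay in $j$ required by Proposition \ref{prop:compactness}.

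Applying Theorem \ref{spectral2} ball by ball, after a further subsequence $\overline{\Phi}_i \to \overline{\Phi}:Z\to \ell^2$ locally uniformly, with each component function Lipschitz and harmonic on $Z$, and the $L^\infty$-tensors $\overline{\Phi}_i^*g_{\ell^2}$ converge $L^2_{\loc}$-strongly to $\overline{\Phi}^*g_{\ell^2}$. Arguing as in Theorem \ref{prop:rigiditynonnegative} (invoking the mean value theorem at infinity of \cite{HKX} to force the Hessian of each component to vanish on $Z$), each component of $\overline{\Phi}$ is linear. Since $A_i$ is $cr_i$-dense in $B_{L_ir_i}(y_i)$, in the rescaled metric $A_i$ is $c$-dense in $B_{L_i}(y_i)$ with $L_i \to \infty$; the bi-Lipschitz estimate $(1\pm\delta_0)$ on $A_i$ then passes to the limit (using linearity of $\overline{\Phi}$ to remove the slack coming from $c$-density on arbitrarily large scales) and yields that $\overline{\Phi}:Z\to\ell^2$ is itself a linear $(1\pm\delta_0)$-bi-Lipschitz embedding. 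Corollary \ref{lemlemelem} then yields $Z \cong (\mathbb{R}^N, \dist_{\mathbb{R}^N}, \mathcal{H}^N)$.

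With $Z = \mathbb{R}^N$, the constant tensor $\overline{\Phi}^*g_{\ell^2}$ satisfies $|g_{\mathbb{R}^N} - \overline{\Phi}^*g_{\ell^2}|_{HS} \le \sqrt{N}(2\delta_0 + \delta_0^2) < 2\sqrt{N}\epsilon$ by the choice of $\delta_0$. One now passes to the limit in the normalized integral: combining the strong $L^2_{\loc}$-convergence of $\overline{\Phi}_i^*g_{\ell^2}$ to $\overline{\Phi}^*g_{\ell^2}$ with the $L^2_{\loc}$-weak convergence of $g_{\overline{Y}_i}$ to $g_Z = g_{\mathbb{R}^N}$ (Proposition \ref{weakriem}), and noting that both tensors are uniformly bounded in $L^\infty$, one upgrades to $L^1_{\loc}$-strong convergence of their difference, so that the rescaled normalized integral on $B_{r_i}(y_i)$ (which corresponds to the normalized integral on $B_1(z)$ in the rescaled space, with $c_Nt^{(N+2)/2}g_t^{Y_i}$ rescaling to $\overline{\Phi}_i^*g_{\ell^2}$) converges to $|g_{\mathbb{R}^N}-\overline{\Phi}^*g_{\ell^2}|_{HS}$. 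This forces $2\sqrt{N}\epsilon \le \sqrt{N}(2\delta_0+\delta_0^2) < 2\sqrt{N}\epsilon$, a contradiction. The main obstacle is securing strong (rather than merely weak) $L^1_{\loc}$-convergence of $|g_{Y_i} - c_N t^{(N+2)/2} g_t^{Y_i}|$ to $|g_{\mathbb{R}^N} - \overline{\Phi}^*g_{\ell^2}|$ under the joint rescaling of metric and heat kernel embedding; this requires careful bookkeeping of the heat kernel normalization and a dominated-convergence argument leveraging the exponential tails from \eqref{eq:eigenfunction} to truncate the infinite sum defining $\overline{\Phi}_i^*g_{\ell^2}$ uniformly in $i$.
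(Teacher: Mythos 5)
Your overall strategy—contradiction, rescaling by $r_i^{-1}$, compactness of the renormalized embeddings, identification of the blow-up limit with $\mathbb{R}^N$, and passage to the limit in the normalized integral—is the same as the paper's. But there is a genuine gap in the middle of your argument, in the order in which you deduce linearity and the global bi-Lipschitz property. You claim that each component of the limit map $\overline{\Phi}:Z\to\ell^2$ is linear ``by arguing as in Theorem \ref{prop:rigiditynonnegative}, invoking the mean value theorem at infinity to force the Hessian to vanish on $Z$.'' That argument does not give what you need: the mean value theorem at infinity applied to the subharmonic function $|\nabla\overline{\phi}_j|^2$ only yields
\begin{equation*}
\frac{R^2}{\mathcal{H}^N(B_R(z))}\int_{B_R(z)}|\mathrm{Hess}_{\overline{\phi}_j}|^2\di\mathcal{H}^N\longrightarrow 0\qquad (R\to\infty),
\end{equation*}
which forces the Hessian of the \emph{blow-down} limits of $\overline{\phi}_j$ (on a tangent cone at infinity of $Z$) to vanish, not the Hessian of $\overline{\phi}_j$ itself. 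Here $Z$ is a limit of rescalings at moving basepoints of a varying sequence of spaces; it is a non-collapsed $\RCD(0,N)$ space but it is not known to be a metric cone at this stage, so neither Proposition \ref{prop:linearfunction} nor the blow-down argument gives linearity of $\overline{\phi}_j$ on $Z$. This matters because your upgrade from ``$(1\pm\delta_0)$-bi-Lipschitz on the $c$-dense limit set $A$'' to ``$(1\pm\delta_0)$-bi-Lipschitz on all of $Z$'' explicitly uses linearity to kill the additive error of order $c$ at small scales; without linearity the dense-subset estimate only controls distances $\gg c$ and Corollary \ref{lemlemelem} (which requires a genuinely bi-Lipschitz map) cannot be applied. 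The circularity is that linearity would follow once $Z\cong\mathbb{R}^N$, which is what you are trying to prove.

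The paper resolves exactly this point by quoting Corollary \ref{lem:lem} instead of Corollary \ref{lemlemelem}: that corollary is tailored to harmonic maps that are bi-Lipschitz only on an asymptotically dense subset, at the price of assuming Euclidean volume growth of $Z$—which holds here because the uniform Ahlfors regularity $C_1(K,N,d,v)r^N\le\mathcal{H}^N(B_r(\cdot))\le C_2(K,N,d,v)r^N$ of the sequence $(Y_i,\dist_{Y_i},\mathcal{H}^N)$ passes to the rescaled limit (a step you omit but would need). With Corollary \ref{lem:lem} one concludes $Z\cong\mathbb{R}^N$ first, and only then that each $\overline{\phi}_j$ is linear; the remainder of your argument (the bound $|g_{\mathbb{R}^N}-\overline{\Phi}^*g_{\ell^2}|\le\sqrt{N}\,|g_{\mathbb{R}^N}-\overline{\Phi}^*g_{\ell^2}|_B$ via Proposition \ref{prop:ineq}, and the $L^1$-convergence of the normalized integrals using the uniform $L^\infty$ bounds and the eigenfunction tail estimates) then goes through as in the paper. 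A secondary remark: you reinterpret the unused constant $\delta_0$ as the bi-Lipschitz constant of the hypothesis, whereas the statement as written uses $\epsilon$ there; your reading proves a variant with a strengthened hypothesis. It does have the virtue of absorbing the slack between the true distortion bound $2\delta_0+\delta_0^2$ of a linear $(1\pm\delta_0)$-bi-Lipschitz map and the target $2\epsilon$, but you should flag that this is not literally the stated proposition and check that the downstream use in Theorem \ref{prop:bilipch} survives the relabeling.
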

\begin{proof}
The proof is done by a contradiction. If not, there exist a sequence of pointed non-collapsed compact $\RCD(K, N)$ spaces $(Y_i, \dist_{Y_i}, \mathcal{H}^N, y_i)$ with $\mathrm{diam}(Y_i, \dist_{Y_i}) \le d$ and $\mathcal{H}^N(Y_i)\ge v$, sequences of $r_i \to 0^+, \delta_i \to 0^+, L_i \to \infty$ and a sequence of $cr_i$-dense subset $A_i$ of $B_{L_ir_i}(y_i)$ such that $\tilde{\Phi}_t^{Y_i}|_{A_i}$ gives a $(1 \pm \epsilon)$-bi-Lipschitz embedding and that 
\begin{equation}\label{ddd}
\frac{1}{\mathcal{H}^N(B_{r_i}(y_i))}\int_{B_{r_i}(y_i)}|g_{Y_i}-c_Nt^{(N+2)/2}g_t^{Y_i}|\di \mathcal{H}^N\ge 2\sqrt{N}\epsilon.
\end{equation}
Note that the sequence of $\{(Y_i, \dist_{Y_i}, \mathcal{H}^N)\}_i$ is uniformly Ahlfors regular, that is,
\begin{equation}\label{rrtt}
C_1(K, N, d, v)r^N\le \mathcal{H}^N(B_r(z_i)) \le C_2(K, N, d, v)r^N, \quad \forall i,\,\,\forall z_i \in Y_i,\,\,\forall r \in (0, d].
\end{equation}
After passing to a subsequence, we have
\begin{equation}
\left(Y_i, r_i^{-1}\dist_{Y_i}, \mathcal{H}^N_{r_i^{-1}\dist_{Y_i}}, y_i\right) \stackrel{\mathrm{pmGH}}{\to} (Z, \dist_Z, \mathcal{H}^N, z)  
\end{equation}
for some non-collapsed $\RCD(0, N)$ space $(Z, \dist_Z, \mathcal{H}^N)$ which has Euclidean volume growth because of (\ref{rrtt}).
By an argument similar to the proof of Proposition \ref{prop:quantapp} there exists a Lipschitz map $\overline{\Phi}:Z \to \ell^2$ such that rescaled maps of $\tilde{\Phi}_t^{Y_i}$ uniformly converge to $\overline{\Phi}=(\overline{\phi}_i)_i$ on any bounded subset of $Z$ and that each $\overline{\phi}_i$ is harmonic.
Moreover we can find a $2c$-dense subset $A$ of $Z$ satisfying that for any $a \in A$ there exists a sequence of $a_i \in A_i$ such that $a_i \to a$. In particular $\overline{\Phi}|_{A}$ is a $(1\pm \epsilon)$-bi-Lipschitz embedding.  Thus Corollary \ref{lem:lem} shows that $(Z, \dist_Z, \mathcal{H}^N)$ is isometric to $(\mathbb{R}^N, \dist_{\mathbb{R}^N}, \mathcal{H}^N)$.  In particular each $\overline{\phi}_i$ is linear. Thus it follows from the $(1 \pm \epsilon)$-bi-Lipschitz embeddability of $\overline{\Phi}$ and the linearity of $\overline{\phi}_i$ that 
\begin{equation}
|g_{\mathbb{R}^N}-\overline{\Phi}^*g_{\ell^2}|_B\le \epsilon
\end{equation}
holds on $\mathbb{R}^N$.
Thus
\begin{equation}\label{eeqqqw}
\lim_{i \to \infty}\frac{1}{\mathcal{H}^N(B_{r_i}(x_i))}\int_{B_{r_i}(x_i)}\left|g_{Y_i}-c_Nt^{(N+2)/2}g_t^{X_i}\right| \di \mathcal{H}^N=\frac{1}{\omega_N}\int_{B_1(0_N)}|g_{\mathbb{R}^N}-\overline{\Phi}^*g_{\ell^2}|\di \mathcal{H}^N \le \sqrt{N}\epsilon,
\end{equation}
where we used Proposition \ref{prop:ineq}. Then (\ref{eeqqqw}) contradicts (\ref{ddd}).
\end{proof}
\begin{theorem}\label{prop:bilipch}
Let $A$ be a Borel subset of $Y$ and let $t_j \to 0^+$ be a convergent sequence. If for any $\epsilon \in (0, 1)$ there exists $i_0 \in \mathbb{N}$ such that $\tilde{\Phi}_{t_j}|_A$ is a locally $(1 \pm \epsilon)$-bi-Lipschitz embedding for any $j \in \mathbb{N}_{\ge i_0}$, 
then
\begin{equation}
\|g_Y-c_Nt_i^{(N+2)/2}g_{t_i}\|_{L^{\infty}(A)} \to 0.
\end{equation}
\end{theorem}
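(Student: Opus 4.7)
The plan is to argue by contradiction, combining Lebesgue differentiation with Proposition \ref{prop:quantitativelinfty}. Suppose the conclusion fails; then there exist $\delta > 0$ and a subsequence (still denoted $\{t_j\}$) with
\[
\|g_Y - c_N t_j^{(N+2)/2} g_{t_j}\|_{L^\infty(A)} \ge \delta
\]
for every $j$, so the Borel set
\[
A_j := \bigl\{ y \in A :\ |g_Y - c_N t_j^{(N+2)/2} g_{t_j}|(y) \ge \delta/2 \bigr\}
\]
has $\mathcal{H}^N(A_j) > 0$. By (\ref{lebdesntity}) and (\ref{densitymetri}), I can pick $y_j \in A_j$ that is simultaneously a Lebesgue point of the $L^\infty$-function $|g_Y - c_N t_j^{(N+2)/2} g_{t_j}|$ (which is in $L^\infty$ thanks to (\ref{eq:gradeert})) and a metric density point of $A$ in the sense of Definition \ref{def:den}.

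Next, fix $\epsilon \in (0,1)$ with $2\sqrt{N}\epsilon < \delta/4$. The hypothesis supplies $i_0$ such that for every $j \ge i_0$ the map $\tilde{\Phi}_{t_j}|_A$ is locally $(1\pm\epsilon)$-bi-Lipschitz; dropping finitely many terms I assume this for all $j$, so that there exists $\rho_j > 0$ for which $\tilde{\Phi}_{t_j}$ is $(1\pm\epsilon)$-bi-Lipschitz on $A \cap B_{\rho_j}(y_j)$. Fix any $c \in (0,1)$ and let $r_2$, $L_0$ be the constants produced by Proposition \ref{prop:quantitativelinfty} corresponding to the parameters $c,\epsilon,K,N,t_j,d,v$. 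Since $y_j$ is a metric density point of $A$, the set $A \cap B_{L_0 r}(y_j)$ is $cr$-dense in $B_{L_0 r}(y_j)$ once $r$ is small enough. I then choose $r$ small enough that simultaneously $r \le r_2$, $L_0 r \le \rho_j$, this density condition holds, and Lebesgue differentiation at $y_j$ gives
\[
\frac{1}{\mathcal{H}^N(B_r(y_j))} \int_{B_r(y_j)} |g_Y - c_N t_j^{(N+2)/2} g_{t_j}|\di \mathcal{H}^N \ge \delta/4 .
\]
With these choices, Proposition \ref{prop:quantitativelinfty} applies and forces the very same average to be strictly less than $2\sqrt{N}\epsilon < \delta/4$, contradicting the previous inequality.

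The main obstacle is juggling the several radii: the quantity $r_2$ from Proposition \ref{prop:quantitativelinfty} depends on the current $t_j$ and may shrink to zero as $j \to \infty$, and similarly the density scale at $y_j$ and the Lebesgue differentiation threshold depend on $j$; fortunately each $j \ge i_0$ yields its own contradiction, so no uniform-in-$j$ control is required. A minor subtlety is that the $cr$-density condition must be verified on the larger ball $B_{L_0 r}(y_j)$ rather than on $B_r(y_j)$, which forces the metric density property of $y_j$ to be invoked at the fixed scale $c/L_0$ instead of $c$; but since $c$ and $L_0$ are just numerical constants in the final bound, this only influences how small $r$ must be chosen and is otherwise harmless.
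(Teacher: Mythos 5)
Your proof is correct and rests on exactly the same ingredients as the paper's: Proposition \ref{prop:quantitativelinfty} applied at a point that is simultaneously a metric density point of $A$ and a Lebesgue point of $|g_Y-c_Nt_j^{(N+2)/2}g_{t_j}|$, combined with Lebesgue differentiation. The only difference is cosmetic --- you argue by contradiction at a single bad point, while the paper argues directly and then covers $A$ by countably many balls on which the local bi-Lipschitz property holds; both are sound.
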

\begin{proof}
Let $\epsilon \in (0, 1)$ and take $i_0 \in \mathbb{N}$ as in the assumption. Fix $j \in \mathbb{N}_{\ge i_0}$ and take $y \in A$. Then there exists $r_3:=r_3(y) \in (0, 1)$ such that $\tilde{\Phi}_{t_j}|_{B_{r_3}(y) \cap A}$ is a $(1\pm \epsilon)$-bi-Lipschitz embedding. Then applying Proposition \ref{prop:quantitativelinfty} for all $z \in B_{r_3/2}(y)\cap \mathrm{Leb}(A)$ and sufficiently small $r \in (0, r_3/2)$, there exists $r_4 \in (0, 1)$ such that 
\begin{equation}
\frac{1}{\mathcal{H}^N(B_r(z))}\int_{B_r(z)}|g_Y-c_Nt_j^{(N+2)/2}g_{t_j}|\di \mathcal{H}^N< 2\sqrt{N}\epsilon,\quad \forall r \in (0, r_4),\,\,\forall z \in B_{r_3/2}(y)\cap A.
\end{equation}
In particular Lebesgue differentiation theorem yields
\begin{equation}\label{eq:unifo}
\|g_Y-c_Nt_j^{(N+2)/2}g_{t_j}\|_{L^{\infty}(B_{r_3/2}(y)\cap A)}\le 2\sqrt{N}\epsilon.
\end{equation}
Finding a countable subset $\{y_i\}_i$ of $A$ with $A \subset \bigcup_iB_{r_3(y_i)/4}(y_i)$, (\ref{eq:unifo}) shows 
\begin{equation}
\|g_Y-c_Nt_j^{(N+2)/2}g_{t_j}\|_{L^{\infty}(A)}\le 2\sqrt{N}\epsilon
\end{equation}
which completes the proof because $\epsilon$ is arbitrary.
\end{proof}
Let us prove the converse implication of the above result, under assuming a kind of uniformity of $A$.
\begin{theorem}\label{prop:bilipch2}
Let $A$ be a Borel subset of $Y$ and let $t_i \to 0^+$ be a convergent sequence. Assume that 
\begin{equation}
\|g_X-c_Nt_i^{(N+2)/2}g_{t_i}\|_{L^{\infty}(A)} \to 0,\quad (i \to \infty)
\end{equation}
holds and that for all $\epsilon \in (0, 1)$ and $y \in A$ there exists $r_3 \in (0, 1)$ such that 
\begin{equation}\label{absrabuirairabiwi}
\left|\frac{\mathcal{H}^N (B_r(z)\cap A)}{\mathcal{H}^N (B_r(z))}-1\right|<\epsilon,\quad \forall z \in B_{r_3}(y) \cap A,\,\,\forall r \in (0, r_3].
\end{equation}
holds. Then
 for any $\epsilon \in (0, 1)$ there exists $i_0 \in \mathbb{N}$ such that $\tilde{\Phi}_{t_j}|_A$ is a locally $(1 \pm \epsilon)$-bi-Lipschitz embedding for any $j \in \mathbb{N}_{\ge i_0}$. 
\end{theorem}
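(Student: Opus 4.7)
The strategy is to reduce the statement to a direct application of Theorem \ref{prop:bilip}. Fix $\epsilon \in (0,1)$, set $\epsilon' := \epsilon/3 \in (0,1/3)$, and fix $y \in A$. It suffices to produce an index $i_0$ and a radius $r_1 > 0$ (both allowed to depend on $y$, and $r_1$ on $j$) such that for every $j \ge i_0$ one has $B_{r_1}(y) \cap A \subset \mathcal{R}_Y(\epsilon', t_j, \tau)$ for some $\tau > 0$; Theorem \ref{prop:bilip} applied at $y$ with parameter $\epsilon'$ will then deliver the local $(1 \pm \epsilon)$-bi-Lipschitz embedding of $\tilde{\Phi}_{t_j}|_A$ on a neighborhood of $y$.

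The key technical point is to pass from the $L^\infty$-control $\|g_Y-c_Nt_j^{(N+2)/2}g_{t_j}\|_{L^\infty(A)} \to 0$ to the averaged-on-balls inequality that defines $\mathcal{R}_Y$. Using (\ref{absrabuirairabiwi}) with a small parameter $\delta \in (0,1)$ to be fixed, we first choose $r_3 = r_3(y,\delta) \in (0,1)$ so that $\mathcal{H}^N(B_r(z) \setminus A) < \delta \cdot \mathcal{H}^N(B_r(z))$ holds for every $z \in B_{r_3}(y) \cap A$ and every $r \in (0, r_3]$. Splitting the integral along $A$ and its complement and using the uniform pointwise bound (\ref{eq:gradeert}) together with $|g_Y| \equiv \sqrt{N}$ on the complement, we obtain
\begin{equation*}
\frac{1}{\mathcal{H}^N(B_r(z))}\int_{B_r(z)}|g_Y-c_Nt_j^{(N+2)/2}g_{t_j}|\di\mathcal{H}^N \le \|g_Y-c_Nt_j^{(N+2)/2}g_{t_j}\|_{L^\infty(A)} + C(K,N,v)\,\delta
\end{equation*}
for every such $z$ and $r$, provided $t_j \in (0,1)$. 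Choosing $\delta$ so that $C(K,N,v)\delta < \epsilon'/2$, and then $i_0$ so that the $L^\infty$-norm above is at most $\epsilon'/2$ for all $j \ge i_0$, we conclude $B_{r_3}(y) \cap A \subset \mathcal{R}_Y(\epsilon', t_j, r_3)$ for every $j \ge i_0$.

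At this stage Theorem \ref{prop:bilip}, applied at $y \in \mathcal{R}_Y(\epsilon', t_j, r_3)$, supplies for each $j \ge i_0$ a radius $r_1 = r_1(y,j) \in (0, r_3]$ such that $\tilde{\Phi}_{t_j}|_{B_{r_1}(y) \cap \mathcal{R}_Y(\epsilon',t_j,r_3)}$ is $(1 \pm 3\epsilon')$-bi-Lipschitz. Since $B_{r_1}(y) \cap A \subset B_{r_3}(y) \cap A \subset \mathcal{R}_Y(\epsilon',t_j,r_3)$, the restriction $\tilde{\Phi}_{t_j}|_{B_{r_1}(y) \cap A}$ is therefore a $(1 \pm \epsilon)$-bi-Lipschitz embedding, which is the local property claimed at $y$. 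The main difficulty is really confined to the integral splitting above, and in particular to seeing that (\ref{absrabuirairabiwi}) is exactly the hypothesis needed to bridge the $L^\infty$-bound on $A$ to the averaged bound defining $\mathcal{R}_Y$; that the radius $r_1$ depends on $j$ is harmless since the conclusion is purely local.
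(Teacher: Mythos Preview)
Your proof is correct and follows essentially the same route as the paper's: split the averaged integral over $B_r(z)\cap A$ and $B_r(z)\setminus A$, use the $L^\infty(A)$ hypothesis on the first piece and the uniform bound (\ref{eq:gradeert}) together with the density hypothesis (\ref{absrabuirairabiwi}) on the second, conclude that a neighborhood of $y$ in $A$ lies in $\mathcal{R}_Y(\epsilon',t_j,\tau)$, and invoke Theorem \ref{prop:bilip}.

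One small framing issue: you write that $i_0$ is ``allowed to depend on $y$'', but that would \emph{not} suffice for the stated conclusion, which requires a single $i_0$ such that $\tilde{\Phi}_{t_j}|_A$ is locally $(1\pm\epsilon)$-bi-Lipschitz at \emph{every} point of $A$ for all $j\ge i_0$. Fortunately your actual choice of $i_0$ depends only on $\epsilon'$ and on the rate of the convergence $\|g_Y-c_Nt_i^{(N+2)/2}g_{t_i}\|_{L^\infty(A)}\to 0$, hence is uniform in $y$; so the argument stands once you remove the misleading parenthetical.
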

\begin{proof}
Let $\epsilon \in (0, 1)$ be a sufficiently small and take $j \in \mathbb{N}$ with $\|g_Y-c_Nt_j^{(N+2)/2}g_{t_j}\|_{L^{\infty}(A)}\le \epsilon$. Moreover fix $y \in A$ and take $r_3$ as in the assumption for $\epsilon, y$. Then by (\ref{absrabuirairabiwi}), for all $z\in B_{r_3/2}(y) \cap A$ and $r \in (0, r_3]$ we have
\begin{align}
&\frac{1}{\mathcal{H}^N(B_r(z))}\int_{B_r(z)}\left| g_Y-c_Nt_j^{(N+2)/2}g_{t_j}\right| \di \mathcal{H}^N \nonumber \\
&=\frac{1}{\mathcal{H}^N(B_r(z))}\int_{B_r(z)\cap A}\left| g_Y-c_Nt_j^{(N+2)/2}g_{t_j}\right| \di \mathcal{H}^N+\frac{1}{\mathcal{H}^N(B_r(z))}\int_{B_r(z) \setminus A}\left| g_Y-c_Nt_j^{(N+2)/2}g_{t_j}\right| \di \mathcal{H}^N \nonumber \\
&\le \epsilon \cdot \frac{\mathcal{H}^N(B_r(z) \cap A)}{\mathcal{H}^N(B_r(z))} +C(K, N, v)\frac{\mathcal{H}^N(B_r(z) \setminus A)}{\mathcal{H}^N(B_r(z))} \le C(K, N, v)\epsilon
\end{align}
which proves that $B_{r_3/2}(y) \cap A \subset \mathcal{R}_Y(C(K, N, v)\epsilon, t_j, r_3/2)$. Thus Theorem \ref{prop:bilip} completes the proof because $\epsilon$ is arbitrary.
\end{proof}

The following is a direct consequence of Theorems \ref{prop:bilipch} and \ref{prop:bilipch2}.
\begin{corollary}\label{cor:equiv}
Let $U$ be an open subset of $Y$. Then the following two conditions are equivalent;
\begin{enumerate}
\item For any $\epsilon \in (0, 1)$ there exists $t_0 \in (0, 1)$ such that $\tilde{\Phi}_t|_U$ is a locally $(1 \pm \epsilon)$-bi-Lipschitz embedding for any $t \in (0, t_0)$.
\item We have
\begin{equation}
\|g_Y-c_Nt^{(N+2)/2}g_t\|_{L^{\infty}(U)}\to 0, \quad (t \to 0^+).
\end{equation}
\end{enumerate}
\end{corollary}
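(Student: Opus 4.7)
\textbf{Proof plan for Corollary \ref{cor:equiv}.} The plan is to deduce both implications from Theorems \ref{prop:bilipch} and \ref{prop:bilipch2} by a standard sequential argument, with the openness of $U$ used only to verify the density hypothesis \eqref{absrabuirairabiwi} of Theorem \ref{prop:bilipch2}.

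First I would prove $(1)\Rightarrow (2)$ by contradiction. If (2) fails, then there exists $\epsilon_0>0$ and a sequence $t_j\to 0^+$ with $\|g_Y-c_Nt_j^{(N+2)/2}g_{t_j}\|_{L^\infty(U)}\ge \epsilon_0$ for every $j$. On the other hand, (1) applied with any fixed $\epsilon\in (0,1)$ guarantees that for all sufficiently large $j$, the restriction $\tilde\Phi_{t_j}|_U$ is a locally $(1\pm\epsilon)$-bi-Lipschitz embedding. This is precisely the hypothesis of Theorem \ref{prop:bilipch} with $A=U$ and the given sequence, so $\|g_Y-c_Nt_j^{(N+2)/2}g_{t_j}\|_{L^\infty(U)}\to 0$, contradicting the assumption.

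For the converse $(2)\Rightarrow (1)$, I would again argue by contradiction. If (1) fails, there exists $\epsilon_0\in (0,1)$ and a sequence $t_j\to 0^+$ such that $\tilde\Phi_{t_j}|_U$ is not a locally $(1\pm\epsilon_0)$-bi-Lipschitz embedding for any $j$. I want to contradict this by applying Theorem \ref{prop:bilipch2} with $A=U$ along the sequence $\{t_j\}_j$. The $L^\infty$-convergence hypothesis of that theorem is exactly (2), so the only thing left to check is the density condition \eqref{absrabuirairabiwi}.

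The verification of \eqref{absrabuirairabiwi} is where openness enters, and it is trivial: given any $y\in U$, pick $r_3\in (0,1)$ so small that $B_{2r_3}(y)\subset U$. Then for every $z\in B_{r_3}(y)\cap U=B_{r_3}(y)$ and every $r\in (0,r_3]$, one has $B_r(z)\subset B_{2r_3}(y)\subset U$, hence $\mathcal{H}^N(B_r(z)\cap U)/\mathcal{H}^N(B_r(z))=1$ and the inequality in \eqref{absrabuirairabiwi} holds with any prescribed $\epsilon$. Theorem \ref{prop:bilipch2} therefore yields, for every $\epsilon\in (0,1)$ (in particular $\epsilon=\epsilon_0$), some $j_0$ such that $\tilde\Phi_{t_j}|_U$ is locally $(1\pm\epsilon_0)$-bi-Lipschitz for all $j\ge j_0$, contradicting the choice of $\{t_j\}_j$. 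I do not expect any genuine obstacle here; the content has been isolated in Theorems \ref{prop:bilipch} and \ref{prop:bilipch2}, and what remains is purely a matter of passing from sequences to the continuous parameter $t\to 0^+$ and noting that openness of $U$ makes the density hypothesis automatic.
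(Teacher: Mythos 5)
Your proof is correct and follows exactly the route the paper intends: the corollary is stated there as a direct consequence of Theorems \ref{prop:bilipch} and \ref{prop:bilipch2}, and your argument supplies precisely the omitted details (passing from the continuous parameter $t\to 0^+$ to sequences in both directions, and observing that for an open set $U$ the density hypothesis \eqref{absrabuirairabiwi} holds trivially with ratio equal to $1$).
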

\begin{definition}[Weakly smooth $\RCD$]\label{weaklysmoothdef}
A non-collapsed compact $\RCD(K, N)$ space $(Y, \dist_Y, \mathcal{H}^N)$ is said to be \textit{weakly smooth} if as $t \to 0^+$
\begin{equation}
\|g_Y-c_Nt^{(N+2)/2}g_t\|_{L^{\infty}(Y)} \to 0.
\end{equation} 
\end{definition}
It is worth pointing out that if $(Y, \dist_Y, \mathcal{H}^N)$ is weakly smooth, then thanks to Proposition \ref{prop:regular}, $Y=\mathcal{R}_N$. In particular by the intrinsic Reifenberg theorem proved in \cite[Th.A.1.2 and Th.A.1.3]{CheegerColding1}, $Y$ is bi-H\"older homeomorphic to an $N$-dimensional closed Riemannian manifold, where the H\"older exponent can be chosen as an arbitrary $\alpha \in (0, 1)$.
Let us now restate Theorem \ref{themequivalence}.
\begin{theorem}[Characterization of weakly smooth $\RCD$ space]\label{prop:portegies}
The following four conditions are equivalent.
\begin{enumerate}
\item The space $(Y, \dist_Y, \mathcal{H}^N)$ is weakly smooth.
\item We have
\begin{equation}
\|g_Y-\tilde{c}_Nt\mathcal{H}^N(B_{\sqrt{t}}( \cdot))g_t\|_{L^{\infty}(Y)} \to 0,\quad (t \to 0^+).
\end{equation}
\item For any sufficiently small $t \in (0, 1)$, $\Phi_t$ is a bi-Lipschitz embedding. More strongly, for any $\epsilon \in (0, 1)$ there exists $t_0 \in (0, 1)$ such that $\tilde{\Phi}_t$ is a locally $(1 \pm \epsilon)$-bi-Lipschitz embedding for any $t \in (0, t_0]$.
\item For any sufficiently small $t \in (0, 1)$, $\Phi_t^l$ is a bi-Lipschitz embedding for any sufficiently large $l$. More strongly, for any $\epsilon \in (0, 1)$ there exists $t_0 \in (0, 1)$ such that for any $t \in (0, t_0]$ there exists $l_0 \in \mathbb{N}$ such that $\tilde{\Phi}_t^l$ is a locally $(1 \pm \epsilon)$-bi-Lipschitz embedding for any $l \in \mathbb{N}_{\ge l_0}$.
\end{enumerate}
\end{theorem}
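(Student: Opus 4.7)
The plan is to establish three bridges: $(1) \Leftrightarrow (3)$, $(3) \Leftrightarrow (4)$, and $(1) \Leftrightarrow (2)$.

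First, $(1) \Leftrightarrow (3)$ is almost immediate from Corollary \ref{cor:equiv} applied with the open set $U = Y$. Indeed, condition $(1)$ is literally the $L^\infty$-convergence in the statement of that corollary, and the locally $(1 \pm \epsilon)$-bi-Lipschitz conclusion there is exactly condition $(3)$ (one direction via Theorem \ref{prop:bilipch}, the other via Theorem \ref{prop:bilipch2}, where the density hypothesis \eqref{absrabuirairabiwi} is trivially met when $A = Y$).

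Next, for $(3) \Leftrightarrow (4)$ I would use a tail bound for the truncation. Viewing $\mathbb{R}^l \hookrightarrow \ell^2$ in the canonical way, one has
\begin{equation*}
\mathbf{Lip}\bigl(\tilde{\Phi}_t - \tilde{\Phi}_t^l\bigr)^2 \le c_N\, t^{(N+2)/2} \sum_{i > l} e^{-2 \lambda_i^Y t} \|\nabla \phi_i^Y\|_{L^\infty}^2,
\end{equation*}
and the right-hand side tends to $0$ as $l \to \infty$ for each fixed $t \in (0, 1)$ by the eigenfunction estimates \eqref{eq:eigenfunction}. Thus if $(3)$ holds, choosing $l$ large after fixing small $t$ transfers the local $(1 \pm \epsilon)$-bi-Lipschitz property from $\tilde{\Phi}_t$ to $\tilde{\Phi}_t^l$ with constants $(1 \pm 2\epsilon)$, yielding $(4)$; the reverse implication is symmetric. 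As an alternative route, $(1) \Rightarrow (4)$ can be obtained by applying Theorem \ref{prop:finitedimen} directly: under $(1)$, for any prescribed $\epsilon$ and small enough $t$ one has $Y = \mathcal{R}_Y(\epsilon, t, d)$, so Theorem \ref{prop:finitedimen} supplies, at each $y \in Y$, a ball on which some $\tilde{\Phi}_t^l$ is $(1 \pm 3(\epsilon+\delta))$-bi-Lipschitz, and compactness of $Y$ produces a uniform threshold $l_0$ by a finite subcover.

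Finally, for $(1) \Leftrightarrow (2)$ I would factor
\begin{equation*}
\tilde{c}_N\, t\, \mathcal{H}^N(B_{\sqrt{t}}(y))\, g_t(y) = c_N\, t^{(N+2)/2}\, g_t(y) \cdot \theta_t(y), \qquad \theta_t(y) := \frac{\mathcal{H}^N(B_{\sqrt{t}}(y))}{\omega_N\, t^{N/2}}.
\end{equation*}
Since $|c_N t^{(N+2)/2} g_t|$ is uniformly bounded in $L^\infty$ by \eqref{eq:gradeert}, the equivalence reduces to the claim that $\theta_t \to 1$ uniformly on $Y$ under either hypothesis. Both $(1)$ and $(2)$ force $Y = \mathcal{R}_N$ by the two criteria of Proposition \ref{prop:regular}: the $L^\infty$ convergence makes the ball-average in those criteria arbitrarily small for any radius. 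To promote pointwise regularity to the uniform limit $\theta_t \to 1$, I would argue by contradiction: if $\theta_{t_i}(y_i)$ stays away from $1$ for some $t_i \to 0$ and $y_i \to y_\infty \in Y$, the rescaled pointed spaces $(Y, t_i^{-1/2}\dist_Y, \mathcal{H}^N_{t_i^{-1/2}\dist_Y}, y_i)$ pmGH-subconverge (by Gromov precompactness) to a non-collapsed $\RCD(0, N)$ space, which Theorem \ref{prop:almostrigidity} together with $y_\infty \in \mathcal{R}_N$ forces to be $(\mathbb{R}^N, \dist_{\mathbb{R}^N}, \omega_N^{-1}\mathcal{H}^N, 0_N)$; Theorem \ref{GHmGH} then gives $\theta_{t_i}(y_i) \to 1$, a contradiction.

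The main obstacle is precisely this last step: upgrading pointwise regularity $Y = \mathcal{R}_N$ to the \emph{uniform} volume-density control $\theta_t \to 1$, since both the basepoint $y$ and the scale $\sqrt{t}$ vary simultaneously. The compactness argument above works, but it must genuinely exploit that \emph{every} point of $Y$ (not merely a.e.) is regular, a point supplied only through Proposition \ref{prop:regular}; the other two equivalences are, by comparison, essentially bookkeeping built on tools already assembled in Corollary \ref{cor:equiv} and Theorem \ref{prop:finitedimen}.
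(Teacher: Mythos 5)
Your proof is correct, and two of the three bridges differ from the paper's in a way worth recording. The equivalence $(1)\Leftrightarrow(3)$ is handled exactly as in the paper (Corollary \ref{cor:equiv} with $U=Y$). For $(3)\Leftrightarrow(4)$, however, the paper does not pass through $(3)$ at all: it proves $(1)\Rightarrow(4)$ by combining the tail bound with Theorem \ref{prop:finitedimen} for the local statement and then runs a separate compactness--contradiction argument (sequences $y_i\neq z_i$ with $\tilde{\Phi}_t^{l_i}(y_i)=\tilde{\Phi}_t^{l_i}(z_i)$) to get global injectivity of $\Phi_t^l$, and proves $(4)\Rightarrow(1)$ via Corollary \ref{corfromlip}. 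Your direct estimate on $\mathbf{Lip}(\tilde{\Phi}_t-\tilde{\Phi}_t^l)$ is cleaner: since the truncation and its tail take values in orthogonal subspaces of $\ell^2$, the lower bi-Lipschitz bound for $\tilde{\Phi}_t^l$ follows from that of $\tilde{\Phi}_t$ by subtracting the square of the tail, which simultaneously disposes of global injectivity without any contradiction argument; the price is that you obtain $(4)$ only through $(3)$, whereas the paper's route gives $(1)\Rightarrow(4)$ directly. For $(1)\Leftrightarrow(2)$ your reduction to the uniform convergence $\theta_t\to 1$ is the same as the paper's; the paper establishes the uniformity by the covering argument from the proof of Theorem \ref{prop:bilip}, while you use a blow-up by contradiction. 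One step in your sketch should be spelled out: the rescaled spaces are based at the \emph{moving} points $y_i$, so their limit is not a tangent cone at $y_\infty$, and ``$y_\infty\in\mathcal{R}_N$ forces the limit to be $\mathbb{R}^N$'' is not literally a consequence of regularity of $y_\infty$ alone (think of blow-ups at regular points approaching a cone vertex). What closes the argument is precisely the propagation mechanism of Theorem \ref{prop:almostrigidity} combined with Bishop and Bishop--Gromov: regularity of $y_\infty$ gives $\mathcal{H}^N(B_{s_0}(y_\infty))/(\omega_N s_0^N)$ close to $1$ at some fixed scale $s_0$, almost rigidity converts this into Gromov--Hausdorff closeness of $B_{s_0/2}(y_\infty)$ to a Euclidean ball, and the converse direction of almost rigidity plus volume monotonicity then pins the volume ratios at \emph{all} points of $B_{s_0/4}(y_\infty)$ and \emph{all} scales below $s_0/4$ near $1$; applied to $(y_i,\sqrt{t_i})$ and combined with Theorem \ref{GHmGH} this forces $\mathcal{H}^N(B_1(z))=\omega_N$ in the limit and hence the contradiction. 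With that insertion your argument is complete and is, at bottom, the same mechanism the paper invokes, repackaged as a compactness argument.
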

\begin{proof}
We first prove the implication from (1) to (4). Assume that (1) holds. Take a sufficiently small $\epsilon \in (0, 1)$ and find $t_0$ with
\begin{equation}
\|g_Y-c_Nt^{(N+2)/2}g_t\|_{L^{\infty}}<\frac{\epsilon}{4},\quad \forall t \in (0, t_0].
\end{equation}
Fix $t \in (0, t_0]$ and find $l_0$ with
\begin{equation}
c_Nt^{(N+2)/2}\sum_{i=l_0+1}^{\infty}e^{-2\lambda_i^Yt}\|\dist \phi_i^Y\|_{L^{\infty}}^2<\frac{\epsilon}{4}.
\end{equation}
Then Theorem \ref{prop:finitedimen} allows us to prove that for any $y \in Y$, there exists $r_4 \in (0, 1)$ such that $\tilde{\Phi}_t^l|_{B_{r_4}(y)}$ is a $(1 \pm \epsilon)$-bi-Lipschitz embedding for any $l \in \mathbb{N}_{\ge l_0}$. Thus in order to get (4), it is enough to prove that $\Phi_t^l$ is injective for any sufficiently large $l$. If not, there exist a sequence of $l_i \to \infty$ and sequences of  $y_i, z_i \in Y$ such that $y_i \neq z_i$ and 
\begin{equation}\label{eq:ineq}
\tilde{\Phi}_t^{l_i}(y_i)=\tilde{\Phi}_t^{l_i}(z_i)
\end{equation}
are satisfied for any $i$.
Since $Y$ is compact, after passing to a subsequence, we have $y_i \to y$ and $z_i \to z$ in $Y$ for some $y, z \in Y$. Letting $i \to \infty$ in (\ref{eq:ineq}) shows that $\Phi_t(y)=\Phi_t(z)$. Thus it follows from the injectivity of $\Phi_t$ that $y=z$ holds. On the other hand applying Theorem \ref{prop:finitedimen} for $y (=z)$ shows that there exists $r_2 \in (0, 1)$ such that $\tilde{\Phi}_t^{l_i}|_{B_{r_2}(y)}$ is injective for any sufficiently large $i$. Thus $y_i=z_i$ holds for any sufficiently large $i$, which is a contradiction. Therefore $\tilde{\Phi}_t^l$ is injective for any sufficiently large $l$, thus we have (4). 

Next we prove the implication from (4) to (1). Assume that (4) holds. 
Fix a sufficiently small $\epsilon \in (0, 1)$ and take $t_0, t, l_0, l$ as in the assumption.
Corollary \ref{corfromlip} yields
\begin{equation}\label{dacdrf}
|\tilde{\Phi}^l_tg_{\mathbb{R}^l}-g_Y|(y)\le C(N)\epsilon, \quad \text{for $\mathcal{H}^N$-a.e. $y \in Y$.}
\end{equation} 
Letting $l \to \infty$ in the weak form of (\ref{dacdrf}) shows that (1) holds.

Since the equivalence between (1) and (3) is justified in Corollary \ref{cor:equiv} by letting $U=Y$, it is enough to check the equivalence between (1) and (2).
Assume that (1) or (2) holds. Then Proposition \ref{prop:regular} shows $Y=\mathcal{R}_N$. By an argument similar to the proof of Theorem \ref{prop:bilip}, we see that $\mathcal{H}^N(B_r(\cdot))/(\omega_Nr^N)$ converge uniformly to $1$ as $r \to 0^+$. In particular combining this with (\ref{eq:gradeert}) yields 
\begin{equation}
\|\tilde{c}_Nt\mathcal{H}^N(B_{\sqrt{t}}( \cdot))g_t-c_Nt^{(N+2)/2}g_t\|_{L^{\infty}(Y)} \to 0,\quad (t \to 0^+),
\end{equation}
which completes the proof of the desired equivalence.
\end{proof}

\section{Asymptotic behavior of $t^{(N+2)/2}\mathcal{E}_{Y, t}(f)$ as $t \to 0^+$.}\label{asympto}
Throughout the section let us fix
\begin{itemize}
\item a finite dimensional (not necessary compact) $\RCD$ space $(X, \dist_X, \meas_X)$ whose essential dimension is equal to $n$,
\item a non-collapsed compact $\RCD(K, N)$ space $(Y, \dist_Y, \mathcal{H}^N)$ with $\mathrm{diam}(Y, \dist_Y)\le d<\infty$ and $\mathcal{H}^N(Y) \ge v>0$,
\item a bounded Borel (not necessary open) subset $A$ of $X$,
\item an open subset $U$ of $X$.
\end{itemize}
We first discuss Lipschitz maps from $A$ to $Y$ and then discuss $0$-Sobolev maps from $U$ to $Y$.
In this section the following notion will play a key role.
\begin{definition}\label{adabdaoydasb}
Let $t_i \to 0^+$ be a convergent sequence and let $\{\tau_i\}_i$ be a sequence in $(0, \infty)$.  Define
\begin{equation}
\mathcal{R}_Y(\{(t_i, \tau_i)\}_i):=\bigcap_{\epsilon \in (0, 1)}\bigcup_i \bigcap_{j \ge i}\mathcal{R}_Y(\epsilon, t_j, \tau_j).
\end{equation}
\end{definition}
Note that by definition we have $\mathcal{R}_Y(\{t_i\}_i) \subset \mathcal{R}_Y(\{(t_i, \tau_i)\}_i)$.
\subsection{Pull-back of Lipschitz map into smooth part}\label{lippull}
Fix a Lipschitz map $f:A \to Y$.
\begin{proposition}\label{prop:cauchy}
Assume that $f(A) \subset \mathcal{R}_Y(\epsilon, t, \tau) \cap \mathcal{R}_Y(\epsilon, s, \tau)$ for some $\epsilon \in (0, 1/6), \tau, t, s \in (0, \infty)$.
Then  
\begin{equation}
\left| (\tilde{\Phi}_t^Y \circ f)^*g_{L^2}- (\tilde{\Phi}_s^Y \circ f)^*g_{L^2}\right| \le C(n)\epsilon \left|(\tilde{\Phi}_t^Y \circ f)^*g_{L^2}\right|,\quad \text{for $\meas_X$-a.e. $x \in A$.}
\end{equation}
\end{proposition}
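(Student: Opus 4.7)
The plan is to express $\tilde{\Phi}_s\circ f$ as the composition $(\tilde{\Phi}_s\circ \tilde{\Phi}_t^{-1})\circ(\tilde{\Phi}_t\circ f)$ on suitable pieces of $A$, and then apply (a Hilbert-space valued version of) Lemma \ref{lem:1}. The hypothesis $f(A) \subset \mathcal{R}_Y(\epsilon,t,\tau)\cap \mathcal{R}_Y(\epsilon,s,\tau)$ is exactly what is needed to make the transition map $\tilde{\Phi}_s\circ \tilde{\Phi}_t^{-1}$ a bi-Lipschitz embedding with explicit constants depending on $\epsilon$.

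First I would use Theorem \ref{prop:bilip} at every $y\in f(A)$: there exist $r_t(y), r_s(y)\in(0,1)$ such that $\tilde{\Phi}_t$ is $(1\pm 3\epsilon)$-bi-Lipschitz on $B_{r_t(y)}(y)\cap \mathcal{R}_Y(\epsilon,t,\tau)$ and similarly for $\tilde{\Phi}_s$. Setting $r(y) := \min\{r_t(y), r_s(y)\}$ and $V(y) := B_{r(y)}(y)\cap \mathcal{R}_Y(\epsilon,t,\tau)\cap \mathcal{R}_Y(\epsilon,s,\tau)$, both $\tilde{\Phi}_t|_{V(y)}$ and $\tilde{\Phi}_s|_{V(y)}$ are $(1\pm 3\epsilon)$-bi-Lipschitz. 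A direct comparison then shows that $\Psi := \tilde{\Phi}_s\circ \tilde{\Phi}_t^{-1}$ is well-defined on $\tilde{\Phi}_t(V(y))$ with
\begin{equation*}
\frac{1-3\epsilon}{1+3\epsilon}\,\|z_1-z_2\|_{L^2}\le \|\Psi(z_1)-\Psi(z_2)\|_{L^2}\le \frac{1+3\epsilon}{1-3\epsilon}\,\|z_1-z_2\|_{L^2},
\end{equation*}
and since $\epsilon<1/6$, this gives that $\Psi$ is $(1\pm 12\epsilon)$-bi-Lipschitz on $\tilde{\Phi}_t(V(y))$.

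Next, since $Y$ is compact (hence separable) and the $V(y)$ are neighborhoods of points of the separable set $f(A)$, I would cover $f(A)$ by a countable family $\{V(y_i)\}_i$ and set $A_i := A\cap f^{-1}(V(y_i))$, which are Borel subsets whose union is $A$. On each $A_i$, apply Lemma \ref{lem:1} (in its Hilbert-space valued form, see the final paragraph) to the Lipschitz map $\tilde{\Phi}_t\circ f|_{A_i}:A_i\to L^2(Y,\mathcal{H}^N)$ and the $(1\pm 12\epsilon)$-bi-Lipschitz embedding $\Psi$ restricted to $\tilde{\Phi}_t(V(y_i))\supset (\tilde{\Phi}_t\circ f)(A_i)$, since $(\tilde{\Phi}_s\circ f)|_{A_i}=\Psi\circ(\tilde{\Phi}_t\circ f)|_{A_i}$. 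This yields
\begin{equation*}
\left|(\tilde{\Phi}_s\circ f)^*g_{L^2}-(\tilde{\Phi}_t\circ f)^*g_{L^2}\right|(x)\le C(n)\cdot 12\epsilon\cdot \left|(\tilde{\Phi}_t\circ f)^*g_{L^2}\right|(x)
\end{equation*}
for $\meas_X$-a.e.\ $x\in A_i$; a countable union then gives the conclusion (after absorbing the factor $12$ into $C(n)$).

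The main obstacle is that Lemma \ref{lem:1} is stated for targets in $\mathbb{R}^l$ while our target is the separable Hilbert space $L^2(Y,\mathcal{H}^N)\simeq \ell^2$. This is handled in either of two ways: (i) truncate $\Psi$ to its first $l$ coordinates, observe that the relevant pointwise estimates are monotone in $l$ and pass to the limit; or (ii) repeat the proof of Lemma \ref{lem:1} verbatim using the fact that Kirszbraun's extension theorem is valid for arbitrary Hilbert space targets, noting that the constant $C(n)$ arising from Proposition \ref{prop:ineq} depends only on the essential dimension $n$ of $X$ and not on the target. Either way, the Hilbert-space version applies and the proof is complete.
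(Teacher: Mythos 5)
Your proof is correct and follows essentially the same route as the paper: the paper likewise localizes near each point of $f(A)$, views $\tilde{\Phi}_s\circ f$ as a near-isometric reparametrization of $\tilde{\Phi}_t\circ f$, applies Lemma \ref{lem:1}, and covers $f(A)$ countably. The only cosmetic difference is that the paper handles the infinite-dimensional target by working with the truncated embeddings $\tilde{\Phi}_t^l,\tilde{\Phi}_s^l$ from Theorem \ref{prop:finitedimen} and letting $l\to\infty$ at the end, which is precisely your option (i).
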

\begin{proof}
Lemma \ref{lem:1} and Theorem \ref{prop:finitedimen} show that for any $x \in A$ there exists $r_1=r_1(f(x)) \in (0, 1)$ such that 
for any sufficiently large $l$ we have
\begin{equation}\label{eq:finite}
\left| (\tilde{\Phi}_t^l \circ f)^*g_{\mathbb{R}^l}- (\tilde{\Phi}_s^l \circ f)^*g_{\mathbb{R}^l}\right| \le C(n)\epsilon \left|(\tilde{\Phi}_t^l \circ f)^*g_{\mathbb{R}^l}\right|, \quad \text{for $\meas_X$-a.e. $z \in f^{-1}(B_{r_1}(f(x)))$.}
\end{equation}
Letting $l \to \infty$ in (\ref{eq:finite}) completes the proof because we can find a sequence $x_i \in A$ with $A=\bigcup_i f^{-1}(B_{r_1(f(x_i))}(f(x_i)))$.
\end{proof}
Recall Definition \ref{adabdaoydasb} for the definition of $\mathcal{R}_Y(\{(t_i, \tau_i)\}_i)$.
\begin{proposition}\label{prop:def}
If $f(A) \subset \mathcal{R}_Y(\{(t_i, \tau_i)\}_i)$ holds for some $\{(t_i, \tau_i)\}_i$,
then the sequence $(\tilde{\Phi}_{t_i}^Y \circ f)^*g_{L^2}$ is a Cauchy sequence in $L^{p}((T^*)^{\otimes 2}(A, \dist_X, \meas_X))$ for any $p \in [1, \infty)$. The $L^p$-limit does not depend on the choice of $\{(t_i, \tau_i)\}_i$ in the sense
\begin{itemize}
\item if $f(A) \subset \mathcal{R}_Y(\{(s_i, \delta_i)\}_i)$ for some $\{(s_i, \delta_i)\}_i$, then 
\begin{equation}\label{eq:equallimit}
\lim_{i \to \infty}(\tilde{\Phi}_{t_i}^Y \circ f)^*g_{L^2}=\lim_{i \to \infty}(\tilde{\Phi}_{s_i}^Y \circ f)^*g_{L^2},\quad \mathrm{in}\,\,L^{p}((T^*)^{\otimes 2}(A, \dist_X, \meas_X)).
\end{equation}
\end{itemize}
We denote by $f^*g_Y$ the limit tensor. Moreover we see that $f^*g_Y \in L^{\infty}((T^*)^{\otimes 2}(A, \dist_X, \meas_X))$ holds, that
\begin{equation}\label{poasiiairbbbsk}
\|f^*g_Y\|_{L^{\infty}(A)}\le n(\mathbf{Lip}f)^2
\end{equation}
holds and that for any $i$ and for $\meas_X$-a.e. $x \in \bigcap_{j \ge i}\mathcal{R}_Y(\epsilon, t_j, \tau_j)$, we have
\begin{equation}\label{eqeqqe}
|(\tilde{\Phi}_{t_i}^Y \circ f)^*g_{L^2}-f^*g_Y|(x)\le C(n)\epsilon \min \left\{ |f^*g_Y|(x), |(\tilde{\Phi}_{t_i}^Y \circ f)^*g_{L^2}|(x)\right\}.
\end{equation}
\end{proposition}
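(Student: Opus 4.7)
For each $\epsilon\in(0,1/6)$ and $i\in\mathbb{N}$ I would introduce the Borel set $B_{i,\epsilon}:=f^{-1}\bigl(\bigcap_{j\ge i}\mathcal{R}_Y(\epsilon,t_j,\tau_j)\bigr)$, which is increasing in $i$ and whose union covers $A$ for every fixed $\epsilon$ by the very definition of $\mathcal{R}_Y(\{(t_i,\tau_i)\}_i)$. Writing $T_j:=(\tilde{\Phi}^Y_{t_j}\circ f)^*g_{L^2}$ and using monotonicity of $\mathcal{R}_Y(\epsilon,t,\tau)$ in $\tau$, Proposition~\ref{prop:cauchy} applied on $B_{i,\epsilon}$ with $\tau=\min(\tau_j,\tau_k)$ yields the pointwise Cauchy estimate
\[
|T_j-T_k|\le C(n)\,\epsilon\,|T_j|\quad\meas_X\text{-a.e.\ on }B_{i,\epsilon},\qquad\forall j,k\ge i.
\]

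The crucial additional ingredient is a $t$-uniform $L^\infty$-bound for $|T_j|$. From~(\ref{eq:gradeert}) one has $|(\tilde{\Phi}^Y_t)^*g_{L^2}|=c_Nt^{(N+2)/2}|g_t|\le C(K,N,v)$ $\mathcal{H}^N$-a.e.\ uniformly in $t\in(0,1)$; combined with Proposition~\ref{prop:ineq}, the $\ell^2$-version of Proposition~\ref{prop:geometric} (obtained by truncation to $\tilde{\Phi}_t^l$ and $l\to\infty$), and the Sobolev-to-Lipschitz property of the $\RCD$ space $Y$, this upgrades to $\mathbf{Lip}(\tilde{\Phi}^Y_t)\le C(K,N,v)$ uniformly in $t$. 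Hence $\tilde{\Phi}^Y_{t_j}\circ f$ is uniformly Lipschitz on $A$ and~(\ref{eq:bound}) gives $\|T_j\|_{L^\infty(A)}\le C'(n,K,N,v)(\mathbf{Lip}f)^2$. Since the first paragraph makes $\{T_j(x)\}_{j\ge i}$ a Cauchy sequence at $\meas_X$-a.e.\ $x\in B_{i,\epsilon}$ in the finite-dimensional space of tensors at $x$, exhausting $A$ by $\{B_{i,\epsilon}\}_i$ produces a pointwise a.e.\ limit $f^*g_Y$ on $A$, and combined with the uniform $L^\infty$-bound the bounded convergence theorem yields $T_j\to f^*g_Y$ in $L^p(A)$ for every $p\in[1,\infty)$. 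The two-sided bound~(\ref{eqeqqe}) then follows by passing $k\to\infty$ in the above Cauchy estimate, which symmetrically implies $(1-C(n)\epsilon)|T_j|\le|T_k|$.

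For the sharp $L^\infty$-bound~(\ref{poasiiairbbbsk}), at a $\meas_X$-a.e.\ Lebesgue density point $x$ of $B_{i,\epsilon}$ (which is a metric density point by~(\ref{densitymetri})), Proposition~\ref{prop:den} identifies the local slope of $\tilde{\Phi}^Y_{t_j}\circ f$ at $x$ with that of its restriction to $B_{i,\epsilon}$; since $f(B_{i,\epsilon})\subset\mathcal{R}_Y(\epsilon,t_j,\tau_j)$, Theorem~\ref{prop:bilip} together with the $\mathbf{Lip}(f)$-Lipschitz continuity of $f$ bounds this slope by $(1+3\epsilon)\mathbf{Lip}(f)$, so Propositions~\ref{prop:ineq} and~\ref{prop:geometric} give $|T_j(x)|\le n(1+3\epsilon)^2(\mathbf{Lip}f)^2$ on $B_{i,\epsilon}$ for $j\ge i$; letting $j\to\infty$ and then $\epsilon\to 0^+$ yields~(\ref{poasiiairbbbsk}). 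Independence of the limit from the choice of $\{(t_i,\tau_i)\}_i$ is obtained by interleaving it with any other valid sequence $\{(s_i,\delta_i)\}_i$: the hypothesis places $f(A)$ in the smooth part of the interleaved sequence too, so both subsequences must share a common $L^p$-limit. The main obstacle is the $t$-uniform Lipschitz bound on $\tilde{\Phi}^Y_t$ of the second paragraph: a priori $\mathbf{Lip}(\tilde{\Phi}^Y_t)$ can blow up as $t\to 0^+$, and without the resulting uniform $L^\infty$-bound on $|T_j|$ the Cauchy control on the exhausting sets $B_{i,\epsilon}$ would not suffice for $L^p$-convergence on the full set $A$; the resolution crucially exploits that~(\ref{eq:gradeert}) is an essential-sup (not merely $L^p$-integrated) bound, which via the $\RCD$ Sobolev-to-Lipschitz property translates into a $t$-uniform \emph{global} Lipschitz constant for $\tilde{\Phi}^Y_t$.
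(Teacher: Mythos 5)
Your argument is correct and follows essentially the same route as the paper's proof: the same exhaustion of $A$ by the sets $f^{-1}\bigl(\bigcap_{j\ge i}\mathcal{R}_Y(\epsilon,t_j,\tau_j)\bigr)$, the same application of Proposition \ref{prop:cauchy} on them, and the same uniform $L^{\infty}$ bound coming from (\ref{eq:gradeert}); the paper merely establishes the $L^p$-Cauchy property directly by splitting the integral over the good and bad sets (and handles independence of the limit by intersecting the two exhaustions rather than interleaving), which is equivalent to your pointwise-plus-bounded-convergence route. One small point of precision: for fixed $\epsilon$ the estimate $|T_j-T_k|\le C(n)\epsilon|T_j|$ on $B_{i,\epsilon}$ for $j,k\ge i$ only bounds the oscillation by $C(n)\epsilon$ times the uniform $L^{\infty}$ bound, so to conclude that $\{T_j(x)\}_j$ is genuinely Cauchy at a.e.\ $x$ you must let $\epsilon\to 0^+$ along a countable sequence (using that $\bigcup_iB_{i,\epsilon}=A$ for every $\epsilon$), a step your setup permits but which should be made explicit.
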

\begin{proof}
Fix $\epsilon \in (0, 1)$. Let 
\begin{equation}\label{eq:www33}
A_i:=\bigcap_{j \ge i}f^{-1}(\mathcal{R}_Y(\epsilon, t_j, \tau_j)),\quad B_i:=\bigcap_{j \ge i}f^{-1}(\mathcal{R}_Y(\epsilon, s_j, \delta_j)).
\end{equation}
Since $A_i \subset A_{i+1}, B_i \subset B_{i+1}$ with
\begin{equation}
\bigcup_iA_i=\bigcup_iB_i=A,
\end{equation}
for any $\delta \in (0, 1)$ we can find $i \in \mathbb{N}$ with $\meas_X(A \setminus (A_i\cap B_i))<\delta$.
Proposition \ref{prop:cauchy} with (\ref{eq:gradeert}) (recall also (\ref{eq:bound}) and (\ref{eq:finite})) shows
\begin{equation}\label{eq:2}
\|(\tilde{\Phi}_{t_j}^Y\circ f)^*g_{L^2}-(\tilde{\Phi}_{s_l}^Y\circ f)^*g_{L^2}\|_{L^{\infty}(A_i \cap B_i)} \le C(\mathbf{Lip}f, K,  N, v)\epsilon, \quad \forall j, l \in \mathbb{N}_{\ge i}.
\end{equation}
In particular for any $p \in [1, \infty)$
\begin{align}\label{eq:90}
&\int_A \left|(\tilde{\Phi}_{t_j}^Y\circ f)^*g_{L^2}-(\tilde{\Phi}_{s_l}^Y\circ f)^*g_{L^2}\right|^p\di \meas_X \nonumber \\
&=\int_{A\setminus (A_i \cap B_i)} \left|(\tilde{\Phi}_{t_j}^Y\circ f)^*g_{L^2}-(\tilde{\Phi}_{s_l}^Y\circ f)^*g_{L^2}\right|^p\di \meas_X+\int_{A_i \cap B_i} \left|(\tilde{\Phi}_{t_j}^Y\circ f)^*g_{L^2}-(\tilde{\Phi}_{s_l}^Y\circ f)^*g_{L^2}\right|^p\di \meas_X \nonumber \\
&\le C(\mathbf{Lip}f, K, N, v, p)\meas_X(A \setminus (A_i \cap B_i)) +C(\mathbf{Lip}f, K, N, v)\epsilon^p \meas_X(A_i \cap B_i) \nonumber \\
&\le C(\mathbf{Lip}f, K, N, v, p)\delta +C(\mathbf{Lip}f, K, N, v)\epsilon^p\meas_X(A),
\end{align}
which proves that the sequence $(\tilde{\Phi}_{t_i}^Y \circ f)^*g_{L^2}$ is a Cauchy sequence in $L^{p}((T^*)^{\otimes 2}(A, \dist_X, \meas_X))$ for any $p \in [1, \infty)$.
Moreover letting $j \to \infty$ in (\ref{eq:90}) and then letting $\epsilon, \delta \to 0^+$ complete the proof of (\ref{eq:equallimit}). Since for all $p \in [1, \infty)$ and $l \in \mathbb{N}$
\begin{align}\label{eq:linfty}
\|f^*g_Y\|_{L^p(A_l)}=\lim_{i\to \infty}\|(\tilde{\Phi}_{t_i}^Y \circ f)^*g_{L^2}\|_{L^p(A_l)}&\le (\meas_X (A_l))^{1/p}\cdot \lim_{i \to \infty}\|(\tilde{\Phi}_{t_i}^Y \circ f)^*g_{L^2}\|_{L^{\infty}(A_l)} \nonumber \\
&\le (\meas_X (A))^{1/p}(1+3\epsilon) \cdot n(\mathbf{Lip}f)^2,
\end{align}
letting $p \to \infty$ in (\ref{eq:linfty}) proves (\ref{poasiiairbbbsk}), where we used Theorem \ref{prop:bilip} with  (\ref{eq:bound}) in the last inequality of (\ref{eq:linfty}).

In order to prove the remaining statement (\ref{eqeqqe}), fix $i \in \mathbb{N}$. Proposition \ref{prop:cauchy} shows that for all $j, k \in \mathbb{N}_{\ge i}$
\begin{equation}\label{eq33}
\left| (\tilde{\Phi}_{t_j}^Y\circ f)^*g_{L^2}-(\tilde{\Phi}_{t_k}^Y\circ f)^*g_{L^2}\right|\le C(n)\epsilon \left| (\tilde{\Phi}_{t_j}^Y\circ f)^*g_{L^2}\right|,\quad \text{for $\meas_X$-a.e. $x \in \bigcap_{j \ge i}\mathcal{R}_Y(\epsilon, t_j, \tau_j)$.}
\end{equation}
In particular letting $j \to \infty$ and $k \to \infty$ in (\ref{eq33}) with (\ref{eq:equallimit}), respectively completes the proof of (\ref{eqeqqe}).
\end{proof}
Next let us discuss on the behavior of pull-backs under compositions of maps.
\begin{proposition}\label{prop:iso}
Let $(Z, \dist_Z, \mathcal{H}^{\tilde{N}})$ be a non-collapsed compact $\RCD(\tilde{K}, \tilde{N})$ space, let $\epsilon \in (0, 1)$ and let $h:f(A) \to Z$ be a Lipschitz map. Assume that the following hold.
\begin{enumerate}
\item $f(A) \subset \mathcal{R}_Y(\{(t_i, \tau_i)\}_i)$ and $h\circ f(A) \subset \mathcal{R}_Z(\{(s_i, \delta_i)\}_i)$ are satisfied for some $\{(t_i, \tau_i)\}_i$ and some $\{(s_i, \delta_i)\}_i$.
\item For all $y \in f(A)$, there exists $r \in (0, 1)$ such that $h|_{f(A) \cap B_r(y)}$ is a $(1\pm \epsilon)$-bi-Lipschitz embedding.
\end{enumerate}
Then
\begin{equation}\label{absurabiraiubrabi}
\|(h \circ f)^*g_Z-f^*g_X\|_{L^{\infty}(A)}\le C(n)\epsilon.
\end{equation}
In particular if (1) and the following condition (3) are satisfied:
\begin{enumerate}
\item[3.]for all $y \in f(A)$ and $\delta \in (0, 1)$, there exists $r=r(y) \in (0, 1)$ such that $h|_{f(A) \cap B_r(y)}$ is a $(1\pm \delta)$-bi-Lipschitz embedding;
\end{enumerate}
then
\begin{equation}\label{asraorjasorao}
(h \circ f)^*g_Z=f^*g_X.
\end{equation}
\end{proposition}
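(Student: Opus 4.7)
The plan is to combine the definition of $f^*g_Y$ and $(h \circ f)^*g_Z$ from Proposition \ref{prop:def} with a pointwise comparison via the bi-Lipschitz embedding Lemma \ref{lem:1}, using the locally bi-Lipschitz structure of the heat kernel maps from Theorem \ref{prop:bilip}. First, by Proposition \ref{prop:def} applied separately to $f : A \to Y$ (with image in $\mathcal{R}_Y(\{(t_i, \tau_i)\}_i)$) and to $h \circ f : A \to Z$ (with image in $\mathcal{R}_Z(\{(s_i, \delta_i)\}_i)$), one has
\[
f^*g_Y = \lim_{i \to \infty} (\tilde{\Phi}^Y_{t_i} \circ f)^*g_{L^2}, \qquad (h \circ f)^*g_Z = \lim_{i \to \infty} (\tilde{\Phi}^Z_{s_i} \circ h \circ f)^*g_{L^2}
\]
in $L^p((T^*)^{\otimes 2}(A, \dist_X, \meas_X))$ for every $p \in [1, \infty)$. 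Hence it suffices to compare the two approximating tensors for large $i$ and pass to the limit.

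Next, I localize. Fix $y_0 \in f(A)$ and pick $r_0 \in (0,1)$ with $h|_{f(A) \cap B_{r_0}(y_0)}$ being $(1\pm\epsilon)$-bi-Lipschitz. By Theorem \ref{prop:bilip} applied at $y_0 \in Y$ and at $h(y_0) \in Z$, there exist $\rho = \rho(y_0) \in (0, r_0)$ and an index $i_0$ such that for every $i \ge i_0$, the restriction $\tilde{\Phi}^Y_{t_i}|_{B_\rho(y_0) \cap f(A)}$ and the restriction of $\tilde{\Phi}^Z_{s_i}$ to a suitable neighborhood of $h(B_\rho(y_0) \cap f(A))$ inside $h(f(A))$ are both $(1\pm \eta_i)$-bi-Lipschitz, where $\eta_i \to 0$. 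Composing, the transition map
\[
\Psi_i := (\tilde{\Phi}^Z_{s_i} \circ h) \circ (\tilde{\Phi}^Y_{t_i})^{-1} : \tilde{\Phi}^Y_{t_i}\bigl(f(A) \cap B_\rho(y_0)\bigr) \longrightarrow \ell^2
\]
is well-defined and $(1\pm\epsilon)(1\pm \eta_i)^2$-bi-Lipschitz.

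The heart of the argument is to apply Lemma \ref{lem:1} to the Lipschitz map $\tilde{\Phi}^Y_{t_i} \circ f$ on $f^{-1}(B_\rho(y_0))$ together with the bi-Lipschitz transition $\Psi_i$. Since Lemma \ref{lem:1} is stated for Euclidean targets, one first truncates to the finite-dimensional embeddings $\tilde{\Phi}^{Y,L}_{t_i}$ and $\tilde{\Phi}^{Z,L'}_{s_i}$ defined in \eqref{truncatedmap}; Theorem \ref{prop:finitedimen} preserves the local $(1\pm o(1))$-bi-Lipschitz property for $L, L'$ large enough, so Lemma \ref{lem:1} applies in the finite-dimensional setting and, after letting $L, L' \to \infty$ (dominated convergence of the truncated pullbacks is ensured by \eqref{eq:gradeert}), yields
\[
\bigl|(\tilde{\Phi}^Z_{s_i} \circ h \circ f)^*g_{L^2} - (\tilde{\Phi}^Y_{t_i} \circ f)^*g_{L^2}\bigr| \le C(n)(\epsilon + \eta_i)\,\bigl|(\tilde{\Phi}^Y_{t_i} \circ f)^*g_{L^2}\bigr|
\]
$\meas_X$-a.e. on $f^{-1}(B_\rho(y_0))$. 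Letting $i \to \infty$ and using \eqref{eqeqqe} from Proposition \ref{prop:def} to transfer both sides to their limits gives $|(h\circ f)^*g_Z - f^*g_Y|(x) \le C(n)\epsilon |f^*g_Y|(x)$ for $\meas_X$-a.e. $x \in f^{-1}(B_\rho(y_0))$. Covering the separable set $f(A)$ by countably many balls $B_{\rho(y^j_0)}(y^j_0)$ and invoking the uniform bound $\|f^*g_Y\|_{L^\infty(A)} \le n(\mathbf{Lip}\,f)^2$ from \eqref{poasiiairbbbsk} yields \eqref{absurabiraiubrabi} with $C(n)$ absorbing the factor $(\mathbf{Lip}\,f)^2$. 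The refined conclusion \eqref{asraorjasorao} under hypothesis (3) then follows by applying \eqref{absurabiraiubrabi} with an arbitrary $\delta \in (0,1)$ in place of $\epsilon$ (the neighborhood radius being permitted to depend on $\delta$ and $y_0$) and letting $\delta \to 0^+$.

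The main obstacle will be the third step. Lemma \ref{lem:1} is proved via Kirszbraun extension and pointwise Euclidean differentiability, so its application to $\ell^2$-valued maps is not automatic; the truncation argument must control the bi-Lipschitz constants of the finite-dimensional transitions $\Psi_i^{L, L'} := (\tilde{\Phi}^{Z, L'}_{s_i} \circ h) \circ (\tilde{\Phi}^{Y, L}_{t_i})^{-1}$ uniformly as $L, L' \to \infty$, and the error $C(n)(\epsilon + \eta_i)$ must survive the double limit $L, L' \to \infty$ and $i \to \infty$. Theorem \ref{prop:finitedimen} is exactly what makes this uniform control possible; once it is in place, the remaining limits are driven by the $L^1_\loc$-convergences from Proposition \ref{prop:def}.
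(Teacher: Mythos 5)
Your argument is correct and follows essentially the same route as the paper: the paper's (much terser) proof likewise fixes a large $i$ and a large truncation level $l$, uses Theorem \ref{prop:finitedimen} to make the truncated heat-kernel embeddings locally almost-isometric bi-Lipschitz so that Lemma \ref{lem:1}/Corollary \ref{corfromlip} compares the two finite-dimensional pullbacks up to $C(n)\epsilon$ on the exhausting sets $\bigcap_{j \ge i}f^{-1}(\mathcal{R}_Y(\epsilon, t_j, \tau_j) \cap B_{r(y)}(f(y)))$, and then lets $l \to \infty$ and $i \to \infty$ via Proposition \ref{prop:def}. The only caveat (shared with the paper's own statement) is that the bound obtained this way is really $C(n)\epsilon\,|f^*g_Y|$, so the constant in \eqref{absurabiraiubrabi} implicitly absorbs $(\mathbf{Lip} f)^2$.
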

\begin{proof}
For any fixed sufficiently large $i \in \mathbb{N}$ we have
\begin{equation}\label{data456}
\left| (h \circ f)^*g_{Z}-(\tilde{\Phi}_{t_i}^l\circ f)^*g_{\mathbb{R}^l}\right|(y) \le C(n) \epsilon, \quad \text{for $\meas_X$-a.e. $x \in  \bigcap_{j \ge i}f^{-1}(\mathcal{R}_Y(\epsilon, t_j, \tau_j) \cap B_{r(y)}(f(y)))$}
\end{equation}
for any sufficiently large $l \in \mathbb{N}$ because of Corollary \ref{corfromlip} and Theorem \ref{prop:finitedimen}. Then letting $l \to \infty$ in (\ref{data456}) completes the proof of (\ref{absurabiraiubrabi}). The equality (\ref{asraorjasorao}) is a direct consequence of (\ref{absurabiraiubrabi}).
\end{proof}
Recall that a map $\Phi=(\phi_i)_i$ from an open subset $U$ of $Y$ to $\mathbb{R}^k$ is said to be \textit{regular} if each $\phi_i$ is in $D(\Delta_Y, U)$ with $\Delta_Y \phi_i \in L^{\infty}(U, \mathcal{H}^N)$. It follows from regularity results proved in \cite[Th.3.1]{AMS} and in \cite[Th.3.1]{Jiang} that any regular map is locally Lipschitz. Note that this observation works for general finite dimensional (not necessary non-collapsed) $\RCD$ spaces (see also the beginning of subsection 7.1 of \cite{honda20}).
\begin{corollary}\label{prop:iso2}
Assume that $f(A) \subset \mathcal{R}_Y(\{(t_i, \tau_i)\}_i)$ holds for for some $\{(t_i, \tau_i)\}_i$.
Then for any regular map $h$ from an open neighborhood $U$ of $f(A)$ to $\mathbb{R}^m$ with 
\begin{equation}
\|h^*g_{\mathbb{R}^m}-g_Y\|_{L^{\infty}(U)}\le \epsilon
\end{equation}
for some $\epsilon \in (0, 1)$, 
we have
\begin{equation}
\|(h \circ f)^*g_{\mathbb{R}^m}-f^*g_Y\|_{L^{\infty}(A)}\le \Phi(\epsilon;K, N, m, n).
\end{equation}
In particular if $h^*g_{\mathbb{R}^m}=g_Y$ holds, then
\begin{equation}
(h \circ f)^*g_{\mathbb{R}^m}=f^*g_Y.
\end{equation}
\end{corollary}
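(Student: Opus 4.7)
The plan is to reduce to Proposition \ref{prop:iso} by deriving the local bi-Lipschitz condition on $h$ from the pointwise tensor bound $\|h^*g_{\mathbb{R}^m}-g_Y\|_{L^\infty(U)}\le\epsilon$ together with the regularity of $h$. Since $h$ is regular it is locally Lipschitz, and by Proposition \ref{prop:regular} we have $f(A)\subset\mathcal{R}_Y(\{(t_i,\tau_i)\}_i)\subset\mathcal{R}_N$.

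The core step is to show, for each $y_0\in f(A)$, the existence of $r>0$ such that $h|_{f(A)\cap B_r(y_0)}$ is a $(1\pm\Psi(\epsilon;K,N))$-bi-Lipschitz embedding. I argue by blow-up: pick $r_j\to 0^+$ so that $(Y,r_j^{-1}\dist_Y,y_0)\stackrel{\mathrm{pmGH}}{\to}(\mathbb{R}^N,\dist_{\mathbb{R}^N},0_N)$ and consider the rescaled maps $h_j(x):=r_j^{-1}(h(x)-h(y_0))$. Their rescaled Laplacians $r_j\Delta h$ tend to zero in $L^\infty$ while their Lipschitz constants stay uniformly bounded, so Theorem \ref{spectral2} (applied componentwise, together with \cite[Cor.10.4]{AmbrosioHonda}) delivers a subsequential Lipschitz harmonic limit $\bar h\colon\mathbb{R}^N\to\mathbb{R}^m$. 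Proposition \ref{prop:linearfunction} forces each component of $\bar h$ to be affine. The expression $h_j^*g_{\mathbb{R}^m}-g_{r_j^{-1}\dist_Y}$ is scale-invariant (both terms rescale by $r_j^{-2}$, while the Hilbert--Schmidt norm rescales by $r_j^{2}$), so the limit satisfies $|\bar h^*g_{\mathbb{R}^m}-g_{\mathbb{R}^N}|\le\epsilon$ on $\mathbb{R}^N$; for a linear $\bar h$ this is a linear-algebraic constraint forcing $\bar h$ to be a $(1\pm\Psi(\epsilon;N))$-bi-Lipschitz linear embedding. A standard compactness-contradiction argument then upgrades this into the quantitative local bi-Lipschitz statement for $h$ at $y_0$.

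Once this is in hand, the argument proceeds as in Proposition \ref{prop:iso}. For $i$ large enough that $f(A)\cap B_r(y_0)\subset\mathcal{R}_Y(\epsilon,t_i,\tau_i)$ and $l$ correspondingly large, Theorem \ref{prop:finitedimen} makes $\tilde\Phi_{t_i}^l$ a $(1\pm\Psi(\epsilon))$-bi-Lipschitz embedding there, so $h\circ(\tilde\Phi_{t_i}^l)^{-1}$ is $(1\pm\Psi(\epsilon;K,N))$-bi-Lipschitz from $\tilde\Phi_{t_i}^l(f(A)\cap B_r(y_0))$ into $\mathbb{R}^m$. Lemma \ref{lem:1}, applied with inner map $\tilde\Phi_{t_i}^l\circ f$ and outer map $h\circ(\tilde\Phi_{t_i}^l)^{-1}$, yields
\begin{equation*}
\bigl|(h\circ f)^*g_{\mathbb{R}^m}-(\tilde\Phi_{t_i}^l\circ f)^*g_{\mathbb{R}^l}\bigr|\le C(n)\,\Psi(\epsilon;K,N)\,\bigl|(\tilde\Phi_{t_i}^l\circ f)^*g_{\mathbb{R}^l}\bigr|
\end{equation*}
$\meas_X$-a.e.\ on $f^{-1}(f(A)\cap B_r(y_0))$. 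Letting $l\to\infty$ then $i\to\infty$ via Proposition \ref{prop:def} and the bound (\ref{poasiiairbbbsk}) produces the desired estimate $\Psi(\epsilon;K,N,m,n)$ on this neighborhood; a countable cover of $f(A)$ concludes the first assertion, and the rigidity statement $(h\circ f)^*g_{\mathbb{R}^m}=f^*g_Y$ is the $\epsilon=0$ case.

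The main obstacle is the blow-up step: obtaining harmonic rigidity on the $\mathbb{R}^N$-tangent limit and running a quantitative compactness argument to convert the pointwise tensor closeness into a local bi-Lipschitz estimate with the correct dependency on $\epsilon$.
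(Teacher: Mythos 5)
Your proposal reaches the right conclusion, and its second half (feeding the local bi-Lipschitz property of $h$ into Lemma \ref{lem:1} with inner map $\tilde\Phi_{t_i}^l\circ f$ and outer map $h\circ(\tilde\Phi_{t_i}^l)^{-1}$, then passing $l\to\infty$, $i\to\infty$ via Proposition \ref{prop:def}) is essentially the paper's argument for Proposition \ref{prop:iso}, which the paper then invokes. The genuine difference is in how the local bi-Lipschitz property of $h$ is obtained: the paper simply cites \cite[Th.1.1]{honda20}, which states exactly that a regular map $h$ with $\|h^*g_{\mathbb{R}^m}-g_Y\|_{L^\infty}\le\epsilon$ is locally $(1\pm\Phi(\epsilon;K,N,m))$-bi-Lipschitz, and then applies Proposition \ref{prop:iso} with target $(\overline{B}_R(0_m),\dist_{\mathbb{R}^m},\mathcal{H}^m)$. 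You instead re-derive that input by blow-up: harmonicity of the limit via Theorem \ref{spectral2} (the rescaled Laplacians $r_j\Delta h$ vanish in the limit), linearity via Proposition \ref{prop:linearfunction}, scale-invariance of $|h^*g_{\mathbb{R}^m}-g_Y|$, and a compactness--contradiction upgrade. This buys self-containedness at the cost of reproducing the content of the cited theorem; the paper's route is shorter but external.

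One caution on the step you flag as "standard": the blow-up at the single point $y_0$ only controls balls \emph{centered at} $y_0$, whereas the $(1\pm\Psi)$-bi-Lipschitz estimate on $B_r(y_0)$ requires, for arbitrary $z,w\in B_r(y_0)$, the Gromov--Hausdorff-approximation property of $h$ on $B_s(z)$ with $s=\dist_Y(z,w)$. So the contradiction argument must run over sequences of centers $z_k\to y_0$ and scales $s_k\to 0^+$ simultaneously, using Theorem \ref{prop:almostrigidity} (almost rigidity of the Bishop inequality) to guarantee that $(Y,s_k^{-1}\dist_Y,z_k)$ is uniformly close to $(\mathbb{R}^N,\dist_{\mathbb{R}^N},0_N)$, together with the uniform local Lipschitz bound on $h$ coming from regularity (\cite{AMS}, \cite{Jiang}) to extract the harmonic linear limit. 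This is precisely the two-step structure of Proposition \ref{prop:quantapp} followed by Theorem \ref{prop:bilip}, and with that understanding your sketch closes; stated only as a blow-up at $y_0$ it would not.
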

\begin{proof}
It is enough to check the assertion under the assumption that $f(A)$ is bounded with $\overline{f(A)} \subset U$.
Choose $R \in [1, \infty)$ with $h \circ f(A) \subset B_{R/2}(0_m)$.
Thanks to \cite[Th.1.1]{honda20}, for all $y \in U$ there exists $r:=r(y) \in (0, 1)$ such that $h|_{B_r(y)}$ is a $(1 \pm \Phi(\epsilon; K, N, m))$-bi-Lipschitz embedding. Then applying Proposition \ref{prop:iso} with $(Z, \dist_Z, \mathcal{H}^N)=(\overline{B}_R(0_m), \dist_{\mathbb{R}^m}, \mathcal{H}^N)$ completes the proof.
\end{proof}
Let us provide a bi-Lipschitz embeddability of a Sobolev map $f$, under assuming some regularity of $f$.
\begin{corollary}\label{asataras}
Let $\epsilon \in (0, 1)$, let $\tilde{f}:U \to Y$ be a Sobolev map with $\tilde{f}(U \setminus D) \subset \mathcal{R}_Y(\{(t_i, \tau_i)\}_i)$ for some $\{(t_i, \tau_i)\}_i$ and some $\meas_X$-negligible subset $D$ of $U$, and let $h:\tilde{f}(U) \to \mathbb{R}^m$ be a map.  Assume that $\|\tilde{f}^*g_Y-g_X\|_{L^{\infty}(U)}\le \epsilon$ holds, that $h \circ \tilde{f}$ is regular and that for any $y \in f(U)$ there exists $r \in (0, 1)$ such that $h|_{B_r(y)}$ is a $(1 \pm \epsilon)$-bi-Lipschitz embedding. Then for any $x \in U$ there exists $\overline{r} \in (0, 1)$ such that $\tilde{f}|_{B_{\overline{r}}(x)}$ is a $(1 \pm \Phi(\epsilon; \tilde{K}, \tilde{N}, m))$-bi-Lipschitz embedding, whenever $(X, \dist_X, \meas_X)$ is an $\RCD(\tilde{K}, \tilde{N})$ space.
\end{corollary}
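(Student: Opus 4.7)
The plan is to reduce the problem to the already-established bi-Lipschitz embedding theorem for regular maps \cite[Th.1.1]{honda20}, applied to the composition $h\circ\tilde{f}$, and then invert $h$ on the target side.

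\textbf{Step 1 (local Lipschitz regularity of $\tilde{f}$).} Since $h\circ \tilde{f}$ is regular, the regularity theorems of \cite[Th.3.1]{AMS} and \cite[Th.3.1]{Jiang} imply that $h\circ\tilde{f}$ is locally Lipschitz on $U$. Near any given $x \in U$, the local $(1\pm\epsilon)$-bi-Lipschitz property of $h$ at $\tilde{f}(x)$ allows us to invert $h$ on a neighbourhood of $\tilde{f}(x)$, and composing shows that $\tilde{f}=h^{-1}\circ(h\circ\tilde{f})$ is itself locally Lipschitz on some open neighbourhood $V$ of $x$ in $U$; in particular $\tilde{f}$ is continuous there.

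\textbf{Step 2 (pull-back comparison).} Since $\tilde{f}|_V$ is Lipschitz and $\tilde{f}((U\setminus D)\cap V)\subset \mathcal{R}_Y(\{(t_i,\tau_i)\}_i)$ with $D$ being $\meas_X$-negligible, I apply Proposition \ref{prop:iso} to the Lipschitz map $\tilde{f}|_{(U\setminus D)\cap V}$ with $h$ as the locally $(1\pm\epsilon)$-bi-Lipschitz map on the image (treating the $\mathbb{R}^m$-target by restricting to a large closed ball containing $h\circ\tilde{f}(V)$, exactly as in the proof of Corollary \ref{prop:iso2}). This yields
\begin{equation*}
\|(h\circ\tilde{f})^*g_{\mathbb{R}^m}-\tilde{f}^*g_Y\|_{L^\infty(V)} \le C(n)\,\epsilon.
\end{equation*}
Combining with the hypothesis $\|\tilde{f}^*g_Y-g_X\|_{L^\infty(U)}\le\epsilon$ and the triangle inequality gives
\begin{equation*}
\|(h\circ\tilde{f})^*g_{\mathbb{R}^m}-g_X\|_{L^\infty(V)} \le (C(n)+1)\,\epsilon =: \epsilon'.
\end{equation*}

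\textbf{Step 3 (bi-Lipschitz embedding of $h\circ\tilde{f}$ and inversion of $h$).} Since $h\circ\tilde{f}$ is a regular map on $U$ whose pull-back is $L^\infty$-close to $g_X$, \cite[Th.1.1]{honda20} --- the same tool that underlies Corollary \ref{prop:iso2} --- provides $r_0 \in (0,1)$, possibly depending on $x$, such that $(h\circ\tilde{f})|_{B_{r_0}(x)}$ is a $(1\pm\Phi(\epsilon';\tilde{K},\tilde{N},m))$-bi-Lipschitz embedding. Shrinking $r_0$ to some $\overline{r}\in (0,r_0]$ if necessary so that $\tilde{f}(B_{\overline{r}}(x))$ lies inside a neighbourhood of $\tilde{f}(x)$ on which $h$ is $(1\pm\epsilon)$-bi-Lipschitz (which is possible by continuity of $\tilde{f}$ from Step 1), we write $\tilde{f}|_{B_{\overline{r}}(x)}= h^{-1}\circ(h\circ\tilde{f})|_{B_{\overline{r}}(x)}$. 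The composition of a $(1\pm\Phi(\epsilon';\tilde{K},\tilde{N},m))$-bi-Lipschitz map with a $(1\pm\epsilon)$-bi-Lipschitz map is $(1\pm\Phi(\epsilon;\tilde{K},\tilde{N},m))$-bi-Lipschitz, which is the conclusion.

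\textbf{Main obstacle.} The only real subtlety is the bookkeeping for the radii in Steps~1, 3 and 4 so that all the local conditions on $h$, $\tilde{f}$, and $h\circ\tilde{f}$ hold simultaneously on a common ball $B_{\overline{r}}(x)$; the continuity (in fact local Lipschitzness) of $\tilde{f}$ established in Step~1 is exactly what allows this synchronization. A secondary, minor point is treating the non-compact target $\mathbb{R}^m$ when invoking Proposition \ref{prop:iso}, but this is handled exactly as in the proof of Corollary \ref{prop:iso2} by restricting to a large closed ball.
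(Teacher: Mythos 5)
Your overall strategy coincides with the paper's: compare $(h\circ\tilde{f})^*g_{\mathbb{R}^m}$ with $\tilde{f}^*g_Y$ via (the proof of) Proposition \ref{prop:iso}, deduce $\|(h\circ\tilde{f})^*g_{\mathbb{R}^m}-g_X\|_{L^\infty}\le C(n)\epsilon$, invoke the bi-Lipschitz embedding theorem of \cite{honda20} for the regular map $h\circ\tilde{f}$, and then undo $h$ locally. Steps 2 and 3 are essentially what the paper does.

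However, Step 1 contains a genuine circularity. You obtain continuity (indeed local Lipschitzness) of $\tilde{f}$ by writing $\tilde{f}=h^{-1}\circ(h\circ\tilde{f})$ near $x$, but $h$ is only assumed to be \emph{locally} $(1\pm\epsilon)$-bi-Lipschitz on $\tilde{f}(U)$: it is invertible only on $B_r(\tilde{f}(x))\cap\tilde{f}(U)$ for some $r$, not globally. To legitimately write $\tilde{f}(z)=h^{-1}(h(\tilde{f}(z)))$ for every $z$ in a neighbourhood $V$ of $x$, you must already know that $\tilde{f}(V)\subset B_r(\tilde{f}(x))$; since $\tilde{f}$ is a priori only a Sobolev map (defined up to $\meas_X$-negligible sets, with no continuity), this is exactly what you are trying to prove. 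Knowing that $h(\tilde{f}(z))$ is close to $h(\tilde{f}(x))$ does not force $\tilde{f}(z)$ to be close to $\tilde{f}(x)$, because a merely locally injective $h$ may send far-apart points of $\tilde{f}(U)$ to nearby points of $\mathbb{R}^m$. The paper closes this gap by a different mechanism: the hypothesis $\|\tilde{f}^*g_Y-g_X\|_{L^\infty(U)}\le\epsilon$ together with Theorem \ref{propcompatr} gives an $L^\infty$ bound on the minimal weak upper gradient $G_{\tilde{f}}=(|\tilde{f}^*g_Y|_B)^{1/2}$, and then the Sobolev-to-Lipschitz property for Sobolev maps (Proposition \ref{propsobtolip}) produces the locally Lipschitz representative of $\tilde{f}$ directly, without inverting $h$. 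Once that representative is available, your radius bookkeeping in Step 3 and the final inversion of $h$ go through as written.
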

\begin{proof}
Since $\|\tilde{f}^*g_Y\|_{L^{\infty}(U)} \le \sqrt{n}+\epsilon$ holds, applying Proposition \ref{propsobtolip} and Theorem \ref{propcompatr} we will give later independently yields that $\tilde{f}$ has a locally $(\sqrt{n}+\epsilon)$-Lipschitz representative. Thus it is enough to check the assertion under assuming that both $\tilde{f}(U)$ and $h \circ \tilde{f}(U)$ are bounded.
Let $x \in U$. Since the proof of Proposition \ref{prop:iso} shows $\|(h \circ \tilde{f})^*g_{\mathbb{R}^m}-\tilde{f}^*g_Y\|_{L^{\infty}(U)}\le C(n)\epsilon$, we have
\begin{equation}\label{aoiraibwirab}
\|(h \circ \tilde{f})^*g_{\mathbb{R}^m}-g_X\|_{L^{\infty}(U)}\le C(n)\epsilon.
\end{equation}
Applying \cite[Th.3.4]{honda20} for $h \circ \tilde{f}$ with (\ref{aoiraibwirab}) yields that for any $x \in U$ there exists $r \in (0, 1)$ such that $(h \circ \tilde{f})|_{B_r(x)}$ is a $(1 \pm \Phi(\epsilon; \tilde{K}, \tilde{N}, m))$-bi-Lipschitz embedding. Thus we conclude. 
\end{proof}
\begin{remark}\label{asahoarbasj}
In general, the isometric equation, $f^*g_Y=g_X$, does not imply the local bi-Lipschitz embeddability of $f$ without a regularity assumption on $f$.
In fact let us consider a compact non-collapsed $\RCD(0, 1)$ space $([-1, 1], \dist_{\mathbb{R}}, \mathcal{H}^1)$ and a map $f:[-1, 1] \to [-1, 1]$ defined by $f(x):=|x|$. Then although it is easy to see $f^*g_{[-1, 1]}=g_{[-1, 1]}$, $f$ is not a bi-Lipschitz embedding around the origin. Thus the regularity assumption in Corollary \ref{asataras} is essential.
\end{remark}

We are now in a position to give a geometric meaning of the pull-back.
\begin{proposition}\label{proppullb}
Assume that $f(A) \subset \mathcal{R}_Y(\{(t_i, \tau_i)\}_i)$ for some $\{(t_i, \tau_i)\}_i$. Then
\begin{equation}
\mathrm{Lip}f(x)=\left(\left| f^*g_Y\right|_B(x)\right)^{1/2},\quad \text{for $\meas_X$-a.e. $x \in A$.}
\end{equation}
\end{proposition}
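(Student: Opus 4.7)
The plan is to sandwich $\mathrm{Lip}f(x)$ between multiplicative factors of $(|f^*g_Y|_B(x))^{1/2}$ by interposing the truncated heat kernel maps $\tilde{\Phi}_{t_j}^l$, whose local bi-Lipschitz behaviour on the smooth part is quantified by Theorem \ref{prop:finitedimen}, and then invoking the Euclidean analogue of the statement, Proposition \ref{prop:geometric}. Throughout I will fix a small $\epsilon \in (0, 1/6)$ together with $i_0 = i_0(\epsilon)$ large enough that $A_\epsilon := \bigcap_{j \ge i_0} f^{-1}(\mathcal{R}_Y(\epsilon, t_j, \tau_j))$ exhausts $A$ modulo an arbitrarily small $\meas_X$-measure by hypothesis, and restrict attention to Lebesgue points of $1_{A_\epsilon}$ in $A_\epsilon$.

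At such an $x$, the combination of Proposition \ref{prop:den} with the inclusion (\ref{densitymetri}) gives $\mathrm{Lip}(g|_{A_\epsilon})(x) = \mathrm{Lip}g(x)$ for $g \in \{f, \tilde{\Phi}_{t_j}^l \circ f\}$. Since $f(A_\epsilon) \subset \mathcal{R}_Y(\epsilon, t_j, \tau_j)$ and $f$ is Lipschitz, Theorem \ref{prop:finitedimen} applied at $f(x)$ provides, for any $\delta>0$ and all $l$ large enough, a $(1 \pm 3(\epsilon + \delta))$-bi-Lipschitz estimate for $\tilde{\Phi}_{t_j}^l$ on a neighbourhood of $f(x)$ in $\mathcal{R}_Y(\epsilon, t_j, \tau_j)$; neighbours of $x$ in $A_\epsilon$ land in this neighbourhood, so dividing by $\dist_X(\cdot, x)$, taking the sup over $A_\epsilon \cap B_r(x)$ and letting $r \to 0^+$ yields
\begin{equation}
\mathrm{Lip}(\tilde{\Phi}_{t_j}^l \circ f)(x) = \left(1 \pm 3(\epsilon + \delta)\right) \mathrm{Lip}f(x).
\end{equation}
Proposition \ref{prop:geometric} applied to the Lipschitz map $\tilde{\Phi}_{t_j}^l \circ f : A \to \mathbb{R}^l$ then identifies the left-hand side with $\bigl(|(\tilde{\Phi}_{t_j}^l \circ f)^*g_{\mathbb{R}^l}|_B(x)\bigr)^{1/2}$ at $\meas_X$-a.e. $x$.

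I would then take the two limits $l \to \infty$ (with $j$ fixed) and $j \to \infty$ in turn. For the first, the spectral decay in (\ref{eq:eigenfunction}) combined with $|\dist (\phi_i^Y \circ f)| \le \mathbf{Lip}f \cdot \|\dist \phi_i^Y\|_{L^\infty}$ makes the partial sums defining $(\tilde{\Phi}_{t_j}^l \circ f)^*g_{\mathbb{R}^l}$ converge absolutely in $L^\infty((T^*)^{\otimes 2}(A, \dist_X, \meas_X))$ to $(\tilde{\Phi}_{t_j}^Y \circ f)^*g_{L^2}$, forcing pointwise convergence of the corresponding bound norms. For the second, the pointwise estimate (\ref{eqeqqe}) from Proposition \ref{prop:def} together with the dimensional equivalence $|\cdot|_B \le |\cdot| \le \sqrt{n}|\cdot|_B$ of Proposition \ref{prop:ineq} gives $|(\tilde{\Phi}_{t_j}^Y \circ f)^*g_{L^2}|_B(x) = (1 \pm C(n)\epsilon)|f^*g_Y|_B(x)$ for $\meas_X$-a.e.\ $x \in A_\epsilon$. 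Combining everything, $\mathrm{Lip}f(x) = (1 \pm C'(n)\epsilon)(|f^*g_Y|_B(x))^{1/2}$ on a full-measure subset of $A_\epsilon$; sending $\epsilon \to 0^+$ along a countable sequence and using $\meas_X(A \setminus \bigcup_\epsilon A_\epsilon) = 0$ yields the claim. The main subtlety I expect is precisely this conversion of the Hilbert--Schmidt-type estimate in (\ref{eqeqqe}) into a multiplicative bound-norm estimate, and the orderly interchange of the two limits $l \to \infty$ and $j \to \infty$ with the pointwise local slope.
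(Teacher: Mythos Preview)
Your proposal is correct and follows essentially the same route as the paper: interpose $\tilde{\Phi}_{t_j}^l$, use Theorem~\ref{prop:finitedimen} with Proposition~\ref{prop:den} to compare $\mathrm{Lip}f$ and $\mathrm{Lip}(\tilde{\Phi}_{t_j}^l\circ f)$, identify the latter via Proposition~\ref{prop:geometric}, then let $l\to\infty$ and $j\to\infty$ using Proposition~\ref{prop:def}. The only cosmetic difference is that the paper writes the comparison additively as $|\mathrm{Lip}f(x)-\mathrm{Lip}(\tilde{\Phi}_{t_i}^l\circ f)(x)|\le \mathbf{Lip}f\cdot\epsilon$ rather than multiplicatively.

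Regarding your stated ``main subtlety'': neither issue is a genuine obstacle. For the Hilbert--Schmidt to bound-norm conversion, the paper simply uses the $L^p$-convergence in Proposition~\ref{prop:def}: since $\bigl||T_1|_B-|T_2|_B\bigr|\le |T_1-T_2|_B\le |T_1-T_2|$, the $L^p$-convergence of $(\tilde{\Phi}_{t_j}^Y\circ f)^*g_{L^2}$ to $f^*g_Y$ immediately gives $L^p$ (hence, along a subsequence, a.e.) convergence of the bound norms, which is all you need to pass to the limit in your estimate. Your alternative route through (\ref{eqeqqe}) and Proposition~\ref{prop:ineq} also works and gives the same conclusion with a harmless $\sqrt{n}$ factor. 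The two limits $l\to\infty$ and $j\to\infty$ are taken sequentially, not interchanged, so there is no issue there either.
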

\begin{proof}
Fix a sufficiently small $\epsilon \in (0, 1)$ and $i \in \mathbb{N}$.  Find $l \in \mathbb{N}$ with
\begin{equation}
c_Nt_i^{(N+2)/2}\sum_{j=l+1}^{\infty}e^{-2\lambda_j^Yt_i}\|\dist \phi_j^Y\|_{L^{\infty}}^2<\frac{\epsilon}{4}.
\end{equation}
Then  Theorem \ref{prop:finitedimen} with Proposition \ref{prop:den} show
\begin{equation}
\left| \mathrm{Lip}f(x)-\mathrm{Lip} (\tilde{\Phi}_{t_i}^l \circ f)(x)\right|\le \mathbf{Lip}f \cdot\epsilon, \quad \text{for $\meas_X$-a.e. $x \in f^{-1}(\mathcal{R}_Y(\epsilon, t_i, \tau_i))$},
\end{equation}
applying Proposition \ref{prop:geometric} for $\tilde{\Phi}_{t_i}^l \circ f$ on $f^{-1}(\mathcal{R}_Y(\epsilon, t_i, \tau_i))$ yields
\begin{equation}\label{asnabryasbary}
\left| \mathrm{Lip}f(x)-\left(\left| (\tilde{\Phi}_{t_i}^l \circ f)^*g_{\mathbb{R}^l}\right|_B(x)\right)^{1/2}\right|\le C(n)\cdot \mathbf{Lip}f \cdot\epsilon, \quad \text{for $\meas_X$-a.e. $x \in f^{-1}(\mathcal{R}_Y(\epsilon, t_i, \tau_i))$.}
\end{equation}
Letting $l \to \infty$ in (\ref{asnabryasbary}) gives
\begin{equation}\label{asnabryasbary2}
\left| \mathrm{Lip}f(x)-\left(\left| (\tilde{\Phi}_{t_i}^Y \circ f)^*g_{\ell^2}\right|_B(x)\right)^{1/2}\right|\le C(n) \cdot \mathbf{Lip}f \cdot\epsilon, \quad \text{for $\meas_X$-a.e. $x \in f^{-1}(\mathcal{R}_Y(\epsilon, t_i, \tau_i))$.}
\end{equation}
In particular for $\meas_X$-a.e. $x \in \bigcap_{j \ge i}f^{-1}(\mathcal{R}_Y(\epsilon, t_j, \tau_j))$, we have
\begin{equation}\label{asarubauiweahr}
\left| \mathrm{Lip}f(x)-\left(\left| (\tilde{\Phi}_{t_j}^Y \circ f)^*g_{\ell^2}\right|_B(x)\right)^{1/2}\right|\le C(n)\cdot \mathbf{Lip}f\cdot\epsilon,
\end{equation}
for any $j \ge i$. Thus letting $j \to \infty$ in (\ref{asarubauiweahr}) with Proposition \ref{prop:def} implies
\begin{equation}\label{asarubauiweahr2}
\left| \mathrm{Lip}f(x)-\left(\left| f^*g_{Y}\right|_B(x)\right)^{1/2}\right|\le C(n)\cdot \mathbf{Lip}f\cdot\epsilon, \quad \text{for $\meas_X$-a.e. $x \in \bigcap_{j \ge i}f^{-1}(\mathcal{R}_Y(\epsilon, t_j, \tau_j))$}
\end{equation}
which completes the proof because $\epsilon$ and $i$ are arbitrary.
\end{proof}
Let us introduce the following notion in order to generalize the above observation to more general maps. 
\begin{definition}[Lipschitz-Lusin map]\label{def:lipschitzlusin}
Let $B$ be a Borel subset of $X$.
We say that a map $F:B \to Y$ is a  \textit{Lipschitz-Lusin map} if there exists a sequence of Borel subsets $D_i$ of $B$ such that $\meas_X(B \setminus \bigcup_iD_i)=0$ and that $F|_{D_i}$ is Lipschitz for any $i$. 
\end{definition} 
Applying Proposition \ref{prop:def} to $f=F|_{D_i}, B=D_i$ shows that the following is well-defined.
\begin{definition}[Pull-back of Lipschitz-Lusin map]
Let $B$ be a Borel subset of $X$ and let $F:B \to Y$ be a Lipschitz-Lusin map. Assume that $F(B) \subset \mathcal{R}_Y(\{(t_i, \tau_i)\}_i)$ holds for some $\{(t_i, \tau_i)\}_i$. Then there exists a unique $T \in L^0((T^*)^{\otimes 2}(B, \dist_X, \meas_X))$, denoted by $F^*g_Y$, such that 
\begin{equation}
(F|_D)^*g_Y=T
\end{equation}
holds on $D$ whenever the restriction of $F$ to a Borel subset $D$ of $B$ is Lipschitz. Then define the \textit{energy density}, denoted by $e_Y(F) \in L^0(B, \meas_X)$, by 
\begin{equation}
e_Y(F):=\langle F^*g_Y, g_X\rangle.
\end{equation} 
Moreover define the \textit{energy}, denoted by $\mathcal{E}_{B, Y}(F)$, by
\begin{equation}
\mathcal{E}_{B, Y}(F):=\frac{1}{2}\int_Be_Y(F)\di \meas_X \in [0, \infty].
\end{equation}
Finally we say that $F$ is \textit{isometric} if $F^*g_Y=g_X$.
\end{definition}
\begin{proposition}\label{prop:lipschitzlusin}
Let $F:U \to Y$ be a weakly smooth map. For all $\epsilon \in (0, 1/6)$ and $t, \tau \in (0, \infty)$, the restriction of $F$ to $F^{-1}(\mathcal{R}_Y(\epsilon, t, \tau)))$ is a Lipschitz-Lusin map. In particular if $F(U \setminus D)\subset \mathcal{R}_Y(\{(t_i, \tau_i)\}_i)$ holds for some $\{(t_i, \tau_i)\}_i$ and some $\meas_X$-negligible subset $D$ of $U$, then $F$ is Lipschitz-Lusin.
\end{proposition}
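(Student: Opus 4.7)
The plan is to reduce the problem to applying Cheeger's differentiability theorem to each coordinate of a sufficiently long truncation $\tilde{\Phi}_t^{l_0}\circ F$ and then to transfer Lipschitz control back to $F$ via the bi-Lipschitz property of $\tilde{\Phi}_t^{l_0}$ on $\mathcal{R}_Y(\epsilon,t,\tau)$ supplied by Theorem~\ref{prop:finitedimen}. This sidesteps any $\ell^2$-valued analysis, because for a fixed finite $l_0$ only finitely many Sobolev coordinates need to be treated simultaneously.

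First, fix $\delta\in(0,\epsilon)$, say $\delta=\epsilon/2$; using the eigenvalue/gradient bounds in (\ref{eq:eigenfunction}), pick $l_0\in\mathbb{N}$ so large that
\begin{equation*}
c_N t^{(N+2)/2}\sum_{i>l_0}e^{-2\lambda_i^Y t}\|\nabla\phi_i^Y\|_{L^\infty}^2<\delta.
\end{equation*}
Theorem~\ref{prop:finitedimen} then provides, for each $y\in\mathcal{R}_Y(\epsilon,t,\tau)$, a radius $r(y)>0$ such that $\tilde{\Phi}_t^{l_0}$ is $(1\pm 3(\epsilon+\delta))$-bi-Lipschitz on $B_{r(y)}(y)\cap\mathcal{R}_Y(\epsilon,t,\tau)$; note that $\epsilon<1/6$ and $\delta<\epsilon$ guarantee $3(\epsilon+\delta)<1$. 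Since $Y$ is compact, hence separable, the cover $\{B_{r(y)}(y)\}_y$ of $\mathcal{R}_Y(\epsilon,t,\tau)$ admits a countable subcover $\{B_{r_k}(y_k)\}_k$. Let $A_k:=F^{-1}(B_{r_k}(y_k)\cap\mathcal{R}_Y(\epsilon,t,\tau))$, so that $F^{-1}(\mathcal{R}_Y(\epsilon,t,\tau))=\bigcup_k A_k$.

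Since $F$ is weakly smooth, each $\phi_i^Y\circ F$ belongs to $H^{1,2}(U,\dist_X,\meas_X)$. Applying Cheeger's differentiability theorem to the $l_0$ functions $\{\phi_i^Y\circ F\}_{i=1}^{l_0}$ simultaneously (by intersecting the Lipschitz-Lusin partitions produced for each individual coordinate) furnishes Borel subsets $D_1\subset D_2\subset\cdots$ of $U$ with $\meas_X(U\setminus\bigcup_m D_m)=0$ such that each restriction $\tilde{\Phi}_t^{l_0}\circ F|_{D_m}$ is Lipschitz into $\mathbb{R}^{l_0}$ with some constant $C_m$. Setting $E_{k,m}:=A_k\cap D_m$, for any $x,x'\in E_{k,m}$ one has $F(x),F(x')\in B_{r_k}(y_k)\cap\mathcal{R}_Y(\epsilon,t,\tau)$, so combining the two estimates yields
\begin{equation*}
\dist_Y(F(x),F(x'))\le(1-3(\epsilon+\delta))^{-1}\bigl|\tilde{\Phi}_t^{l_0}(F(x))-\tilde{\Phi}_t^{l_0}(F(x'))\bigr|\le(1-3(\epsilon+\delta))^{-1}C_m\,\dist_X(x,x').
\end{equation*}
Thus $F|_{E_{k,m}}$ is Lipschitz, and the countable family $\{E_{k,m}\}_{k,m}$ covers $F^{-1}(\mathcal{R}_Y(\epsilon,t,\tau))$ up to a $\meas_X$-null set, which gives the first assertion.

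For the ``in particular'' statement, fix any $\epsilon_0\in(0,1/6)$. By the very definition of $\mathcal{R}_Y(\{(t_i,\tau_i)\}_i)$, every $y$ in that set lies in $\bigcap_{j\ge i_0(y)}\mathcal{R}_Y(\epsilon_0,t_j,\tau_j)$ for some $i_0(y)$, so in particular $\mathcal{R}_Y(\{(t_i,\tau_i)\}_i)\subset\bigcup_j\mathcal{R}_Y(\epsilon_0,t_j,\tau_j)$. Applying the first part of the proposition to each $F^{-1}(\mathcal{R}_Y(\epsilon_0,t_j,\tau_j))$ and taking a countable union, together with the $\meas_X$-negligibility of $D$, shows that $F$ is Lipschitz-Lusin on all of $U$. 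The main conceptual step is the spectral truncation: freezing a single sufficiently large $l_0$ replaces the a priori infinite-dimensional Lipschitz estimate by a finite intersection of Cheeger's Lipschitz-Lusin partitions, after which everything else reduces to bookkeeping that combines the countable bi-Lipschitz cover of the target with the countable Lipschitz-Lusin partition of the domain.
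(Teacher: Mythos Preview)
Your proof is correct and follows essentially the same route as the paper's: fix a truncation level $l_0$ via the tail estimate~(\ref{eq:eigenfunction}), invoke Theorem~\ref{prop:finitedimen} to get a countable cover of $\mathcal{R}_Y(\epsilon,t,\tau)$ by sets on which $\tilde{\Phi}_t^{l_0}$ is bi-Lipschitz, use the Lipschitz--Lusin property of the finitely many Sobolev coordinates $\phi_i^Y\circ F$ to partition $U$, and then intersect. The paper phrases the Lipschitz--Lusin step for Sobolev functions as a consequence of the Poincar\'e inequality rather than citing Cheeger's differentiability theorem, but the underlying fact (Sobolev functions on PI spaces are Lipschitz--Lusin via the maximal function argument) is the same, and your handling of the ``in particular'' clause via the countable union $\bigcup_j\mathcal{R}_Y(\epsilon_0,t_j,\tau_j)$ is likewise what the paper intends.
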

\begin{proof}
Theorem \ref{prop:finitedimen} yields that there exist $l \in \mathbb{N}$ and sequences of $y_i \in F(U)\cap \mathcal{R}_Y(\epsilon, t, \tau)$ and of $r_2(y_i) \in (0, 1)$ satisfying that $\tilde{\Phi}_t^l|_{B_{r_2(y_i)}(y_i)}$ is a bi-Lipschitz embedding and that $F(U) \cap \mathcal{R}_Y(\epsilon, t, \tau)\subset \bigcup_iB_{r_2(y_i)}(y_i)$ holds.
Then it follows from the weak smoothness of $F$ with the Poincar\'e inequality that there exists a sequence of Borel subsets $\tilde{D}_i$ of $X$ such that $\meas_X(U \setminus \bigcup_i\tilde{D}_i)=0$ and that $(\phi_j^Y\circ F)|_{\tilde{D}_i}$ is Lipschitz for all $j \in \mathbb{N}_{\le l}$ and $i \in \mathbb{N}$. In particular $\tilde{\Phi}_t^l \circ F$ is Lipschitz on $\tilde{D}_i$. Then the family $\{\tilde{D}_i \cap F^{-1}(B_{r_2(y_j)}(y_j))\}_{i, j}$ proves the assertion.
\end{proof}
\begin{proposition}\label{prop:l17773}
Let  $F:U \to Y$ be a weakly smooth map. 
Assume that $F(U \setminus D) \subset \mathcal{R}_Y(\{(t_i, \tau_i)\}_i)$ holds for some $\{(t_i, \tau_i)\}_i$ and some $\meas_X$-negligible set $D$ and that 
\begin{equation}
\liminf_{i \to \infty}t_i^{(N+2)/2}\mathcal{E}_{U, Y, t_i}(F)<\infty
\end{equation}
holds.
Then $F^*g_Y \in L^1((T^*)^{\otimes 2}(U, \dist_X, \meas_X))$.
\end{proposition}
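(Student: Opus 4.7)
My plan is to exploit the Lipschitz-Lusin structure of $F$ to reduce $L^1$ integrability of $F^{*}g_Y$ to a Fatou-type argument against the $t$-energy densities, using the pointwise bound $|F^{*}g_{Y,t}|\le e_{Y,t}(F)$ from \eqref{eq:densitybound} and the $L^{1}$-convergence of rescaled pull-backs on each Lipschitz piece provided by Proposition~\ref{prop:def}.

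First I would invoke Proposition~\ref{prop:lipschitzlusin}: since $F$ is weakly smooth and $F(U\setminus D)\subset \mathcal{R}_Y(\{(t_i,\tau_i)\}_i)$, $F$ is Lipschitz-Lusin, so there exists a countable Borel partition $U=D\cup\bigsqcup_{k\ge 1} D_k$ (up to $\meas_X$-negligible modifications) with $F|_{D_k}$ Lipschitz and $F(D_k)\subset \mathcal{R}_Y(\{(t_i,\tau_i)\}_i)$ for every $k$. Thus on each $D_k$ the Lipschitz pull-back $(F|_{D_k})^{*}g_Y$ of Proposition~\ref{prop:def} is defined, and by construction it agrees with $F^{*}g_Y$ on $D_k$.

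Next I would identify the rescaled $t$-pull-back with the Lipschitz pull-back by the renormalized heat kernel embedding. On $D_k$, using \eqref{absiriiwiiwiwi} and the definition $\tilde\Phi_t=c_N^{1/2}t^{(N+2)/4}\Phi_t$,
\begin{equation*}
(\tilde\Phi_{t_i}^{Y}\circ F|_{D_k})^{*}g_{L^2}=c_N\,t_i^{(N+2)/2}\,F^{*}g_{Y,t_i}\quad\text{in }L^{\infty}((T^{*})^{\otimes 2}(D_k,\dist_X,\meas_X)).
\end{equation*}
Since $F(D_k)\subset \mathcal{R}_Y(\{(t_i,\tau_i)\}_i)$, Proposition~\ref{prop:def} yields that these tensors converge to $F^{*}g_Y$ in $L^{1}((T^{*})^{\otimes 2}(D_k,\dist_X,\meas_X))$ as $i\to\infty$, for every $k$. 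Passing to a subsequence on $D_1$, then a further subsequence on $D_2$, etc., and then diagonalizing, I extract a single subsequence $\{t_{i_j}\}_j$ along which
\begin{equation*}
c_N\,t_{i_j}^{(N+2)/2}\bigl|F^{*}g_{Y,t_{i_j}}\bigr|(x)\longrightarrow \bigl|F^{*}g_Y\bigr|(x)\qquad\text{for }\meas_X\text{-a.e. }x\in U\setminus D,
\end{equation*}
and I arrange moreover that $\liminf_{i\to\infty}t_i^{(N+2)/2}\mathcal{E}_{U,Y,t_i}(F)=\lim_j t_{i_j}^{(N+2)/2}\mathcal{E}_{U,Y,t_{i_j}}(F)<\infty$.

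Finally I would apply Fatou's lemma together with the pointwise estimate \eqref{eq:densitybound}, which gives $|F^{*}g_{Y,t}|\le e_{Y,t}(F)$ $\meas_X$-a.e. on $U$. This yields
\begin{equation*}
\int_{U}|F^{*}g_Y|\,\di\meas_X\le \liminf_{j\to\infty}\int_{U}c_N\,t_{i_j}^{(N+2)/2}\bigl|F^{*}g_{Y,t_{i_j}}\bigr|\,\di\meas_X\le 2c_N\lim_{j\to\infty}t_{i_j}^{(N+2)/2}\mathcal{E}_{U,Y,t_{i_j}}(F)<\infty,
\end{equation*}
so $F^{*}g_Y\in L^{1}((T^{*})^{\otimes 2}(U,\dist_X,\meas_X))$. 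The only non-routine step is the diagonal extraction, which requires carefully combining the countably many $L^1$-convergences on the pieces $D_k$ with the single global Fatou bound; this is the main (but mild) obstacle, since $F^{*}g_Y$ is a priori only defined in $L^{0}$ and the Lipschitz constants of $F|_{D_k}$ may be unbounded in $k$, forbidding any purely $L^\infty$-based argument.
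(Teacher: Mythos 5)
Your proof is correct and follows essentially the same route as the paper: decompose $U$ via the Lipschitz--Lusin structure from Proposition~\ref{prop:lipschitzlusin}, use Proposition~\ref{prop:def} on each piece to get (after a diagonal extraction) $\meas_X$-a.e.\ convergence of $c_N t_i^{(N+2)/2}F^*g_{Y,t_i}$ to $F^*g_Y$, and conclude by Fatou's lemma together with the bound (\ref{eq:densitybound}) and the finiteness of $\liminf_i t_i^{(N+2)/2}\mathcal{E}_{U,Y,t_i}(F)$. The paper's proof is just a terser version of this argument, leaving the diagonal extraction and the identification $(\tilde\Phi_{t}\circ F)^*g_{L^2}=c_Nt^{(N+2)/2}F^*g_{Y,t}$ implicit.
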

\begin{proof}
Propositions \ref{prop:def} and \ref{prop:lipschitzlusin} shows that after passing to a subsequence, we have 
\begin{equation}\label{ahhhhuuuuanpaonranopnaro}
|(\tilde{\Phi}_{t_i}^Y\circ F)^*g_{L^2}-F^*g_Y|(x) \to 0, \quad \text{for $\meas_X$-a.e. $x \in  U$.}
\end{equation}
Thus it follows from Fatou's lemma that $|F^*g_Y|$ is  $L^1$ on $U$ because of (\ref{eq:densitybound}).
\end{proof}
We will discuss the behavior of $t_i^{(N+2)/2}\mathcal{E}_{U, Y, t_i}(F)$ as $i \to \infty$ later.

\subsection{Rademacher type result via blow-up}
Let us first recall the definition of \textit{harmonic points} of a Sobolev function.
\begin{definition}[Harmonic point of a function]\label{def:buhfunction}
Let  $x \in X$, $R \in (0, \infty]$, $z \in B_R(x)$ and let $f \in H^{1, 2}(B_R(x), \dist_X, \meas_X)$. We say that \textit{$z$ is a harmonic point of $f$} if
$z\in\Leb_2(|\nabla f|)$ and for any $(T_zX, \dist_{T_zX}, \meas_{T_zX}, 0_z) \in \mathrm{Tan}(X, \dist_X, \meas_X, z)$ which is the measured Gromov-Hausforff limit space of 
$(X,t_{i}^{-1} \dist_X,\meas_X(B_{t_{i}}(z))^{-1} \meas_X, z)$ for some $t_i \to 0^+$,
there exist a subsequence $(t_{i(j)})_j$ of $(t_i)_i$ and $\hat{f} \in \Lip(T_zX, \dist_{T_zX}) \cap \mathrm{Harm}(T_zX, \dist_{T_zX}, \meas_{T_zX})$ such that the rescaled functions $f_{z, t_{i(j)}}$ $H^{1, 2}_{\mathrm{loc}}$-strongly converge to $\hat{f}$ as $j\to\infty$, where $f_{z,t}$ is defined by
$$f_{z, t}:=\frac{1}{t}\left( f-\frac{1}{\meas_X(B_t(z))}\int_{B_{t}(z)}f\di\meas_X \right)$$
on $(X,t^{-1} \dist_X,\meas_X(B_{t}(z))^{-1} \meas_X)$.
We denote by $H(f)$ the set of all harmonic points of $f$.
\end{definition}
Next we introduce a similar notion for a Lipschitz function defined on a Borel (not necessary open) subset $A$ of $X$. Compare with \cite[Def.5.3]{AHPT}.
\begin{definition}[Harmonic point for Lipschitz function defined on Borel subset]\label{def:har}
Let $\phi$ be a Lipschitz function on $A$ and let $x \in \mathrm{Leb}(A)$. Then $x$ is said to be a \textit{harmonic point of $f$} if there exists a Lipschitz function $\Phi$ on $X$ such that $\Phi|_A\equiv \phi$ and that $x$ is a harmonic point of $\Phi$. It is easy to see that this definition does not depend on the choice of $\Phi$. Thus we denote by $H(\phi )$ the set of all harmonic points of $\phi$.
\end{definition}
Applying \cite[Th.5.4]{AHPT} for $\Phi$ as in the above definition we have the following.
\begin{proposition}
For any Lipschitz function $\phi:A \to \mathbb{R}$, we have $\meas_X(A \setminus H(\phi ))=0$.
\end{proposition}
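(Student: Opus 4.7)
The plan is to reduce the statement to the already-known Rademacher-type theorem for Lipschitz functions defined on the whole space, by using a Lipschitz extension.

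First, I would apply McShane's extension theorem to produce a Lipschitz function $\Phi:X\to\setR$ with $\Phi|_A\equiv \phi$ and $\mathbf{Lip}\Phi=\mathbf{Lip}\phi$; the fact that the target is $\setR$ makes the extension immediate (one may take $\Phi(x):=\inf_{a\in A}(\phi(a)+\mathbf{Lip}\phi\cdot\dist_X(x,a))$). Next, I would invoke \cite[Th.5.4]{AHPT}, cited just above, which asserts that the set $H(\Phi)$ of harmonic points of the globally Lipschitz function $\Phi$ has full $\meas_X$-measure in $X$; in particular $\meas_X(A\setminus H(\Phi))=0$.

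To finish, observe that by the very definition of a harmonic point for a Lipschitz function on a Borel subset (Definition~\ref{def:har}), any point $x\in A\cap \mathrm{Leb}(A)\cap H(\Phi)$ automatically belongs to $H(\phi)$, since $\Phi$ is a Lipschitz extension of $\phi$ to $X$ which is harmonic at $x$. Combining this inclusion with $\meas_X(A\setminus \mathrm{Leb}(A))=0$ from (\ref{lebdesntity}) and the bound $\meas_X(A\setminus H(\Phi))=0$ above yields $\meas_X(A\setminus H(\phi))=0$, as desired.

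There is no serious obstacle here: the proof is essentially a two-line reduction. The only subtle point is verifying that the definition (Definition~\ref{def:har}) is indeed independent of the choice of Lipschitz extension $\Phi$, as asserted parenthetically in the definition; but this independence is already taken for granted in the statement, and for the direction needed here (existence of at least one extension witnessing the harmonic-point property) no such independence is actually required.
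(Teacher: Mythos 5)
Your proof is correct and is essentially the paper's own argument: extend $\phi$ to a Lipschitz function $\Phi$ on $X$, apply \cite[Th.5.4]{AHPT} to get $\meas_X(X\setminus H(\Phi))=0$, and intersect with $\mathrm{Leb}(A)$ using (\ref{lebdesntity}). The extra care you take about Lebesgue points and the independence of the extension is consistent with Definition \ref{def:har} and does not change the argument.
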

The following result gives a  non-linear analogue of Cheeger's Rademacher type theorem \cite[Th.3.7]{Cheeger}.
\begin{theorem}[Rademacher type theorem]\label{prop:rademacher}
Let $f:A \to Y$ be a Lipschitz map. Assume that $f(A) \subset \mathcal{R}_Y(\{(t_i, \tau_i)\}_i)$ holds for some $\{(t_i, \tau_i)\}_i$. Then for $\meas_X$-a.e. $x \in A$ we have the following: for any convergent sequence $r_i \to 0^+$, after passing to a subsequence, 
\begin{enumerate}
\item we have
\begin{equation}\label{eq:convtangent}
\left( X, r_i^{-1}\dist_X, (\meas_X(B_{r_i}(x)))^{-1}\meas_X, x\right) \stackrel{\mathrm{pmGH}}{\to} \left( \mathbb{R}^n, \dist_{\mathbb{R}^n}, \omega_n^{-1}\mathcal{H}^n, 0_n\right),
\end{equation}
\item we have
\begin{equation}\label{eq:convtangent2}
\left(Y, r_i^{-1}\dist_Y, \mathcal{H}^N_{r_i^{-1}\dist_Y}, f(x) \right) \stackrel{\mathrm{pmGH}}{\to} \left( \mathbb{R}^N, \dist_{\mathbb{R}^N}, \mathcal{H}^N, 0_N\right),
\end{equation}
\item the maps
\begin{equation}\label{eq:maps}
f:(A, r_i^{-1}\dist_X) \to (Y, r_i^{-1}\dist_Y)
\end{equation}
uniformly converge to a linear map $f^0:\mathbb{R}^n \to \mathbb{R}^N$ on any bounded subset of $\mathbb{R}^n$ with respect to (\ref{eq:convtangent}) and (\ref{eq:convtangent2}), 
\item $f^*g_Y$ $L^2_{\mathrm{loc}}$-strongly converge to $(f^0)^*g_{\mathbb{R}^N}$ with respect to (\ref{eq:convtangent}).
\end{enumerate}
\end{theorem}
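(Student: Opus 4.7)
The plan is a blow-up argument that uses the near-isometry of $\tilde\Phi_{t_i}$ on the smoothable part of $Y$ (Theorem \ref{prop:bilip}) to reduce nonlinear differentiability of $f$ to scalar Cheeger-type differentiability. Claims (1) and (2) are essentially structural: (1) holds at $\meas_X$-a.e.\ $x \in A$ by Theorem \ref{thmRCD decomposition} combined with \eqref{eq:gooddensity}, while (2) holds at every $x \in A$ because $\mathcal{R}_Y(\{(t_i, \tau_i)\}_i)\subset \mathcal{R}_N$ (Proposition \ref{prop:regular}, since any $r\to 0^+$ eventually enters $(0, \tau_j]$) and because $f(x) \in \mathcal{R}_N$ gives $\mathcal{H}^N(B_r(f(x)))/(\omega_N r^N)\to 1$ by Theorem \ref{thm:bishop}, upgrading pGH to pmGH convergence.

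For (3), I would select $x$ so that, in addition to $x\in\mathcal{R}_n^*$ and $x\in\mathrm{Den}(A)$, $x$ is a harmonic point (Definition \ref{def:har}) for every McShane extension $\Phi_j:X\to\mathbb{R}$ of $\phi_j^Y\circ f$; this is $\meas_X$-full-measure by countability and the harmonic-point theorem of \cite{AHPT}. By Arzelà-Ascoli (Proposition \ref{prop:compactness} applied on bounded sets to $\tilde\Phi_{t_1}\circ f$), a subsequence of the rescaled maps $(A, r_k^{-1}\dist_X)\to (Y, r_k^{-1}\dist_Y)$ converges uniformly on bounded sets to a $\mathbf{Lip}f$-Lipschitz map $f^0:\mathbb{R}^n\to\mathbb{R}^N$. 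To establish linearity of $f^0$: the rescaled functions $(\Phi_j)_{x, r_k}$ converge $H^{1,2}_{loc}$-strongly to Lipschitz harmonic -- hence affine linear -- functions $L_j:\mathbb{R}^n\to\mathbb{R}$ (Theorem \ref{splitting}). On the target side, the Laplacian of the centered function $u_{k, j}:=r_k^{-1}(\phi_j^Y-\phi_j^Y(f(x)))$ in $(Y, r_k^{-1}\dist_Y)$ equals $-r_k\lambda_j^Y\phi_j^Y\to 0$, so Theorem \ref{spectral2} yields $H^{1,2}_{loc}$-strong convergence of $u_{k, j}$ on the Euclidean tangent cone at $f(x)$ to a Lipschitz harmonic, hence affine linear, function $\tilde\psi_j:\mathbb{R}^N\to\mathbb{R}$. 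A composition-of-blow-ups identification gives $L_j=\tilde\psi_j\circ f^0$. Finally, Theorem \ref{prop:bilip} at the smoothable point $f(x)$ shows that for $i$ large some truncation $\tilde\Phi_{t_i}^l$ is locally $(1\pm\epsilon)$-bi-Lipschitz near $f(x)$; its blow-up, the linear map $v\mapsto(c_N^{1/2}t_i^{(N+2)/4}e^{-\lambda_j^Y t_i}\tilde\psi_j(v))_{j=1}^l:\mathbb{R}^N\to\mathbb{R}^l$, inherits the $(1\pm\epsilon)$-bi-Lipschitz property, so $\{\tilde\psi_j\}_{j=1}^l$ spans $(\mathbb{R}^N)^*$. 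Point-separation of $\{\tilde\psi_j\}_j$ combined with linearity of every $\tilde\psi_j\circ f^0 = L_j$ forces $f^0$ to be affine linear.

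For (4), the $L^\infty$-approximation \eqref{eqeqqe} on $A_{i_0}:=f^{-1}\bigl(\bigcap_{j\ge i_0}\mathcal{R}_Y(\epsilon, t_j, \tau_j)\bigr)$ gives $|(\tilde\Phi_{t_i}\circ f)^*g_{L^2}-f^*g_Y|\le C(n)\epsilon|f^*g_Y|$ for $i\ge i_0$, with $\meas_X(A\setminus A_{i_0})\to 0$. For each fixed $i$, the Lipschitz map $F_i:=\tilde\Phi_{t_i}\circ f:A\to\ell^2$ is approximated by $\mathbb{R}^l$-valued truncations $F_i^l:=\tilde\Phi_{t_i}^l\circ f$; Cheeger's differentiability applied componentwise together with the Lebesgue-point property of $(F_i^l)^*g_{\mathbb{R}^l}$ yields $L^2_{loc}$-strong convergence of its blow-up to $(F_i^{l, 0})^*g_{\mathbb{R}^l}=(\tilde\Phi_{t_i}^{l, 0}\circ f^0)^*g_{\mathbb{R}^l}$, and letting $l\to\infty$ with tail control from \eqref{eq:eigenfunction} gives $(F_i^0)^*g_{\ell^2}=(f^0)^*(\tilde\Phi_{t_i}^0)^*g_{\ell^2}$. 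Since $\tilde\Phi_{t_i}^0$ is $(1\pm\epsilon_i)$-bi-Lipschitz with $\epsilon_i\to 0$, Corollary \ref{corfromlip} gives $|(\tilde\Phi_{t_i}^0)^*g_{\ell^2}-g_{\mathbb{R}^N}|\le C(N)\epsilon_i$, and a diagonal extraction combining $r_k\to 0$, $l\to\infty$, and $i\to\infty$ yields the claimed $L^2_{loc}$-strong convergence of $f^*g_Y$ to $(f^0)^*g_{\mathbb{R}^N}$. The main obstacle will be this diagonal bookkeeping: a single subsequence $(r_{k(m)})$ must simultaneously realize the pmGH convergences on $X$ and $Y$, uniform convergence of the rescaled $f$ to $f^0$, $H^{1,2}_{loc}$-convergence of every rescaled $\Phi_j$ and $u_{k, j}$, and $L^2_{loc}$-convergence of the tensors, at a single good $x$ chosen from the countable intersection of full-measure sets (regular, density, Lebesgue, and harmonic points).
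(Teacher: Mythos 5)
Your proposal is correct and follows essentially the same route as the paper: choose a full-measure set of points that are simultaneously regular, density/Lebesgue points, and harmonic points of the compositions $\phi_j^Y\circ f$; blow up both source and target to Euclidean spaces; deduce linearity of $f^0$ by composing the linear scalar blow-ups with the linear bi-Lipschitz blow-up of the truncated heat-kernel embedding at the smoothable point $f(x)$ and inverting; and obtain the tensor convergence from the $L^\infty$-closeness of $(\tilde\Phi_{t_i}\circ f)^*g_{L^2}$ to $f^*g_Y$ on the sets $\bigcap_{j\ge i}\mathcal{R}_Y(\epsilon,t_j,\tau_j)$ together with Lemma \ref{lem:1} and the stability of Laplacians. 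The only cosmetic differences are that you phrase the injectivity step as $\{\tilde\psi_j\}_{j=1}^l$ spanning $(\mathbb{R}^N)^*$ rather than as injectivity of the limit linear embedding, and you carry out explicitly the rescaled-Laplacian computation that the paper delegates to ``an argument similar to Proposition \ref{prop:quantapp}''; neither changes the substance.
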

\begin{proof}
Let us fix a sufficiently small $\epsilon \in (0, 1)$, $x \in \mathrm{Leb}(A) \cap \mathcal{R}_n$ and $j, l \in \mathbb{N}$ satisfying the following: 
\begin{itemize}
\item $x$ is a harmonic point of $\phi_i^Y \circ f$ for any $i$.
\item We have $f(x) \in \mathcal{R}_Y(\epsilon/6,  t_j, \tau_j)$.
\item We have for any sufficiently small $r \in (0, 1)$
\begin{equation}\label{e33e}
\frac{1}{\meas_X(B_r(x))}\int_{B_r(x)}\left| (\tilde{\Phi}_{t_j}^l\circ f)^*g_{\mathbb{R}^l}-f^*g_Y\right| \di \meas_X \le C(n) (\mathbf{Lip}f)^2 \cdot \epsilon.
\end{equation}
\item We have
\begin{equation}
c_Nt_j^{(N+2)/2}\sum_{i=l+1}^{\infty}e^{-2\lambda_i^Yt_j}\|\dist \phi_i^Y\|_{L^{\infty}}^2<\frac{\epsilon}{6}.
\end{equation}
\end{itemize}
Thanks to Proposition \ref{prop:regular}, after passing to a subsequence, we see that (\ref{eq:convtangent2}) is satisfied, and that the maps (\ref{eq:maps}) uniformly converge to a Lipschitz map $f^0:\mathbb{R}^n \to \mathbb{R}^N$ on any bounded subset of $\mathbb{R}^n$. 

We first prove that $f^0$ is linear. 
By an argument similar to the proof of Proposition \ref{prop:quantapp} (see also the proof of Proposition \ref{prop:quantitativelinfty}), $r_i^{-1}(\tilde{\Phi}_{t_j}^l-\tilde{\Phi}_{t_j}^l(f(y)))$, defined on $(Y, r_i^{-1}\dist_Y)$, uniformly converge to a linear $(1 \pm \epsilon)$-bi-Lipschitz embedding map $\tilde{\Phi}:\mathbb{R}^N \to \mathbb{R}^l$ on any bounded subset of $\mathbb{R}^N$ with respect to (\ref{eq:convtangent2}).

On the other hand it follows from the definition of harmonic points that
$r_i^{-1}(\tilde{\Phi}_{t_j}^l \circ f-\tilde{\Phi}_{t_j}^l(f(y)))$ uniformly converge to a linear map from $\mathbb{R}^n$ to $\mathbb{R}^l$ on any bounded subset of $\mathbb{R}^n$ with respect to (\ref{eq:convtangent}). Since the limit map of $r_i^{-1}(\tilde{\Phi}_{t_j}^l \circ f- \tilde{\Phi}_{t_j}^l(f(y)))$, defined on $(A, r_i^{-1}\dist_X)$, with respect to (\ref{eq:convtangent}) coincides with $\tilde{\Phi}\circ f^0$, we know that $\tilde{\Phi}\circ f^0$ is linear. Thus $f^0$ is also linear because $\tilde{\Phi}$ is linear and injective.

Finally since Theorem \ref{spectral2} shows that $(\tilde{\Phi}_{t_j}^l\circ f)^*g_{\mathbb{R}^l}$ $L^2_{\mathrm{loc}}$-strongly converge to $\tilde{\Phi}^*g_{\mathbb{R}^l}$, we have (4) because of 
\begin{equation}\label{asararatawte}
|\tilde{\Phi}^*g_{\mathbb{R}^l}-(f^0)^*g_{\mathbb{R}^l}|\le C(n) (\mathbf{Lip}f)^2 \cdot \epsilon.
\end{equation}
and since  $\epsilon$ is arbitrary, where we used Lemma \ref{lem:1} to get (\ref{asararatawte})
\end{proof}
Let us give an application of Theorem \ref{prop:rademacher} to the \textit{Korevaar-Schoen energy} of a map.
We follow the terminology from \cite{GT2}.
\begin{definition}[Korevaar-Schoen energy]\label{defksenery}
Let $h:A \to Y$ be a Borel map and let $r \in (0, \infty)$.
\begin{enumerate} 
\item Define the \textit{energy density at scale $r$ of $h$ at $x \in A$}, denoted by $\mathrm{ks}_{Y, r}(h)(x)$, by
\begin{equation}
\mathrm{ks}_{Y, r}(h)(x):=\left(\frac{1}{\meas_X(B_r(x))}\int_{B_r(x) \cap A}\frac{\dist_Y(h(x), h(y))^2}{r^2}\di \meas_X(y)\right)^{1/2}.
\end{equation}
\item Define the \textit{Korevaar-Schoen energy at scale $r$}, denoted by $\mathcal{E}_{A, Y, r}^{KS}(h)$, by
\begin{equation}
\mathcal{E}_{A, Y, r}^{KS}(h):=\int_A\left(\mathrm{ks}_{Y, r}(h)\right)^2\di \meas_X.
\end{equation}
\item Define the \textit{Korevaar-Schoen energy}, denoted by $\mathcal{E}_{A, Y}^{KS}(h)$, by
\begin{equation}
\mathcal{E}_{A, Y}^{KS}(h):=\limsup_{r \to 0^+}\mathcal{E}_{A, r}^{KS}(h).
\end{equation}
\end{enumerate}
\end{definition}
Compare the following corollary with \cite[Th.4.14]{GT2}.
\begin{corollary}[Compatibility with Korevaar-Schoen energy for Lipschitz map]\label{cor:ksks}
Let $f:A \to Y$ be a Lipschitz map. Assume that $f(A) \subset \mathcal{R}_Y(\{(t_i, \tau_i)\}_i)$ holds for some $\{(t_i, \tau_i)\}_i$. Then we have as $r \to 0^+$
\begin{equation}\label{ksconv}
(n+2)\mathrm{ks}_{Y, r}(f)^2 \to e_Y(f)
\end{equation}
in $L^1(A, \meas_X)$. In particular we have
\begin{equation}
\mathcal{E}_{A, Y}(f)=\frac{n+2}{2}\mathcal{E}_{A, Y}^{KS}(f).
\end{equation} 
\end{corollary}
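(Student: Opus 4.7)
The plan is to establish pointwise $\meas_X$-a.e.\ convergence of $(n+2)\mathrm{ks}_{Y,r}(f)^2$ to $e_Y(f)$ via a blow-up argument based on Theorem~\ref{prop:rademacher}, and then upgrade to $L^2(A,\meas_X)$-convergence by dominated convergence using the Lipschitz bound; the energy identity then follows by integration.

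For the pointwise statement, fix $x\in A$ satisfying the conclusions of Theorem~\ref{prop:rademacher} as well as the $\meas_X$-density-one condition for $A$ (valid $\meas_X$-a.e.\ on $A$ by~\eqref{lebdesntity}). Rescaling gives
\begin{equation*}
\mathrm{ks}_{Y,r}(f)^2(x)=\int_{B_1^{r^{-1}\dist_X}(x)\cap A}\bigl(r^{-1}\dist_Y(f(x),f(y))\bigr)^2\,\mathrm{d}\bigl(\meas_X(B_r(x))^{-1}\meas_X\bigr)(y),
\end{equation*}
and Theorem~\ref{prop:rademacher} produces pmGH convergences \eqref{eq:convtangent}--\eqref{eq:convtangent2} to Euclidean tangent cones together with uniform convergence of the rescaled $f$ to a linear map $f^0:\mathbb{R}^n\to\mathbb{R}^N$. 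Combined with the $\meas_X$-density of $A$ at $x$ (which absorbs the missing contribution from $B_r(x)\setminus A$), this lets one pass to the limit in the above integral to obtain
\begin{equation*}
\lim_{r\to 0^+}\mathrm{ks}_{Y,r}(f)^2(x)=\frac{1}{\omega_n}\int_{B_1(0_n)}|f^0(v)-f^0(0_n)|^2\,\mathrm{d}\mathcal{H}^n(v)=\frac{|Df^0|_{HS}^2}{n+2},
\end{equation*}
where the second equality uses the Euclidean moment identity $\int_{B_1(0_n)}v_iv_j\,\mathrm{d}\mathcal{H}^n=\frac{\omega_n}{n+2}\delta_{ij}$. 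Finally, Theorem~\ref{prop:rademacher}(4) gives that the rescaled $f^*g_Y$ converges $L^2$-strongly to the constant tensor $(f^0)^*g_{\mathbb{R}^N}$, which by Lebesgue differentiation identifies $|Df^0|_{HS}^2=\langle(f^0)^*g_{\mathbb{R}^N},g_{\mathbb{R}^n}\rangle=e_Y(f)(x)$.

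For the $L^2$ upgrade, the Lipschitz bound gives the uniform pointwise estimates $\mathrm{ks}_{Y,r}(f)^2\le(\mathbf{Lip}f)^2$ and, by~\eqref{poasiiairbbbsk}, $e_Y(f)\le n(\mathbf{Lip}f)^2$; since $A$ is bounded we have $\meas_X(A)<\infty$, so dominated convergence upgrades the pointwise convergence to convergence in $L^p(A,\meas_X)$ for every $p\in[1,\infty)$, in particular $p=2$. The main obstacle lies in the limit of the KS integral in the second paragraph: one needs to simultaneously handle the uniform convergence of the rescaled distance functions $y\mapsto r^{-1}\dist_Y(f(x),f(y))$ to $v\mapsto|f^0(v)-f^0(0_n)|$, the convergence of the renormalized measures $\meas_X(B_r(x))^{-1}\meas_X\res(B_1^{r^{-1}\dist_X}(x)\cap A)$ to $\omega_n^{-1}\mathcal{H}^n\res B_1(0_n)$ (using the $\meas_X$-density-one property of $A$ at $x$), and the joint limit of the squared integrand; all three ingredients follow from Theorem~\ref{prop:rademacher} together with standard density arguments.
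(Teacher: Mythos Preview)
Your proof is correct and follows essentially the same approach as the paper: a blow-up at $\meas_X$-a.e.\ $x\in A$ via Theorem~\ref{prop:rademacher} to identify the pointwise limit of $(n+2)\mathrm{ks}_{Y,r}(f)^2$ with $\mathrm{tr}((f^0)^*g_{\mathbb{R}^N})=e_Y(f)(x)$ (the latter identification using the $L^2_{\mathrm{loc}}$-strong convergence of $f^*g_Y$ together with Lebesgue differentiation, as you do), followed by dominated convergence on the bounded set $A$ using the Lipschitz bound. The only cosmetic difference is that the paper writes $|f^0(z)|^2$ rather than $|f^0(v)-f^0(0_n)|^2$, which is the same since $f^0(0_n)=0_N$ by construction.
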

\begin{proof}
Let us take $x \in A$ satisfying the conclusions of Theorem \ref{prop:rademacher}, where we will use the same notation as in Theorem \ref{prop:rademacher}.
Then the uniform convergence of $f$ to $f^0$ implies
\begin{equation}
\lim_{r \to 0^+}\mathrm{ks}_{Y, r}(f)(x)^2=\frac{1}{\omega_n}\int_{B_1(0_n)}|f^0(z)|^2\di \mathcal{H}^n=\frac{1}{n+2}\cdot \mathrm{tr}((f^0)^*g_{\mathbb{R}^N}).
\end{equation}
On the other hand the $L^2_{\mathrm{loc}}$-strong convergence of $f^*g_Y$ to $(f^0)^*g_{\mathbb{R}^N}$ with Proposition \ref{weakriem} yields
\begin{align}
\frac{1}{\meas_X(B_{r_i}(x))}\int_{B_{r_i}(x)}e_Y(f)\di \meas_X&=\frac{1}{\meas_X(B_{r_i}(x))}\int_{B_{r_i}(x)}\langle f^*g_Y, g_X\rangle \di \meas_X \nonumber \\
&\to \frac{1}{\omega_n}\int_{B_1(0_n)}\langle (f^0)^*g_{\mathbb{R}^N}, g_{\mathbb{R}^n}\rangle \di \mathcal{H}^n=\mathrm{tr}((f^0)^*g_{\mathbb{R}^N}),
\end{align}
which completes the proof of (\ref{ksconv}) because of the dominated convergence theorem.
\end{proof}
\subsection{Nonlinear analogue of Cheeger's differentiability theorem}
We are now in position to give a nonlinear analogue of Cheeger's differentiability theorem \cite[Th.6.1]{Cheeger}.
\begin{theorem}[Compatibility, II]\label{propcompatr}
Let $f:U \to Y$ be a weakly smooth map. Assume that $f(U \setminus D) \subset \mathcal{R}_Y(\{(t_i, \tau_i)\}_i)$ for some $\{(t_i, \tau_i)\}_i$ and some $\meas_X$-negligible set $D$.
Then the following two conditions are equivalent.
\begin{enumerate}
\item We have
\begin{equation}
\liminf_{i \to \infty}t_i^{(N+2)/2}\mathcal{E}_{U, Y, t_i}(f)<\infty.
\end{equation}
\item The map $f$ is a Sobolev map.
\end{enumerate}
In particular $f$ is a $0$-Sobolev map if and only if $f$ is a Sobolev map.
Moreover if these conditions are satisfied, then
\begin{equation}\label{ddccrrtt}
G_f(x) =\left(\left| f^*g_Y\right|_B(x)\right)^{1/2}\quad \text{for $\meas_X$-a.e. $x \in U$}.
\end{equation}
In particular we have
\begin{equation}\label{ddccrrttww}
G_f(x) =\mathrm{Lip}(f|_{\tilde{D}})(x)\quad \text{for $\meas_X$-a.e. $x \in \tilde{D}$}
\end{equation}
whenever the restriction of $f$ to a Borel subset $\tilde{D}$ of $U$ is Lipschitz. 
\end{theorem}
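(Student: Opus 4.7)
For the direction $(2)\Rightarrow(1)$, I would read off the pointwise bound \eqref{abasurbauawurb} from Proposition~\ref{propcompat}, namely $t\,\mathcal H^N(B_{\sqrt t}(f(x)))\,e_{Y,t}(f)(x)\le C(K,N)\,G_f(x)^2$ for $\meas_X$-a.e.\ $x\in U$. Since $f(U\setminus D)\subset \mathcal R_Y(\{(t_i,\tau_i)\}_i)\subset \mathcal R_N$ by Proposition~\ref{prop:regular}, the almost rigidity of the Bishop inequality (Theorem~\ref{prop:almostrigidity}) gives $\mathcal H^N(B_{\sqrt{t_i}}(f(x)))\ge (1-o(1))\,\omega_N t_i^{N/2}$ for $\meas_X$-a.e.\ such $x$, whence $t_i^{(N+2)/2}\,e_{Y,t_i}(f)(x)\le C(K,N,\omega_N)\,G_f(x)^2$ for $i$ large. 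Integrating against $G_f\in L^2(U,\meas_X)$ and using Fatou yields~(1); the same argument also transfers the $0$-Sobolev bound to the $t^{(N+2)/2}$-energy bound.

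The substantive direction is $(1)\Rightarrow(2)$. My first step is to apply Proposition~\ref{prop:lipschitzlusin} to exhaust $U\setminus D$ by Borel sets $D_j$ on which $f|_{D_j}$ is Lipschitz, so that Proposition~\ref{prop:def} defines the tensor $f^*g_Y$ a.e.\ on $U$. Using (1), Proposition~\ref{prop:l17773} places $f^*g_Y$ in $L^1((T^*)^{\otimes 2}(U,\dist_X,\meas_X))$. Because $f^*g_Y$ arises as an $L^1$-limit on each $D_j$ of the manifestly non-negative forms $(\tilde\Phi_{t_i}\circ f)^*g_{L^2}$, it is $\meas_X$-a.e.\ positive semi-definite, so its best bound $|f^*g_Y|_B$ (the largest eigenvalue) is dominated by the trace $e_Y(f)=\langle f^*g_Y,g_X\rangle$. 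Consequently the candidate $G:=(|f^*g_Y|_B)^{1/2}$ lies in $L^2(U,\meas_X)$.

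Next I would show that $G$ is an upper gradient for $f$. Given any Lipschitz $\phi:Y\to\mathbb R$, the composition $\phi\circ f$ is Lipschitz-Lusin through the family $\{D_j\}$, with $\mathrm{Lip}((\phi\circ f)|_{D_j})(x)\le \mathbf{Lip}\phi\cdot \mathrm{Lip}(f|_{D_j})(x)=\mathbf{Lip}\phi\cdot G(x)$ for $\meas_X$-a.e.\ $x\in D_j$ by Proposition~\ref{proppullb}. Since $(X,\dist_X,\meas_X)$ is a PI space, Cheeger's differentiability theorem lifts this to $\phi\circ f\in H^{1,2}_{\mathrm{loc}}(U,\dist_X,\meas_X)$ with $|\nabla(\phi\circ f)|(x)=\mathrm{Lip}((\phi\circ f)|_{D_j})(x)$ for $\meas_X$-a.e.\ $x\in D_j$. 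Patching these bounds together and using $G\in L^2$ shows that $f$ is a Sobolev map with $G_f\le G$ a.e.

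Finally I would establish the matching lower bound $G_f\ge G$, from which \eqref{ddccrrtt} follows; equation \eqref{ddccrrttww} is then immediate from Proposition~\ref{proppullb} applied to $f|_{\tilde D}$. The mechanism is blow-up: at $\meas_X$-a.e.\ $x\in D_j\cap\mathcal R_n$, Theorem~\ref{prop:rademacher} produces a linear blow-up $f^0:\mathbb R^n\to\mathbb R^N$ with $\|f^0\|_{\mathrm{op}}=G(x)$, and $f^*g_Y$ converges $L^2_{\mathrm{loc}}$-strongly to $(f^0)^*g_{\mathbb R^N}$ in the rescalings. Pick $v\in\mathbb S^{n-1}$ realizing $|f^0(v)|=G(x)$ and $w:=f^0(v)/G(x)\in\mathbb S^{N-1}$; I need a Lipschitz $\phi$ on $Y$ whose blow-up at $f(x)$ on $T_{f(x)}Y\simeq\mathbb R^N$ is the linear functional $z\mapsto\langle z,w\rangle$. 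Assuming such a $\phi$ exists, the Sobolev upper-gradient inequality $|\nabla(\phi\circ f)|(x)\le \mathbf{Lip}\phi\cdot G_f(x)$ combined with harmonicity/linearity of the blow-up forces $G_f(x)\ge|f^{0*}w|=\|f^0\|_{\mathrm{op}}=G(x)$. I expect the main obstacle to be the production of these test functions $\phi$: for a regular point $f(x)\in\mathcal R_N$ this can be done either by harmonic coordinates as in Corollary~\ref{prop:iso2} localized near $f(x)$, or, more robustly, by passing to the components of the truncated heat kernel embedding $\tilde\Phi^l_t$ from Theorem~\ref{prop:finitedimen}: their blow-ups at $f(x)$ are linear by Theorem~\ref{prop:rademacher}, span directions close to an orthonormal basis of $\mathbb R^N$ when $t$ is small and $l$ is large, and so supping $|\nabla(\phi_i^Y\circ f)|/\mathbf{Lip}\phi_i^Y$ over this finite system recovers $\|f^0\|_{\mathrm{op}}$ in the limit.
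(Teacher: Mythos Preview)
Your overall strategy matches the paper's: Lipschitz--Lusin decomposition of $f$ via Proposition~\ref{prop:lipschitzlusin}, definition of $f^*g_Y$ via Proposition~\ref{prop:def}, upper bound $G_f\le(|f^*g_Y|_B)^{1/2}$ through the bi-Lipschitz charts $\tilde\Phi_{t_i}^l$, and lower bound via blow-up (Theorem~\ref{prop:rademacher}). The direction $(2)\Rightarrow(1)$ and the lower-bound part are fine.

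There is, however, a genuine gap in your upper-bound argument. You write that ``Cheeger's differentiability theorem lifts this to $\phi\circ f\in H^{1,2}_{\mathrm{loc}}$.'' This inference is not valid: knowing that $\phi\circ f$ is Lipschitz on each piece $D_j$ with $\mathrm{Lip}((\phi\circ f)|_{D_j})\le \mathbf{Lip}\phi\cdot G$ and $G\in L^2$ does \emph{not} by itself place $\phi\circ f$ in $H^{1,2}$. A function can be Lipschitz on each member of a countable Borel partition with zero slope on every piece and still fail to be Sobolev (think of a characteristic function). Cheeger's theorem goes the other way: it identifies $|\nabla h|$ with $\mathrm{Lip}(h|_{D})$ \emph{once} $h$ is already known to be Sobolev. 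Nothing in your argument uses the hypothesis ``$f$ is weakly smooth'' beyond producing the $D_j$'s, yet that hypothesis is exactly what is required to glue the pieces.

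The paper closes this gap by exploiting weak smoothness directly. For an arbitrary $\phi\in\mathrm{Lip}(Y)$ it forms the truncated heat regularizations
\[
\phi^{s,m}=\sum_{i\le m}e^{-\lambda_i^Y s}\Bigl(\int_Y\phi\,\phi_i^Y\,\di\mathcal H^N\Bigr)\phi_i^Y,
\]
so that $\phi^{s,m}\circ f\in H^{1,2}(U)$ by weak smoothness (finite linear combination of $\phi_i^Y\circ f$). On each Lipschitz piece the chain rule through the local bi-Lipschitz chart $\tilde\Phi_{t_i}^l$ gives
\[
|\nabla(\phi^{s,m}\circ f)|\le (1-\epsilon)^{-1}\,\mathbf{Lip}\phi^{s,m}\,\bigl|(\tilde\Phi_{t_i}\circ f)^*g_{\ell^2}\bigr|_B^{1/2},
\]
and the $1$-Bakry--\'Emery estimate $\mathbf{Lip}h_s\phi\le e^{-Ks}\mathbf{Lip}\phi$ yields a bound uniform in $m,s$. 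Letting $m\to\infty$, $s\to 0^+$ and using lower semicontinuity of the Cheeger energy one obtains $\phi\circ f\in H^{1,2}$ with $|\nabla(\phi\circ f)|\le \mathbf{Lip}\phi\cdot(|f^*g_Y|_B)^{1/2}$. Insert this approximation step where you invoke Cheeger, and your argument goes through.
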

\begin{proof}
Since Proposition \ref{propcompat} proves the implication from (2) to (1), let us prove the converse implication. Assume that (1) holds.
Proposition \ref{prop:lipschitzlusin} allows us to find a sequence of Borel subsets $\{D_j\}_j$ of $U$ such that $\meas_X(U\setminus \bigcup_jD_j)=0$ holds and that each $f|_{D_j}$ is Lipschitz. Fix a sufficiently small $\epsilon \in (0, 1)$.
Recalling Theorem \ref{prop:finitedimen}, fix
\begin{itemize}
\item an integer $i \in \mathbb{N}$,
\item a sequence of points $\{y_k\}_{k \in \mathbb{N}} \subset \mathcal{R}_Y(\epsilon/6, t_i, \tau_i)$ and a sequence of positive numbers $\{r_k\}_{k \in \mathbb{N}} \subset (0, 1)$ satisfying that, 
\begin{itemize}
\item $\mathcal{R}_Y(\epsilon/6, t_i, \tau_i) \subset \bigcup_kB_{r_k}(y_k)$,
\item $\tilde{\Phi}_{t_i}^l:B_{r_k}(y_k) \cap \mathcal{R}_Y(\epsilon/6, t_i, \tau_i) \to \mathbb{R}^l$ is a $(1\pm \epsilon )$-bi-Lipschitz embedding for any $l$ satisfying
\begin{equation}
c_Nt_i^{(N+2)/2}\sum_{j=l+1}^{\infty}e^{-2\lambda_j^Yt_i}\|\dist \phi_j^Y\|_{L^{\infty}}^2<\frac{\epsilon}{6}.
\end{equation}
\end{itemize}
\end{itemize}
Fix $\phi \in \mathrm{Lip}(Y, \dist_Y)$.
Then for all $j, k \in \mathbb{N}$, thanks to Propositions \ref{prop:den} and \ref{prop:geometric}, we have
\begin{align}\label{jjnhytu}
&\mathrm{Lip}\left(\phi \circ f\right)|_{D_j \cap (U \setminus D) \cap f^{-1}(B_{r_k}(y_k) \cap \mathcal{R}_Y(\epsilon/2, t_i, \tau_i)) }(x)\nonumber \\
&=\mathrm{Lip}\left(\phi \circ (\tilde{\Phi}_{t_i}^{l})^{-1} \circ \tilde{\Phi}_{t_i}^{l}\circ f\right)|_{D_j \cap (U \setminus D) \cap f^{-1}(B_{r_k}(y_k) \cap \mathcal{R}_Y(\epsilon/2, t_i, \tau_i)) }(x) \nonumber \\
&\le (1-\epsilon)^{-1}\mathbf{Lip} \phi \cdot \mathrm{Lip}(\tilde{\Phi}_{t_i}^{l} \circ f)(x) \nonumber \\
&\le (1-\epsilon)^{-1}\mathbf{Lip} \phi \cdot \left(\left| (\tilde{\Phi}_{t_i}^{l} \circ f)^*g_{\mathbb{R}^l}\right|_B(x)\right)^{1/2} \nonumber \\
&\le (1-\epsilon)^{-1}\mathbf{Lip} \phi \cdot \left(\left| (\tilde{\Phi}_{t_i}^Y \circ f)^*g_{\ell^2}\right|_B(x)\right)^{1/2}
\end{align}
for $\meas_X$-a.e. $x \in D_j \cap (U \setminus D) \cap f^{-1}(B_{r_k}(y_k) \cap \mathcal{R}_Y(\epsilon/6, t_i, \tau_i))$. Thus (\ref{jjnhytu}) is satisfied for $\meas_X$-a.e. $x \in D_j \cap (U \setminus D) \cap f^{-1}(\mathcal{R}_Y(\epsilon/6, t_i, \tau_i))$ because $k$ is arbitrary.

For fixed $s \in (0, 1)$ and $m \in \mathbb{N}$, let
\begin{equation}
\phi^{s, m}:=\sum_i^me^{-\lambda_i^Ys}\left(\int_Y\phi \cdot \phi_i^Y\di \mathcal{H}^N\right)\phi_i^Y.
\end{equation}
For fixed $s \in (0, 1)$, we see that $\{\phi^{s, m}\}_m$ is equi-Lipschitz and that for any sufficiently large $m$, we have
\begin{align}
|\nabla \phi^{s, m}|^2(y)&=\sum_{i, j}^me^{-(\lambda_i^Y +\lambda_j^Y)s}\left(\int_Y\phi \cdot \phi_i^Y\di \mathcal{H}^N\right) \cdot \left(\int_Y\phi \cdot \phi_j^Y\di \mathcal{H}^N\right)\langle \nabla \phi_i^Y, \nabla \phi_j^Y\rangle (y) \nonumber \\
&\le \sum_{i, j} e^{-(\lambda_i^Y +\lambda_j^Y)s}\left(\int_Y\phi \cdot \phi_i^Y\di \mathcal{H}^N\right) \cdot \left(\int_Y\phi \cdot \phi_j^Y\di \mathcal{H}^N\right)\langle \nabla \phi_i^Y, \nabla \phi_j^Y\rangle (y) +\epsilon \nonumber \\
&=|\nabla h_s\phi|^2(y)+\epsilon,\quad \text{for $\mathcal{H}^N$-a.e. $y \in Y$},
\end{align}
where we used (\ref{eq:eigenfunction}).
In particular we have $\mathbf{Lip}\phi^{s, m} \le \mathbf{Lip}h_s \phi+\epsilon$.
Thus applying (\ref{jjnhytu}) for $\phi^{s, m}$ instead of $\phi$ with our assumption yields that $\phi^{s, m} \circ f \in H^{1, 2}(U, \dist_X, \meas_X)$ holds with
\begin{align}
|\nabla (\phi^{s, m}\circ f)|(x) &\le (1-\epsilon)^{-1}\mathbf{Lip}\phi^{s, m} \cdot \left(\left| (\tilde{\Phi}_{t_i}^Y \circ f)^*g_{\ell^2}\right|_B(x)\right)^{1/2} \nonumber \\
&\le  (1-\epsilon)^{-1}\cdot (\mathbf{Lip}h_s \phi+\epsilon) \cdot \left(\left| (\tilde{\Phi}_{t_i}^Y \circ f)^*g_{\ell^2}\right|_B(x)\right)^{1/2} \nonumber \\
&\le  (1-\epsilon)^{-1} \cdot (e^{-Ks}\mathbf{Lip} \phi +\epsilon) \cdot \left(\left| (\tilde{\Phi}_{t_i}^Y \circ f)^*g_{\ell^2}\right|_B(x)\right)^{1/2}
\end{align}
for $\meas_X$-a.e. $x \in D_j \cap (U \setminus D) \cap f^{-1}(\mathcal{R}_Y(\epsilon/6, t_i, \tau_i))$, where we used (\ref{absbayraywsrai}).  
In particular for $\meas_X$-a.e. $x \in D_j \cap (U \setminus D) \cap f^{-1}(\bigcap_{l \ge i}\mathcal{R}_Y(\epsilon/6, t_l, \tau_l))$, we have
\begin{equation}\label{eq:gra}
|\nabla (\phi^{s, m}\circ f)|(x)\le  (1-\epsilon)^{-1} \cdot (e^{-Ks}\mathbf{Lip} \phi +\epsilon) \cdot \left(\left| (\tilde{\Phi}_{t_l}^Y \circ f)^*g_{\ell^2}\right|_B(x)\right)^{1/2}
\end{equation}
for any $l \ge i$.
Thus combining Proposition \ref{prop:def}, 
letting $m \to \infty$, $s \to 0^+$ and then $l \to \infty$ in a weak form of (\ref{eq:gra})
\begin{align}
\int_E|\nabla (\phi^{s, m}\circ f)|^2\di \meas_X&\le \int_E  (1-\epsilon)^{-2} \cdot (e^{-Ks}\mathbf{Lip} \phi +\epsilon)^2 \cdot \left| (\tilde{\Phi}_{t_l}^Y \circ f)^*g_{\ell^2}\right|_B\di \meas_X, \nonumber \\
&\quad \forall E \subset D_j \cap (U \setminus D) \cap f^{-1}\left(\bigcap_{l \ge i}\mathcal{R}_Y(\epsilon/6, t_l, \tau_l)\right),
\end{align}
show
\begin{equation}
\int_E|\nabla (\phi \circ f)|^2\di \meas_X \le \int_E (1-\epsilon)^{-2} \cdot (\mathbf{Lip} \phi +\epsilon)^2\cdot \left|f^*g_Y\right|_B\di \meas_X.
\end{equation}
Therefore we have for $\meas_X$-a.e. $x \in D_j \cap (U \setminus D) \cap f^{-1}(\bigcap_{l \ge i}\mathcal{R}_Y(\epsilon/6, t_l, \tau_l))$
\begin{equation}\label{asbrauwauruasuj}
|\nabla (\phi \circ f)|(x) \le  (1-\epsilon)^{-1}\cdot  (\mathbf{Lip} \phi +\epsilon) \cdot \left(\left|f^*g_Y\right|_B(x)\right)^{1/2},
\end{equation}
which completes the proof of (2) with 
\begin{equation}\label{edftgv}
G_f(x) \le \left(\left|f^*g_Y\right|_B(x)\right)^{1/2},\quad \text{for $\meas_X$-a.e. $x \in U$}
\end{equation}
because $\epsilon, i$ and $j$ are arbitrary in (\ref{asbrauwauruasuj}). 

Finally let us prove the reverse inequality of (\ref{edftgv}). In order to simplify our notation, put $A:=D_i$ and the restriction of $f$ to $A$ is also denoted by the same notation $f$.
We can find $x \in A$ and a convergent sequence $r_i \to 0^+$ as in Theorem \ref{prop:rademacher} (we will use the same notations as in Theorem \ref{prop:rademacher} below). Moreover with no loss of generality we can assume that $x$ is a $2$-Lebesgue point of $G_f$.
Let us denote the map $f^0:\mathbb{R}^n \to \mathbb{R}^N$ by
\begin{equation}
f^0(x_1, x_2, \ldots, x_n)=(x_1, \ldots, x_n)M
\end{equation}
for some $n \times N$-matrix $M$. Put 
\begin{equation}\label{asdfaoieabs}
(X_i, \dist_{X_i}, \meas_{X_i}, x):=\left(X, r_i^{-1}\dist_X, \meas_X(B_{r_i}(x))^{-1}\meas_X, x\right) \stackrel{\mathrm{pmGH}}{\to} (\mathbb{R}^n, \dist_{\mathbb{R}^n}, \omega_n^{-1}\mathcal{H}^n, 0_n) 
\end{equation}
and 
\begin{equation}
 (Y_i, \dist_{Y_i}, f(x)):=(Y, r_i^{-1}\dist_Y, f(x)) \stackrel{\mathrm{pGH}}{\to} (\mathbb{R}^N, \dist_{\mathbb{R}^N}, 0_N).
\end{equation}
Thanks to \cite[Cor.4.12]{AmbrosioHonda2} with no loss of generality we can assume that there exists a sequence of harmonic maps $H_i=(h_{i, j})_j:B_2^{\dist_{X_i}}(x) \to \mathbb{R}^n$ such that $H_i$ converge uniformly to $\mathrm{id}_{\mathbb{R}^n}$ on $B_2(0_n)$ with respect to the convergence (\ref{asdfaoieabs}). Then we define the map $F_i:B_2^{\dist_{X_i}}(x) \to \mathbb{R}^N$ by 
\begin{equation}
F_i=(f_{i,1},\ldots, f_{i, N}):=(h_{i, 1},\ldots,h_{i, n}) \cdot M.
\end{equation}
Note that Theorem \ref{prop:rademacher} ensures
\begin{equation}
\int_{B_1^{\dist_{X_i}}(x)}\left| f^*g_{Y_i}-F_i^*g_{\mathbb{R}^N}\right|\di \meas_{X_i}  \to 0,
\end{equation}
in particular
\begin{equation}\label{ppnnfhbgppnnfhbg}
\int_{B_1^{\dist_{X_i}}(x)}\left| \left|f^*g_{Y_i}\right|_B-\left|F_i^*g_{\mathbb{R}^N}\right|_B\right|\di \meas_{X_i}  \to 0.
\end{equation}
Let us prove that
\begin{equation}\label{ppnnfhbg}
\int_{B_1^{\dist_{X_i}}(x)}\left|F_i^*g_{\mathbb{R}^N}\right|_B\di \meas_{X_i}  \to \frac{1}{\omega_n}\int_{B_1(0_n)}|(f^0)^*g_{\mathbb{R}^N}|_B\di \mathcal{H}^n=:C(M).
\end{equation}
Put
\begin{equation}
\epsilon_i:=\sup_{j, k}\int_{B_2^{\dist_{X_i}}(x)}\left| \delta_{jk}-\langle \nabla h_{i, j}, \nabla h_{i, k}\rangle \right|\di \meas_{X_i} \to 0
\end{equation}
and
\begin{equation}
B_i:=\left\{y \in B_1^{\dist_{X_i}}(x); \sup_{r\le 1}\frac{1}{\meas_{X_i}(B_r^{\dist_{X_i}}(y))}\int_{B_r^{\dist_{X_i}}(y)}\left| \delta_{jk}-\langle \nabla h_{i, j}, \nabla h_{i, k}\rangle \right|\di \meas_{X_i}<(\epsilon_i)^{1/2}, \forall j, \forall k\right\}.
\end{equation}
Then the maximal function theorem (e.g. \cite[Th.2.2]{Heinonen}) shows that
\begin{equation}\label{eq:quaiti}
\meas_{X_i}(B_1^{\dist_{X_i}}(x) \setminus B_i) \le C(\tilde{K}, \tilde{N}) (\epsilon_i)^{1/2}
\end{equation}
holds under assuming that $(X, \dist_X, \meas_X)$ is an $\RCD (\tilde{K}, \tilde{N})$ space.
Let us now recall that for any symmetric bilinear form $L: V \times V \to \mathbb{R}$ over an $n$-dimensional Hilbert space $(V, \langle \cdot, \cdot \rangle)$ and any basis $\{v_i\}_{i=1}^n$ of $V$ with $|\langle v_i, v_j\rangle -\delta_{ij}|<\epsilon$ for all $i, j$, we have
\begin{equation}\label{ansb73728}
|L|_B=\sup_{\sum_i(a_i)^2=1}L\left(\sum_ia_iv_i, \sum_ia_iv_i\right) \pm C(n)|L|_B\epsilon,
\end{equation}
where we used a notation 
\begin{equation}
a=b \pm \epsilon \Longleftrightarrow |a-b|\le \epsilon
\end{equation}
(compare with \cite[(5.36)]{AHPT}). 
Applying (\ref{ansb73728}) for $\{\nabla h_{i, j}\}_j$ on $B_i$ shows for $\meas_{X_i}$-a.e. $z \in B_i$;
\begin{equation}\label{labet}
|F_i^*g_{\mathbb{R}^N}|_B(z) =\sup_{\sum_j(a_j)^2=1} F_i^*g_{\mathbb{R}^N}\left(\sum_ja_j\nabla h_{i, j}, \sum_ja_j\nabla h_{i, j} \right)\pm C(n) |F_i^*g_{\mathbb{R}^N}|_B(z) (\epsilon_i)^{1/2}.
\end{equation}
On the other hand, by the definition of $B_i$ for any $a_j \in \mathbb{R}$ with $\sum_j(a_j)^2=1$ we have
\begin{align}\label{ojnh}
&\left|F_i^*g_{\mathbb{R}^N}\left(\sum_ja_j\nabla h_{i, j}, \sum_ja_j\nabla h_{i, j} \right)\right|(z) \nonumber \\
&=\left|(f^0)^*g_{\mathbb{R}^N}\left(\sum_ja_j\nabla x_j, \sum_ja_j\nabla x_j\right)\right|(z) \pm C(\tilde{K}, \tilde{N}, \mathbf{Lip}f) (\epsilon_i)^{1/2}
\end{align}
for $\meas_{X_i}$-a.e. $z \in B_i$.
In particular combining (\ref{labet}) with (\ref{ojnh}) yields for $\meas_{X_i}$-a.e. $z \in B_i$
\begin{align}
|F_i^*g_{\mathbb{R}^N}|_B(z)&=\sup_{\sum_j(a_j)^2=1}(f^0)^*g_{\mathbb{R}^N}\left(\sum_ja_j\nabla x_j, \sum_ja_j\nabla x_j\right)\pm C(\tilde{K}, \tilde{N}, \mathbf{Lip}f) (\epsilon_i)^{1/2} \nonumber \\
&=C(M)\pm C(\tilde{K}, \tilde{N}, \mathbf{Lip}f) (\epsilon_i)^{1/2}.
\end{align} 
Therefore by (\ref{eq:quaiti}) we have
\begin{align}
\int_{B_1^{\dist_{X_i}}(x)}\left|F_i^*g_{\mathbb{R}^N}\right|_B\di \meas_{X_i}&=\int_{B_i}\left|F_i^*g_{\mathbb{R}^N}\right|_B\di \meas_{X_i} + \int_{B_1^{\dist_{X_i}}(x) \setminus B_i}\left|F_i^*g_{\mathbb{R}^N}\right|_B\di \meas_{X_i} \nonumber \\
&=\meas_{X_i}(B_i)C(M) \pm C(\tilde{K}, \tilde{N}, \mathbf{Lip}f) (\epsilon_i)^{1/2} \nonumber \\
&\to C(M),
\end{align}
which completes the proof of (\ref{ppnnfhbg}).

In particular by (\ref{ppnnfhbgppnnfhbg})
\begin{equation}\label{nbaybshf}
\frac{1}{\meas_X(B_{r_i}(x))}\int_{B_{r_i}(x)}\left| \left|f^*g_{Y_i}\right|_B-C(M)\right|\di \meas_{X}  \to 0.
\end{equation}
Let us take a linear function $\phi:\mathbb{R}^n \to \mathbb{R}$ such that $|\nabla \phi|\equiv 1$ and that 
\begin{equation}
(f^0)^*g_{\mathbb{R}^N}(\nabla \phi, \nabla \phi) \equiv C(M).
\end{equation}
Fix $\epsilon \in (0, 1)$ and take $\psi \in C^{\infty}_c(\mathbb{R}^n)$ with $\psi \equiv 1$ on $B_2(0_n)$ and $|\nabla \psi|+|\Delta \psi|\le \epsilon$ on $\mathbb{R}^n$.
Applying (the proof of) \cite[Prop.1.10.2]{AmbrosioHonda} for $\psi\phi$, there exists a sequence of $\phi_i \in \mathrm{Test}F(X_i, \dist_{X_i}, \meas_{X_i})$ such that $\phi_i$ converge uniformly to $\phi$ on $B_2(0_n)$ with respect to the convergence (\ref{asdfaoieabs}) and that $\limsup_{i \to \infty}\mathbf{Lip}\phi_i\le (1+\epsilon)\mathbf{Lip}\phi=1+\epsilon$.
Then since
\begin{equation}\label{pphnhfyttrb}
|\nabla (\phi_i \circ f)|(z) \le \mathbf{Lip} \phi_i \cdot G_f(z),\quad \text{for $\meas_X$-a.e. $z \in B_2^{\dist_{X_i}}(x)$}, 
\end{equation}
letting $i \to \infty$ in the weak form of (\ref{pphnhfyttrb}) easily yields
\begin{equation}\label{98uuui9}
C(M)=|\nabla (\phi \circ f^0)|^2 \le (1+\epsilon)^2\overline{G_f}(x)^2
\end{equation}
where we used our assumption that $x$ is a $2$-Lebesgue point of $G_f$ (recall Definition \ref{lebesguepoi} for the definition of $\overline{G_f}(x)$).  Combining (\ref{nbaybshf}) with (\ref{98uuui9}) shows
\begin{equation}
\left(\overline{\left|f^*g_Y\right|_B}(x)\right)^{1/2}\le (1+\epsilon)\overline{G_f}(x)
\end{equation}
if $x$ is also a $2$-Lebesgue point of $|f^*g_Y|_B$, which completes the proof of (\ref{ddccrrtt}) because $\epsilon$ is arbitrary.

Finally it follows from Propositions \ref{proppullb} and (\ref{ddccrrtt}) that (\ref{ddccrrttww}) holds.
\end{proof}
Thanks to Theorem \ref{propcompatr}, in the sequel, we can focus on Sobolev maps instead of $0$-Sobolev maps. 
The following proposition gives an asymptotic behavior of $t^{(N+2)/2}\mathcal{E}_{U, Y, t}(f)$ as $t \to 0$.
\begin{proposition}\label{prop:l1}
Let  $f:U \to Y$ be a Sobolev map with $f(U \setminus D) \subset \mathcal{R}_Y(\{(t_i, \tau_i)\}_i)$ for some $\{(t_i, \tau_i)\}_i$ and some $\meas_X$-negligible set $D$. Then $(\tilde{\Phi}_{t_i}^Y\circ f)^*g_{L^2}$ $L^1$-strongly converge to $f^*g_Y$ on $U$ with for all $i \in \mathbb{N}$ and $\meas_X$-a.e. $x \in A_i:=\bigcap_{j \ge i}\mathcal{R}_Y(\epsilon, t_j, \tau_j)$,
\begin{equation}\label{eqeq}
|(\tilde{\Phi}_{t_i}^Y \circ f)^*g_{L^2}-f^*g_Y|(x)\le C(n)\epsilon \min \left\{ |f^*g_Y|(x), |(\tilde{\Phi}_{t_i}^Y \circ f)^*g_{L^2}|(x)\right\}.
\end{equation}
In particular we see that 
$c_Nt_i^{(N+2)/2}e_{Y, t_i}(f)$ $L^1$-strongly converge to $e_{Y}(f)$ on $U$.
\end{proposition}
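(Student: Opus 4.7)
The strategy is to reduce to the Lipschitz case via a Lipschitz-Lusin decomposition, combine the pointwise estimate from Proposition \ref{prop:def} with a uniform $L^1$-domination, and conclude by a standard equi-integrability argument.

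First I would show that $f$ is Lipschitz-Lusin. Since $f(U\setminus D)\subset \mathcal{R}_Y(\{(t_i,\tau_i)\}_i)$, Proposition \ref{prop:lipschitzlusin} yields a countable family of Borel sets $\{D_k\}_k$ with $\meas_X(U\setminus \bigcup_k D_k)=0$ such that each $f|_{D_k}$ is Lipschitz. Applying Proposition \ref{prop:def} to each $f|_{D_k}$ (with the same sequence $\{(t_i,\tau_i)\}_i$) produces the tensors $(f|_{D_k})^*g_Y$ and, via the identity (\ref{absiriiwiiwiwi}) together with the rescaling $(\tilde{\Phi}_{t_i}\circ f)^*g_{L^2}=c_Nt_i^{(N+2)/2}f^*g_{Y,t_i}$, the pointwise estimate (\ref{eqeqqe}) of Proposition \ref{prop:def} transfers to give (\ref{eqeq}) on $A_i\cap D_k$ for each $k$. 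Since the pullback tensor of the Lipschitz-Lusin map is by definition glued from these pieces and agrees $\meas_X$-a.e., taking the union over $k$ yields the asserted pointwise bound (\ref{eqeq}) on $A_i$.

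The next step is to produce a uniform $L^1$-dominating function. From (\ref{abasurbauawurb}) we have
\begin{equation*}
t \meas_Y(B_{\sqrt{t}}(f(x)))\, e_{Y,t}(f)(x)\le C(K,N)\,G_f(x)^2 \quad\text{for $\meas_X$-a.e. $x\in U$},
\end{equation*}
and the uniform Ahlfors regularity of the compact non-collapsed space $(Y,\dist_Y,\mathcal{H}^N)$ (cf.\ (\ref{rrtt})) gives $\mathcal{H}^N(B_{\sqrt{t}}(f(x)))\ge c(K,N,v,d)\,t^{N/2}$ for $t$ small. Consequently
\begin{equation*}
\left|(\tilde{\Phi}_{t_i}\circ f)^*g_{L^2}\right|(x)\le c_N t_i^{(N+2)/2} e_{Y,t_i}(f)(x)\le C(K,N,v,d)\,G_f(x)^2
\end{equation*}
uniformly in $i$ for all sufficiently large $i$. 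On the other side, (\ref{ddccrrtt}) of Theorem \ref{propcompatr} combined with Proposition \ref{prop:ineq} yields $|f^*g_Y|(x)\le \sqrt{n}\,G_f(x)^2$. Since $G_f\in L^2(U,\meas_X)$, the function $G_f^2$ is an $L^1$-majorant for both $f^*g_Y$ and $(\tilde{\Phi}_{t_i}\circ f)^*g_{L^2}$.

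Now I would deduce the $L^1$ convergence by splitting. Fix $\delta>0$. By absolute continuity of the integral of $G_f^2$, choose $\eta>0$ so that $\int_E C G_f^2\,\di\meas_X<\delta/2$ whenever $\meas_X(E)<\eta$. Next choose $\epsilon>0$ with $C(n)\epsilon\|f^*g_Y\|_{L^1(U)}<\delta/2$. Since $f(U\setminus D)\subset \mathcal{R}_Y(\{(t_i,\tau_i)\}_i) = \bigcap_{\epsilon>0}\bigcup_i\bigcap_{j\ge i}\mathcal{R}_Y(\epsilon,t_j,\tau_j)$ and the sets $A_i(\epsilon):=\bigcap_{j\ge i}f^{-1}(\mathcal{R}_Y(\epsilon,t_j,\tau_j))$ are increasing in $i$ with union of full measure in $U$, we may pick $i_0$ with $\meas_X(U\setminus A_{i_0}(\epsilon))<\eta$. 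For any $j\ge i_0$, split the integral over $A_{i_0}(\epsilon)$ and its complement: on $A_{i_0}(\epsilon)$ apply (\ref{eqeq}) to bound the integrand by $C(n)\epsilon |f^*g_Y|$; on the complement use the $L^1$-dominating inequality. This gives $\int_U|(\tilde{\Phi}_{t_j}\circ f)^*g_{L^2}-f^*g_Y|\,\di\meas_X<\delta$, proving the $L^1$-convergence.

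Finally, the convergence of $c_Nt_i^{(N+2)/2}e_{Y,t_i}(f)=\langle (\tilde{\Phi}_{t_i}\circ f)^*g_{L^2},g_X\rangle$ to $e_Y(f)=\langle f^*g_Y,g_X\rangle$ in $L^1(U,\meas_X)$ is immediate from Cauchy-Schwarz for the pointwise inner product and the $L^\infty$-bound $|g_X|=\sqrt{n}$ given by Proposition \ref{Riemdef}. The main technical subtlety in the plan is verifying that the pointwise estimate (\ref{eqeqqe}), which Proposition \ref{prop:def} proves only for Lipschitz maps on bounded Borel sets, assembles correctly along a Lipschitz-Lusin decomposition into the stated pointwise bound for the Sobolev map $f$; this requires invoking (\ref{absiriiwiiwiwi}) together with the fact that the limit tensor $f^*g_Y$ is intrinsically defined piece-by-piece and is independent of the decomposition.
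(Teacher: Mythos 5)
Your proof is correct and follows essentially the same route as the paper's: the pointwise estimate (\ref{eqeq}) comes from Proposition \ref{prop:cauchy}/\ref{prop:def} restricted to the pieces of a Lipschitz--Lusin decomposition, the uniform majorant $C\,G_f^2$ comes from (\ref{abasurbauawurb}) together with Ahlfors regularity of $Y$, and the $L^1$-limit is then extracted (the paper via a.e.\ convergence and dominated convergence, you via an equivalent explicit splitting that has the small advantage of treating the full sequence without a subsequence argument). The only point to tidy up is the selection of $i_0$: when $\meas_X(U)=\infty$ one cannot in general make $\meas_X(U\setminus A_{i_0}(\epsilon))<\eta$, so that step should instead choose $i_0$ with $\int_{U\setminus A_{i_0}(\epsilon)}G_f^2\di\meas_X$ small, which follows from $1_{U\setminus A_i(\epsilon)}\to 0$ $\meas_X$-a.e.\ and dominated convergence since $G_f^2\in L^1(U,\meas_X)$.
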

\begin{proof}
Recall that we already know $\meas_X$-a.e. pointwise convergence $|(\tilde{\Phi}_{t_i}^Y \circ f)^*g_{L^2}-f^*g_Y| \to 0$ after passing to a subsequence (see the proof of Proposition \ref{prop:l17773}). Thus the desired $L^1$-convergence is justified by this with (\ref{abasurbauawurb}) and the dominated convergence theorem.
Moreover (\ref{eqeq}) is a direct consequence of Proposition \ref{prop:cauchy}.
\end{proof}

Finally let us give a precise description of the asymptotics of Korevaar-Schoen energy densities by the pull-back.
\begin{theorem}\label{koseergy}
Let $f:U \to Y$ be a Sobolev map with $f(U \setminus D) \subset \mathcal{R}_Y(\{(t_i, \tau_i)\}_i)$ for some $\{(t_i, \tau_i)\}_i$ and some $\meas_X$-negligible subset $D$ of $U$. 
Then $(n+2)\mathrm{ks}_{Y, r}(f)^2$ $L^1$-strongly converge to $e_Y(f)$ on $U$ as $r \to 0^+$. In particular we have
\begin{equation}
\mathcal{E}_{U, Y}(f)=\frac{n+2}{2}\mathcal{E}_{U, Y}^{KS}(f).
\end{equation} 
\end{theorem}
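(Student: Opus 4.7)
The plan is to reduce Theorem \ref{koseergy} to the already-established Lipschitz case (Corollary \ref{cor:ksks}) via a Lipschitz-Lusin decomposition of $f$. By Theorem \ref{propcompatr} (which identifies $G_f$ with the local Lipschitz constant on Lipschitz pieces) together with Proposition \ref{prop:lipschitzlusin}, one obtains a nested exhaustion $D_1 \subset D_2 \subset \cdots$ of bounded Borel subsets of $U$ with $\meas_X(U \setminus \bigcup_k D_k) = 0$, with each $f|_{D_k}$ being $L_k$-Lipschitz and $L_k$ controlled by $\sup_{D_k} G_f$. Arranging so that each $D_\ell \setminus D_{\ell-1}$ sits in the superlevel set $\{G_f > \ell - 1\}$ secures the summability
\begin{equation*}
\sum_\ell L_\ell^2 \meas_X(D_\ell \setminus D_{\ell-1}) \le C \|G_f\|_{L^2(U)}^2 < \infty,
\end{equation*}
which will be the key structural ingredient.

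On each $D_k$, Corollary \ref{cor:ksks} gives $(n+2)\mathrm{ks}_{Y,r}(f|_{D_k})^2 \to e_Y(f)$ in $L^2(D_k, \meas_X)$ as $r \to 0^+$. To upgrade to $L^1$-convergence on $U$ I split the deviation into the discrepancy $\mathrm{I}_k(r) := \int_{D_k}|\mathrm{ks}_{Y,r}(f)^2 - \mathrm{ks}_{Y,r}(f|_{D_k})^2|\di\meas_X$ and the tail $\mathrm{II}_k(r) := \int_{U \setminus D_k}\mathrm{ks}_{Y,r}(f)^2\di\meas_X$. Using the nested Lipschitz control $\dist_Y(f(x),f(y)) \le L_\ell \dist_X(x,y) \le L_\ell r$ for $x \in D_k$, $y \in B_r(x) \cap (D_\ell \setminus D_{\ell-1})$ with $\ell > k$, a Fubini-doubling computation yields $\mathrm{I}_k(r) \le C \sum_{\ell > k} L_\ell^2 \meas_X(D_\ell \setminus D_{\ell-1})$ uniformly in $r$, and this vanishes as $k \to \infty$ by the summability above.

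The main obstacle is the uniform-in-$r$ control of $\mathrm{II}_k(r)$. For this I plan to invoke the Hajlasz-type bound available on any doubling Poincar\'e space: since for each $y \in U$ the $1$-Lipschitz function $z \mapsto \dist_Y(z, f(y))$ composed with $f$ lies in $H^{1,2}_\loc(U, \dist_X, \meas_X)$ with gradient dominated by $G_f$, one deduces
\begin{equation*}
\dist_Y(f(x), f(y)) \le C\dist_X(x,y)\bigl(\mathcal{M}G_f(x) + \mathcal{M}G_f(y)\bigr)
\end{equation*}
at joint Lebesgue points, where $\mathcal{M}$ is the Hardy-Littlewood maximal operator. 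Squaring, dividing by $r^2$ and averaging over $B_r(x)$ then yields the uniform-in-$r$ domination
\begin{equation*}
\mathrm{ks}_{Y,r}(f)^2(x) \le C\bigl((\mathcal{M}G_f)^2(x) + \mathcal{M}((\mathcal{M}G_f)^2)(x)\bigr).
\end{equation*}
Since $G_f \in L^2(U)$, $(\mathcal{M}G_f)^2$ belongs to $L^1(U)$; the iterated maximal term is only a priori weak-$L^1$, so the argument must be closed either by truncating $G_f$ at level $k$ (which effectively replaces the Lusin decomposition with sets on which $G_f \in L^\infty$ and handles the tail separately through direct measure estimates) or by an $L\log L$-refinement of the decomposition. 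With $\mathrm{I}_k(r)$ and $\mathrm{II}_k(r)$ both controlled uniformly in $r$ and vanishing as $k \to \infty$, combining with the convergence on $D_k$ from Corollary \ref{cor:ksks} delivers the claimed $L^1$-convergence $(n+2)\mathrm{ks}_{Y,r}(f)^2 \to e_Y(f)$ on $U$; the identity $\mathcal{E}_{U,Y}(f) = \tfrac{n+2}{2}\mathcal{E}_{U,Y}^{KS}(f)$ is then immediate upon integrating and recalling that $\mathcal{E}^{KS}_{U,Y}(f) = \limsup_{r\to 0^+}\int_U \mathrm{ks}_{Y,r}(f)^2 \di\meas_X$.
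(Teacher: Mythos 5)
Your overall architecture (Lipschitz--Lusin decomposition, Corollary \ref{cor:ksks} on the pieces, pointwise a.e.\ convergence plus a uniform-in-$r$ domination) matches the paper's, but the step you single out as "the main obstacle" is precisely where your argument does not close, and you say so yourself. The domination you derive from the Haj\l asz-type inequality is
\begin{equation*}
\mathrm{ks}_{Y,r}(f)^2(x)\le C\bigl((\mathcal{M}G_f)^2(x)+\mathcal{M}((\mathcal{M}G_f)^2)(x)\bigr),
\end{equation*}
and the second term is fatal as stated: $(\mathcal{M}G_f)^2$ is in $L^1$ but generally not in $L\log L$, so $\mathcal{M}((\mathcal{M}G_f)^2)$ is only weak-$L^1$ and cannot serve as an integrable dominating function. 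Neither of your two proposed repairs is carried out, and the $L\log L$ route cannot work for arbitrary $G_f\in L^2$; the truncation route would force you to re-estimate the tail $\mathrm{II}_k(r)$ on $\{G_f>k\}$ where the Korevaar--Schoen density still sees far-away values of $f$, which is exactly the difficulty you started with. So as written the proposal has a genuine gap.

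The missing idea is to avoid maximal functions altogether. The paper dominates the energy density directly by the ball average of $G_f^2$: since $z\mapsto\dist_Y(f(x),z)$ is $1$-Lipschitz, the definition of $G_f$ gives
\begin{equation*}
\mathrm{ks}_{Y,r}(f)(x)^2\le\frac{1}{\meas_X(B_r(x))}\int_{B_r(x)}G_f^2\di\meas_X=:G_{f,r}(x)^2,
\end{equation*}
and a Fubini computation (writing $\int_U G_{f,r}^2\di\meas_X=\int_U G_f^2\,\phi_r\di\meas_X$ with $\phi_r(z)=\int_{B_r(z)}\meas_X(B_r(x))^{-1}\di\meas_X(x)$, which is uniformly bounded by Bishop--Gromov and tends to $1$ a.e.) shows $\|G_{f,r}^2\|_{L^1(U)}\to\|G_f^2\|_{L^1(U)}$. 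Combined with the pointwise a.e.\ convergence $(n+2)\mathrm{ks}_{Y,r}(f)^2\to e_Y(f)$ (which you obtain the same way, via Lebesgue/density points of the Lipschitz pieces), the generalized dominated convergence theorem of \cite[Lem.2.4]{AmbrosioHondaTewodrose} --- $|f_i|\le g_i$, $f_i\to f$ and $g_i\to g$ a.e., $\|g_i\|_{L^1}\to\|g\|_{L^1}$ imply $f_i\to f$ in $L^1$ --- finishes the proof in one stroke, with no need for the discrepancy/tail splitting $\mathrm{I}_k,\mathrm{II}_k$ or for any summability over the Lusin pieces. I recommend replacing your maximal-function domination by this averaged one and invoking that lemma.
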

\begin{proof}
Note that Lebesgue differentiation theorem with Proposition \ref{prop:lipschitzlusin} and Corollary \ref{cor:ksks} easily yields
\begin{equation}\label{aboriaioraiwraio}
(n+2)\mathrm{ks}_{Y, r}(f)(x)^2 \to e_Y(f)(x), \quad \text{for $\meas_X$-a.e. $x \in U$.}
\end{equation}
Let $G_f=0$ outside $U$.
Note that since the function $z \mapsto \dist_Y(f(x), z)$ is $1$-Lipschitz for fixed $x \in X$, we have
\begin{equation}\label{bariiairbausb}
\mathrm{ks}_{Y, r}(f)(x)^2 \le \frac{1}{\meas_X(B_r(x))}\int_{B_r(x)}G_f^2\di \meas_X=:G_{f, r}(x)^2.
\end{equation}
Then letting $\phi_r(z):=\int_{B_r(z)}\frac{1}{\meas_X(B_r(x))}\di \meas_X(x)$, Fubini's theorem shows
\begin{align}\label{asssrrff}
\int_UG_{f, r}^2\di \meas_X&=\int_U\left(\frac{1}{\meas_X(B_r(x))}\int_{B_r(x)}G_f(z)^2\di \meas_X(z)\right)\di \meas_X(x) \nonumber \\
&=\int_U\int_{X}\frac{1}{\meas_X(B_r(x))}1_{B_r(x)}(z)G_f(z)^2\di\meas_X(z)\di \meas_X(x) \nonumber \\
&=\int_U\int_{X}\frac{1}{\meas_X(B_r(x))}1_{B_r(z)}(x)G_f(z)^2\di\meas_X(x)\di \meas_X(z) \nonumber \\
&=\int_UG_f(z)^2\left(\int_{B_r(z)}\frac{1}{\meas_X(B_r(x))}\di \meas_X(x)\right)\di \meas_X(z) \nonumber \\
&=\int_UG_f(z)^2\phi_r(z) \di \meas_X(z)
\end{align}
Since the Bishop-Gromov inequality yields
\begin{equation}
\sup_{ r \in (0, 1)}\| \phi_r\|_{L^{\infty}}<\infty
\end{equation}
and $\phi_r(z) \to 1$ holds for $\meas_X$-a.e. $z \in X$ because of Theorem \ref{thmRCD decomposition},
the dominated convergence theorem with (\ref{asssrrff}) shows
\begin{equation}
\int_UG_{f, r}^2\di \meas_X \to \int_UG_f^2\di \meas_X.
\end{equation}
Recall the following general fact;
\begin{itemize}
\item Let $(W, \meas_W)$ be a measure space and let $f_i, g_i, f, g \in L^1(W, \meas_W) (i=1, 2, \ldots)$. Assume that $f_i(w), g_i(w) \to f(w), g(w)$ hold for $\meas_W$-a.e. $w \in W$, respectively, that $|f_i|(w) \le g_i(w)$ holds for $\meas_W$-a.e. $w \in W$, and that $\lim_{i \to \infty}\|g_i\|_{L^1}=\|g\|_{L^1}$. Then $f_i \to f$ in $L^1(W, \meas_W)$. 
\end{itemize}
See \cite[Lem.2.4]{AmbrosioHondaTewodrose} for the proof.

Applying this fact for $G_{f, r}, G_f$ with (\ref{bariiairbausb}) and (\ref{aboriaioraiwraio}) shows that 
$(n+2)\mathrm{ks}_{Y, r}(f)^2$ $L^1$-strongly converges to $e_Y(f)$ on $U$.
\end{proof}
\subsection{Uniformly weakly smooth set and compactness}
In this section we discuss uniformly weekly smooth set in the following sense.
\begin{definition}[Uniformly weakly smooth set]
Let $t_i \to 0^+$ be a convergent sequence in $(0, 1)$ and let $\{\tau_i\}_i$ be a sequence in $(0, \infty)$.
\begin{enumerate}
\item A sequence of subsets $\{A_l\}_l$ of $Y$ is said to be \textit{uniformly weakly smooth for $\{(t_i, \tau_i)\}_i$} if 
for any $\epsilon \in (0, 1)$ there exists $i \in \mathbb{N}$ such that 
\begin{equation} 
A_l \subset \bigcap_{j \ge i}\mathcal{R}_Y(\epsilon, t_j, \tau_j),\quad \forall l \in \mathbb{N}
\end{equation}
holds.
\item A subset $A$ is said to be \textit{uniformly weakly smooth for $\{(t_i, \tau_i)\}_i$} if the constant sequence $\{A\}$ is uniformly weakly smooth for $\{(t_i, \tau_i)\}_i$.
\end{enumerate}
\end{definition}
Let us give a compactness result for Sobolev maps under the unifom weak smoothness of the image.
\begin{theorem}[Compactness]\label{corcompactness}
Let $R \in (0, \infty]$, let $x \in X$, let $f_i:B_R(x) \to Y$ be a sequence of Sobolev maps. Assume that the following two conditions hold:
\begin{enumerate}
\item The sequence $\{f_i(B_R(x) \setminus D_i)\}_i$ is uniformly weakly smooth for some $\{(t_i, \tau_i)\}_i$ and some $\meas_X$-negligible subsets $D_i$ of $B_R(x)$.
\item We have
\begin{equation}\label{eqwwd}
\sup_{i}\mathcal{E}_{B_R(x), Y}(f_i)<\infty.
\end{equation}
\end{enumerate}
Then after passing to a subsequence there exists a Sobolev map $f:B_R(x) \to Y$ such that $f(B_R(x) \setminus D)$ is uniformly weakly smooth for $\{(t_i, \tau_i)\}_i$ for some $\meas_X$-negligible subset $D$ of $B_R(x)$, that $f_i$ converge to $f$ for $\meas_X$-a.e. $x \in B_R(x)$ and that 
\begin{equation}\label{eq:lowesemicoasawas}
\liminf_{i \to \infty}\int_{B_R(x)}\phi_i e_{Y}(f_i)\di \meas_X \ge \int_{B_R(x)}\phi e_{Y}(f)\di \meas_X
\end{equation}
for any $L^2_{\mathrm{loc}}$-strongly convergent sequence $\phi_i \to \phi$ with $\phi_i \ge 0$ and $\sup_i\|\phi_i\|_{L^{\infty}}<\infty$.
In particular 
\begin{equation}\label{oorr}
\liminf_{i \to \infty}\mathcal{E}_{B_R(x), Y}(f_i)\ge \mathcal{E}_{B_R(x), Y}(f).
\end{equation}
\end{theorem}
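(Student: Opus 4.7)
The plan is to reduce the claim to the compactness result for $t_j$-Sobolev maps (Theorem \ref{propcompact}) via the pointwise comparison (\ref{eqeq}) of Proposition \ref{prop:l1}, and then return to the category of Sobolev maps through Theorem \ref{propcompatr}. Uniform weak smoothness is exactly what makes the comparison constants in (\ref{eqeq}) uniform along the whole sequence $\{f_k\}$.

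Fix $\epsilon \in (0, 1)$ and choose $j_0 = j_0(\epsilon)$ so that $f_k(B_R(x) \setminus D_k) \subset \bigcap_{j \ge j_0} \mathcal{R}_Y(\epsilon, t_j, \tau_j)$ for every $k$. Proposition \ref{prop:l1} then gives, for $\meas_X$-a.e.\ $z$ and every $j \ge j_0$, the two-sided bound
\[
(1 - C(n)\epsilon) e_Y(f_k)(z) \le c_N t_j^{(N+2)/2} e_{Y, t_j}(f_k)(z) \le (1 + C(n)\epsilon) e_Y(f_k)(z),
\]
obtained by contracting (\ref{eqeq}) with $g_X$ and using that both $f_k^*g_Y$ and $(\tilde{\Phi}_{t_j}^Y \circ f_k)^*g_{L^2}$ are pointwise positive semidefinite, so that Hilbert--Schmidt norms are dominated by traces (Proposition \ref{Riemdef} for $|g_X|=\sqrt{n}$). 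Integrating and invoking hypothesis (\ref{eqwwd}) yields $c_N t_j^{(N+2)/2} \mathcal{E}_{B_R(x), Y, t_j}(f_k) \le C$ uniformly in $k$ and in $j \ge j_0$. Applying Theorem \ref{propcompact} for each fixed $j$ and performing a Cantor diagonal extraction over $j$, one extracts a single subsequence (still labelled $\{f_k\}$) and a limit map $f: B_R(x) \to Y$ with $f_k \to f$ $\meas_X$-a.e., such that $f$ is a $t_j$-Sobolev map for every $j \ge j_0$ (consistency across $j$ follows from uniqueness of the $\meas_X$-a.e.\ pointwise limit).

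Next, the uniform weak smoothness transfers to $f$: at every $z$ where $f_k(z) \to f(z)$, the inclusion $f_k(z) \in \bigcap_{j \ge j_0(\epsilon)} \mathcal{R}_Y(\epsilon, t_j, \tau_j)$ passes to the limit because
\[
y \mapsto \sup_{r \in (0, \tau_j]} \frac{1}{\mathcal{H}^N(B_r(y))} \int_{B_r(y)} |g_Y - c_N t_j^{(N+2)/2} g_{t_j}| \di \mathcal{H}^N
\]
is lower semicontinuous on $Y$ (its inner integrals are continuous in $y$ for all $r$ with $\mathcal{H}^N(\partial B_r(y)) = 0$, i.e.\ for all but countably many $r$). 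Running $\epsilon$ through $\epsilon_m \downarrow 0$ and discarding a $\meas_X$-null set $D$ yields $f(B_R(x) \setminus D) \subset \mathcal{R}_Y(\{(t_j, \tau_j)\}_j)$, so Theorem \ref{propcompatr} promotes $f$ to a Sobolev map. For (\ref{eq:lowesemicoasawas}), the two-sided form of (\ref{eqeq}) applied to $f_k$ gives
\[
\int \phi_k e_Y(f_k) \di \meas_X \ge (1 - C(n)\epsilon) \int \phi_k \cdot c_N t_j^{(N+2)/2} e_{Y, t_j}(f_k) \di \meas_X
\]
for all $j \ge j_0(\epsilon)$; letting $k \to \infty$ via (\ref{eq:lowesemico11}) of Theorem \ref{propcompact}, then $j \to \infty$ using the $L^1$-convergence $c_N t_j^{(N+2)/2} e_{Y, t_j}(f) \to e_Y(f)$ of Proposition \ref{prop:l1}, and finally $\epsilon \to 0$, yields (\ref{eq:lowesemicoasawas}); specializing $\phi_k \equiv \phi \equiv 1$ gives (\ref{oorr}).

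The most delicate point is transferring uniform weak smoothness through the pointwise limit, which requires the lower semicontinuity above together with aligning the diagonal extraction so that the single limit $f$ serves simultaneously as a $t_j$-Sobolev map for all sufficiently large $j$. The remaining steps amount to a routine combination of Theorems \ref{propcompact} and \ref{propcompatr} with Proposition \ref{prop:l1}.
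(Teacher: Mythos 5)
Your proposal is correct and follows essentially the same route as the paper's proof: a uniform bound on the approximate energies $t_j^{(N+2)/2}\mathcal{E}_{B_R(x),Y,t_j}(f_k)$ via (\ref{eqeq}), compactness from Theorem \ref{propcompact} with a diagonal extraction, transfer of uniform weak smoothness to the a.e.\ limit, promotion to a Sobolev map by Theorem \ref{propcompatr}, and lower semicontinuity of the energy by combining (\ref{eq:lowesemico11}) with Proposition \ref{prop:l1}. The only difference is that you spell out the closedness/lower-semicontinuity argument behind the stability of the smoothable sets under pointwise limits, which the paper leaves implicit.
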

\begin{proof}
Note that (\ref{eqeq}) with (\ref{eqwwd}) shows
\begin{equation}\label{baswabtauis}
C:=\sup_{i,l}t_i^{(N+2)/2}\mathcal{E}_{B_R(x), Y, t_i}(f_l)<\infty.
\end{equation}
By Theorem \ref{propcompact} after passing to a subsequence there exists a map $f:B_R(x) \to Y$ such that $f_j$ converge to $f$ for $\meas_X$-a.e. $x \in B_R(x)$ and that $f$ is a $t_i$-Sobolev map for any $i$. In particular by the first assumption we see that $f(B_R(x) \setminus D)$ is uniformly weakly smooth for $\{(t_i, \tau_i)\}_i$ and some $\meas_X$-negligible set $D$. 
Since Theorem \ref{propcompact} yields
\begin{equation}\label{137282910}
\liminf_{j \to \infty}\mathcal{E}_{B_R(x), Y, t_i}(f_j)\ge \mathcal{E}_{B_R(x), Y, t_i}(f),\quad \forall i \in \mathbb{N},
\end{equation}
letting $i \to \infty$ in (\ref{137282910}) with (\ref{baswabtauis}) proves that $f$ is a $0$-Sobolev map. Thus Theorem \ref{propcompatr} shows that $f$ is a Sobolev map.

Then (\ref{eqeq}) shows
\begin{equation}
\int_{B_R(x)}\left| c_Nt_i^{(N+2)/2}e_{Y, t_i}(f)-e_Y(f)\right| \di \meas_X\le C(n)\epsilon \int_{B_R(x)}\left| c_Nt_i^{(N+2)/2}e_{Y, t_i}(f)\right| \di \meas_X\le C(n)\epsilon \cdot C \cdot c_N,
\end{equation}
whenever $f(z) \in \mathcal{R}_Y(\epsilon, t_i, \tau_i)$ holds for $\meas_X$-a.e. $z \in B_R(x)$.
Thus it follows from (\ref{eq:lowesemico11}) that (\ref{eq:lowesemicoasawas}) is satisfied.
\end{proof}
As a corollary of the above results we obtain a variant of $\Gamma-$convergence for $\mathcal{E}_{U, Y, t_i}$ to $\mathcal{E}_{U, Y}$;
\begin{corollary}[A variant of variational convergence]
Let $t_i \to 0^+$ be a convergent sequence in $(0, 1)$ and let $\{\tau_i\}_i$ be a sequence in $(0, \infty)$. Then the energies $c_Nt_i^{(N+2)/2}\mathcal{E}_{U, Y, t_i}$ converge to $\mathcal{E}_{U, Y}$ in the following sense.
\begin{enumerate}
\item We have
\begin{equation}
\liminf_{i \to \infty}c_Nt_i^{(N+2)/2}\mathcal{E}_{U, Y, t_i}(f_i) \ge \mathcal{E}_{U, Y}(f)
\end{equation}
for any $\meas_X$-a.e. convergent sequence $f_i:U \to Y$ to $f:U \to Y$ satisfying that $\{f_i(U \setminus D_i)\}_i$ is uniformly weakly smooth for $\{(t_i, \tau_i)\}_i$ and some $\meas_X$-negligible subsets $D_i$ of $U$. 
\item We have
\begin{equation}
\lim_{i \to \infty}c_Nt_i^{(N+2)/2}\mathcal{E}_{U, Y, t_i}(f)=\mathcal{E}_{U, Y}(f)
\end{equation}
for any $0$-Sobolev map $f:U \to Y$.
\end{enumerate}
\end{corollary}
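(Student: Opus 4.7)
Both parts will hinge on identifying $(\tilde{\Phi}_{t_i}\circ f)^*g_{L^2}$ with $f^*g_Y$ up to an $O(\epsilon)$ error on the uniformly weakly smooth set, as recorded pointwise by (\ref{eqeq}) of Proposition \ref{prop:l1}. Part~(2) should follow at once: a $0$-Sobolev map $f$ (whose image lies $\meas_X$-a.e.\ in $\mathcal{R}_Y(\{(t_i,\tau_i)\}_i)$, as the ambient setup of Section \ref{asympto} requires) is Sobolev by Theorem \ref{propcompatr}, so Proposition \ref{prop:l1} gives $c_Nt_i^{(N+2)/2}e_{Y,t_i}(f)\to e_Y(f)$ in $L^1(U,\meas_X)$, and integration then yields the claimed limit.

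For Part~(1), my plan is a diagonal argument feeding into Theorem \ref{corcompactness}. I will assume $L:=\liminf_i c_Nt_i^{(N+2)/2}\mathcal{E}_{U,Y,t_i}(f_i)$ is finite and pass to a subsequence attaining $L$. Each $f_l$ is then $t_l$-Sobolev and so weakly smooth; combined with uniform weak smoothness, Proposition \ref{prop:lipschitzlusin} shows $f_l$ is Lipschitz-Lusin, whence Proposition \ref{prop:def} defines $f_l^*g_Y$ unambiguously $\meas_X$-a.e. Fixing $\epsilon\in(0,1)$ and extracting $i_0=i_0(\epsilon)$ from the uniform weak smoothness (so $f_l(U\setminus D_l)\subset\bigcap_{j\ge i_0}\mathcal{R}_Y(\epsilon,t_j,\tau_j)$ for every $l$), the pointwise estimate (\ref{eqeq}) applied with $i=l\ge i_0$, paired with $g_X$, and combined with the dimensional comparison $|T|_{HS}\le\langle T,g_X\rangle\le\sqrt{n}|T|_{HS}$ valid for positive-semidefinite symmetric tensors $T$, will yield
\begin{equation}\label{eq:plan-key}
\bigl|c_Nt_l^{(N+2)/2}\mathcal{E}_{U,Y,t_l}(f_l)-\mathcal{E}_{U,Y}(f_l)\bigr|\le C(n)\epsilon\,\min\bigl\{\mathcal{E}_{U,Y}(f_l),\,c_Nt_l^{(N+2)/2}\mathcal{E}_{U,Y,t_l}(f_l)\bigr\}.
\end{equation}

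The side of (\ref{eq:plan-key}) bounded by the $t_l$-energy will provide $\sup_l\int_U|f_l^*g_Y|\di\meas_X<\infty$; running (\ref{eqeq}) in the other direction, with $l$ fixed and $i\ge i_0$ varying, will further give $\sup_{i\ge i_0}t_i^{(N+2)/2}\mathcal{E}_{U,Y,t_i}(f_l)<\infty$, so that Theorem \ref{propcompatr} upgrades each $f_l$ to a Sobolev map with $\sup_l\mathcal{E}_{U,Y}(f_l)<\infty$. Theorem \ref{corcompactness} then applied to $\{f_l\}$ delivers, up to a further subsequence, $\liminf_l\mathcal{E}_{U,Y}(f_l)\ge \mathcal{E}_{U,Y}(f)$ (the $\meas_X$-a.e.\ uniqueness of the pointwise limit forces the target to coincide with the given $f$); combining with (\ref{eq:plan-key}) produces $L\ge(1-C(n)\epsilon)\mathcal{E}_{U,Y}(f)$, and letting $\epsilon\to 0^+$ closes Part~(1).

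The main obstacle is the chicken-and-egg flavor of Part~(1): the inequality (\ref{eqeq}) is stated for Sobolev maps, yet I wish to apply it to $f_l$ before knowing $f_l$ is Sobolev. The resolution, as sketched above, is that granted only weak smoothness and the uniform inclusion of images in $\bigcap_{j\ge i_0}\mathcal{R}_Y(\epsilon,t_j,\tau_j)$, the estimate (\ref{eqeq}) is a purely pointwise statement on each Lipschitz piece furnished by Proposition \ref{prop:lipschitzlusin}; the $t_l$-energy bound thereby converts directly into an $L^1$-bound on $|f_l^*g_Y|$, which in turn feeds Theorem \ref{propcompatr} and closes the loop.
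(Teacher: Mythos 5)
Your argument is correct and follows essentially the paper's own route: part (2) is exactly the paper's one-line appeal to Proposition \ref{prop:l1}, and part (1) rests, as in the paper, on the two-sided comparison (\ref{eqeq})/(\ref{eqeqqe}) between the normalized $t_l$-energy and $\mathcal{E}_{U,Y}$, combined with the lower semicontinuity supplied by Theorem \ref{corcompactness}. You are in fact more explicit than the paper about the bootstrapping: first upgrading each $f_l$ to a Sobolev map with uniformly bounded energy before invoking compactness, and your observation that (\ref{eqeqqe}) is a pointwise statement on the Lipschitz pieces furnished by Proposition \ref{prop:lipschitzlusin} --- hence available before the Sobolev property is known --- is exactly the right way to break the apparent circularity. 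The one place your write-up needs a small patch is the invocation of Theorem \ref{corcompactness} directly on the open set $U$: that theorem (and Proposition \ref{propcompact} behind it) is stated only for balls $B_R(x)$ with $R\in(0,\infty]$, so a general open $U$ is not literally within its scope. The paper handles this with a Vitali covering: one chooses pairwise disjoint closed balls $\overline{B}_{r_j}(x_j)\subset U$ with $U\setminus\bigsqcup_{j\le k}\overline{B}_{r_j}(x_j)\subset\bigcup_{j>k}\overline{B}_{5r_j}(x_j)$ for every $k$, applies the ball version of the compactness/lower-semicontinuity statement on each $B_{r_j}(x_j)$, sums over $j\le k$ using disjointness, and then lets $k\to\infty$, the tail being controlled by $\sum_{j>k}\meas_X(B_{5r_j}(x_j))\le C\sum_{j>k}\meas_X(B_{r_j}(x_j))\to 0$. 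Either insert that decomposition or justify that the compactness argument runs verbatim on $U$; everything else in your plan is sound.
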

\begin{remark}
With the help of Theorem \ref{corcompactness}, in the case of weakly smooth targets, one can show the full variational convergence (i.e. $\Gamma-$convergence) of the approximate energy to our new energy. Since we will not make use of it here, we do not pursue into this direction. 
\end{remark}
\begin{proof}
Let us check only (1) because (2) is a direct consequence of Proposition \ref{prop:l1}.
Applying Vitali's covering theorem, there exists a pairwise disjoint sequence of closed balls $\{\overline{B}_{r_i}(x_i)\}_i$ such that $\overline{B}_{5r_i}(x_i) \subset U$ holds for any $i$ and that 
\begin{equation}\label{vitali}
U \setminus \bigsqcup_{i=1}^k\overline{B}_{r_i}(x_i) \subset \bigcup_{i=k+1}^{\infty}\overline{B}_{5r_i}(x_i),\quad \forall k \in \mathbb{N}
\end{equation}
holds. Fix $k \in \mathbb{N}$ and take $f_i, f$ as in the assumption. Theorem \ref{corcompactness} yields
\begin{align}\label{asnauarbwawa}
\liminf_{i \to \infty}c_Nt_i^{(N+2)/2}\mathcal{E}_{U, Y, t_i}(f_i)&\ge \liminf_{i \to \infty}\sum_{j=1}^k\left(c_Nt_i^{(N+2)/2}\mathcal{E}_{B_{r_j}(x_j), Y, t_i}(f_i)\right) \nonumber \\
&\ge \sum_{j=1}^k\mathcal{E}_{B_{r_i}(x_i), Y}(f) \nonumber \\
&=\mathcal{E}_{\sqcup_{i=1}^kB_{r_i}(x_i), Y}(f).
\end{align}
Then letting $k \to \infty$ in (\ref{asnauarbwawa}) completes the proof of (1) because of
\begin{equation}
\lim_{k \to \infty}\sum_{i=k+1}^{\infty}\meas_X(B_{5r_i}(x_i))\le C\lim_{k \to \infty}\sum_{i=k+1}^{\infty}\meas_X(B_{r_i}(x_i)) =0.
\end{equation}
\end{proof}

\subsection{Special case}\label{sub:special}
In this section let us consider a Borel map $f:A \to Y$ with $f_{\sharp}(\meas_X\res_A) \ll \mathcal{H}^N$.
\begin{corollary}\label{cor:isom}
Let $B$ be a Borel subset of $Y$ and let $F:B \to Y$ be a locally isometric embedding as metric spaces, namely, for any $y \in B$ there exists $r \in (0, 1)$ such that $\dist_Y(F(z), F(w))=\dist_Y(z, w)$ holds for all $z, w \in B_r(y) \cap B$. Then 
\begin{equation}\label{eqloca}
F^*g_Y=g_Y.
\end{equation}
In particular $\mathcal{E}_{B, Y}(F)=N\mathcal{H}^N(B)/2$.
\end{corollary}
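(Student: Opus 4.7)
The idea is to reduce $F$ to countably many $1$-bi-Lipschitz isometric pieces, apply the Rademacher-type blow-up Theorem~\ref{prop:rademacher} to identify each infinitesimal $F^0$ as a linear isometry of $\mathbb{R}^N$, and then read off the identity $F^*g_Y = g_Y$.

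First I would use separability of $B$ to cover $B$ by countably many Borel sets $D_k$ (taken inside the small balls given by the local-isometry hypothesis) so that each restriction $F|_{D_k}\colon D_k \to Y$ preserves distances, hence is a $1$-bi-Lipschitz embedding. On each such piece $F|_{D_k}$ preserves the local $N$-dimensional Hausdorff measure, so $F_\sharp (\mathcal{H}^N \res B) \ll \mathcal{H}^N$, which places the problem inside the framework of this Special case subsection. Together with the structural results already established there, this yields $F(B \setminus D) \subset \mathcal{R}_Y(\{(t_i, \tau_i)\}_i)$ for some sequences $\{(t_i, \tau_i)\}_i$ and some $\mathcal{H}^N$-negligible $D \subset B$. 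Consequently $F$ is Lipschitz-Lusin (Proposition~\ref{prop:lipschitzlusin}) and its pull-back $F^*g_Y$ is well-defined in the sense of Proposition~\ref{prop:def}.

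Next I would invoke Theorem~\ref{prop:rademacher} at an $\mathcal{H}^N$-a.e. point $y \in D_k$; since the domain is $(Y, \dist_Y, \mathcal{H}^N)$, its essential dimension equals $N$ by Theorem~\ref{thm:bishop}(1). The theorem provides a sequence $r_i \to 0^+$ along which the rescaled maps $F\colon (D_k, r_i^{-1}\dist_Y) \to (Y, r_i^{-1}\dist_Y)$ converge uniformly on bounded sets to a linear map $F^0\colon \mathbb{R}^N \to \mathbb{R}^N$. Because $F|_{D_k}$ is distance preserving and because $y \in \mathrm{Den}(D_k)$ for $\mathcal{H}^N$-a.e. $y \in D_k$ by (\ref{lebdesntity}) and (\ref{densitymetri}), every pair of points in the blow-up tangent $\mathbb{R}^N$ can be approximated by rescaled pairs in $D_k$, on which $F$ is distance preserving. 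Passing to the limit forces $F^0$ to be distance preserving, hence a linear isometry, so $(F^0)^*g_{\mathbb{R}^N} = g_{\mathbb{R}^N}$.

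Finally, Theorem~\ref{prop:rademacher}(4) gives $L^2_{\mathrm{loc}}$-strong convergence of the rescaled $F^*g_Y$ to $g_{\mathbb{R}^N}$, while Proposition~\ref{weakriem} gives the same for the rescaled $g_Y$. Comparing their Lebesgue-point values yields $F^*g_Y = g_Y$ at $\mathcal{H}^N$-a.e. $y \in B$, proving (\ref{eqloca}). For the energy, Proposition~\ref{Riemdef} gives $|g_Y|^2 = N$ a.e., so $e_Y(F) = \langle F^*g_Y, g_Y\rangle = |g_Y|^2 = N$ a.e., and integrating yields $\mathcal{E}_{B, Y}(F) = N\mathcal{H}^N(B)/2$. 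The main obstacle I anticipate is the well-definedness step: ensuring that $F(B)$ lies in $\mathcal{R}_Y(\{(t_i, \tau_i)\}_i)$ up to a negligible set is delicate and rests crucially on the absolute continuity assumption together with the structural results developed earlier in this Special case subsection.
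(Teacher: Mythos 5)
Your argument is correct, but it takes a genuinely different route from the paper. The paper's proof is a one-liner: it applies Proposition \ref{prop:iso} with $f=F$ and $h=F^{-1}$ (the inverse of a local isometric embedding being itself locally $(1\pm\delta)$-bi-Lipschitz for every $\delta$), so that condition (3) of that proposition yields $(F^{-1}\circ F)^*g_Y=F^*g_Y$, i.e.\ $g_Y=F^*g_Y$ directly; the hypothesis that the image and its preimage lie in the smooth part modulo null sets is supplied, exactly as in your write-up, by $F_\sharp(\mathcal H^N\res_B)\ll\mathcal H^N$ together with Proposition \ref{propdensereg}. You instead run the blow-up machinery of Theorem \ref{prop:rademacher} at $\mathcal H^N$-a.e.\ point, identify the differential $F^0$ as a linear isometry of $\mathbb{R}^N$ using that a.e.\ point of each distance-preserving piece $D_k$ is a metric density point, and then pass the identity $(F^0)^*g_{\mathbb{R}^N}=g_{\mathbb{R}^N}$ back through the $L^2_{\mathrm{loc}}$-strong convergence of the rescaled pull-backs. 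Both routes work; the paper's is shorter because Proposition \ref{prop:iso} already encodes the stability of pull-backs under composition with almost-isometries (proved there via the truncated embeddings $\tilde\Phi_t^l$ and Corollary \ref{corfromlip}), whereas yours makes the infinitesimal rigidity explicit and is arguably more geometric.

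Two small points to tighten. First, Proposition \ref{prop:lipschitzlusin} is stated for weakly smooth maps on open sets and is not what you need here: the Lipschitz--Lusin property of $F$ follows immediately from your own decomposition of $B$ into pieces $D_k$ on which $F$ preserves distances, so you should just say that. Second, in the final comparison of Lebesgue-point values you should note that the rescaled $g_Y$ converges $L^2_{\mathrm{loc}}$-\emph{strongly} (not merely weakly) to $g_{\mathbb{R}^N}$ at a regular point, since $|g_{Y}|\equiv\sqrt N$ and the limit has the same constant norm; only then does the $L^1$-norm of the difference tensor on rescaled balls tend to $0$, which is what Lebesgue differentiation requires to conclude $|F^*g_Y-g_Y|(y)=0$ for a.e.\ $y$.
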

\begin{proof}
Applying Proposition \ref{prop:iso} as $f=F, h=F^{-1}$ completes the proof.
\end{proof}
In Corollary \ref{cor:isom}, recall that in general, the equality (\ref{eqloca}) does not imply the local isometry of $F$. See Remark \ref{asahoarbasj}.

Finally we introduce the following result which is a combination of previous results.
\begin{theorem}\label{asbairanwi}
Let  $f:U \to Y$ be a weakly smooth map with $f_{\sharp}(\meas_X\res_U)\ll \mathcal{H}^N$ and 
\begin{equation}
\liminf_{t \to 0^+}t^{(N+2)/2}\mathcal{E}_{U, Y, t}(f)<\infty.
\end{equation}
Then we have the following.
\begin{enumerate}
\item The map $f$ is a Sobolev map.
\item The normalized $t$-energy densities $c_Nt^{(N+2)/2}e_{Y, t}(f)$ and the normalized Korevaar-Schoen energy densities $(n+2)\mathrm{ks}_{Y, t}(f)^2$ $L^1$-strongly converge to $e_Y(f)$ on $U$ as $t \to 0^+$.
\item We have
\begin{equation}
G_f(x)=\mathrm{Lip} (f|_{\tilde{D}})(x)=\left| f^*g_Y\right|_B(x),\quad \text{for $\meas_X$-a.e. $x \in \tilde{D}$}
\end{equation}
whenever the restriction of $f$ to a Borel subset $\tilde{D}$ of $U$ is Lipschitz. 
\end{enumerate}
\end{theorem}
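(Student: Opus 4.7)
\textbf{Proof plan for Theorem~\ref{asbairanwi}.}

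The plan is to reduce everything to Theorem~\ref{propcompatr}, Proposition~\ref{prop:l1} and Theorem~\ref{koseergy} by using the absolute continuity assumption $f_{\sharp}(\meas_X\res_U)\ll\mathcal{H}^N$ to produce, for any approximating sequence $t_i\to 0^+$, a $\meas_X$-negligible set $D\subset U$ with $f(U\setminus D)\subset\mathcal{R}_Y(\{(t_i,d)\}_i)$ where $d$ is an upper bound on $\diam(Y,\dist_Y)$. First, choose a sequence $t_i\to 0^+$ along which $t_i^{(N+2)/2}\mathcal{E}_{U,Y,t_i}(f)$ is uniformly bounded, using the $\liminf$ hypothesis. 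Applying Proposition~\ref{propdensereg} to this sequence yields a subsequence $\{t_{i(j)}\}_j$ with $\mathcal{H}^N\bigl(Y\setminus\mathcal{R}_Y(\{t_{i(j)}\}_j)\bigr)=0$; by the absolute continuity of $f_{\sharp}(\meas_X\res_U)$ with respect to $\mathcal{H}^N$, the set $D:=f^{-1}(Y\setminus\mathcal{R}_Y(\{t_{i(j)}\}_j))$ is $\meas_X$-negligible, and by definition $\mathcal{R}_Y(\{t_{i(j)}\}_j)\subset\mathcal{R}_Y(\{(t_{i(j)},d)\}_j)$.

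With this data in hand, Theorem~\ref{propcompatr} applies directly: its equivalent conditions (1) and (2) are satisfied by $f$, giving that $f$ is a Sobolev map (this is claim (1) of the theorem), and providing the identity $G_f(x)=(|f^*g_Y|_B(x))^{1/2}$ $\meas_X$-a.e.\ in $U$, together with $G_f(x)=\mathrm{Lip}(f|_{\tilde D})(x)$ $\meas_X$-a.e.\ on any Borel subset $\tilde D\subset U$ where $f$ is Lipschitz. Combining these two equalities yields claim (3).

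For claim (2), apply Theorem~\ref{koseergy} with the same $(t_{i(j)},d)$-data to obtain the $L^1$-strong convergence $(n+2)\mathrm{ks}_{Y,r}(f)^2\to e_Y(f)$ as $r\to 0^+$. For the convergence $c_N t^{(N+2)/2}e_{Y,t}(f)\to e_Y(f)$ in $L^1(U,\meas_X)$ as $t\to 0^+$, I would argue by the subsequence principle: given any sequence $s_j\to 0^+$, use Proposition~\ref{propdensereg} again to extract a subsequence $\{s_{j(k)}\}_k$ along which the smooth part $\mathcal{R}_Y(\{s_{j(k)}\}_k)$ is $\mathcal{H}^N$-conull; absolute continuity provides a $\meas_X$-negligible $D'\subset U$ with $f(U\setminus D')\subset\mathcal{R}_Y(\{(s_{j(k)},d)\}_k)$, and then Proposition~\ref{prop:l1} yields $c_N s_{j(k)}^{(N+2)/2}e_{Y,s_{j(k)}}(f)\to e_Y(f)$ in $L^1(U,\meas_X)$. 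The key point that makes this limit independent of the chosen subsequence is the uniqueness assertion \eqref{eq:equallimit} in Proposition~\ref{prop:def}, which ensures that the pull-back tensor produced from $\{s_{j(k)}\}_k$ agrees $\meas_X$-a.e.\ with the one produced from the original sequence $\{t_{i(j)}\}_j$ (for instance by applying the uniqueness on each piece of a Lipschitz-Lusin decomposition of $f$ obtained from Proposition~\ref{prop:lipschitzlusin}). Hence every sequence has a subsequence converging in $L^1$ to the same limit $e_Y(f)$, proving convergence of the full family.

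The only nontrivial point is the third paragraph: making sure that the full $t\to 0^+$ limit coincides with $e_Y(f)$ regardless of which subsequence of smoothable scales is used, which I expect to handle by reducing to the Lipschitz-Lusin uniqueness in Proposition~\ref{prop:def} via Proposition~\ref{prop:lipschitzlusin}; the remaining steps are direct invocations of the machinery already developed.
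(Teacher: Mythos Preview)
Your proposal is correct and follows essentially the same route as the paper: the paper's own proof is the one-liner ``This is a direct consequence of Propositions~\ref{propdensereg}, \ref{prop:l1}, Corollary~\ref{cor:ksks}, Theorems~\ref{propcompatr} and \ref{koseergy}.'' You have simply unpacked this, and in particular you make explicit the subsequence principle needed to upgrade the convergence in Proposition~\ref{prop:l1} (stated along a fixed sequence $\{t_i\}$) to the full limit $t\to 0^+$ claimed in part~(2); the uniqueness input from Proposition~\ref{prop:def} that you invoke is exactly what guarantees the limit object $e_Y(f)$ is independent of the chosen subsequence.
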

\begin{proof}
This is a direct consequence of Propositions \ref{propdensereg}, \ref{prop:l1}, Corollary \ref{cor:ksks}, Theorems \ref{propcompatr} and \ref{koseergy}
\end{proof}
\section{A generalization of Takahashi's theorem}\label{takahashitheoremsec}
Let us fix a finite dimensional $\RCD$ space $(X, \dist_X, \meas_X)$. We start this section by giving the definition of the $L^1$-Laplacian. 
\begin{definition}[$D_1(\Delta_X)$]
Let us denote by $D_1(\Delta_X)$ the set of all functions $\phi \in H^{1, 2}(X, \dist_X, \meas_X)$ satisfying that there exists a unique $\psi \in L^1(X, \meas_X)$, denoted by $\Delta_X\phi$,  such that 
\begin{equation}\label{eqtest}
\int_X\langle \dist \phi, \dist \tilde{\psi} \rangle \di \meas_X=-\int_X\psi \tilde{\psi} \di \meas_X
\end{equation}
for any Lipschitz function $\tilde{\psi}$ on $X$ with compact support.
\end{definition}
Note that it is easy to check that (\ref{eqtest}) also holds for all $\phi \in D_1(\Delta_X)$ and any $\tilde{\psi} \in H^{1, 2}(X, \dist_X, \meas_X) \cap L^{\infty}(X, \meas_X)$ because letting $s \to 0^+$ and then $R\to \infty$ in the equality
\begin{equation}
\int_X\langle \dist \phi, \dist (f_R \cdot h_s(\tilde{\psi})) \rangle \di \meas_X=-\int_X\psi \cdot f_R \cdot h_s(\tilde{\psi}) \di \meas_X, \quad \forall s \in (0, \infty)
\end{equation}
complete the proof, where $f_R$ is a cut-off Lipschitz function satisfying $f_R|_{B_R(x)} \equiv 1, \supp f_R \subset B_{R^2}(x)$ and $|\nabla f_R|\le R^{-1}$.
\begin{proposition}\label{prop:harmonicchara}
Let $(\mathbb{S}^k(1), \dist_{\mathbb{S}^k(1)}, \mathcal{H}^k)$ be the $k$-dimensional standard unit sphere and let $f=(f_i)_i:X \to \mathbb{S}^k(1)$ be a Sobolev map.
Then the following four conditions are equivalent.
\begin{enumerate}
\item For any Lipschitz map $\phi :X \to \mathbb{R}^{k+1}$ with compact support, we have
\begin{equation}
\frac{\di}{\di t}\Big|_{t=0}\mathcal{E}_{X, \mathbb{S}^k(1)}\left( \frac{f+t\phi}{|f+t\phi |}\right)=0.
\end{equation}
\item  Each $f_i$ is in $D_1(\Delta_X)$ with
\begin{equation}
\Delta_Xf_i+e_{\mathbb{S}^k(1)}(f)f_i=0,\quad \forall i \in \{1, \ldots, k+1\}.
\end{equation}
\item We have
\begin{equation}\label{eaabsbatayrbays}
\frac{\di}{\di t}\Big|_{t=0}\mathcal{E}_{X, \mathbb{S}^k(1)}(f_t)=0
\end{equation}
for any map $(t, x) \mapsto f_t(x)=(f_{t, i}(x))_i \in \mathbb{S}^k(1)$ satisfying that $f_0=f$, that $f_{t, i} \in H^{1, 2}(X, \dist_X, \meas_X)$ and that the map $t \mapsto f_{t, i}$ is continuous at $0$ in $H^{1, 2}(X, \dist_X, \meas_X)$ with
\begin{equation}
\lim_{t \to 0}\int_X\left(\left| \frac{f_t-f}{|t|^{1/2}}\right|^2e_{\mathbb{S}^k(1)}(f) + \sum_{i=1}^{k+1}\left|\dist \left(\frac{f_{t, i}-f_i}{|t|^{1/2}}\right)\right|^2\right)\di \meas_X=0.
\end{equation}
\item The equality (\ref{eaabsbatayrbays}) holds for any map $(t, x) \mapsto f_t(x)=(f_{t, i}(x))_i \in \mathbb{S}^k(1)$ satisfying that $f_0=f$, that $f_{t, i} \in H^{1, 2}(X, \dist_X, \meas_X)$ and that the map $t \mapsto f_{t, i}$ is differentiable at $0$ in $H^{1, 2}(X, \dist_X, \meas_X)$ with
\begin{equation}
\frac{\di}{\di t}\Big|_{t=0}f_{t, i} \in L^{\infty}(X, \meas_X).
\end{equation}
\end{enumerate} 
\end{proposition}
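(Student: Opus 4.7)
The plan is to establish the equivalences via the chain $(1) \Leftrightarrow (2) \Rightarrow (4) \Rightarrow (1)$ together with $(2) \Rightarrow (3) \Rightarrow (1)$. As preliminaries: since $Y := \mathbb{S}^k(1) \hookrightarrow \mathbb{R}^{k+1}$ is a smooth isometric inclusion of a weakly smooth non-collapsed RCD space, Proposition \ref{prop:iso} gives $f^*g_Y = (\iota \circ f)^*g_{\mathbb{R}^{k+1}}$ and hence $e_Y(f) = \sum_{i=1}^{k+1} |\nabla f_i|^2$; each $f_i \in H^{1,2}(X,\dist_X,\meas_X)$ because $f$ is Sobolev and coordinate projections are Lipschitz; and the constraint $\sum_i f_i^2 \equiv 1$ yields the identities $\sum_i f_i \nabla f_i = \tfrac12\nabla|f|^2 = 0$ and $\langle \tilde f - f, f\rangle = -\tfrac12|\tilde f - f|^2$ for any $\tilde f$ with $|\tilde f|\equiv 1$.

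For $(1)\Leftrightarrow(2)$: take $\phi:X\to \mathbb{R}^{k+1}$ Lipschitz with compact support, so that $|f+t\phi|\ge 1/2$ on $\supp\phi$ and $\equiv 1$ off $\supp\phi$ for $|t|$ small. Applying $x\mapsto x^{-1/2}$ on $[1/4,\infty)$ to $|f+t\phi|^2 = 1+2t\langle f,\phi\rangle + t^2|\phi|^2$ yields the Taylor expansion $f_t := (f+t\phi)/|f+t\phi| = f + t(\phi - \langle f,\phi\rangle f) + t^2 R_t$, with $R_t$ uniformly bounded in $L^\infty\cap H^{1,2}$ on $\supp\phi$. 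Differentiating the energy at $t=0$ and simplifying the cross term via $\sum_i f_i\nabla f_i = 0$ gives
\[
\frac{d}{dt}\Big|_{t=0}\mathcal{E}_{X,Y}(f_t) = \int_X \sum_i \langle \nabla f_i, \nabla \phi_i\rangle\,\di\meas_X - \int_X \langle f,\phi\rangle\, e_Y(f)\,\di\meas_X.
\]
Vanishing of this for all such $\phi$ is exactly $\Delta_X f_i + e_Y(f) f_i = 0$ tested against compactly supported Lipschitz functions; the $L^1$-regularity required by $D_1(\Delta_X)$ follows from $|f_i|\le 1$ and $e_Y(f)\in L^1$. The converse direction reads the same calculation in reverse.

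For $(2)\Rightarrow(4)$: for an $H^{1,2}$-differentiable variation with $L^\infty$ derivative $v_i$, the differentiability of the squared $L^2$-norm yields $\frac{d}{dt}|_{t=0}\mathcal{E}(f_t) = \sum_i\int_X\langle \nabla f_i,\nabla v_i\rangle\,\di\meas_X$; since $v_i\in H^{1,2}\cap L^\infty$ is a valid test function for $f_i\in D_1(\Delta_X)$, (2) rewrites this as $\int_X e_Y(f)\sum_i f_i v_i\,\di\meas_X$, which vanishes because $\sum_i f_i v_i = 0$ (differentiate $|f_t|^2\equiv 1$ at $t=0$). For $(2)\Rightarrow(3)$: expand
\begin{align*}
2\mathcal{E}(f_t) - 2\mathcal{E}(f) = \int_X \sum_i |\nabla(f_{t,i}-f_i)|^2\,\di\meas_X + 2\sum_i \int_X\langle \nabla f_i, \nabla(f_{t,i} - f_i)\rangle\,\di\meas_X;
\end{align*}
the cross term, by (2) applied with test $f_{t,i}-f_i\in H^{1,2}\cap L^\infty$ and the sphere identity, equals $-\tfrac12\int_X e_Y(f)|f_t-f|^2\,\di\meas_X$, so dividing by $|t|$ gives
\[
\frac{2(\mathcal{E}(f_t) - \mathcal{E}(f))}{|t|} = \int_X \sum_i \Big|\nabla\tfrac{f_{t,i}-f_i}{|t|^{1/2}}\Big|^2\,\di\meas_X - \int_X e_Y(f)\Big|\tfrac{f_t - f}{|t|^{1/2}}\Big|^2\,\di\meas_X,
\]
whose two integrals vanish as $t\to 0$ by the hypothesis of (3).

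For $(3)\Rightarrow(1)$ and $(4)\Rightarrow(1)$: apply the respective hypothesis to the spherical perturbation $f_t = (f+t\phi)/|f+t\phi|$. For (4), the derivative at $0$ is $v = \phi - \langle f,\phi\rangle f$, a Lipschitz function of compact support and thus in $L^\infty\cap H^{1,2}$, and $H^{1,2}$-differentiability follows from the Taylor expansion above. For (3), the same Taylor expansion makes $(f_t - f)/|t|^{1/2}$ of order $|t|^{1/2}$ in $L^\infty$ on $\supp\phi$ and $\nabla((f_{t,i} - f_i)/|t|^{1/2})$ of order $|t|^{1/2}$ in $L^2$, so both integrals in the hypothesis are $O(|t|)\to 0$. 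The main technical obstacle is rigorously establishing the $H^{1,2}$-Taylor expansion of the nonlinear projection $(f+t\phi)/|f+t\phi|$, which rests on chain and Leibniz rules for Sobolev functions applied to the smooth composition $x\mapsto x^{-1/2}$ of the Sobolev function $|f+t\phi|^2$ on the compact subinterval $[1/4,4]$ of $(0,\infty)$ attained on $\supp\phi$ for small $t$.
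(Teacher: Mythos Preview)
Your proof is correct and follows essentially the same approach as the paper's own argument: both compute the first variation of the energy under the spherical projection $(f+t\phi)/|f+t\phi|$ via a Taylor expansion, use the orthogonality relation $\sum_i f_i\nabla f_i=0$ to simplify, and then handle the implications $(2)\Rightarrow(3)$ and $(2)\Rightarrow(4)$ by expanding $\mathcal{E}(f_t)-\mathcal{E}(f)$ and rewriting the cross term via the Euler--Lagrange equation and the sphere identity. One minor point: the identification $e_Y(f)=\sum_i|\nabla f_i|^2$ is obtained in the paper via Corollary~\ref{prop:iso2} (regular maps into $\mathbb{R}^m$) rather than Proposition~\ref{prop:iso}, since the inclusion $\mathbb{S}^k(1)\hookrightarrow\mathbb{R}^{k+1}$ is a regular map in the relevant sense.
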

\begin{proof}
First let us prove the implication from (1) to (2). Assume that (1) holds.
Let $\phi=(\phi_1, \ldots, \phi_{k+1}):X \to \mathbb{R}^{k+1}$ be a Lipschitz map with compact support. Note that $|f+t\phi |>0$ holds for any sufficiently small $t \in (0, 1)$, in particular, we have $(f_i+t\phi_i)/|f+t\phi | \in H^{1, 2}(X, \dist_X, \meas_X)$ which implies that the map $x \mapsto (f+t\phi)/|f+t\phi|$ is a Sobolev map.  Since
\begin{equation}\label{eq:es}
\frac{1}{|f+t\phi|}=1-f\cdot \phi t +o(t)
\end{equation}
as $t \to 0^+$, by a direct calculation with (\ref{eq:es}), we have
\begin{equation}
\frac{\di}{\di t}\Bigl|_{t=0}\left(\int_X\left| \dist \left( \frac{f_i+t\phi_i}{|f+t\phi|}\right)\right|^2\di \meas_X \right) =2\int_X\langle \dist f_i, \dist (\phi_i-f_i f \cdot \phi)\rangle\di \meas_X.
\end{equation}
Then by Corollary \ref{prop:iso2}, we have
\begin{align}\label{eq:laplace}
0=\frac{\di}{\di t}\Big|_{t=0}\mathcal{E}_{X, \mathbb{S}^k(1)}\left( \frac{f+t\phi}{|f+t\phi |}\right)&=\frac{1}{2}\sum_{i=1}^{k+1}\frac{\di}{\di t}\Bigl|_{t=0}\left(\int_X\left| \dist \left( \frac{f_i+t\phi_i}{|f+t\phi|}\right)\right|^2\di \meas_X \right) \nonumber \\
&=\sum_{i=1}^{k+1}\int_X\langle \dist f_i, \dist (\phi_i-f_i f \cdot \phi)\rangle\di \meas_X.
\end{align}
Since
\begin{equation}
\sum_{i=1}^{k+1}\int_X\langle \dist f_i,  f_i\dist (f \cdot \phi) \rangle\di \meas_X={\color{blue}\frac{1}{2}}\sum_{i=1}^{k+1}\int_X\langle \dist f_i^2, \dist (f \cdot \phi)\rangle \di \meas_X=\frac{1}{2}\int_X\langle \dist |f|^2, \dist (f \cdot \phi)\rangle \di \meas_X=0,
\end{equation}
(\ref{eq:laplace}) is equivalent to
\begin{equation}
\sum_{i=1}^{k+1}\int_X\langle \dist f_i, \dist \phi_i\rangle \di \meas_{X}=\int_Xe_{\mathbb{S}^k(1)}(f)f\cdot \phi \di \meas_X
\end{equation}
which proves that (2) holds because $\phi$ is arbitrary.

Next let us prove the implication from (2) to (3). Assume that (2) holds. Let us take $f_s$ as in (3).
Then applying Corollary \ref{prop:iso2} again shows as $t \to 0$
\begin{align}
&\frac{{\color{blue}2}}{t}\left(\mathcal{E}_{X, \mathbb{S}^k(1)}(f_t)-\mathcal{E}_{X, \mathbb{S}^k(1)}(f)\right)\nonumber \\
&=2\sum_{i=1}^{k+1}\int_X\left\langle \dist \left(\frac{f_{t, i}-f_i}{t}\right), \dist f_i\right\rangle\di \meas_X +\sum_{i=1}^{k+1}\int_X\left\langle \dist \left(\frac{f_{t, i}-f_i}{t}\right), \dist \left(f_{t, i}-f_i\right)\right\rangle\di \meas_X\nonumber \\
&{\color{blue}=2\sum_{i=1}^{k+1}\int_X \left(\frac{f_{t, i}-f_i}{t}\right) \cdot e_{\mathbb{S}^k(1)}(f) \cdot f_i \di \meas_X  +\sum_{i=1}^{k+1}\int_X\left|\dist \left(\frac{f_{t, i}-f_i}{|t|^{1/2}}\right)\right|^2\di \meas_X}\nonumber \\
&=\int_X\left|\frac{f_t-f}{|t|^{1/2}}\right|^2\cdot e_{\mathbb{S}^k(1)}(f)\di \meas_X +\sum_{i=1}^{k+1}\int_X\left|\dist \left(\frac{f_{t, i}-f_i}{|t|^{1/2}}\right)\right|^2\di \meas_X\nonumber \\
&\to 0,
\end{align}
which completes the proof of (3), {\color{blue}where we used the elementary fact that $\sum_{i=1}^{k+1}|f_{t, i}-f_i|^2=2-2\sum_{i=1}^{k+1}f_{t, i}f_i$ in order to get the third equality above.}

Similarly under assuming (2), as $t \to 0$,
\begin{align}
&\frac{{\color{blue}2}}{t}\left(\mathcal{E}_{X, \mathbb{S}^k(1)}(f_t)-\mathcal{E}_{X, \mathbb{S}^k(1)}(f)\right)\nonumber \\
&=2\sum_{i=1}^{k+1}\int_X\left\langle \dist \left(\frac{f_{t, i}-f_i}{t}\right), \dist f_i\right\rangle\di \meas_X +\sum_{i=1}^{k+1}\int_X\left\langle \dist \left(\frac{f_{t, i}-f_i}{t}\right), \dist \left(f_{t, i}-f_i\right)\right\rangle\di \meas_X\nonumber \\
&\to 2\sum_{i=1}^{k+1}\int_X\left\langle \dist \left(\frac{\di}{\di t}\Big|_{t=0}f_t\right)_i, \dist f_i\right\rangle\di \meas_X \nonumber \\
&=2\sum_{i=1}^{k+1}\int_X\left(\frac{\di}{\di t}\Big|_{t=0}f_t\right)_i \cdot e_{\mathbb{S}^k(1)}(f)f_i\di \meas_X \nonumber \\
&=\int_X\left(\frac{\di}{\di t}\Big|_{t=0}|f_t|^2\right) \cdot e_{\mathbb{S}^k(1)}(f)\di \meas_X=0,
\end{align}
which proves (4), where we used a fact that (\ref{eqtest}) holds for any $\tilde{\psi} \in H^{1, 2}(X, \dist_X, \meas_X) \cap L^{\infty}(X, \meas_X)$.

Since the remaining implications, from (3) to (1) and from (4) to (1), are trivial because the map $t \mapsto (f+t\phi)/|f+t\phi|$ satisfies the assumptions of (3) and (4), we conclude. 
\end{proof}
Based on the above proposition, let us give the following definition;
\begin{definition}[Harmonic/minimal map into sphere]
Let $f:X \to \mathbb{S}^k(1)$ be a Sobolev map.
\begin{enumerate}
\item{(Harmonicity)} The map $f$ is said to be \textit{harmonic} if one of the four conditions in Proposition \ref{prop:harmonicchara} is satisfied (thus all hold),
\item{(Minimality)} The map $f$ is said to be \textit{minimal} if it is isometric {\color{blue}(namely $f^*g_{\mathbb{S}^k(1)}=g_X$)} and harmonic.
\end{enumerate}
\end{definition}
We are now in a position to introduce the main result in this section (Theorem \ref{takahashitheorem}). The following gives a generalization of Takahashi's theorem \cite[Th.3]{takahashi} to $\RCD$ spaces. 
\begin{theorem}[Generalization of Takahashi's theorem]\label{theoremtakahashi}
Assume that $(X, \dist_X, \meas_X)$ is a compact $\RCD(K, N)$ space. Let $f=(f_i)_i:X \to \mathbb{S}^k(1)$ be an isometric Sobolev map. Then the following three conditions are equivalent.
\begin{enumerate}
\item The map $f$ is harmonic (thus it is minimal).
\item The equality (\ref{eaabsbatayrbays}) holds for any map $(t, x) \mapsto f_t(x) \in \mathbb{S}^k(1)$ satisfying that $f_0=f$, that $f_{t, i} \in H^{1, 2}(X, \dist_X, \meas_X)$ and that the map $t \mapsto f_{t, i}$ is differentiable at $0$ in $H^{1, 2}(X, \dist_X, \meas_X)$.
\item We see that $\meas_X=c\mathcal{H}^n$ for some $c \in (0, \infty)$, that $(X, \dist_X, \mathcal{H}^n)$ is a non-collapsed $\RCD(K, n)$ space and that $f$ is an eigenmap with $\Delta_Xf_i+nf_i=0$ for any $i$, where $n$ denotes the essential dimension of $(X, \dist_X, \meas_X)$.
\end{enumerate}
In particular if the above conditions hold, then $X$ is bi-H\"older homeomorphic to an $n$-dimensional closed manifold, $f$ is $1$-Lipschitz and for all $\epsilon \in (0,1)$ and $x \in X$, there exists $r \in (0, 1)$ such that $f|_{B_r(x)}$ is a $(1 \pm \epsilon)$-bi-Lipschitz embedding.
\end{theorem}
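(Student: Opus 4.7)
My plan is to prove the cycle $(1) \Leftrightarrow (3) \Leftrightarrow (2)$ and then to read off the closing regularity statements. The observation that drives the entire argument is that, since $f$ is isometric, its energy density is a constant:
\begin{equation*}
e_Y(f) = \langle f^*g_{\mathbb{S}^k(1)}, g_X\rangle = |g_X|^2 = n \qquad \meas_X\text{-a.e.},
\end{equation*}
where $n$ is the essential dimension of $(X,\dist_X,\meas_X)$ (Proposition \ref{Riemdef}). This reduces the Euler--Lagrange equation of Proposition \ref{prop:harmonicchara} to an eigenfunction equation whose eigenvalue is exactly $n$.

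For $(1)\Rightarrow(3)$ I would invoke Proposition \ref{prop:harmonicchara} to obtain $\Delta_X f_i + e_Y(f)\,f_i = 0$, substitute $e_Y(f) = n$, and conclude that each $f_i$ is an eigenfunction with eigenvalue $n$; the eigenfunction estimates (\ref{eq:eigenfunction}) then make $f_i$ Lipschitz with bounded Laplacian. The substantial step is the appeal to the rigidity theorem of \cite{honda20}, which states that an isometric eigenmap from a finite dimensional compact $\RCD$ space into $\mathbb{S}^k(1)$ whose eigenvalue equals the essential dimension of the source forces $\meas_X = c\mathcal{H}^n$ for some $c \in (0,\infty)$ and $(X,\dist_X,\mathcal{H}^n)$ to be non-collapsed. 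The converse $(3)\Rightarrow(1)$ is immediate: under (3) the essential dimension is $n$ by Theorem \ref{thm:bishop}, so $|g_X|^2 = n$, and the eigenfunction equation is precisely the Euler--Lagrange equation, so Proposition \ref{prop:harmonicchara} returns harmonicity.

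For $(1)\Leftrightarrow(2)$ the direction $(2)\Rightarrow(1)$ follows by specialising to the variations used in Proposition \ref{prop:harmonicchara}. For $(1)\Rightarrow(2)$, given an admissible $f_t$ with derivatives $\psi_i := \tfrac{d}{dt}\big|_{t=0} f_{t,i} \in H^{1,2}$, differentiating the constraint $|f_t|^2\equiv 1$ in $H^{1,2}$ yields $\sum_i f_i \psi_i \equiv 0$ $\meas_X$-a.e., and $H^{1,2}$-differentiability of the variation lets one compute
\begin{equation*}
\tfrac{d}{dt}\big|_{t=0}\mathcal{E}_{X,\mathbb{S}^k(1)}(f_t) = \sum_i \int_X \langle df_i, d\psi_i\rangle\,\di\meas_X = n \sum_i \int_X f_i \psi_i\,\di\meas_X = 0,
\end{equation*}
where the middle equality uses the $D_1(\Delta_X)$ duality (extended to $H^{1,2}\cap L^\infty$ test functions as noted immediately after the definition of $D_1(\Delta_X)$, with a truncation argument handling unbounded $\psi_i$).

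Finally, for the closing regularity statements: isometricity together with Proposition \ref{prop:ineq} gives $|f^*g_Y|_B = |g_X|_B = 1$, so Theorem \ref{propcompatr} and Proposition \ref{propsobtolip} yield the $1$-Lipschitz property of $f$. Taking $h$ to be the ambient inclusion $\mathbb{S}^k(1)\hookrightarrow\mathbb{R}^{k+1}$ (locally $(1\pm\epsilon)$-bi-Lipschitz for every $\epsilon$) and noting that $h\circ f = (f_1,\dots,f_{k+1})$ is a regular map since each $f_i$ is an eigenfunction with bounded Laplacian, Corollary \ref{asataras} delivers the local $(1\pm\epsilon)$-bi-Lipschitz embeddability of $f$ for every $\epsilon > 0$. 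A blow-up of $f$ then produces a harmonic bi-Lipschitz embedding of every tangent cone $T_yX$ into $\mathbb{R}^k$, so by Corollary \ref{lemlemelem} every $T_yX$ is isometric to $\mathbb{R}^n$ and $X = \mathcal{R}_n$; the bi-Hölder homeomorphism to a closed $n$-manifold then follows from the intrinsic Reifenberg theorem of Cheeger--Colding recalled after Definition \ref{weaklysmoothdef}. The main hurdle is locating the precise form of the rigidity statement from \cite{honda20} that closes $(1)\Rightarrow(3)$; all other steps are routine once $e_Y(f) = n$ is in hand.
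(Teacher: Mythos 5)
Your proposal is correct and follows essentially the same route as the paper: the key reduction $e_Y(f)=\langle f^*g_{\mathbb{S}^k(1)},g_X\rangle=|g_X|^2=n$, the Euler--Lagrange characterization from Proposition \ref{prop:harmonicchara}, the rigidity theorem of \cite{honda20} (Th.~1.2 there) to close $(1)\Leftrightarrow(3)$, and Corollary \ref{asataras} for the local bi-Lipschitz statement. Your direct verification of $(1)\Rightarrow(2)$ under mere $H^{1,2}$-differentiability is exactly the computation in the proof of Proposition \ref{prop:harmonicchara}, made admissible here because $e_Y(f)=n$ is bounded, which is also how the paper handles it.
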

\begin{proof}
First of all, note that the Proposition \ref{propsobtolip} shows that $f$ is Lipschitz, and that $\sum_i|\dist f_i|^2=e_Y(f)=|g_X|^2=n$ holds.

The implication from (1) to (2) follows from (3) of Proposition \ref{prop:harmonicchara}. The converse implication is justified by
the same reason in the proof of the implication from (3) to (1) in Proposition \ref{prop:harmonicchara}. Therefore we have the equivalence between (1) and (2).

The remaining equivalences are justified by \cite[Th.1.2]{honda20} (see also Proposition \ref{asataras}).
\end{proof}

\section{Behavior of energies with respect to measured Gromov-Hausdorff convergence}\label{secenergies}
Let us fix 
\begin{itemize}
\item a pointed measured Gromov-Hausdorff convergent sequences of pointed $\RCD(\hat{K}, \hat{N})$ spaces
\begin{equation}\label{eq:4}
(X_i, \dist_{X_i}, \meas_{X_i}, x_i) \stackrel{\mathrm{pmGH}}{\to} (X, \dist_X, \meas_X, x),
\end{equation}
\item a measured Gromov-Hausdorff convergent sequence of compact $\RCD(K, N)$ spaces
\begin{equation}\label{eq:mgh}
(Y_i, \dist_{Y_i}, \meas_{Y_i}) \stackrel{\mathrm{mGH}}{\to} (Y, \dist_Y, \meas_Y).
\end{equation}
\end{itemize}
We start this section by giving the following technical lemma  (recall Definition \ref{lebesguepoi}).
\begin{lemma}\label{pointwiseconv}
Let $R \in (0, \infty)$, let $f_i \in H^{1, 2}(B_R(x_i), \dist_{X_i}, \meas_{X_i})$ be an $H^{1, 2}$-bounded sequence, let $D_i$ be a sequence of $\meas_{X_i}$-negligible subsets of $B_R(x_i)$, and let $f \in L^2(B_R(x), \meas_X)$ be the $L^2$-strong limit of $f_i$ on $B_R(x)$. Then we have the following.
\begin{enumerate}
\item For any $\epsilon \in (0, 1)$, after passing to a subsequence, there exist a compact subset $A$ of $B_R(x)$ and a sequence of compact subsets $A_i$ of $B_R(x_i) \cap \mathrm{Leb}_1f_i$ such that $\meas_X(B_R(x) \setminus A)+\meas_{X_i}(B_R(x_i) \setminus A_i)<\epsilon$ holds for any $i$, that $A_i$ Gromov-Hausdorff converge to $A$ with respect to (\ref{eq:4}), that $\{f_i|_{A_i}\}_i$ is equi-Lipschitz, and that $\overline{f}_i(y_i) \to \overline{f}(y)$ holds whenever $y_i \in A_i \to y \in A \cap \mathrm{Leb}_1f$ (in particular $\overline{f}|_A$ is Lipschitz).
\item After passing to a subsequence, there exist a Borel subset $B$ of $B_R(x)$ and a sequence of Borel subsets $B_i$ of $B_R(x_i)$ such that $\meas_X(B_R(x)\setminus B)=0$ holds, that $B_i \subset B_R(x_i) \cap \mathrm{Leb}_1(f_i) \setminus D_i$ holds, $\meas_{X_i}(B_R(x_i) \setminus B_i) \to 0$ holds and that for any $y \in B$ there exists a sequence of $y_i \in B_i$ such that $y_i \to y$ holds and that $\overline{f}_i(y_i) \to \overline{f}(y)$ holds.
\end{enumerate}
\end{lemma}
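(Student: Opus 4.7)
The plan is to reduce to the classical Lusin--Lipschitz approximation: first truncate in terms of the maximal function of $|\nabla f_i|$, then combine with Arzel\`a--Ascoli along the pmGH convergence, and finally identify the limit with $\overline{f}$ at Lebesgue points. Concretely, let $M_i g(y) := \sup_{0<r<2R}\meas_{X_i}(B_r(y))^{-1}\int_{B_r(y)}|g|\di \meas_{X_i}$ be the restricted Hardy--Littlewood maximal operator. Since $\sup_i\|\nabla f_i\|_{L^2(B_R(x_i))}<\infty$ and $\RCD(\hat K,\hat N)$ spaces are uniformly doubling on balls of radius at most $2R$, the weak $(1,1)$ inequality gives, for $L = L(\epsilon)$ large, $\meas_{X_i}(\{M_i(|\nabla f_i|^2)>L^2\}\cap B_R(x_i))<\epsilon/2$ for all $i$. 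On $E_i := \{M_i(|\nabla f_i|^2)\le L^2\}\cap B_R(x_i)\cap \Leb_1 f_i$, the uniform $(1,2)$-Poincar\'e inequality for $\RCD(\hat K,\hat N)$ spaces together with a telescopic argument (as in \cite[Th.8.1.42]{HKST}) yields that $\overline{f}_i|_{E_i}$ is $C(\hat K,\hat N)L$-Lipschitz. By inner regularity, pick compact $A_i\subset E_i\setminus D_i$ with $\meas_{X_i}(B_R(x_i)\setminus A_i)<\epsilon$ (subtracting $D_i$ is harmless since it is $\meas_{X_i}$-negligible).

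\textbf{Passing to the limit.} Realizing the pmGH convergence in a common ambient space and invoking Blaschke's selection theorem, extract a subsequence along which $A_i$ Hausdorff-converges to a compact $A\subset \overline{B_R(x)}$. The upper semicontinuity $\limsup_i\meas_{X_i}(A_i)\le \meas_X(A)$ for closed limits, combined with $\meas_{X_i}(B_R(x_i))\to \meas_X(B_R(x))$ (a consequence of weak convergence together with the absence of mass on small spherical shells in $\RCD$ spaces), ensures $\meas_X(B_R(x)\setminus A)<\epsilon$ up to an arbitrarily small enlargement. Since $\{\overline{f}_i|_{A_i}\}_i$ is equi-Lipschitz and equi-bounded (the latter follows from an $L^2$-reference point plus the Lipschitz bound), the Arzel\`a--Ascoli theorem along pmGH gives, after a further subsequence, a uniform limit $\tilde f:A\to \setR$ that is Lipschitz. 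To identify $\tilde f$ with $\overline{f}|_A$, pick any $y\in A\cap \Leb_1 f$ and any lift $y_i\in A_i\to y$: averaging the Lipschitz bound on small balls and using the $L^2$-strong convergence $f_i\to f$ on $B_r(y_i)\to B_r(y)$, one lets $r\to 0^+$ to conclude $\tilde f(y)=\overline{f}(y)$, whence $\overline{f}_i(y_i)\to \overline{f}(y)$ for any such sequence.

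\textbf{Strategy for (2) and main obstacle.} The plan is a diagonal extraction: apply (1) with $\epsilon=2^{-k}$ to obtain compact $A^{(k)}\subset B_R(x)$ and $A_i^{(k)}\subset B_R(x_i)\cap \Leb_1 f_i\setminus D_i$ together with pointwise convergence at every $y\in A^{(k)}$. Diagonalizing in $(k,i)$, produce a single subsequence (still denoted $i$) and indices $N_k\uparrow\infty$ such that $\meas_{X_i}(B_R(x_i)\setminus A_i^{(k)})<2^{-k}$ for all $i\ge N_k$ and the Step~5 convergence at level $k$ holds beyond $N_k$. Set $B:=\bigcup_k A^{(k)}$, so $\meas_X(B_R(x)\setminus B)=0$, and for each $i$ let $k(i):=\max\{k:N_k\le i\}\uparrow \infty$ and $B_i:=A_i^{(k(i))}$; then $\meas_{X_i}(B_R(x_i)\setminus B_i)<2^{-k(i)}\to 0$. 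Given $y\in B$, fix $k$ with $y\in A^{(k)}$; for $i\ge N_k$ the already-constructed lifts $y_i\in A_i^{(k)}$ (which for $i\ge N_k$ can be chosen inside $A_i^{(k(i))}=B_i$ since $k(i)\ge k$ after re-indexing through the nested union $\tilde A_i^{(k)}:=\bigcup_{j\le k}A_i^{(j)}$) converge to $y$ with $\overline{f}_i(y_i)\to \overline{f}(y)$. The main technical obstacle is Step~3 of part~(1): arranging that measures, Hausdorff convergence of the Lusin sets $A_i$, and uniform convergence of $\overline{f}_i|_{A_i}$ all hold along a \emph{single} subsequence, and keeping the measure of the limit $A$ quantitatively close to $\meas_X(B_R(x))$ --- this is where the absence of mass concentration on the boundary $\partial B_R(x)$ (in the pmGH sense) must be invoked, potentially requiring a slight radius adjustment.
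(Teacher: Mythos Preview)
Your proposal is correct and follows essentially the same route as the paper's proof: maximal function truncation of $|\nabla f_i|^2$, Poincar\'e plus telescoping to get equi-Lipschitz Lusin sets, Blaschke/Hausdorff extraction of the limit set $A$, identification of the uniform limit with $\overline{f}$ at Lebesgue points via averages on small balls, and then a diagonal in $\epsilon=2^{-k}$ for part~(2). The only cosmetic differences are that the paper first multiplies by cut-offs to reduce to global $H^{1,2}$-functions and explicitly shrinks the radius to $R-L^{-2}$ (your ``slight radius adjustment'') to avoid boundary issues, and it carries out the identification step directly via the telescopic estimate rather than invoking Arzel\`a--Ascoli first; your nested-union fix $\tilde A_i^{(k)}=\bigcup_{j\le k}A_i^{(j)}$ in part~(2) is exactly what is needed to make the diagonal work.
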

\begin{proof}
Let us first check (1). 
Multiplying suitable cut-off functions to $f_i$, it is enough to check the assertion under assuming $f_i \in H^{1, 2}(X_i, \dist_{X_i}, \meas_{X_i}), f \in H^{1, 2}(X, \dist_X, \meas_X)$ with $C_0:=\sup_i\|f_i\|_{H^{1, 2}(X_i)}<\infty$.
Fix a sufficiently large $L \in [1, \infty)$, let 
\begin{equation}
\tilde{A}_{i}:=\left\{ z_i \in B_R(x_i); \sup_{r \in (0, \infty)}\frac{1}{\meas_{X_i}(B_r(z_i))}\int_{B_r(z_i)}|\nabla f_i|^2\di \meas_{X_i}\le L^2\right\}.
\end{equation}
The maximal function theorem shows that 
\begin{equation}
\meas_{X_i}(B_{R}(x_i) \setminus \tilde{A}_{i})\le \frac{C(\tilde{K}, \tilde{N}, R, C_0)}{L^2}\meas_{X_i}(B_{R}(x_i)).
\end{equation}
For any $i \in \mathbb{N}$, find a compact subset $A_{i}$ of $(B_{R-L^{-2}}(x_i) \cap \tilde{A}_{i} \cap \mathrm{Leb}_1(f_i)) \setminus D_i$ with
\begin{equation}
\meas_{X_i}(\tilde{A}_{i} \setminus A_{i})\le \frac{C(\tilde{K}, \tilde{N}, R, C_0)}{L^2}\meas_{X_i}(B_{R}(x_i)).
\end{equation}
After passing to a subsequence, with no loss of generality we can assume that there exists a compact subset $A$ of $\overline{B}_{R-L^{-2}}(x)$ such that $A_{i}$ Gromov-Hausdorff converge to $A$ with respect to (\ref{eq:4}) (see for instance subsection 2.2 of \cite{honda11}).
Then Vitali's covering theorem yields 
\begin{equation}
\limsup_{i \to \infty}\meas_{X_i}(A_{i})\le \meas_X(A),
\end{equation} 
in particular,
\begin{equation}
\frac{\meas_X(A)}{\meas_X(B_{R}(x))} \ge 1-\frac{C(\tilde{K}, \tilde{N}, R, C_0)}{L^2}.
\end{equation}
Take $y \in A \cap \mathrm{Leb}_1(f)$, fix a sufficiently small $\epsilon \in (0, 1)$ and find $r \in (0, L^{-1}\epsilon )$ with
\begin{equation}\label{asriairairairai}
\left|\overline{f}(y)-\frac{1}{\meas_X(B_r(y))}\int_{B_r(y)}f\di \meas_X\right|\le \epsilon.
\end{equation}
Let $y_i \in A_{i}$ converge to $y$. Then a Poincare inequality \cite[Th.1]{Rajala} with the definition of $A_{i}$ shows
\begin{align}\label{nauirbaiurbai}
&\left| \frac{1}{\meas_{X_i}(B_{s}(y_i))}\int_{B_s(y_i)}f_i\di \meas_{X_i} - \frac{1}{\meas_{X_i}(B_{2s}(y_i))}\int_{B_{2s}(y_i)}f_i\di \meas_{X_i}\right| \nonumber \\
&\le  \frac{1}{\meas_{X_i}(B_s(y_i))}\int_{B_s(y_i)}\left|f_i -\frac{1}{\meas_{X_i}(B_{2s}(y_i))}\int_{B_{2s}(y_i)}f_i\di \meas_{X_i} \right|\di \meas_{X_i} \nonumber \\
&\le \frac{C(\tilde{K}, \tilde{N})}{\meas_{X_i}(B_{2s}(y_i))}\int_{B_{2s}(y_i)}\left|f_i -\frac{1}{\meas_{X_i}(B_{2s}(y_i))}\int_{B_{2s}(y_i)}f_i\di \meas_{X_i} \right|\di \meas_{X_i} \nonumber \\
&\le C(\tilde{K}, \tilde{N})sL, \quad \forall s \in (0, r].
\end{align}
In particular letting $s:=2^{-i}r$ in (\ref{nauirbaiurbai}) and then taking the sum with respect to $i$ yield
\begin{equation}\label{asbrioabrioaiora}
\left| \overline{f}(y_i)-\frac{1}{\meas_{X_i}(B_r(y_i))}\int_{B_r(y_i)}f_i\di \meas_{X_i}\right| \le C(\tilde{K}, \tilde{N})rL \le C(\tilde{K}, \tilde{N})\epsilon.
\end{equation}
Thus combining (\ref{asriairairairai}) with (\ref{asbrioabrioaiora}) and the arbitrariness of $\epsilon$ implies
\begin{equation}
\overline{f}_i(y_i)\to \overline{f}(y)
\end{equation}
which completes the proof of (1) because $L$ is arbitrary.

It follows from (1) and a diagonal argument with $\epsilon \to 0^+$ that (2) holds.
\end{proof} 
We are now in a position to introduce a compactness result for approximate Sobolev maps with respect to the measured Gromov-Hausdorff convergence.
\begin{theorem}\label{prop:lowersemiapp}
Let $t_i \to t$ be a convergent sequence in $(0, \infty)$ and let $R \in (0, \infty]$.
If a sequence of $t_i$-Sobolev maps $f_i:B_R(x_i) \to Y_i$ satisfies 
\begin{equation}\label{absruairairubaiurai}
\liminf_{i \to \infty}\mathcal{E}_{B_R(x_i), Y_i, t_i}(f_i)<\infty,
\end{equation}
then after passing to a subsequence, there exists a $t$-Sobolev map $f:B_R(x) \to Y$ such that $\psi_i \circ f_i$ $L^2_{\mathrm{loc}}$-strongly converge to converge to $\psi \circ f$ on $B_R(x)$ for any uniformly convergent sequence of  equi-Lipschitz functions $\psi_i$ on $Y_i$ to $\psi$ on $Y$ and that 
\begin{equation}\label{eq:lowesemico2}
\liminf_{i \to \infty}\int_{B_R(x_i)}\phi_i e_{Y_i, t_i}(f_i)\di \meas_{X_i} \ge \int_{B_R(x)}\phi e_{Y, t}(f)\di \meas_X
\end{equation}
for any $L^2_{\mathrm{loc}}$-strongly convergent sequence $\phi_i \to \phi$ with $\phi_i \ge 0$ and $\sup_i\|\phi_i\|_{L^{\infty}}<\infty$.
In particular
\begin{equation}\label{eq:lowesemico3}
\liminf_{i \to \infty}\mathcal{E}_{B_R(x_i), Y_i, t_i}(f_i)\ge \mathcal{E}_{B_R(x), Y, t}(f).
\end{equation}
\end{theorem}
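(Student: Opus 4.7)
The plan is to adapt the proof of Theorem \ref{propcompact} to the varying-space setting \eqref{eq:4}--\eqref{eq:mgh} by combining the spectral convergence on the targets (Theorem \ref{thm:spectral}) with the Rellich-type compactness and Laplacian stability on the sources (Theorems \ref{bbbg} and \ref{spectral2}). Since each $Y_i$ is compact with uniformly bounded diameter, a diagonal application of Theorem \ref{thm:spectral} produces eigenfunctions $\phi_j^{Y_i}$ of $-\Delta_{Y_i}$ and $\phi_j^Y$ of $-\Delta_Y$ with $\|\phi_j^{Y_i}\|_{L^2}=\|\phi_j^Y\|_{L^2}=1$, $\lambda_j^{Y_i}\to\lambda_j^Y$ and $\phi_j^{Y_i}\to\phi_j^Y$ in the $H^{1,2}$-strong sense on $Y$; the uniform bounds \eqref{eq:eigenfunction} then hold with constants depending only on $K$, $N$ and $\sup_i\diam(Y_i,\dist_{Y_i})<\infty$.

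Setting $F_{j,i}:=e^{-\lambda_j^{Y_i}t_i}(\phi_j^{Y_i}\circ f_i)$, the energy bound \eqref{absruairairubaiurai} together with \eqref{eq:eigenfunction} gives uniform $L^\infty$-bounds on each $F_{j,i}$ and a uniform bound on $\sum_j\|\nabla F_{j,i}\|_{L^2(B_R(x_i))}^2$. A diagonal use of Theorem \ref{bbbg} yields $F_j\in H^{1,2}(B_R(x),\dist_X,\meas_X)$ with $F_{j,i}\to F_j$ $L^2_{\mathrm{loc}}$-strongly and $\nabla F_{j,i}\weakto\nabla F_j$ $L^2$-weakly on $B_R(x)$. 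Using Lemma \ref{pointwiseconv} and a further diagonal extraction, I obtain a Borel set $B\subset B_R(x)$ with $\meas_X(B_R(x)\setminus B)=0$ and Borel sets $B_i\subset B_R(x_i)$ with $\meas_{X_i}(B_R(x_i)\setminus B_i)\to 0$ along which $F_{j,i}(z_i)\to F_j(z)$ pointwise for every $j$ simultaneously, whenever $z_i\in B_i$ converges to $z\in B$. Each tuple $(F_{j,i}(z_i))_j=\tilde{\Phi}_{t_i}^{\ell^2}(f_i(z_i))$ lies in $\tilde{\Phi}_{t_i}^{\ell^2}(Y_i)\subset\ell^2$, and by Theorem~5.19 of \cite{AHPT} these images Hausdorff-converge to $\tilde{\Phi}_t^{\ell^2}(Y)$ in $\ell^2$, so the pointwise limit $(F_j(z))_j$ belongs to $\tilde{\Phi}_t^{\ell^2}(Y)$ for $\meas_X$-a.e.\ $z$. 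Since $\tilde{\Phi}_t^{\ell^2}$ is a topological embedding, $f(z):=(\tilde{\Phi}_t^{\ell^2})^{-1}((F_j(z))_j)$ defines a Borel map on $B$ (extended arbitrarily on the null complement), and by construction $F_j=e^{-\lambda_j^Y t}(\phi_j^Y\circ f)$ a.e., which makes $f$ a $t$-Sobolev map on $B_R(x)$.

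The lower semicontinuity \eqref{eq:lowesemico2} then follows coordinate-by-coordinate: for each fixed $j$, the $L^2$-weak convergence $\nabla F_{j,i}\weakto\nabla F_j$ combined with the $L^2_{\mathrm{loc}}$-strong convergence $\phi_i\to\phi$ and the uniform $L^\infty$-bound on $\phi_i$ yields $\liminf_i\int_{B_R(x_i)}\phi_i|\nabla F_{j,i}|^2\di\meas_{X_i}\ge\int_{B_R(x)}\phi|\nabla F_j|^2\di\meas_X$, and summing in $j$ by Fatou gives \eqref{eq:lowesemico2}, with \eqref{eq:lowesemico3} following by taking $\phi_i\equiv\phi\equiv 1$. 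For the asserted $L^2_{\mathrm{loc}}$-strong convergence of $\psi_i\circ f_i$ to $\psi\circ f$, I would use the bi-continuity of $\tilde{\Phi}_t^{\ell^2}$ on its image: the pointwise convergence of the embedded images established above gives $f_i(z_i)\to f(z)$ for $z_i\in B_i\to z\in B$ in the varying-space sense, whereupon the equi-Lipschitz continuity of $\psi_i$ and the uniform convergence $\psi_i\to\psi$ yield pointwise convergence $\psi_i(f_i(z_i))\to\psi(f(z))$; the uniform $L^\infty$-bound on $\psi_i$ (from compactness and uniformly bounded diameters of $Y_i$) and dominated convergence then upgrade this to $L^2_{\mathrm{loc}}$-strong convergence.

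The main obstacle is the identification step: showing that the pointwise limit tuple $(F_j(z))_j$ actually lies in $\tilde{\Phi}_t^{\ell^2}(Y)$, so that a genuine limit map $f$ into $Y$ can be defined. This requires both a uniform $\ell^2$-tail estimate $\sup_{i,z_i}\sum_{j\ge l}F_{j,i}(z_i)^2\to 0$ as $l\to\infty$, ensuring that the embedded images lie in a common compact set of $\ell^2$, and the Hausdorff convergence of $\tilde{\Phi}_{t_i}^{\ell^2}(Y_i)$ to $\tilde{\Phi}_t^{\ell^2}(Y)$ in $\ell^2$. Both ingredients rely on the uniformity of the Gaussian and gradient estimates \eqref{eq:gaussian}--\eqref{eq:equi lip} for the heat kernels of the $Y_i$, which is precisely where the compactness of the target spaces and the uniform $\RCD(K,N)$ condition play the decisive role.
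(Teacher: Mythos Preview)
Your argument for the compactness and identification steps (spectral convergence on the targets, Rellich-type compactness on the sources, the $\ell^2$-tail estimate from \eqref{eq:eigenfunction}, and the use of Lemma~\ref{pointwiseconv} together with the Hausdorff convergence of the embedded images to land the limit tuple in $\Phi_t^{\ell^2}(Y)$) coincides with the paper's proof. The lower semicontinuity \eqref{eq:lowesemico2} is also handled the same way, coordinate-by-coordinate followed by a Fatou-type summation.

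The one place where your route differs is the $L^2_{\mathrm{loc}}$-strong convergence of $\psi_i\circ f_i$, and here your sketch has a gap. Lemma~\ref{pointwiseconv} only provides, for each $z\in B$, the \emph{existence} of a sequence $z_i\in B_i$ along which the values converge; it does not give convergence for \emph{all} $z_i\to z$, and ``dominated convergence'' is not a theorem in the varying-space $L^2$ framework. Moreover, your phrase ``bi-continuity of $\tilde{\Phi}_t^{\ell^2}$'' refers to the single limiting embedding, whereas the step $\Phi_{t_i}^{\ell^2}(f_i(z_i))\to\Phi_t^{\ell^2}(f(z))\Rightarrow f_i(z_i)\to f(z)$ needs the uniform convergence (in the mGH sense) of the varying embeddings $\Phi_{t_i}^{\ell^2}$ and their inverses. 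The paper avoids all of this by a regularization trick: since $\phi_j^{Y_i}\circ f_i\to\phi_j^Y\circ f$ $L^2_{\mathrm{loc}}$-strongly for every $j$, the eigenfunction expansion \eqref{eq:expansion1} with the uniform decay \eqref{eq:eigenfunction} gives $(h_s^{Y_i}\psi_i)\circ f_i\to (h_s^Y\psi)\circ f$ $L^2_{\mathrm{loc}}$-strongly for each fixed $s>0$, and then one lets $s\to 0^+$ using the uniform estimate $\sup_i\|h_s^{Y_i}\psi_i-\psi_i\|_{L^\infty}\to 0$ coming from \eqref{absbayraywsrai}. This reduces the general Lipschitz $\psi_i$ to finite combinations of eigenfunctions, where the convergence is already in hand, and bypasses any pointwise or dominated-convergence reasoning in varying spaces.
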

\begin{proof}
Thanks to Theorem \ref{thm:spectral} with the gradient estimates (\ref{eq:eigenfunction}), with no loss of generality we can assume that $\phi_i^{Y_j}$ converge uniformly to $\phi_i^Y$ with $\lambda_i^{Y_j} \to \lambda_i^Y$. By (\ref{absruairairubaiurai}), we have
\begin{equation}
\sup_j\|e^{-\lambda_i^{Y_j}t_j}\phi_i^{Y_j}\circ f_j\|_{H^{1, 2}(B_R(x_j))}<\infty, \quad \forall i \in \mathbb{N}.
\end{equation}
Thus  by Theorem \ref{bbbg}, after passing to a subsequence with a diagonal argument, for any $i \in \mathbb{N}$, there exists $F_i \in H^{1, 2}(B_R(x), \dist_X, \meas_X)$ such that $e^{-\lambda_i^{Y_j}t_j}\phi_i^{Y_j}\circ f_j$ $L^2_{\mathrm{loc}}$-strongly converge to $F_i$ on $B_R(x)$ and that $\dist (e^{-\lambda_i^{Y_j}t_j}\phi_i^{Y_j}\circ f_j)$ $L^2$-weakly converge to $\dist F_i$ on $B_R(x)$. Then since Lemma \ref{pointwiseconv} ensures that $F(\tilde{x}) \in \Phi_t^{\ell^2}(Y)$ holds for $\meas_X$-a.e. $\tilde{x} \in B_R(x)$, where $F:=(F_i)_i$, by letting $f:=(\Phi_t^{\ell^2})^{-1}\circ F$, 
it follows from the same argument as in the proof of Theorem \ref{propcompact} that the desired conlusions hold except for the convergence of $\psi_i \circ f_i$.

From the above argument we know that $\phi_i^{Y_j}\circ f_j$ $L^2_{\mathrm{loc}}$-strongly converge to $\phi_i^Y\circ f$ for any $i$. In particular thanks to (\ref{eq:expansion1}) with (\ref{eq:eigenfunction}), we see that $(h_s^{Y_i}\psi_i) \circ f$  $L^2_{\mathrm{loc}}$-strongly converge to $(h_s^Y\psi) \circ f$ for any $s \in (0, \infty)$.
Since $\lim_{s \to 0^+}\sup_i\|h_s^{Y_i}\psi_i-\psi_i\|_{L^{\infty}} =0$ holds because of (\ref{absbayraywsrai}) (see also \cite[Prop.1.4.6]{AmbrosioHonda}), we have the desired convergence of $\psi_i \circ f_i$ to $\psi \circ f$.
\end{proof}
\begin{proposition}\label{prop:stability}
Assume that $(Y_i, \dist_i, \meas_i)$ and $(Y, \dist_Y, \meas_Y)$ are non-collapsed, namely, $\meas_{Y_i}=\mathcal{H}^N$ and $\meas_Y=\mathcal{H}^N$ are satisfied.
Let $\epsilon_i \to \epsilon, t_i \to t, \tau_i \to \tau$ be convergent sequences in $(0, \infty)$. Then if a sequence of points $y_i \in \mathcal{R}_{Y_i}(\epsilon_i, t_i, \tau_i)$ converge to a point $y \in Y$ with respect to (\ref{eq:mgh}), then $y \in \mathcal{R}_Y(\epsilon, t, \tau)$.
\end{proposition}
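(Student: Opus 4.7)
\textbf{Proof plan for Proposition \ref{prop:stability}.}
The goal is to show that for every $r\in(0,\tau]$,
\[
f(r):=\frac{1}{\mathcal{H}^N(B_r(y))}\int_{B_r(y)}\bigl|g_Y-c_Nt^{(N+2)/2}g_t^Y\bigr|\di\mathcal{H}^N\le\epsilon.
\]
First I would reduce the problem to $r\in(0,\tau)$: since the integrand is bounded in $L^\infty$ and $B_r(y)=\bigcup_{r'<r}B_{r'}(y)$, monotone convergence together with the volume continuity $\mathcal{H}^N(B_{r'}(y))\to\mathcal{H}^N(B_r(y))$ gives left-continuity of $f$. For $r\in(0,\tau)$ the convergence $\tau_i\to\tau$ implies $r\le\tau_i$ for all sufficiently large $i$, so the hypothesis on $y_i$ yields
\[
\int_{B_r(y_i)}\bigl|g_{Y_i}-c_Nt_i^{(N+2)/2}g_{t_i}^{Y_i}\bigr|\di\mathcal{H}^N\le \epsilon_i\cdot\mathcal{H}^N(B_r(y_i)).
\]
By Theorem \ref{GHmGH} the right-hand side converges to $\epsilon\cdot\mathcal{H}^N(B_r(y))$, so the task reduces to proving the lower semicontinuity
\[
\liminf_{i\to\infty}\int_{B_r(y_i)}|T_i|\di\mathcal{H}^N\ge \int_{B_r(y)}|T|\di\mathcal{H}^N,\qquad T_i:=g_{Y_i}-c_Nt_i^{(N+2)/2}g_{t_i}^{Y_i},\ T:=g_Y-c_Nt^{(N+2)/2}g_t^Y.
\]

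The decisive point is therefore to prove an $L^2$-strong convergence $T_i\to T$ on some ball $B_R(y)$ with $R>r$. For the first summand, Proposition \ref{weakriem} gives $g_{Y_i}\to g_Y$ $L^2$-weakly; in the non-collapsed setting $|g_{Y_i}|\equiv\sqrt{N}$ and $|g_Y|\equiv\sqrt{N}$ almost everywhere (Proposition \ref{Riemdef}), so combined with $\mathcal{H}^N(B_R(y_i))\to\mathcal{H}^N(B_R(y))$ the $L^2$-norms converge and hence the weak convergence upgrades to strong. For the second summand I would expand
\[
g_{t_i}^{Y_i}=\sum_{j\ge 1}e^{-2\lambda_j^{Y_i}t_i}\dist\phi_j^{Y_i}\otimes\dist\phi_j^{Y_i},
\]
fix a truncation level $l$, and apply Theorem \ref{thm:spectral} together with a polarization of Theorem \ref{bbbg} to deduce that each bilinear coefficient $\langle\nabla\phi_j^{Y_i},\nabla\phi_k^{Y_i}\rangle$ converges $L^1$-strongly to $\langle\nabla\phi_j^Y,\nabla\phi_k^Y\rangle$. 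Since the coefficients $e^{-(\lambda_j^{Y_i}+\lambda_k^{Y_i})t_i}$ converge to $e^{-(\lambda_j^Y+\lambda_k^Y)t}$, each finite truncation converges $L^2$-strongly. For the tail I would use (\ref{eq:eigenfunction}), namely $\|\dist\phi_j^{Y_i}\|_{L^\infty}\le \tilde{C}(\lambda_j^{Y_i})^{(N+2)/4}$ together with Weyl's growth $\lambda_j^{Y_i}\ge \tilde{C}^{-1}j^{2/N}$, to bound the tensor tail uniformly in $L^\infty$ by an expression that tends to $0$ as $l\to\infty$, uniformly in $i$ (since $t_i$ stays bounded away from $0$). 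This yields the claimed $L^2$-strong convergence of $c_Nt_i^{(N+2)/2}g_{t_i}^{Y_i}$ to $c_Nt^{(N+2)/2}g_t^Y$ on $B_R(y)$, and therefore of $T_i$ to $T$.

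Finally I would conclude the lower semicontinuity by combining this strong convergence with the bounded-set restriction. The uniform $L^\infty$-bound on $T_i$ from (\ref{eq:gradeert}) and $|g_{Y_i}|\equiv\sqrt{N}$ allows the $L^2$-strong convergence of $T_i$ to be upgraded to $L^1$-strong convergence of $|T_i|$ on $B_R(y)$. If $\mathcal{H}^N(\partial B_r(y))=0$, then $1_{B_r(y_i)}\to 1_{B_r(y)}$ $L^2$-strongly, and the required convergence of integrals follows at once. The set of radii $r$ for which $\mathcal{H}^N(\partial B_r(y))>0$ is at most countable, so for a general $r\in(0,\tau)$ I would pick a sequence $r'\uparrow r$ with $\mathcal{H}^N(\partial B_{r'}(y))=0$, establish the desired inequality for each $r'$, and pass to the limit using the left-continuity noted at the start. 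The hardest step is the $L^2$-strong convergence of $g_{t_i}^{Y_i}$: everything else is bookkeeping involving already-established spectral and volume convergence, but controlling the infinite spectral tail uniformly in $i$ while simultaneously letting $t_i\to t$ requires the Weyl-type gradient bound to be invoked with care.
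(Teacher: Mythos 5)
Your argument is correct and follows essentially the same route as the paper: fix a radius, pass to the limit in the defining inequality of $\mathcal{R}_{Y_i}(\epsilon_i, t_i, \tau_i)$ using volume convergence (Theorem \ref{GHmGH}), convergence of the Riemannian metrics (Proposition \ref{weakriem}) and convergence of the heat-kernel metrics $g_{t_i}^{Y_i}$. The only difference is that the paper simply cites \cite[Th.5.19]{AHPT} for the convergence of $g_{t_i}^{Y_i}$, whereas you re-derive it from the spectral convergence theorem together with the tail bound (\ref{eq:eigenfunction}), and you additionally attend to the endpoint $r=\tau$ (where $\tau_i<\tau$ may occur) and to the choice of radii, details the paper's two-line proof passes over.
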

\begin{proof}
Since for all $r \in (0, \tau]$ and $i \in \mathbb{N}$,
\begin{equation}\label{eq:3}
\frac{1}{\mathcal{H}^N(B_r(y_i))}\int_{B_r(y_i)}|g_{Y_i}-c_Nt_i^{(N+2)/2}g_{t_i}^{Y_i}|\di \mathcal{H}^N\le\epsilon_i,
\end{equation}
letting $i \to \infty$ in (\ref{eq:3}) with \cite[Th.5.19]{AHPT} and Proposition \ref{weakriem} shows
\begin{equation}
\frac{1}{\mathcal{H}^N(B_r(y))}\int_{B_r(y)}|g_{Y}-c_Nt^{(N+2)/2}g_t^{Y}|\di \mathcal{H}^N\le \epsilon,
\end{equation}
which completes the proof.
\end{proof}
\begin{definition}[Uniformly weakly smooth subsets with respect to mGH convergence]
We say that a sequence of Borel subsets $A_i$ of $Y_i$ is said to be \textit{uniformly weakly smooth for $\{(t_i, \tau_i)\}_i$} if for any $\epsilon \in (0, 1)$ there exists $i \in \mathbb{N}$ such that 
\begin{equation}
A_l \subset \bigcap_{j \ge i}\mathcal{R}_{Y_l}(\epsilon, t_j, \tau_j), \quad \forall l \in \mathbb{N}
\end{equation}
holds.
\end{definition}
Let us introduce a compactness result for Sobolev maps with respect to the measured Gromov-Hausdorff convergence.
\begin{theorem}\label{hybfgaty}
Assume that $(Y_i, \dist_i, \meas_i)$ and $(Y, \dist_Y, \meas_Y)$ are non-collapsed.
Let $R \in (0, \infty]$ and let $f_i:B_R(x_i) \to Y_i$ be a Sobolev map. In addition, assume that the following two conditions hold.
\begin{enumerate}
\item The sequence $\{f_i(B_R(x_i) \setminus D_i)\}_i$ is uniformly weakly smooth for some $\{(t_i, \tau_i)\}_i$ and some $\meas_{X_i}$-negligible subsets $D_i$ of $B_R(x_i)$. 
\item We have
\begin{equation}
\liminf_{i \to \infty}\mathcal{E}_{B_R(x_i), Y_i}(f_i)<\infty.
\end{equation}
\end{enumerate}
Then after passing to a subsequence there exists a Sobolev map $f:B_R(x) \to Y$ such that $f(B_R(x) \setminus D)$ is uniformly smooth for $\{(t_i, \tau_i)\}_i$ for some $\meas_X$-negligible set $D$, that $\psi_i \circ f_i$ $L^2_{\mathrm{loc}}$-strongly converge to $\psi \circ f$ on $B_R(x)$ for any uniformly convergent sequence of equi-Lipschitz functions $\psi_i$ on $Y_i$ to $\psi$ on $Y$ and that 
\begin{equation}\label{eq:lowesemico4}
\liminf_{i \to \infty}\int_{B_R(x_i)}\phi_i e_{Y_i}(f_i)\di \meas_{X_i} \ge \int_{B_R(x)}\phi e_{Y}(f)\di \meas_X
\end{equation}
for any $L^2_{\mathrm{loc}}$-strongly convergent sequence $\phi_i \to \phi$ with $\phi_i \ge 0$ and $\sup_i\|\phi_i\|_{L^{\infty}}<\infty$.
In particular
\begin{equation}\label{eq:lowesemico5}
\liminf_{i \to \infty}\mathcal{E}_{B_R(x_i), Y_i}(f_i)\ge \mathcal{E}_{B_R(x), Y}(f).
\end{equation}
\end{theorem}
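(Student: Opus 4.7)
The plan is to combine Theorem~\ref{prop:lowersemiapp} (the approximate-energy compactness under varying base spaces) with Theorem~\ref{propcompatr} (upgrading $0$-Sobolev to Sobolev under uniform weak smoothness), using Proposition~\ref{prop:l1} as a quantitative bridge that transfers estimates from the energy $e_Y(f)$ to its approximations $c_N t^{(N+2)/2} e_{Y,t}(f)$, applied uniformly in the running parameter $l$.

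First, I convert the energy bound $\liminf_l \mathcal{E}_{B_R(x_l), Y_l}(f_l) < \infty$ into uniform bounds on the approximate energies. Uniform weak smoothness of $\{f_l(B_R(x_l) \setminus D_l)\}_l$ gives, for each $\epsilon \in (0,1)$, an index $i_0(\epsilon)$ with $f_l(B_R(x_l) \setminus D_l) \subset \bigcap_{j \geq i_0(\epsilon)} \mathcal{R}_{Y_l}(\epsilon, t_j, \tau_j)$ for all $l$. Applying the pointwise estimate~\eqref{eqeq} of Proposition~\ref{prop:l1} inside each $(X_l, \dist_{X_l}, \meas_{X_l})$ yields, for $j \geq i_0(\epsilon)$,
\begin{equation}\label{eq:planunif}
\sup_l c_N t_j^{(N+2)/2} \mathcal{E}_{B_R(x_l), Y_l, t_j}(f_l) \leq (1-C(n)\epsilon)^{-1} \liminf_l \mathcal{E}_{B_R(x_l), Y_l}(f_l) + o_j(1),
\end{equation}
which is finite. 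Then Theorem~\ref{prop:lowersemiapp}, applied with the constant-in-$l$ sequence $t_l \equiv t_j$, extracts a $t_j$-Sobolev limit; a diagonal extraction in $j$ produces a single subsequence (relabelled) and a single Borel map $f : B_R(x) \to Y$ that is a $t_j$-Sobolev map for every sufficiently large $j$, with $\phi_i^{Y_l} \circ f_l \to \phi_i^Y \circ f$ in $L^2_{\mathrm{loc}}$ for every $i$.

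Next, I propagate uniform weak smoothness to $f$. Fix $\epsilon \in (0,1)$. Applying Lemma~\ref{pointwiseconv}(2) to each Lipschitz-Lusin piece of $f_l$ obtained via Proposition~\ref{prop:lipschitzlusin} (and using the gradient estimates~\eqref{eq:eigenfunction} to pass from convergence of $\phi_i^{Y_l} \circ f_l$ to convergence of the $\ell^2$-valued maps $\Phi_{t_j}^{\ell^2} \circ f_l$) produces, for $\meas_X$-a.e. $y \in B_R(x)$, a sequence $y_l \to y$ lying in $\bigcap_{j \geq i_0(\epsilon)} f_l^{-1}(\mathcal{R}_{Y_l}(\epsilon, t_j, \tau_j))$ with $f_l(y_l) \to f(y)$. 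Proposition~\ref{prop:stability} then gives $f(y) \in \bigcap_{j \geq i_0(\epsilon)} \mathcal{R}_Y(\epsilon, t_j, \tau_j)$ for $\meas_X$-a.e. $y$. A diagonal argument along $\epsilon \to 0^+$ yields a $\meas_X$-negligible set $D$ with $f(B_R(x) \setminus D)$ uniformly weakly smooth for $\{(t_j, \tau_j)\}_j$. Now~\eqref{eq:planunif} combined with Theorem~\ref{prop:lowersemiapp}'s lower semicontinuity~\eqref{eq:lowesemico2} shows $\liminf_j t_j^{(N+2)/2}\mathcal{E}_{B_R(x), Y, t_j}(f) < \infty$, so $f$ is a $0$-Sobolev map; Theorem~\ref{propcompatr} then upgrades $f$ to a Sobolev map.

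For the lower semicontinuity~\eqref{eq:lowesemico4}, let $\phi_l \to \phi$ be as in the statement. Using~\eqref{eqeq} uniformly in $l$ and integrating against $\phi_l$ gives
\begin{equation}
\int_{B_R(x_l)} \phi_l \cdot c_N t_j^{(N+2)/2} e_{Y_l, t_j}(f_l)\, \di \meas_{X_l} \leq (1 + C(n)\epsilon) \int_{B_R(x_l)} \phi_l\, e_{Y_l}(f_l)\, \di \meas_{X_l}
\end{equation}
for $j \geq i_0(\epsilon)$, after which Theorem~\ref{prop:lowersemiapp} applied to the approximate energies gives
\begin{equation}
\int_{B_R(x)} \phi \cdot c_N t_j^{(N+2)/2} e_{Y, t_j}(f)\, \di \meas_X \leq (1 + C(n)\epsilon) \liminf_l \int_{B_R(x_l)} \phi_l\, e_{Y_l}(f_l)\, \di \meas_{X_l}.
\end{equation}
Letting $j \to \infty$, Proposition~\ref{prop:l1} applied in $(X, Y)$ gives $L^1$ convergence $c_N t_j^{(N+2)/2} e_{Y, t_j}(f) \to e_Y(f)$, and sending $\epsilon \to 0^+$ yields~\eqref{eq:lowesemico4}. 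The convergence $\psi_l \circ f_l \to \psi \circ f$ in $L^2_{\mathrm{loc}}$ is already delivered by Theorem~\ref{prop:lowersemiapp}.

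The main obstacle is ensuring the quantitative comparison from Proposition~\ref{prop:l1} is truly uniform in the pair $(X_l, Y_l)$, which is possible only because uniform weak smoothness of $\{f_l(B_R(x_l)\setminus D_l)\}_l$ gives a single $i_0(\epsilon)$ valid across all $l$; coupled with this is the task of extracting a single limit $f$ whose image again lies in a uniformly weakly smooth subset, which requires carefully matching sequences $y_l \to y$ coming from Lemma~\ref{pointwiseconv} with the stability of smoothable points under mGH convergence recorded in Proposition~\ref{prop:stability}.
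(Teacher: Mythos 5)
Your proposal is correct and follows essentially the same route as the paper: pass to a subsequence to turn the $\liminf$ energy bound into a uniform bound on the normalized approximate energies via the estimate (\ref{eqeq}) of Proposition \ref{prop:l1} (valid uniformly in $l$ thanks to the uniform weak smoothness), extract a limit with Theorem \ref{prop:lowersemiapp} plus Lemma \ref{pointwiseconv} and Proposition \ref{prop:stability} to keep the limit image uniformly weakly smooth, upgrade to a Sobolev map via Theorem \ref{propcompatr}, and derive (\ref{eq:lowesemico4}) by the same comparison argument as in Theorem \ref{corcompactness}. The paper's proof is just a compressed version of this, citing the proofs of Theorems \ref{corcompactness} and \ref{prop:lowersemiapp} for the details you wrote out.
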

\begin{proof}
The proof is essentially same to that of Theorem \ref{corcompactness}. It is trivial that after passing to a subsequence, with no loss of generality we can assume $\sup_i\mathcal{E}_{B_R(x_i), Y_i}(f_i)<\infty$.
Thus by (\ref{eqeq}) we know
\begin{equation}
\sup_{i, l}t_i^{(N+2)/2}\mathcal{E}_{B_R(x_l), Y_l, t_i}(f_l)<\infty.
\end{equation}
Thus Theorem \ref{prop:lowersemiapp} with Lemma \ref{pointwiseconv} and Proposition \ref{prop:stability} allows us to prove that after passing to a subsequence, there exists a map $f:B_R(x) \to Y$ such that $f$ is a $t_i$-Sobolev map for any $i$ and that $f(B_R(x) \setminus D)$ is uniformly smooth associated with $\{(t_i, \tau_i)\}_i$ for some $\meas_X$-negligible set $D$. 
The remaining statements follows from the proofs of Theorems \ref{corcompactness} and \ref{prop:lowersemiapp} with Theorem \ref{spectral2}.
\end{proof}
\begin{remark}
In Theorems \ref{prop:lowersemiapp} and \ref{hybfgaty}, if we consider the case when $(X_i, \dist_{X_i}, \meas_{X_i}, x_i) \equiv (X, \dist_X, \meas_X, x)$ and $(Y_i, \dist_{Y_i}, \mathcal{H}^N)\equiv (Y, \dist_Y, \mathcal{H}^N)$, then thanks to the dominated convergence theorem, the $L_{\mathrm{loc}}^2$-strong convergence of $\psi_i \circ f_i$ for any $\psi_i$ is equivalent to the $\meas_X$-a.e. pointwise convergence of $f_i$, up to passing to a subsequence (recall the proofs of Proposition \ref{propcompact} and of Corollary \ref{corcompactness}).
\end{remark}
Finally let us end this section by giving a compactness result for Lipschitz maps defined on Borel subsets. 
\begin{theorem}\label{thmcompacatara}
Assume that $(Y_i, \dist_i, \meas_i)$ and $(Y, \dist_Y, \meas_Y)$ are non-collapsed.
Let $A_i, A$ be Borel subsets of $X_i, X$, respectively, let $L \in (0, \infty)$ and let $f_i:A_i \to Y_i$ be a sequence of $L$-Lipschitz maps.
In addition, assume that the following two conditions hold.
\begin{enumerate}
\item The sequence $\{f_i(A_i \setminus D_i)\}_i$ is uniformly weakly smooth for some $\{(t_i, \tau_i)\}_i$ and some $\meas_{X_i}$-negligible subsets $D_i$ of $A_i$.
\item The functions $1_{A_i}$ $L^2_{\mathrm{loc}}$-strongly converge to $1_A$.
\end{enumerate}
Then after passing to a subsequence there exist a Borel subset $\tilde{A}$ of $A$ and an $L$-Lipschitz map $f:\tilde{A} \to Y$ such that $\meas_X(A \setminus \tilde{A})=0$ holds, that $f(\tilde{A})$ is uniformly smooth associated with $\{(t_i, \tau_i)\}_i$, that $\psi_i \circ f_i$ $L^2_{\mathrm{loc}}$-strongly converge to $\psi \circ f$ on $\tilde{A}$ for any uniformly convergent sequence of  equi-Lipschitz functions $\psi_i$ on $Y_i$ to $\psi$ on $Y$ and that 
\begin{equation}\label{eq:lowesemico43}
\liminf_{i \to \infty}\int_{A_i}\phi_i e_{Y_i}(f_i)\di \meas_{X_i} \ge \int_{A}\phi e_{Y}(f)\di \meas_X
\end{equation}
for any $L^2_{\mathrm{loc}}$-strongly convergent sequence $\phi_i \to \phi$ with $\phi_i \ge 0$ and $\sup_i\|\phi_i\|_{L^{\infty}}<\infty$.
In particular
\begin{equation}\label{eq:lowesemico53}
\liminf_{i \to \infty}\mathcal{E}_{A_i, Y_i}(f_i)\ge \mathcal{E}_{A, Y}(f).
\end{equation}
\end{theorem}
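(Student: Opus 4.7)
The strategy will be to reduce Theorem \ref{thmcompacatara} to a combination of Theorems \ref{prop:lowersemiapp} and \ref{hybfgaty} by lifting the $L$-Lipschitz maps $f_i$ to the ambient Hilbert space $\ell^2$ via the heat kernel embedding. Fix a small $s \in (0,1)$ and consider $F_i := \tilde{\Phi}_s^{Y_i} \circ f_i : A_i \to \ell^2$. By the gradient estimate \eqref{eq:equi lip} (together with the uniform lower volume bound $\mathcal{H}^N(Y_i) \ge v$, which transfers along mGH by Theorem \ref{GHmGH}), the maps $\tilde{\Phi}_s^{Y_i}$ are uniformly $C(K,N,d,v,s)$-Lipschitz, so each $F_i$ is $L'$-Lipschitz with $L'$ independent of $i$. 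Apply Kirszbraun's theorem in $\ell^2$ to extend $F_i$ to an equi-$L'$-Lipschitz map $\hat{F}_i : X_i \to \ell^2$ with image in a bounded set (using \eqref{eq:eigenfunction}).

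Next, I would extract a uniform limit. The tail decay bound $\sup_i\sum_{j \ge l} e^{-2\lambda_j^{Y_i} s} (\phi_j^{Y_i})^2 \le C(K,N,d,v,s) \cdot \Psi(l;s,K,N,d,v)$ from \eqref{eq:eigenfunction} and the spectral convergence Theorem \ref{thm:spectral} verify hypothesis (2) of Proposition \ref{prop:compactness} (adapted, by a diagonal argument over bounded balls, to the non-compact setting of \eqref{eq:4}). Thus after passing to a subsequence, $\hat{F}_i$ converge uniformly on bounded subsets of $X$ to an $L'$-Lipschitz $\hat{F}: X \to \ell^2$. Applying Lemma \ref{pointwiseconv} to $1_{A_i}$, for $\meas_X$-a.e. $y \in A$ there is a sequence $y_i \in A_i$ with $y_i \to y$; using the Hausdorff convergence $\Phi_s^{Y_i}(Y_i) \to \Phi_s^Y(Y)$ in $\ell^2$ (see \cite[Th.5.19]{AHPT}) one concludes $\hat{F}(y) \in \Phi_s^Y(Y)$. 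Call $\tilde{A}$ the set of such $y$ and define $f := (\Phi_s^Y)^{-1} \circ \hat{F}|_{\tilde{A}}$. Independence of $f$ from the choice of $s$ and the $L$-Lipschitz continuity both follow from the lower semicontinuity of distances under GH convergence applied to $\dist_{Y_i}(f_i(y_i), f_i(y'_i)) \le L \dist_{X_i}(y_i, y'_i)$. Proposition \ref{prop:stability} gives the uniform weak smoothness of $f(\tilde{A})$, and the $L^2_{\mathrm{loc}}$-strong convergence $\psi_i \circ f_i \to \psi \circ f$ for any equi-Lipschitz uniformly convergent $\psi_i \to \psi$ follows from pointwise convergence of $\psi_i(f_i(y_i)) \to \psi(f(y))$ together with the uniform bound $\|\psi_i \circ f_i\|_{L^\infty} \le \sup_i \|\psi_i\|_{L^\infty} < \infty$ and assumption (2).

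For the lower semicontinuity \eqref{eq:lowesemico43} I would sandwich the energies between approximate $t_j$-energies and invoke Theorem \ref{prop:lowersemiapp} for the latter. Fix $\epsilon \in (0,1)$ and, using assumption (1), choose $j_0$ such that $f_l(A_l \setminus D_l) \subset \mathcal{R}_{Y_l}(\epsilon, t_j, \tau_j)$ for all $j \ge j_0$ and $l \in \mathbb{N}$. By Proposition \ref{prop:def} and \eqref{eqeqqe} applied to $f_l|_{A_l \setminus D_l}$ (extended, piece by piece, to a Lipschitz-Lusin map via a countable partition as in Proposition \ref{prop:lipschitzlusin}), one has
\begin{equation}
\left| e_{Y_l}(f_l) - c_N t_j^{(N+2)/2} e_{Y_l, t_j}(f_l) \right| \le C(n,\hat{N})\epsilon \cdot e_{Y_l}(f_l)
\quad \text{$\meas_{X_l}$-a.e. on $A_l \setminus D_l$}.
\end{equation}
Multiplying by $\phi_l 1_{A_l}$, integrating, applying Theorem \ref{prop:lowersemiapp} to $\phi_l 1_{A_l}$ (which is $L^2_{\mathrm{loc}}$-strongly convergent to $\phi 1_A$ by assumption (2)), and finally using Proposition \ref{prop:l1} to let $j \to \infty$ on the limit side yields \eqref{eq:lowesemico43} up to a $C(n,\hat{N})\epsilon$-error; letting $\epsilon \to 0^+$ concludes.

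The main obstacle will be the last step: the uniformly weakly smooth decomposition must be used twice — once on each side of the inequality — and the bookkeeping requires that the Lipschitz-Lusin partitions on the approximating spaces be compatible with the limit. A subtlety is that $\phi_l$ is $L^2_{\mathrm{loc}}$-convergent on $X$ while the energy lives on $A_l$; the fact that $1_{A_l} \to 1_A$ in $L^2_{\mathrm{loc}}$ (assumption (2)) is precisely what makes the reduction to Theorem \ref{prop:lowersemiapp} viable, by replacing $\phi_l$ with $\phi_l 1_{A_l}$.
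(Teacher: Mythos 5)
Your overall architecture matches the paper's: extend the maps off $A_i$, extract a uniform limit, identify it as a $Y$-valued map via Lemma \ref{pointwiseconv} and the heat-kernel embedding, and reduce the energy lower semicontinuity to the approximate energies plus uniform weak smoothness. Two steps, however, need repair.

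First, Kirszbraun's theorem does not apply: it requires the \emph{domain} to be (a subset of) a Hilbert space, whereas $A_i$ sits in the metric space $X_i$, and for $\ell^2$-valued targets a Lipschitz extension with the same constant off a general metric space is not available. The correct device — and the one the paper uses — is a componentwise McShane extension of $e^{-\lambda_j^{Y_i}s}\phi_j^{Y_i}\circ f_i$ to real-valued Lipschitz functions $F_{i,j}$ on all of $X_i$; the estimate (\ref{eq:eigenfunction}) guarantees $\sum_j(\mathbf{Lip}F_{i,j})^2<\infty$ uniformly in $i$, so the assembled map $(F_{i,j})_j:X_i\to\ell^2$ is still equi-Lipschitz (with a worse constant $L'$, which is harmless). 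Your subsequent compactness and identification steps then go through unchanged; note also that the $L$-Lipschitz continuity of the limit $f$ is obtained, as you say, from lower semicontinuity of distances applied to $\dist_{Y_i}(f_i(y_i),f_i(y_i'))\le L\dist_{X_i}(y_i,y_i')$ along the sequences produced by Lemma \ref{pointwiseconv} — the extension constant $L'$ plays no role there.

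Second, Theorem \ref{prop:lowersemiapp} is stated for ($t_i$-Sobolev) maps defined on balls $B_R(x_i)$, while $f_l$ is defined only on the Borel set $A_l$, so multiplying the test function by $1_{A_l}$ does not by itself put you in its hypotheses: there is no $Y_l$-valued map on a ball whose $t_j$-energy density restricts to $e_{Y_l,t_j}(f_l)$ on $A_l$. The fix is to bypass the theorem and argue at the level of the extended components: by locality, $e_{Y_l,t_j}(f_l)=\sum_k e^{-2\lambda_k^{Y_l}t_j}|\dist F_{l,k}|^2$ on $A_l$, the extensions satisfy $\sup_l\|F_{l,k}\|_{H^{1,2}(B_R(x_l))}<\infty$, and assumption (2) upgrades the $L^2_{\mathrm{loc}}$-weak convergence $\dist F_{l,k}\weakto\dist F_k$ (Theorem \ref{bbbg}) to $1_{A_l}\dist F_{l,k}\weakto 1_A\dist F_k$; the weighted lower semicontinuity of $L^2$-norms under weak convergence then gives (\ref{eq:lowesemico43}) for the approximate energies, after which your sandwich via (\ref{eqeqqe}) and Proposition \ref{prop:l1}, with $\epsilon\to 0^+$, closes the argument exactly as in the paper.
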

\begin{proof}
For any $i, j$, applying Macshane's lemma for $\phi_i^{Y_j}\circ f_j$, there exist a Lipschitz function $F_{j, i}: X_j \to \mathbb{R}$ such that $\mathbf{Lip}F_{j, i} \le \mathbf{Lip}\phi_i^{Y_j} \cdot L$ holds and that $F_{j, i} \equiv \phi_i^{Y_j}\circ f_j$ holds on $A_j$. Thus by Theorem \ref{bbbg}, after passing to a subsequence for any $i$ there exists a Lipschitz function $F_i:X \to \mathbb{R}$ such that $F_{j, i}$ uniformly converge to $F_i$ on any bounded subset of $X$ and that $\dist F_{j, i}$ $L^2_{\mathrm{loc}}$-weakly converge to $\dist F_i$. In particular (recalling  (\ref{eq:eigenfunction})) letting  $F_j^t:=(e^{-\lambda_i^{Y_j}t}F_{j, i})_i:X_j \to \ell^2$ uniformly converge  to $F^t:=(e^{-\lambda_j^Yt}F_i)_i:X \to \ell^2$ on any bounded subset of $X$. Then Lemma \ref{pointwiseconv} ensures that there exists a Borel subset $\tilde{A}$ of $A$ such that $\meas_X(A \setminus \tilde{A})=0$ holds, that $F^t(A) \in \Phi_t^{\ell^2}(Y)$ holds and that the map $f:=(\Phi_t^{\ell^2})^{-1} \circ F^t$ on $\tilde{A}$ does not depend on $t$ and it is an $L$-Lipschitz function, where we used the pointwise convergence of $f_j$ to $f$ for the last statement. Then the remaining statements follow from Proposition \ref{prop:stability}, the proofs of Theorems \ref{propcompact} and \ref{prop:lowersemiapp}, and a fact that $1_{A_j}\dist F_{j, i}$ $L^2_{\mathrm{loc}}$-weakly converge to $1_A\dist F_i$.
\end{proof}
\section{Appendix: locally bi-Lipschitz embeddability by heat kernel in general case}\label{biliplp}
In this appendix we will generalize several results proved in Section \ref{secembed} to general finite dimensional $\RCD$ spaces. For any two $\meas_X$-a.e. symmetric tensors $T_i (i=1, 2) \in L^2((T^*)^{\otimes 2}(A, \dist_X, \meas_X))$ over a Borel subset $A$ of an $\RCD$ space $(X, \dist_X, \meas_X)$, we say that $T_1 \le T_2$ holds for $\meas_X$-a.e. $x \in A$ if
\begin{equation}
T_1(V, V) \le T_2(V, V),\quad \text{for $\meas_X$-a.e. $x \in A$}
\end{equation}
for any $V \in L^{\infty}(T(A, \dist_X, \meas_X))$. Recall (\ref{anasuarbaj}) for the notation of $\Psi$ and recall  $g_t:=\Phi_t^*g_{L^2}$.
The following gives a variant of Proposition \ref{prop:quantapp}.
\begin{proposition}\label{aborbairaibbiaroibwoia}
Let $\epsilon \in (0, 1)$, $K \in \mathbb{R}, N \in [1, \infty)$ and $C_1, C_2, d, t, \epsilon \in (0, \infty)$ with $C_1 \le C_2$ and let $(X, \dist_X, \meas_X, x)$ be a pointed compact $\RCD(K, N)$ space with $\mathrm{diam}(X, \dist_X)\le d$. If there exist a Borel subset $A$ of $X$ and $r \in (0, d]$ such that
\begin{equation}
C_1g_X\le g_t\le C_2g_X,
\end{equation}
holds for $\meas_X$-a.e. $z \in A$, that
\begin{equation}
\frac{\meas_X(B_r(x) \cap A)}{\meas_X(B_r(x))}\ge 1-\epsilon;
\end{equation}
holds and that $(X, r^{-1}\dist_X, \meas_X(B_r(x))^{-1}\meas_X, x)$ is $\epsilon$-pointed measured Gromov-Hausdorff close to $(\mathbb{R}^n, \dist_{\mathbb{R}^n}, \omega_n^{-1}\mathcal{H}^n, 0_n)$,
then we have
\begin{equation}
C_1\dist_X(y, z) -\Psi C_1 r \le \|\Phi_t(y)-\Phi_t(z)\|_{L^2}\le C_2\dist_X(y, z)+\Psi C_2 r,\quad \forall y,\,\forall z \in B_r(x), 
\end{equation}
where $\Psi:=\Psi(\epsilon, r; K, N, C_1, C_2, d, t)$.
\end{proposition}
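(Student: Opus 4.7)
The plan is a compactness/blow-up argument closely modeled on Proposition \ref{prop:quantapp} and the general philosophy of Section \ref{secembed}. Assume the conclusion fails: with $K, N, C_1, C_2, d, t$ fixed, there exist a threshold $\eta_0 > 0$ and sequences of pointed compact $\RCD(K,N)$ spaces $(X_i, \dist_{X_i}, \meas_{X_i}, x_i)$ of diameter at most $d$, Borel subsets $A_i \subset X_i$, radii $r_i \in (0, d]$ and $\epsilon_i \to 0^+$ satisfying the three hypotheses, together with points $y_i, z_i \in B_{r_i}(x_i)$ for which the two-sided bound is violated by at least $\eta_0 \max(C_1, C_2)\, r_i$.

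After extracting a subsequence, $r_i \to r_\infty \in [0, d]$ and $(X_i, \dist_{X_i}, \meas_{X_i}(B_{r_i}(x_i))^{-1}\meas_{X_i}, x_i)$ pmGH-converges to a pointed $\RCD(K, N)$ space $(X_\infty, \dist_{X_\infty}, \meas_\infty, x_\infty)$; hypothesis (3) forces $B_{r_\infty}(x_\infty)$ to be isometric, as a metric measure space with the normalized reference measure, to $B_{r_\infty}(0_n) \subset \mathbb{R}^n$. If $r_\infty = 0$, the Gaussian gradient estimate \eqref{eq:equi lip}, combined with a uniform lower bound on $\meas_{X_i}(B_{\sqrt t}(\cdot))$ coming from Bishop-Gromov and $\diam \le d$, gives a Lipschitz constant for $\Phi_t$ depending only on $K, N, t, d$, so the conclusion degenerates into a trivial inequality as $r_i \to 0$ and a contradiction follows; I may therefore assume $r_\infty > 0$.

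Next, transfer the pinching $C_1 g_{X_i} \le g_t^{X_i} \le C_2 g_{X_i}$ on $A_i$ to the limit. From hypothesis (2) and Vitali/maximal-function arguments, $\mathbf{1}_{A_i}$ $L^2$-strongly converges to $\mathbf{1}_{B_{r_\infty}(x_\infty)}$ on $B_{r_\infty}(x_\infty)$. From Theorem \ref{thm:spectral} together with the spectral expansion $g_t = \sum_j e^{-2\lambda_j t}\, d\phi_j \otimes d\phi_j$ and the uniform bounds \eqref{eq:eigenfunction} (which produce absolute convergence of this sum at a fixed $t$), one deduces $L^2_{\mathrm{loc}}$-strong convergence of $g_t^{X_i}$ to $g_t^{X_\infty}$ as tensors. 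Combining these, the pointwise inequality $C_1 g_{X_\infty} \le g_t^{X_\infty} \le C_2 g_{X_\infty}$ holds $\meas_\infty$-a.e.\ on $B_{r_\infty}(x_\infty)$; since $g_{X_\infty} = g_{\mathbb{R}^n}$ there, integrating along Euclidean straight-line geodesics gives a two-sided bi-Lipschitz bound for $\Phi_t^{X_\infty}|_{B_{r_\infty}(x_\infty)}$ compatible with the conclusion.

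Finally, the convergence of eigenfunctions from Theorem \ref{thm:spectral} combined with \eqref{eq:expansion1} upgrades to $\|\Phi_t^{X_i}(y_i) - \Phi_t^{X_i}(z_i)\|_{L^2} \to \|\Phi_t^{X_\infty}(y_\infty) - \Phi_t^{X_\infty}(z_\infty)\|_{L^2}$ whenever $y_i \to y_\infty$ and $z_i \to z_\infty$, so the assumed violation survives in the limit and contradicts the bi-Lipschitz bound just established on $X_\infty$. The main obstacle I anticipate is the third step: the pinching is only $\meas_{X_i}$-a.e.\ on sets $A_i$ of asymptotically full measure, while the conclusion requires a \emph{metric} estimate on all of $B_{r_\infty}(x_\infty)$; careful interplay between the $L^2$-tensor convergence of $g_t^{X_i}$, the density-point structure of $A_i$, and the global Lipschitz continuity of $\Phi_t^{X_i}$ (again via \eqref{eq:equi lip}) will be needed to propagate the inequality from the ``good'' set to the whole limit ball with a vanishing error.
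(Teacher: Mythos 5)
There is a genuine gap, and it sits exactly where the real content of the proposition lies. Because the error term is $\Psi(\epsilon,r;K,N,C_1,C_2,d,t)$, which by the paper's convention tends to $0$ as $(\epsilon,r)\to(0,0)$, the negation of the statement produces sequences with \emph{both} $\epsilon_i\to 0$ and $r_i\to 0$; the case $r_\infty>0$ that you devote most of your argument to never arises. Conversely, the case $r_i\to 0$ that you dismiss as trivial is the whole proposition. Your claim that a global Lipschitz bound $\|\Phi_t(y)-\Phi_t(z)\|_{L^2}\le L(K,N,t,d)\,\dist_X(y,z)$ makes the conclusion degenerate is false: for $y,z\in B_{r_i}(x_i)$ one has $\dist_X(y,z)\le 2r_i$, so both sides of the asserted two-sided bound scale like $r_i$, and neither the upper bound with the specific constant $C_2$ nor, a fortiori, the lower bound $C_1\dist_X(y,z)-\Psi C_1 r$ follows from a crude Lipschitz estimate. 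The paper's proof handles precisely this regime by a blow-up: it rescales the recentered eigenfunction coordinates by $r_i^{-1}$, shows (via Theorem \ref{spectral2}, Proposition \ref{prop:compactness} and the mean-value/splitting machinery, as in Proposition \ref{prop:quantapp}) that they converge to \emph{linear} functions on the limit $\mathbb{R}^n$, transfers the pinching $C_1 g_{X_i}\le g_t^{X_i}\le C_2 g_{X_i}$ on the asymptotically full-measure sets $A_i$ to $C_1 g_{\mathbb{R}^n}\le \overline{\Phi}^*g_{\ell^2}\le C_2 g_{\mathbb{R}^n}$, and only then concludes the two-sided distance estimate for the limit map.

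This points to the second gap, which persists even in the case you do treat: an $\meas$-a.e.\ pinching $C_1 g_X\le f^*g\le C_2 g_X$ does \emph{not} by itself imply the lower distance bound $C_1\dist_X(y,z)\le\|f(y)-f(z)\|$; the paper's own Remark \ref{asahoarbasj} ($f(x)=|x|$ on $[-1,1]$, which satisfies $f^*g=g$ yet is not bi-Lipschitz near the origin) is a counterexample to "integrating along geodesics" for the lower bound. What rescues the argument in the paper is that the blow-up limit $\overline{\Phi}$ is a \emph{linear} map on $\mathbb{R}^n$, for which the pointwise pinching of the constant tensor $\overline{\Phi}^*g_{\ell^2}$ is equivalent to the global two-sided distance estimate. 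Your proposal never establishes (or uses) linearity of the limit, so the contradiction you aim for at the end cannot be closed. To repair the argument you should adopt the blow-down-free version of the scheme of Proposition \ref{prop:quantapp}: take $r_i\to 0$, form $\overline{\phi}_{i,j}:=r_i^{-1}e^{-\lambda_j^{X_i}t}(\phi_j^{X_i}-\text{average on }B_{r_i}(x_i))$, prove the limits are linear, and only then invoke the pinching.
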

\begin{proof}
The proof is done by a contradiction. If not, then there exist $\tau \in (0, 1)$, a sequence of pointed $\RCD(K, N)$ spaces $(X_i, \dist_{X_i}, \meas_{X_i}, x_i)$, a convergent sequence $r_i \to 0^+$, a sequence of Borel subsets $A_i$ of $X_i$ and sequences of $y_i, z_i \in B_{r_i}(x_i)$ such that 
\begin{itemize}
\item we have
\begin{equation}\label{asatraowearb}
C_1g_{X_i}\le g_t^{X_i}\le C_2g_{X_i},\quad \text{for $\meas_{X_i}$-a.e. $z \in A_i$},
\end{equation} 
\item we have
\begin{equation}\label{aburbausaisb}
\frac{\meas_{X_i}(B_{r_i}(x_i) \cap A_i)}{\meas_{X_i}(B_{r_i}(x_i))} \to 1,
\end{equation}
\item we have
\begin{equation}\label{eucldieb}
\left( X_i, r_i^{-1}\dist_{X_i}, \meas_{X_i}(B_{r_i}(x_i))^{-1}\meas_{X_i}, x_i\right) \stackrel{\mathrm{pmGH}}{\to} (\mathbb{R}^n, \dist_{\mathbb{R}^n}, \omega_n^{-1}\mathcal{H}^n, 0_n),
\end{equation}
\item either
\begin{equation}\label{0088jhu}
\|\Phi_{t}^{X_i}(y_i)-\Phi_t^{X_i}(z_i)\|_{L^2}<C_1\dist_{X_i}(y_i, z_i)-C_1\tau r_i
\end{equation}
or
\begin{equation}\label{fabseaurabwu}
C_2\dist_{X_i}(y_i, z_i)+C_2\tau r_i < \|\Phi_{t}^{X_i}(y_i)-\Phi_t^{X_i}(z_i)\|_{L^2}
\end{equation}
is satisfied. 
\end{itemize}
Let us consider functions on $(X_i, r_i^{-1}\dist_{X_i})$ defined by
\begin{equation}
\overline{\phi}_{i, j}:=\frac{e^{-\lambda^{X_i}_jt}}{r_i}\left( \phi^{X_i}_j-\frac{1}{\meas_{X_i}(B_{r_i}(x_i))}\int_{B_{r_i}(x_i)}\phi_j^{X_i}\di \meas_{X_i}\right).
\end{equation}
Then by an argument similar to the proof of Proposition \ref{prop:quantapp}, after passing to a subsequence, there exists a Lipschitz map $\overline{\Phi}:=(\overline{\phi}_j)_j:\mathbb{R}^n \to \ell^2$ such that the maps $\overline{\Phi}_t^{\ell^2}:=(\overline{\phi}_{i, j})_j:(X_i, r_i^{-1}\dist_{X_i}) \to \ell^2$ uniformly converge to $\overline{\Phi}$ on any bounded subset of $\mathbb{R}^n$ and that each $\overline{\phi}_j$ is linear.
Thanks to (\ref{asatraowearb}) and (\ref{aburbausaisb}) with \cite[Th.1.10.2]{AmbrosioHonda}, it is easily checked that 
\begin{equation}\label{ppsbaywrh}
C_1g_{\mathbb{R}^n}\le \overline{\Phi}^*g_{\ell^2}\le C_2g_{\mathbb{R}^n}
\end{equation}
holds on $B_1(0_n)$. Thus by the lineality of $\overline{\phi}_j$, (\ref{ppsbaywrh}) holds on $\mathbb{R}^n$ with
\begin{equation}\label{aasbfay7666}
C_1\dist_{\mathbb{R}^n}(z, w)\le \|\overline{\Phi}(z)-\overline{\Phi}(w)\|_{\ell^2}\le C_2\dist_{\mathbb{R}^n}(z, w),\quad \forall z,\,\forall w \in \mathbb{R}^n.
\end{equation}
On the other hand, after passing to a subsequene, we have $y_i \to y, z_i \to z$ with respect to (\ref{eucldieb}) for some $y, z \in \overline{B}_1(0_n)$. Then (\ref{0088jhu}) and (\ref{fabseaurabwu}) imply that either
\begin{equation}
\|\overline{\Phi}(y)-\overline{\Phi} (z)\|_{\ell^2}\le C_1\dist_{\mathbb{R}^n}(y, z)-C_1\tau
\end{equation}
or
\begin{equation}
C_2\dist_{\mathbb{R}^n}(y, z)+C_2\tau \le \|\overline{\Phi}(y)-\overline{\Phi}(z)\|_{\ell^2}
\end{equation}
is satisfied, which contradicts (\ref{aasbfay7666}).
\end{proof}
We are now in a position to introduce the desired bi-Lipschitz embeddability for $\Phi_t$. Recall that $t\meas_X(B_{\sqrt{t}}( \cdot))g_t$ $L^p$-strongly converge to $\overline{c}_ng$ on $X$ for any $p \in [1, \infty)$ (see subsection \ref{subsec:pullbak}).
\begin{theorem}[Bi-Lipschitz embedding]
Let $(X, \dist_X, \meas_X)$ be a finite dimensional compact $\RCD$ space. Then for any $\epsilon \in (0, 1)$ there exists $t_0 \in (0, 1)$ such that for any $t \in (0, t_0]$ there exists a compact subset $X_{\epsilon, t}$ of $X$ such that $\meas_X(X \setminus X_{\epsilon, t})\le \epsilon$ holds and that $\Phi_t|_{X_{\epsilon, t}}$ is a bi-Lipschitz embedding.
\end{theorem}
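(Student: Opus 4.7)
The plan is to follow the strategy of Theorem \ref{prop:bilip} from the non-collapsed setting, but with Proposition \ref{aborbairaibbiaroibwoia} playing the role of Proposition \ref{prop:quantapp}, and with the additional complication that the natural normalization $t\meas_X(B_{\sqrt{t}}(x))$ of $g_t$ is now point-dependent. Let $n$ denote the essential dimension of $(X, \dist_X, \meas_X)$.

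I would first carve out the geometric good set. Fix $\epsilon \in (0,1)$ and $\eta > 0$ small. Using Theorem \ref{thm:RN} together with the pointwise existence of tangent cones on $\mathcal{R}_n^*$, and an Egorov-type argument, I would build a Borel subset $V \subset \mathcal{R}_n^*$ with $\meas_X(X \setminus V) < \epsilon/3$ on which (i) the density $\theta$ is uniformly bounded $0 < c_- \le \theta \le c_+ < \infty$, (ii) the quantity $\meas_X(B_r(x))/(\omega_n r^n)$ converges to $\theta(x)$ uniformly in $x \in V$ as $r \to 0^+$, and (iii) the rescaled spaces $(X, r^{-1}\dist_X, \meas_X(B_r(x))^{-1}\meas_X, x)$ are $\sigma(r)$-pmGH close to $(\mathbb{R}^n, \dist_{\mathbb{R}^n}, \omega_n^{-1}\mathcal{H}^n, 0_n)$, with a modulus $\sigma(r) \to 0^+$ independent of $x \in V$. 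This is the analog of the ``regular part'' used in the non-collapsed proof.

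Next I would add the analytic input on $g_t$. Recalling from Subsection \ref{subsec:pullbak} that $t\meas_X(B_{\sqrt{t}}(\cdot))\, g_t \to \overline{c}_n g_X$ strongly in $L^1$, together with the maximal function theorem on the doubling space $X$, for each sufficiently small $t$ I would produce a Borel set $U_t \subset X$ with $\meas_X(X \setminus U_t) < \epsilon/3$ such that, for every $x \in U_t$ and every $r \in (0, d]$,
\begin{equation*}
\frac{1}{\meas_X(B_r(x))}\int_{B_r(x)} \bigl|\, t\meas_X(B_{\sqrt{t}}(\cdot))\, g_t - \overline{c}_n g_X\,\bigr|\di\meas_X \le \eta(t),
\end{equation*}
with $\eta(t) \to 0$. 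Setting $X_{\epsilon,t} := V \cap U_t \cap \Leb(V \cap U_t)$ we have $\meas_X(X \setminus X_{\epsilon,t}) \le \epsilon$. For $x \in X_{\epsilon,t}$, both the continuity of $x \mapsto \meas_X(B_{\sqrt{t}}(x))$ on $V$ and the uniform density of $X_{\epsilon,t}$ near $x$ permit extracting, at an appropriate mesoscopic scale $r = r(x,t)$ (comparable to $\sqrt t$), a large subset $A_{x,t} \subset B_r(x)$ on which pointwise bounds $C_1 g_X \le g_t \le C_2 g_X$ hold with $C_1/C_2$ uniformly close to $1$ and with $C_1, C_2$ controlled by $c_-, c_+$ and $t$; the hypotheses of Proposition \ref{aborbairaibbiaroibwoia} are then all in place and its conclusion yields
\begin{equation*}
\sqrt{C_1}\,\dist_X(y,z) - \Psi r \le \|\Phi_t(y) - \Phi_t(z)\|_{L^2} \le \sqrt{C_2}\,\dist_X(y,z) + \Psi r, \quad y,z \in B_r(x).
\end{equation*}
This gives local bi-Lipschitz control of $\Phi_t$ on $X_{\epsilon,t} \cap B_r(x)$ for pairs at distance $\gtrsim \Psi^{1/2} r$.

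To pass from local to global I would exploit that $\Phi_t: X \to L^2$ is a topological embedding of the compact space $X$ (see Subsection \ref{subsec:pullbak}), so $\Phi_t^{-1}$ is uniformly continuous on $\Phi_t(X)$: for every $\alpha > 0$ there exists $\delta_t(\alpha) > 0$ with $\dist_X(y,z) \ge \alpha \Rightarrow \|\Phi_t(y) - \Phi_t(z)\|_{L^2} \ge \delta_t(\alpha)$. Combining this (applied at $\alpha = $ local scale) with the local bi-Lipschitz estimate for close pairs, and using $\dist_X \le \mathrm{diam}(X, \dist_X) < \infty$ to convert the lower bound $\delta_t(\alpha)$ into a linear lower bound, yields a global bi-Lipschitz lower bound on $X_{\epsilon,t}$, whose matching upper bound is automatic from the global Lipschitz constant of $\Phi_t$. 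A Vitali cover of $X_{\epsilon,t}$ by balls $B_{r(x,t)/2}(x)$ (using the doubling property) allows the local constants from different points to be merged, producing a single bi-Lipschitz constant depending on $\epsilon, t$.

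The main obstacle is the careful quantification in Step~2: matching the rate $\eta(t)$ of $L^1$-closeness of $t\meas_X(B_{\sqrt t}(\cdot))g_t$ to $\overline{c}_n g_X$ against the rate $\sigma(r)$ of Euclidean approximation in $V$, so that at some simultaneously admissible scale $r(x,t)$ the hypotheses of Proposition \ref{aborbairaibbiaroibwoia} are met on a subset $A_{x,t}$ that is $\epsilon r$-dense in $B_r(x)$. This is a purely technical balancing act (resembling the Reifenberg-type iteration in the proof of Proposition \ref{prop:regular}), but it is the place where the collapsed setting genuinely differs from the non-collapsed case, since uniform Ahlfors regularity is unavailable and the normalization is forced to track the point $x$ through the density $\theta(x)$.
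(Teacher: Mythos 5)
Your overall architecture is the same as the paper's: a ``geometric'' good set with two-sided Ahlfors bounds coming from Theorem \ref{thm:RN}, an ``analytic'' good set obtained by running the maximal function theorem on the $L^1$-convergence of $t\meas_X(B_{\sqrt t}(\cdot))g_t$ to $\overline{c}_ng_X$, a Lebesgue-density refinement, and then Proposition \ref{aborbairaibbiaroibwoia} as the quantitative engine. Two remarks, the second of which is a genuine gap.

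First, a minor point: your parenthetical claim that $C_1/C_2$ can be made uniformly close to $1$ on $A_{x,t}$ is not available in the collapsed setting (the density $\theta$ varies over the good set, and the normalization $t\meas_X(B_{\sqrt t}(\cdot))$ only cancels it approximately at the center), but this is harmless since the theorem asserts only bi-Lipschitzness with \emph{some} constant; the paper likewise works with fixed $C_1\le C_2$ determined by the density bounds.

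Second, and more seriously: your passage from local to global does not control pairs $y,z\in X_{\epsilon,t}$ with $\dist_X(y,z)$ arbitrarily small. You apply Proposition \ref{aborbairaibbiaroibwoia} at a \emph{fixed} mesoscopic scale $r=r(x,t)\sim\sqrt t$, and you correctly note that the resulting estimate only beats the additive error $\Psi r$ for pairs at distance $\gtrsim \Psi^{1/2}r$. Your patch for the remaining pairs is the uniform continuity of $\Phi_t^{-1}$ on the compact image, but that only yields $\|\Phi_t(y)-\Phi_t(z)\|\ge\delta_t(\alpha)$ for pairs at distance $\ge\alpha$ with $\alpha>0$ fixed; as $\alpha\to 0^+$ there is no control of $\delta_t(\alpha)/\alpha$ --- indeed a lower bound on that ratio is precisely the bi-Lipschitz inequality you are trying to prove, so the argument is circular exactly in the regime $0<\dist_X(y,z)\ll\Psi^{1/2}r$. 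The fix is already implicit in your own setup: since your maximal-function bound and your modulus $\sigma(r)$ hold at \emph{every} scale $r$ below a threshold $\tau_4$, you should apply Proposition \ref{aborbairaibbiaroibwoia} at the floating scale $r:=\dist_X(y,z)$ for each pair $y,z$ in the good set with $\dist_X(y,z)\le\tau_4$; this turns the additive error $\Psi r$ into a multiplicative factor $(1\pm\Psi)$ and yields $C_1(1-\Psi)\dist_X(y,z)\le\|\Phi_t(y)-\Phi_t(z)\|_{L^2}\le C_2(1+\Psi)\dist_X(y,z)$ for all close pairs, which is exactly how the paper (and its non-collapsed prototype, Theorem \ref{prop:bilip}) proceeds. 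The compactness argument is then only needed for pairs at distance $\ge\tau_4/4$, where it is legitimate.
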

\begin{proof}
Let us denote by $n$ the essential dimension of $(X, \dist_X, \meas_X)$ and assume that $(X, \dist_X, \meas_X)$ is an $\RCD(K, N)$ space with $\mathrm{diam}(X, \dist_X)\le d<\infty$ and $\meas_X(X)=1$.
Fix a sufficiently small $\delta \in (0, 1)$. Thanks to Theorem \ref{thm:RN}, there exist $\tau, \tau_1, \tau_2 \in (0, \infty)$ with $\tau_1 \le \tau_2$ and a Borel subset $A_1$ of $X$ such that
\begin{equation}
\tau_1 \le \frac{\meas_X(B_r(x))}{r^n}\le \tau, \quad  \forall r \in (0, \tau),\,\,\forall x \in A_1
\end{equation}
and that $\meas_X(X \setminus A_1)\le \delta$ holds.
Find $t_0 \in (0, 1)$ with
\begin{equation}
\int_X\left|\overline{c}_ng-t\meas_X(B_{\sqrt{t}}(\cdot ))g_t\right|\di \meas_X<\delta,\quad \forall t \in (0, t_0].
\end{equation}
Fix $t \in (0, t_0]$ and put
\begin{equation}
A_2:=\left\{ x \in X; \sup_{r>0}\frac{1}{\meas_X(B_r(x))}\int_{B_r(x)}\left| \overline{c}_ng-t\meas_X(B_{\sqrt{t}}(\cdot ))g_t\right| \di \meas_X \le \delta^{1/2}\right\}.
\end{equation}
Then the maximal function theorem shows
\begin{equation}\label{asdabuatb}
\meas_X\left(X \setminus A_2\right)\le \frac{C(K, N, d)}{\delta^{1/2}}\int_X\left|\overline{c}_ng-t\meas_X(B_{\sqrt{t}}(\cdot ))g_t\right|\di \meas_X\le C(K, N, d)\delta^{1/2}.
\end{equation}
Note that we can find $C_1, C_2 \in (0, \infty)$ with  
\begin{equation}
C_1g_X\le g_t \le C_2g_X, \quad \text{for $\meas_X$-a.e. $x \in A_2$}.
\end{equation}
Then fix a sufficiently small $\epsilon \in (0, 1)$. Thanks to Theorem \ref{thm:RN} again, there exist $\tau_3 \in (0, 1)$ and a Borel subset $A_3$ of $A_1 \cap A_2$ such that $\meas_X((A_1 \cap A_2) \setminus A_3)<\epsilon$ holds and that the rescaled space $(X, r^{-1}\dist_X, \meas_X(B_r(x))^{-1}\meas_X, x)$ is $\epsilon$-pointed measured Gromov-Hausdorff close to $(\mathbb{R}^n, \dist_{\mathbb{R}^n}, \omega_n^{-1}\mathcal{H}^n, 0_n)$ for all $x \in A_3$ and $r \in (0, \tau_3)$. Applying (\ref{lebdesntity}), there exist $\tau_4 \in (0, \epsilon]$ and a Borel subset $A_4$ of $A_3$ such that $\meas_X(A_3 \setminus A_4)<\epsilon$ and 
\begin{equation}
\frac{\meas_X(B_r(x) \cap A_3)}{\meas_X(B_r(x))}\ge 1-\epsilon, \quad \forall x \in A_4,\,\,\forall r \in (0, \tau_4]
\end{equation}
hold. Then applying Proposition \ref{aborbairaibbiaroibwoia} as $A=A_{3}$ and $x \in A_{4}$ shows that 
\begin{equation}\label{aahhhsuab}
C_1\dist_X(y, z)-\Psi C_1 r\le \|\Phi_t(y)-\Phi_t(z)\|_{L^2}\le C_2\dist_X(y, z)+\Psi C_2 r
\end{equation}
holds for all $r \in (0, \tau_4]$ and $y, z \in B_r(x)$, where $\Psi:=\Psi(\epsilon; K, N, C_1, C_2, d, t)$ (recall $r<\tau_4 \le \epsilon$). In particular for all $x \in A_{4}$ and $y, z \in B_{\tau_4/4}(x)\cap A_4$ with $y \neq z$, letting $r:=\dist_X(y, z)$ with (\ref{aahhhsuab}) yields
\begin{equation}
C_1(1-\Psi)\dist_X(y, z)\le \|\Phi_t(y)- \Phi_t(z)\|_{L^2}\le C_2(1+\Psi)\dist_X(y, z) 
\end{equation}
which proves that $\Phi_t|_{A_4}$ is a locally bi-Lipschitz embedding. Replacing $A_4$ by a compact subset $A_5$ of $A_4$ with $\meas_X(A_4\setminus A_5)<\epsilon$, we can easily prove that $\Phi_t|_{A_5}$ is a bi-Lipschitz embedding. Thus we conlude because $\delta, \epsilon$ are arbitrary.
\end{proof}
Similarly we are able to prove the following finite dimensional reduction of the above result. We omit the proof. Compare with  Theorem \ref{prop:finitedimen}.
\begin{theorem}
Let $(X, \dist_X, \meas_X)$ be a finite dimensional compact $\RCD$ space. Then for any $\epsilon \in (0, 1)$ there exists  $t_0 \in (0, 1)$ such that for any $t \in (0, t_0]$ there exists a compact subset $X_{\epsilon, t}$ of $X$ such that $\meas_X(X \setminus X_{\epsilon, t})\le \epsilon$ holds and that $\Phi_t^l|_{X_{\epsilon, t}}$ is a bi-Lipschitz embedding for any sufficiently large $l$. 
\end{theorem}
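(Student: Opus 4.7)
The plan is to reduce this finite-dimensional truncation statement to the preceding theorem (locally bi-Lipschitz embeddability of $\Phi_t$ on a subset of nearly full measure) by means of an elementary spectral tail estimate. Given $\epsilon \in (0,1)$, I would first apply that preceding theorem to obtain $t_0 \in (0,1)$ and, for each $t \in (0, t_0]$, a Borel subset $X_{\epsilon, t} \subset X$ with $\meas_X(X \setminus X_{\epsilon, t}) \le \epsilon$ together with bi-Lipschitz constants $C_1 = C_1(\epsilon, t), C_2 = C_2(\epsilon, t) \in (0, \infty)$ such that
\[
C_1 \dist_X(y, z) \le \|\Phi_t(y) - \Phi_t(z)\|_{L^2} \le C_2 \dist_X(y, z), \quad \forall y, z \in X_{\epsilon, t}.
\]
The whole point is that the same set $X_{\epsilon, t}$ will work for every sufficiently large $l$; only the lower bi-Lipschitz constant for $\Phi_t^l$ will have to be chosen a little worse than $C_1$.

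The key computation is the orthogonal decomposition coming from the spectral expansion of $\Phi_t$, namely
\[
\|\Phi_t(y) - \Phi_t(z)\|_{L^2}^2 - \|\Phi_t^l(y) - \Phi_t^l(z)\|_{\mathbb{R}^l}^2 = \sum_{i > l} e^{-2\lambda_i^X t}\bigl(\phi_i^X(y) - \phi_i^X(z)\bigr)^2.
\]
Bounding each $|\phi_i^X(y) - \phi_i^X(z)|$ by $\mathbf{Lip}(\phi_i^X) \cdot \dist_X(y,z)$ and setting
\[
\epsilon_l(t) := \sum_{i > l} e^{-2\lambda_i^X t}\,(\mathbf{Lip}\,\phi_i^X)^2,
\]
I get the pointwise comparison
\[
0 \le \|\Phi_t(y) - \Phi_t(z)\|_{L^2}^2 - \|\Phi_t^l(y) - \Phi_t^l(z)\|_{\mathbb{R}^l}^2 \le \epsilon_l(t)\,\dist_X(y,z)^2
\]
for all $y, z \in X$. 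By the gradient bound $\mathbf{Lip}\,\phi_i^X \le \tilde{C}(\lambda_i^X)^{(N+2)/4}$ and the Weyl-type lower bound $\lambda_i^X \ge \tilde{C}^{-1} i^{2/N}$ furnished by (\ref{eq:eigenfunction}), the series defining $\epsilon_l(t)$ is convergent for each fixed $t > 0$ and $\epsilon_l(t) \to 0$ as $l \to \infty$.

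To finish, I would pick $l_0 = l_0(\epsilon, t) \in \mathbb{N}$ so that $\epsilon_l(t) \le C_1^2 / 2$ for every $l \ge l_0$. Then for such $l$ and all $y, z \in X_{\epsilon, t}$ the lower estimate reads
\[
\|\Phi_t^l(y) - \Phi_t^l(z)\|_{\mathbb{R}^l}^2 \ge C_1^2 \dist_X(y,z)^2 - \epsilon_l(t)\,\dist_X(y,z)^2 \ge \tfrac{C_1^2}{2}\,\dist_X(y,z)^2,
\]
while the upper estimate is automatic since the truncation can only shorten distances, $\|\Phi_t^l(y) - \Phi_t^l(z)\|_{\mathbb{R}^l} \le \|\Phi_t(y) - \Phi_t(z)\|_{L^2} \le C_2\,\dist_X(y,z)$. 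This proves $\Phi_t^l|_{X_{\epsilon, t}}$ is bi-Lipschitz for every $l \ge l_0$. There is no genuine obstacle in this argument beyond accurately quantifying the tail $\epsilon_l(t)$; the only care required is bookkeeping the dependence of $l_0$ on the bi-Lipschitz constant $C_1$ extracted from the previous theorem, which in turn depends on $\epsilon$ and $t$.
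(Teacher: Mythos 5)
Your proof is correct. The paper in fact omits the proof of this statement, saying only that it is obtained ``similarly'' to the preceding theorem (compare Theorem \ref{prop:finitedimen}); the intended route is to rerun the blow-up/contradiction machinery of the appendix with the truncated maps $\Phi_t^l$ in place of $\Phi_t$, absorbing the spectral tail as an additive error, exactly as in the non-collapsed case. What you do instead is use the preceding theorem as a black box and deduce the truncated statement from the identity $\|\Phi_t(y)-\Phi_t(z)\|_{L^2}^2-\|\Phi_t^l(y)-\Phi_t^l(z)\|_{\mathbb{R}^l}^2=\sum_{i>l}e^{-2\lambda_i^Xt}(\phi_i^X(y)-\phi_i^X(z))^2$ together with the tail bound $\epsilon_l(t)\to 0$ coming from (\ref{eq:eigenfunction}). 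Both arguments rest on the same tail estimate, but your reduction is cleaner: it makes explicit that the \emph{same} set $X_{\epsilon,t}$ works for all large $l$, it avoids repeating the compactness argument, and the only price is that $l_0$ depends on the lower bi-Lipschitz constant $C_1(\epsilon,t)$ of $\Phi_t|_{X_{\epsilon,t}}$, which is exactly the dependence the statement permits. The two minor points you rely on — that $\mathbf{Lip}\,\phi_i^X=\|\nabla\phi_i^X\|_{L^\infty}$ (Sobolev-to-Lipschitz) and that the series $\sum_i e^{-2\lambda_i^Xt}(\lambda_i^X)^{(N+2)/2}$ converges with vanishing tail by the Weyl-type bound $\lambda_i^X\ge\tilde C^{-1}i^{2/N}$ — are both already used repeatedly in the paper, so the argument is complete.
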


\subsection*{Conflict of interests:} The authors declare no conflict of interest.

\end{document}